\newenvironment{subeqnarray}
{\begin{subequations}\begin{eqnarray}}%
{\end{eqnarray}\end{subequations}\hskip-4.0pt}
\def\fatnorm#1{|\kern-.2ex|\kern-.2ex| #1 |\kern-.2ex|\kern-.2ex|}
\newcommand{\twonorm}[1]{\left\lVert#1\right\rVert_2}
\newcommand{\shtwonorm}[1]{\lVert#1\rVert_2}
\newcommand{\fnorm}[1]{\left\lVert#1\right\rVert_{F}}
\newcommand{\norm}[1]{\left\lVert#1\right\rVert}
\newcommand{\abs}[1]{\left\lvert#1\right\rvert}
\newcommand{\Sp}{\mathbb{S}}
\newcommand{\V}{\mathcal{V}}
\newcommand{\N}{{\mathcal N}}
\newcommand{\cov}{\textsf{Cov}}
\newcommand{\SNR}{\ensuremath\textsf{S/N}}
\newcommand{\signal}{\ensuremath\textsf{S}}
\newcommand{\noise}{\ensuremath\textsf{N}}
\newcommand{\spin}{\textsf{span}}
\newcommand{\half}{\ensuremath{\frac{1}{2}}}
\newcommand{\inv}[1]{\frac{1}{#1}}
\newcommand{\ip}[1]{\;\langle{\,#1\,}\rangle\;}
\newcommand{\size}[1]{\ensuremath{\left|#1\right|}}
\newcommand{\onenorm}[1]{\ensuremath{\left\|#1\right\|_1}}
\newcommand{\expct}[1]{\ensuremath{\mathbb E}\left(#1\right)}
\newcommand{\silent}[1]{}
\newcommand{\mvec}[1]{\rm{vec}\left\{\,#1\,\right\}}
\newcommand{\ve}{\varepsilon}
\def\qed{\hskip1pt $\;\;\scriptstyle\Box$}
\def\Ber{\mathop{\text{Bernoulli}\kern.2ex}}
\def\corr{\mathop{\text{corr}\kern.2ex}}
\def\prec{\mathop{\text{precision}\kern.2ex}}
\def\recall{\mathop{\text{recall}\kern.2ex}}
\def\cov{\mathop{\text{Cov}\kern.2ex}}
\def\mnorm{\mathcal{N}_{f,m}\kern.2ex}
\def\var{\mathop{\text{Var}\kern.2ex}}
\def\ess{\mathop{\text{ess}\kern.2ex}}
\def\dom{\mathop{\text{dom}\kern.2ex}}
\def\lin{\mathop{\text{lin}\kern.2ex}}
\newcommand{\func}[1]{\ensuremath{\mathrm{#1}}}
\newcommand{\diag}{\func{diag}}
\newcommand{\RE}{\textnormal{\textsf{RE}}}
\let\hat\widehat
\let\tilde\widetilde
\newcommand{\tr}{{\rm tr}}
\def\E{{\mathbb E}}
\newcommand{\prob}[1]{\ensuremath{\mathbb P}\left(#1\right)}
\newcommand{\beq}{\begin{equation}}
\newcommand{\eeq}{\end{equation}}
\newcommand{\ben}{\begin{eqnarray}}
\newcommand{\een}{\end{eqnarray}}
\newcommand{\bnum}{\begin{enumerate}}
\newcommand{\enum}{\end{enumerate}}
\newcommand{\bit}{\begin{itemize}}
\newcommand{\eit}{\end{itemize}}
\newcommand{\bens}{\begin{eqnarray*}}
\newcommand{\eens}{\end{eqnarray*}}
\newcommand{\R}{{\bf R}}
\newcommand{\Sc}{\ensuremath{S^c}}
\newcommand{\ora}{{\rm oracle}}
\newcommand{\SMR}{\ensuremath\textsf{S/M}}
\newcommand{\bignoise}{\ensuremath\textsf{M}}
\newcommand{\PaulBhalf}{\ensuremath{\hat\tau_B^{1/2}}}
\newcommand{\e}{\epsilon}
\newcommand{\vp}{\varpi}
\newcommand{\onef}{\textstyle \frac{1}{f}}
\newcommand{\onem}{\textstyle \frac{1}{m}}
\newcommand{\Ball}{{B}}
\newcommand{\B}{\mathcal{B}}
\newcommand{\A}{\mathcal{A}}
\newcommand{\U}{\Upsilon}
\newcommand{\up}{\upsilon}
\newcommand{\ind}{\mathbb{I}}
\newtheorem{theorem}{Theorem}
\newtheorem{assumption}{Assumption}[section]
\newcommand{\Cone}{{\rm Cone}}
\newcommand{\cone}{{\rm Cone}}
\def\supp{\mathop{\text{\rm supp}\kern.2ex}}
\def\conv{\mathop{\text{\rm conv}\kern.2ex}}
\def\span{\mathop{\text{\rm span}\kern.2ex}}
\def\argmin{\mathop{\text{arg\,min}\kern.2ex}}
\def\absconv{\mathop{\text{\rm absconv}\kern.2ex}}
\def\half{\frac{1}{2}}
\let\hat\widehat
\def\W{\Cone}
\newtheorem{lemma}[theorem]{Lemma}
\newtheorem{definition}[assumption]{Definition}
\newtheorem{remark}[assumption]{Remark}
\newtheorem{corollary}[theorem]{Corollary}
\def\qed{\hskip1pt $\;\;\scriptstyle\Box$}
\newenvironment{proofof}[1]{\hspace*{20pt}{\it Proof}{ of #1}.\hskip10pt}{\qed\vskip5pt}
\newenvironment{proofof2}{\hskip10pt}{\qed\vskip5pt}
\def\math{${}^\star$}
\def\stat{${}^\dag$}
\begin{document}
\title{Errors-in-variables models with dependent measurements
\footnote{
Mark Rudelson is partially supported by NSF grant DMS 1161372 and
USAF Grant FA9550-14-1-0009. Shuheng Zhou was supported in part by NSF under Grant
  DMS-1316731 and Elizabeth Caroline Crosby Funding from the Advance
  Program at the University of Michigan.
This manuscript was submitted for peer review in August 1, 2015; minor
typos are being corrected in this version.} 
\footnote{{\bf Keywords.}
Errors-in-variable models, measurement error data, subgaussian concentration, matrix variate distributions.}
}

\author{Mark Rudelson\math\; and\; Shuheng Zhou\stat\;\; \\
{\math}Department of Mathematics,\\
{\stat}Department of Statistics,\\
University of Michigan, Ann Arbor, MI 48109 \\[20pt]
Department of Statistics, Tech Report 538,  July 31, 2015
}

\maketitle

\begin{abstract}
Suppose that we observe $y \in \R^f$ and $X \in \R^{f \times m}$ in the following errors-in-variables model:
\begin{eqnarray*}
y & =  & X_0 \beta^* + \e \\
X & = & X_0 + W
\end{eqnarray*}
where $X_0$ is a $f \times m$ design matrix with independent
subgaussian row vectors, $\e \in \R^f$ is a noise vector and 
$W$ is a mean zero $f \times m$ random noise matrix with independent
subgaussian column vectors, independent of $X_0$ and $\e$.
This model is significantly different from those analyzed in the literature
in the sense that we allow the measurement error for each covariate 
to be a dependent vector across its $f$ observations. Such error structures appear in the 
science literature when modeling the trial-to-trial fluctuations in response strength shared
across a set of neurons.

Under sparsity and restrictive eigenvalue type of conditions, we show that one is able to 
recover a sparse vector $\beta^* \in \R^m$ from the model
given a single observation matrix $X$ and the response vector $y$.
We establish consistency in estimating $\beta^*$ and obtain the rates of convergence 
in the $\ell_q$ norm, where $q = 1, 2$ for the Lasso-type estimator,
and for $q \in [1, 2]$ for a Dantzig-type conic programming estimator.
We show error bounds which approach that of the regular Lasso and the Dantzig selector 
in case the errors in $W$ are tending to 0.
\end{abstract}

\section{Introduction}
The matrix variate normal model has a long history in psychology and social sciences, 
and is becoming increasingly popular in biology and genomics, neuroscience, econometric theory, image and signal processing, wireless
communication, and machine learning
in recent years, see for
example~\cite{Dawid81,GV92,Dut99,WJS08,BCW08,Yu09,Efr09,AT10,KLLZ13}, and the references therein.
\silent{
We model the data matrix $X$ as a single random sample coming from a
matrix variate normal distribution with a separable covariance  $A
\otimes B$, which we denote by $X_{f \times m} \sim \N_{f,m}(M, A_{m \times m} \otimes B_{f \times f})$, where $\otimes$ is the Kronecker product. This is equivalent to say  that $\mvec{X}$ follows a multivariate normal distribution with mean $\mvec{M}$ and covariance $A \otimes B$. Here $\mvec{X}$ is formed by stacking the columns of $X$ into a vector in $\R^{mf}$.}
We call the random matrix $X$ which contains $f$ rows and $m$
columns a single data matrix, or one instance from the matrix variate normal distribution. We say that an $f \times m$ random matrix $X$ follows a 
matrix normal distribution with a separable covariance matrix
$\Sigma_X = A \otimes B$, which we write 
$X_{f \times m} \sim \N_{f,m}(M, A_{m \times m} \otimes B_{f \times f}).$
This is equivalent to say $\mvec{X}$ follows a multivariate normal distribution with mean
$\mvec{M}$ and covariance $\Sigma_X = A \otimes B$. 
Here, $\mvec{X}$ is formed
by stacking the columns of $X$ into a vector in $\R^{mf}$.
Intuitively, $A$ describes the covariance between columns of $X$
while $B$  describes the covariance between rows of $X$.
 See~\cite{Dawid81,GV92} for more characterization and examples.

In this paper, we introduce the related Kronecker Sum models to encode
the covariance structure of a matrix variate distribution. The
proposed models and methods incorporate ideas from recent advances in
graphical models, high-dimensional regression model with observation
errors, and matrix decomposition. 
Let $A_{m \times m}, B_{f \times f}$ be symmetric positive definite 
covariance matrices. Denote the Kronecker sum of $A =(a_{ij})$ 
and $B = (b_{ij})$ by
\bens
\Sigma & = & A \oplus B := A \otimes I_f + I_m \otimes B \\
& = &
\left[
\begin{array}{cccc} 
a_{11}I_f + B & a_{12} I_f & \ldots & a_{1m} I_f\\
a_{21} I_f & a_{22}I_f +  B & \ldots & a_{2m} I_f\\
\ldots &  & & \\
a_{m1} I_f & a_{m2} I_f & \ldots & a_{mm}I_f + B
\end{array} \right]_{(m f) \times (m f)}
\eens
where $I_f$ is an $f \times f$ identity matrix.
This covariance model arises naturally from the context
of errors-in-variables regression model defined as follows.
Suppose that we observe $y \in \R^f$ and $X \in \R^{f \times m}$ in the following model:
\begin{subeqnarray}
\label{eq::oby}
y & =  & X_0 \beta^* + \e \\
\label{eq::obX}
X & = & X_0 + W
\end{subeqnarray}
where $X_0$ is a $f \times m$ design matrix with independent row vectors, $\e \in \R^f$ is a noise vector and 
$W$ is a mean zero $f \times m$ random noise matrix, independent of $X_0$ and $\e$, with independent column
vectors $\omega^1, \ldots, \omega^m$.
In particular, we are interested in the additive model of $X = X_0 + W$ such that
\ben
\label{eq::dataplus}
\mvec{X} \sim \N(0, \Sigma) \; \; \text{ where } \; \; 
\Sigma = A \oplus B := A \otimes I_f + I_m \otimes B
\een
where  we use one covariance component $A \otimes
I_f$ to describe the covariance of matrix $X_0 \in \R^{f \times m}$, which is considered as the {\it signal} matrix, and the other component $I_m \otimes B$ to describe that of the {\it noise matrix } $W \in \R^{f \times m}$, where $\E \omega^j \otimes \omega^j = B$ for all $j$,
where $\omega^j$ denotes the $j^{th}$ column vector of $W$.
Our focus is on deriving the statistical properties of two estimators for estimating $\beta^*$ in
\eqref{eq::oby} and~\eqref{eq::obX} despite the presence of the
additive error $W$ in the observation matrix $X$.
We will show that  our theory and analysis works with a model much more general than
that in \eqref{eq::dataplus}, which we will define in
Section~\ref{sec::method}.

Before we go on to define our estimators, we now use an example to motiviate~\eqref{eq::dataplus} and its
subgaussian generalization in Definition~\ref{def::subgdata}.
Suppose that there are $f$ patients in a particular study, for which
 we use $X_0$ to model the "systolic blood pressure" and $W$ to model
the seasonal effects. In this case, $X$ models the fact that 
among the $f$ patients we measure, each patient has its own row vector
of observed set of blood pressures across time, and each column vector
in $W$ models the seasonal variation on top of the true signal at a
particular day/time. Thus we consider $X$ as measurement of $X_0$ with
$W$ being the observation error. 
That is, we model the seasonal effects on blood pressures across a
set of patients in a particular study with a vector of dependent
entries. Thus $W$ is a matrix which consists of repeated independent 
sampling of spatially dependent vectors, if we regard the individuals 
as having spatial coordinates, for example, through their geographic locations.
We will come back to discuss this example in Section~\ref{sec::discuss}.

\subsection{The model and the method} 
\label{sec::method}
We first need to define an independent isotropic vector with {\em subgaussian} marginals as in Definition~\ref{def:psi2-vector}.
\begin{definition}
\label{def:psi2-vector} 
Let $Y$ be a random vector in $\R^p$
\bnum
\item
$Y$ is called isotropic
if for every $y \in \R^p$, $\expct{\abs{\ip{Y, y}}^2} = \twonorm{y}^2$.
\item
$Y$ is $\psi_2$ with a constant $\alpha$ if for every $y \in \R^p$,
\beq
\norm{\ip{Y, y}}_{\psi_2} := \;
\inf \{t: \expct{\exp(\ip{Y,y}^2/t^2)} \leq 2 \}
\; \leq \; \alpha \twonorm{y}.
\eeq
\enum
The  $\psi_2$ condition on a scalar random variable $V$ is equivalent to
the subgaussian tail decay of $V$, which means
$\prob{|V| >t} \leq 2 \exp(-t^2/c^2), \; \; \text{for all} \; \; t>0.$
\end{definition}
Throughout this paper, we use $\psi_2$ vector, a vector with subgaussian marginals
and subgaussian vector interchangeably.
\begin{definition}
\label{def::subgdata}
Let $Z$ be an $f \times m$ random matrix with independent entries $Z_{ij}$ satisfying
$\E Z_{ij} = 0$, $1 = \E Z_{ij}^2 \le \norm{Z_{ij}}_{\psi_2} \leq K$. 
Let $Z_1, Z_2$ be independent copies of $Z$.
Let $X = X_0 + W$ such that
\bnum
\item
$X_0 = Z_1 A^{1/2}$ is the design matrix with independent subgaussian
row vectors, and 
\item
$W =B^{1/2} Z_2$ is a random noise matrix with independent subgaussian
column vectors.
\enum
\end{definition}
Assumption (A1) allows the covariance model in~\eqref{eq::dataplus}
and its subgaussian variant in Definition~\ref{def::subgdata} to be identifiable. 
\bnum
\item[(A1)]
We assume $\tr(A) = m$ is a known parameter, where $\tr(A)$ denotes
the trace of matrix $A$.
\enum
In the kronecker sum model, we could assume we know 
$\tr(B)$, in order not to assume knowing $\tr(A)$. 
Assuming one or the other is known is unavoidable as the covariance model is not identifiable otherwise. 
Moreover, by knowing $\tr(A)$, we can construct an estimator for
$\tr(B)$:
\ben
\label{eq::trBest}
\hat\tr(B) &= &
\onem \big(\fnorm{X}^2 -f \tr(A)\big)_{+} \; \; \;
\text{ and define } \; \; 
\hat\tau_B  := \onef \hat\tr(B) \ge 0 
\een
where $(a)_{+} = a \vee 0$. 
We first introduce the Lasso-type estimator, adapted from those as
considered in~\cite{LW12}.

Suppose that $\hat\tr(B)$ is an estimator for $\tr(B)/f$; for example, 
as constructed in~\eqref{eq::trBest}. Let 
\ben
\label{eq::hatGamma}
\hat\Gamma & = & 
\inv{f} X^T X - \inv{f} \hat\tr(B)  I_{m}\;\;
\; \text{ and } \;
\hat\gamma \; = \; \inv{f} X^T y.
\een
For a chosen penalization parameter $\lambda \geq 0$, and parameters
$b_0$ and $d$, we consider the following  regularized estimation with the $\ell_1$-norm penalty,
\begin{eqnarray}
\label{eq::origin} \; \; 
\hat \beta & = & \argmin_{\beta: \norm{\beta}_1 \le b_0 \sqrt{d}} \frac{1}{2} \beta^T \hat\Gamma \beta 
- \ip{\hat\gamma, \beta} + \lambda \|\beta\|_1, \; \;
\end{eqnarray}
which is a variation of the Lasso \cite{Tib96} or the Basis
Pursuit~\cite{Chen:Dono:Saun:1998} estimator.
Although in our analysis, we set $b_0 \ge \twonorm{\beta^*}$ and
$d = \size{\supp(\beta^*)}$ for simplicity. In practice, both $b_0$
and $d$ are understood to be parameters chosen to provide an upper bound on the $\ell_2$ norm and the sparsity of  the true $\beta^*$.

Recently,~\cite{BRT14} discussed the following conic programming
compensated matrix uncertainly (MU) selector
, which is a variant of the Dantzig selector~\cite{CT07,RT10,RT13}. Adapted to our setting, it is
defined as follows. Let $\lambda, \mu, \tau >0$, 
\begin{eqnarray}
\label{eq::Conic} \; \; 
\hat \beta & = & \arg\min\big\{\norm{\beta}_1 +\lambda t\; :\; (\beta, t) \in \U\big\} 
\text{ where } \; \\
\nonumber
\U &= & \left\{(\beta, t) \; : \; \beta \in \R^m, \norm{\hat\gamma - \hat\Gamma \beta}_{\infty}
\le \mu t + \tau, \twonorm{\beta} \le t\right\}
\end{eqnarray}
where $\hat\gamma$ and $\hat\Gamma$ are as defined in \eqref{eq::hatGamma} 
with $\mu \sim \sqrt{\frac{\log m}{f}}$, $\tau \sim \sqrt{\frac{\log m}{f}}$. We refer to this estimator as the 
Conic programming estimator from now on.

\subsection{Our contributions}
We provide a unified analysis of the rates of convergence for both the
Lasso-type estimator~\eqref{eq::origin} as well as the Conic Programming estimator \eqref{eq::Conic},
which is a Dantzig selector-type, although under slightly different conditions. 
We will show the rates of convergence in the $\ell_q$ norm for $q =1, 2$
for estimating a sparse vector $\beta^* \in \R^m$ in the
model~\eqref{eq::oby} and~\eqref{eq::obX} using the Lasso-type
estimator~\eqref{eq::origin} 
in Theorems~\ref{thm::lasso} and~\ref{thm::lassora},  
and the Conic Programming estimator~\eqref{eq::Conic} in
Theorems~\ref{thm::DS} and~\ref{thm::DSoracle} 
for $1\le q\le 2$.
For the Conic Programming estimator, we also show bounds on the predictive errors.
The bounds we derive in both Theorems~\ref{thm::lasso} and~\ref{thm::DS}  focus on cases
where the errors in $W$ are not too small in their magnitudes  in the
sense that $\tau_B := \tr(B)/f$ is bounded from below. For the extreme case when $\tau_B$
approaches $0$, one hopes to recover bounds close to those for the
regular Lasso or the Dantzig selector as the effect of the noise in matrix $W$ on the procedure 
becomes negligible. We show in Theorems~\ref{thm::lassora} 
and~\ref{thm::DSoracle} that this is indeed the case. These results are new to the best of our knowledge.

In Theorems~\ref{thm::lasso} to~\ref{thm::DSoracle}, 
we consider the regression model in \eqref{eq::oby} and \eqref{eq::obX} with subgaussian random design, 
where $X_0 = Z_1 A^{1/2}$ is a subgaussian random matrix with independent row
vectors, and $W =B^{1/2} Z_2$ is a $f \times m$ random noise matrix with
independent column vectors where $Z_1, Z_2$ are independent
subgaussian random matrices with independent entries
(cf. Definition~\ref{def::subgdata}). 
This model is significantly different from
those analyzed in the literature. For example, unlike the present work, the authors in~\cite{LW12} apply 
Theorem~\ref{thm::main} which states a general result on statistical
convergence properties of the estimator~\eqref{eq::origin} to cases
where $W$ is composed of independent subgaussian row vectors, 
when the row vectors of $X_0$ are either independent or follow a
Gaussian vector auto-regressive model. 
See also~\cite{RT10,RT13,CC13,BRT14} for the corresponding results on
the compensated MU selectors, variant on the Orthogonal Matching
Pursuit algorithm and the Conic Programming estimator
\eqref{eq::Conic}. 

The second key difference between our framework and
the existing work is that we assume that only one observation matrix $X$ with the single measurement
error matrix $W$ is available. Assuming (A1) allows us to estimate $\E W^T W$ as required
in the estimation procedure \eqref{eq::hatGamma} directly, given the
knowledge that $W$ is composed of independent column vectors.  
In contrast, existing work needs to assume that the covariance matrix $\Sigma_W := \onef \E W^T W$ of the independent row vectors
of $W$ or its functionals are either known a priori, or can be 
estimated from an dataset independent of $X$, or from replicated $X$
measuring the same $X_0$; see for example~\cite{RT10,RT13,BRT14,LW12, carr:rupp:2006}.
Such repeated measurements are not always available or are costly to obtain in practice~\cite{carr:rupp:2006}.

A noticeable exception is the work of~\cite{CC13}, which deals with
the scenario when the noise covariance is not assumed to be known. 
We now elaborate on their result, which is a variant of the orthogonal
matching pursuit (OMP) algorithm~\cite{Tropp:04,TG07}.
Their support recovery result, that is, recovering the support set of
$\beta^*$, applies only to the case when both signal matrix and the
measurement error matrix have isotropic subgaussian row vectors; that
is, they assume independence among both rows and columns in $X$ 
($X_0$ and $W$); moreover, their algorithm requires the knowledge
of the sparsity parameter $d$, which is the number of non-zero entries
in $\beta^*$, as well as a $\beta_{\min}$ condition: $\min_{j \in
  \supp{\beta^*}}  \abs{\beta^*_j} = \Omega\left(\sqrt{\frac{\log
      m}{f}}(\twonorm{\beta^*}+1)\right)$. 
They recover essentially the same $\ell_2$-error bounds
as in~\cite{LW12} and the current work when the covariance $\Sigma_W$
is known.

In summary, oblivion in $\Sigma_W$ and a general dependency
condition in the data matrix $X$ are not simultaneously allowed in
existing work. In contrast, while we assume that $X_0$ is composed of
independent subgaussian row vectors, we allow rows of $W$ to be
dependent, which brings dependency to the row vectors of the
observation matrix $X$.
In the current paper, we focus on the proof-of-the-concept on using
the kronecker sum covariance and additive model to model two way
dependency in data matrix $X$, and derive bounds in statistical
convergence for~\eqref{eq::origin} and~\eqref{eq::Conic}.
In some sense, we are considering a parsimonious model for fitting
observation data with two-way dependencies; that is, we use the signal
matrix to encode column-wise dependency among covariates in $X$, 
and error matrix $W$ to explain its row-wise dependency.
When replicates of $X$ or $W$ are available, we are able to study more
sophisticated models and inference problems to be described in Section~\ref{sec::discuss}.

\subsection{Discussion}
\label{sec::discuss}
The key modeling question is: would each row vector in $W$ for a
particular patient across all time points be a correlated normal or
subgaussian vector as well? It is our conjecture that combining the
newly developed techniques, namely, the concentration of measure
inequalities we have derived in the current framework with techniques
from existing work, we can handle the case when $W$ follows a matrix
normal distribution with a separable covariance matrix  $\Sigma_W = C
\otimes B$, where $C$ is an $m \times m$ positive semi-definite
covariance matrix.  Moreover, for this type of "seasonal effects" as 
the measurement errors, the time varying covariance model 
would make more sense to model $W$, which we elaborate in the second example.

As a second example, in neuroscience applications, population coding refers to the information
contained in the combined activity of multiple neurons~\cite{KassVB05}.
The relationship between population encoding and correlations is complicated and is an area of 
active investigation, see for example~\cite{RC14a,CK11}
It becomes more often that repeated measurements (trials) simultaneously recorded
across a set of neurons and over an ensemble of stimuli are
available. In this context, one can imagine using a random matrix  $X_0 \sim
N_{f,m}(\mu, A \otimes B)$  which follows a matrix-variate normal
distribution, or its subgaussian correspondent, 
to model the ensemble of mean response variables, e.g.,
the membrane potential, corresponding to the cross-trial average 
over a set of experiments. Here we use $A$ to model the task correlations and $B$ 
to model the baseline correlation structure among all pairs of neurons at the {\it signal } level.
It has been observed that the onset of stimulus and task events not only
change the cross-trial mean response in $\mu$, but also alter the structure
and correlation of the {\it noise } for a set of neurons,
which correspond to the trial-to-trial fluctuations of the neuron responses.
We use $W$ to model such task-specific trial-to-trial fluctuations of
a set of neurons  recorded over the time-course of a variety of tasks.
Models as in~\eqref{eq::oby} and~\eqref{eq::obX} are useful in predicting the response of set of 
neurons based on the current and past mean responses of all neurons.
Moreover, we could incorporate non-i.i.d. non-Gaussian $W = [w_1,
\ldots, w_m]$ where $w_t = B^{1/2}(t) z(t)$, where $z(1), \ldots, z(m)$ 
are independent isotropic subgaussian random vectors and $B(t) \succ 0$ for all $t$,
to model the time-varying correlated noise as observed in the
trial-to-trial fluctuations. 
It is possible to combine the techniques developed in the present
paper with those in ~\cite{ZLW08,Zhou14a} to develop estimators for
$A$, $B$ and the time varying $B(t)$
which is itself an interesting topic, however, beyond the scope of the current work.
 
We leave the investigation of this more general modeling framework and
relevant statistical questions to future work. 
We refer to ~\cite{carr:rupp:2006}
for an excellent survey of the classical as well as modern
developments in measurement error models. 
In future work, we will also extend the estimation methods to the settings where the covariates are
measured with multiplicative errors which are shown to be reducible to
the additive error problem as studied in the present work;
see~\cite{RT13,LW12}. Moreover, we are interested in applying the
analysis and concentration of measure results developed in the current
paper and in our ongoing work to the more general contexts and settings where measurement
error models are introduced and investigated; see for
example~\cite{DLR77,CGG85,Stef:1985,HWang86,Full:1987,Stef:1990,CW91,CGL93,Cook:Stef:1994,Stef:Cook:1995,ICF99,LHC99,Str03,XY07,HM07,LL09,ML10,AT10,SSB14,SFT14,SFT14b} and the references therein.

\section{Assumptions and preliminary results}
We will now define some parameters related to the restricted and
sparse eigenvalue conditions that are needed to state our main
results. We also state a preliminary result in
Lemma~\ref{lemma::REcomp} regarding the relationships between the
two conditions in Definitions~\ref{def:memory} and~\ref{def::lowRE}.
\begin{definition}
\label{def:memory}
\textnormal{\bf (Restricted eigenvalue condition $\RE(s_0, k_0, A)$).}
Let $1 \leq s_0 \leq p$, and let $k_0$ be a positive number.
We say that a $p \times q$ matrix $A$ satisfies $\RE(s_0, k_0, A)$
 condition with parameter $K(s_0, k_0, A)$ if for any $\upsilon
 \not=0$,
\beq
\inv{K(s_0, k_0, A)} := 
\min_{\stackrel{J \subseteq \{1, \ldots, p\},}{|J| \leq s_0}}
\min_{\norm{\upsilon_{J^c}}_1 \leq k_0 \norm{\upsilon_{J}}_1}
\; \;  \frac{\norm{A \upsilon}_2}{\norm{\upsilon_{J}}_2} > 0.
\eeq
\end{definition}
It is clear that when $s_0$ and $k_0$ become smaller,
this condition is easier to satisfy.
We also consider the following variation of the baseline $\RE$ condition.
\begin{definition}{\textnormal{(Lower-$\RE$ condition)~\cite{LW12}}}
\label{def::lowRE}
The matrix $\Gamma$ satisfies a Lower-$\RE$ condition with curvature
$\alpha >0$ and tolerance $\tau > 0$ if 
\bens
\theta^T \Gamma \theta \ge 
\alpha \twonorm{\theta}^2 - \tau \onenorm{\theta}^2 \; \;  \forall \theta \in \R^m.
\eens
\end{definition}
As $\alpha$ becomes smaller, or as $\tau$ becomes larger, the
Lower-$\RE$ condition is easier to be satisfied.
\begin{lemma}
\label{lemma::REcomp}
Suppose that the Lower-$\RE$ condition holds for $\Gamma := A^T A$
with $\alpha, \tau > 0$ such that $\tau (1 + k_0)^2 s_0 \le \alpha/2$.
Then the $\RE(s_0, k_0, A)$ condition holds for $A$ with 
\bens
\inv{K(s_0, k_0, A)} \ge \sqrt{\frac{\alpha}{2}} >0.
\eens
Assume that $\RE((k_0+1)^2, k_0, A)$ holds.
Then the Lower-$\RE$ condition holds for  $\Gamma = A^T A$ with 
\bens
\alpha =\inv{(k_0 + 1)K^2(s_0, k_0, A)} > 0
\eens
where $s_0 = (k_0+1)^2$, and $\tau > 0$ which satisfies
\ben
\label{eq::tauchoice}
\lambda_{\min}(\Gamma) \ge \alpha - \tau s_0/4.
\een
The condition above holds for any 
$\tau \ge \frac{4}{(k_0 + 1)^3K^2(s_0, k_0, A)} - \frac{4 \lambda_{\min}(\Gamma)}{(k_0+1)^2}$.
\end{lemma}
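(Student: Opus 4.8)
The plan is to prove the two implications separately, both resting on the identity $\twonorm{A\upsilon}^2 = \upsilon^T \Gamma \upsilon$ together with two elementary norm comparisons on a support of size at most $s_0$: the Cauchy--Schwarz bound $\norm{\upsilon_J}_1 \le \sqrt{s_0}\,\twonorm{\upsilon_J}$ and the crude bound $\twonorm{\upsilon_J} \le \norm{\upsilon_J}_1$. Since every term appearing in the two conditions is homogeneous of degree two in the vector, only the ratio of $\ell_1$- to $\ell_2$-mass ever matters, which is what makes a clean case split possible.

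For the first implication (Lower-$\RE \Rightarrow \RE$), I would fix any $J$ with $|J|\le s_0$ and any $\upsilon\neq 0$ in the cone $\norm{\upsilon_{J^c}}_1 \le k_0\norm{\upsilon_J}_1$, apply the Lower-$\RE$ inequality to $\upsilon$, discard the tail through $\twonorm{\upsilon}^2 \ge \twonorm{\upsilon_J}^2$, and control the penalty by $\norm{\upsilon}_1 \le (1+k_0)\norm{\upsilon_J}_1 \le (1+k_0)\sqrt{s_0}\,\twonorm{\upsilon_J}$, so that $\norm{\upsilon}_1^2 \le (1+k_0)^2 s_0\,\twonorm{\upsilon_J}^2$. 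The hypothesis $\tau(1+k_0)^2 s_0 \le \alpha/2$ then leaves $\twonorm{A\upsilon}^2 \ge (\alpha/2)\twonorm{\upsilon_J}^2$; taking square roots and the infimum over all admissible $(J,\upsilon)$ gives $1/K(s_0,k_0,A) \ge \sqrt{\alpha/2}$.

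The reverse implication is the substantive one. Writing $K := K(s_0,k_0,A)$ with $s_0 = (k_0+1)^2$, I would fix an arbitrary $\theta$, let $J_0$ index its $s_0$ largest coordinates in magnitude, and split on whether $\theta$ lies in the associated cone. The recurring tool is the head/tail estimate $\twonorm{\theta_{J_0^c}}^2 \le \norm{\theta_{J_0}}_1\norm{\theta_{J_0^c}}_1/s_0$, valid because each tail coordinate is at most the average head magnitude $\norm{\theta_{J_0}}_1/s_0$. In the cone case $\norm{\theta_{J_0^c}}_1 \le k_0\norm{\theta_{J_0}}_1$, combining this with $\norm{\theta_{J_0}}_1^2 \le s_0\twonorm{\theta_{J_0}}^2$ yields $\twonorm{\theta_{J_0^c}}^2 \le k_0\twonorm{\theta_{J_0}}^2$, hence $\twonorm{\theta}^2 \le (1+k_0)\twonorm{\theta_{J_0}}^2$; feeding $\twonorm{A\theta}^2 \ge \twonorm{\theta_{J_0}}^2/K^2$ from $\RE$ then gives $\twonorm{A\theta}^2 \ge \alpha\twonorm{\theta}^2$ with $\alpha = 1/((k_0+1)K^2)$, so the tolerance term is not even needed. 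In the complementary case $\norm{\theta_{J_0^c}}_1 > k_0\norm{\theta_{J_0}}_1$ I would instead use the trivial bound $\twonorm{A\theta}^2 \ge \lambda_{\min}(\Gamma)\twonorm{\theta}^2$ and let the penalty cover the deficit: the head/tail estimate together with $\twonorm{\theta_{J_0}}\le \norm{\theta_{J_0}}_1$ reduces the target inequality $\norm{\theta}_1^2 \ge (s_0/4)\twonorm{\theta}^2$ to the scalar claim $(1+\rho)^2 \ge (k_0+1)^2/4 + \rho/4$ for $\rho := \norm{\theta_{J_0^c}}_1/\norm{\theta_{J_0}}_1 > k_0$, which is immediate. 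Finally \eqref{eq::tauchoice}, equivalently $(\alpha - \lambda_{\min}(\Gamma))/\tau \le s_0/4$, upgrades this to $\tau\norm{\theta}_1^2 \ge (\alpha - \lambda_{\min}(\Gamma))\twonorm{\theta}^2$, and hence $\twonorm{A\theta}^2 \ge \lambda_{\min}(\Gamma)\twonorm{\theta}^2 \ge \alpha\twonorm{\theta}^2 - \tau\norm{\theta}_1^2$.

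I expect the main obstacle to be the out-of-cone case, and more precisely the bookkeeping that makes a single threshold $s_0/4$ simultaneously serve the eigenvalue slack (through \eqref{eq::tauchoice}) and the geometry of the cone. The head/tail bound has to be applied sharply enough that the quadratic in $\rho$ closes for every $\rho > k_0$ exactly when $s_0 = (k_0+1)^2$; this is where the specific sparsity level $(k_0+1)^2$ and the factor $1/4$ are pinned down, and a looser head/tail estimate would break the alignment of constants.
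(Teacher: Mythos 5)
Your proof is correct, and Part I is essentially identical to the paper's (bounding $\onenorm{\upsilon}^2\le(1+k_0)^2 s_0\twonorm{\upsilon_J}^2$ inside the Lower-$\RE$ inequality and absorbing the tolerance term via $\tau(1+k_0)^2s_0\le\alpha/2$). In Part II the overall architecture also matches — apply $\RE$ on one piece of $\R^m$ and $\lambda_{\min}(\Gamma)$ plus the tolerance, via \eqref{eq::tauchoice} with $s_0/4=R^2$, on the other — but your decomposition is organized differently. The paper splits on the ratio $\onenorm{x}\lessgtr R\twonorm{x}$ with $R=(k_0+1)/2$ and proves, by a non-increasing-rearrangement series estimate, that the small-ratio half is contained in the cone $\{\onenorm{x_{T_0^c}}\le(2R-1)\onenorm{x_{T_0}}\}$, so that $\RE$ applies there and the large-ratio half is handled trivially. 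You instead split on cone membership of the top-$s_0$ support and prove the contrapositive inclusion: outside the cone, the head/tail bound $\twonorm{\theta_{J_0^c}}^2\le\onenorm{\theta_{J_0}}\onenorm{\theta_{J_0^c}}/s_0$ plus the elementary quadratic inequality $(1+\rho)^2\ge(k_0+1)^2/4+\rho/4$ for $\rho>k_0$ forces $\onenorm{\theta}^2\ge(s_0/4)\twonorm{\theta}^2$, which is exactly what the tolerance term needs. The two auxiliary facts are logically equivalent (contrapositives), but your route avoids the series summation entirely and is somewhat more self-contained; the paper's rearrangement step is the standard Maurey-type sparse-approximation argument, which is the version that generalizes to other sparsity levels. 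Your in-cone case reproves the paper's Lemma on $\twonorm{x_{T_0}}\ge\twonorm{x}/\sqrt{1+k_0}$ via the same head/tail bound, and your identification $\alpha=1/((k_0+1)K^2)$ agrees with the paper's $\twonorm{x}^2/(\sqrt{s_0}K^2)$ since $\sqrt{s_0}=k_0+1$.
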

The first part of Lemma \ref{lemma::REcomp} means that, if 
$k_0$ is fixed, then smaller values of $\tau$ guarantee $\RE(s_0, k_0,
A)$ holds with larger $s_0$, that is, a stronger $\RE$ condition. 
The second part of the Lemma implies that a weak $\RE$ condition
implies that the Lower-$\RE$ (LRE) holds with a large $\tau$. 
On the other hand, if one assumes $\RE((k_0+1)^2,k_0,A)$ holds with a
large value of  $k_0$ (in other words, a strong $\RE$ condition), this
would imply LRE with a small $\tau$. In short, the two
conditions are similar but require tweaking the parameters. 
Weaker $\RE$ condition implies LRE condition holds with a
larger $\tau$, and Lower-$\RE$ condition with a smaller $\tau$, that
is, stronger LRE implies stronger $\RE$.
We prove Lemma \ref{lemma::REcomp} in Section~\ref{sec::proofoflemmaREcomp}.
\begin{definition}{\textnormal{(Upper-$\RE$ condition)~\cite{LW12}}}
\label{def::upRE}
The matrix $\Gamma$ satisfies an upper-$\RE$ condition with curvature
$\bar\alpha >0$ and tolerance $\tau > 0$ if 
\bens
\theta^T \Gamma \theta \le \bar\alpha \twonorm{\theta}^2 + \tau
\onenorm{\theta}^2 \; \;  \forall \theta \in \R^m.
\eens
\end{definition}

\begin{definition}
\label{def::sparse-eigen}
Define the largest and smallest
$d$-sparse eigenvalue of a $p \times q$ matrix $A$ to be
\ben
\label{eq::eigen-Sigma}
\rho_{\max}(d, A) & := &
\max_{t \not= 0; d-\text{sparse}} \; \;\shtwonorm{A
  t}^2/\twonorm{t}^2, \text{ where } \; \; d< p, \\
\label{eq::eigen-Sigma-min}
 \text{ and } \; \;
\rho_{\min}(d, A) & := &
\min_{t \not= 0; d-\text{sparse}} \; \;\shtwonorm{A t}^2/\twonorm{t}^2.
\een
\end{definition}
The rest of the paper is organized as follows. 
In Section~\ref{sec::tworesults}, we present two main results
Theorems~\ref{thm::lasso} and~\ref{thm::DS}. 
We state results which improve upon Theorems~\ref{thm::lasso} and Theorem~\ref{thm::DS}
in Section~\ref{sec::smallW}, when the measurement errors in $W$ are {\it small} 
in their magnitudes in the sense of $\tr(B)$ being small.
In Section~\ref{sec::proofall}, we outline the proof of the main theorems.
In particular,  In Section~\ref{sec::proofall}, we outline the proof for 
Theorems~\ref{thm::lasso},~\ref{thm::DS},~\ref{thm::lassora},and~\ref{thm::DSoracle} 
in Section~\ref{sec::proofall},~\ref{sec::lassooutline},~\ref{sec::lassooracle}
and~\ref{sec::DSoracle} respectively.
In Section~\ref{sec::AD}, we show a deterministic result as well as its
application to the random matrix $\hat\Gamma - A$ for  $\hat\Gamma$ as in~\eqref{eq::hatGamma} with regards to the
upper and Lower $\RE$ conditions.
In section~\ref{sec::Best}, we show the concentration properties of
the gram matrices $XX^T$ and $X^TX$ after we correct them with the
corresponding {\it  population} error terms defined by $\tr(A) I_f$
and $\tr(B) I_m$ respectively. These results might be of independent
interests.  
The technical details of the proof are collected at the end of the
paper.
We prove Theorem~\ref{thm::lasso} in Section~\ref{sec::proofofthmlasso}.
We prove Theorem~\ref{thm::DS} in Section~\ref{sec::proofofDSthm}.
We prove Theorem~\ref{thm::lassora} and~\ref{thm::DSoracle} in Section~\ref{sec::classoproof}
and Section~\ref{sec::DSoraproof} respectively.
The paper concludes with a discussion of the results 
in Section~\ref{sec::conclude}.
Additional proofs and theoretical results are collected in the Appendix.

\noindent{\bf Notation.}
Let $e_1,\ldots, e_p$ be the canonical basis of $\R^p$.
For a set $J \subset \{1, \ldots, p\}$, denote
$E_J = \spin\{e_j: j \in J\}$.
For a matrix $A$, we use $\twonorm{A}$ to denote its operator norm.
For a set $V \subset \R^p$,
we let $\conv V$ denote the convex hull of $V$. For a finite set
$Y$, the cardinality is denoted by $|Y|$. Let $\Ball_1^p$, $\Ball_2^p$
and $S^{p-1}$ be the unit $\ell_1$ ball, the unit Euclidean ball and the
unit sphere respectively.
For a matrix $A = (a_{ij})_{1\le i,j\le m}$, let $\norm{A}_{\max} =
\max_{i,j} |a_{ij}|$ denote  the entry-wise max norm. 
Let $\norm{A}_{1} = \max_{j}\sum_{i=1}^m\abs{a_{ij}}$ 
denote the matrix $\ell_1$ norm.
The Frobenius norm is given by $\norm{A}^2_F = \sum_i\sum_j a_{ij}^2$. 
Let $|A|$ denote the determinant and ${\rm tr}(A)$ be the trace of $A$.
Let $\lambda_{\max}(A)$ and $\lambda_{\min}(A)$ be the
largest and smallest eigenvalues, and  $\kappa(A)$ be the condition
number for matrix $A$. 
The operator or $\ell_2$ norm $\twonorm{A}^2$ is given by
$\lambda_{\max}(AA^T)$.  

For a matrix $A$, denote by $r(A)$ the effective rank
$\tr(A)/\twonorm{A}$. Let ${\fnorm{A}^2 }/{\twonorm{A}^2}$ denote the
stable rank for matrix $A$.
We write $\diag(A)$ for a diagonal matrix with the same diagonal as
$A$.  For a symmetric matrix $A$, let $\Upsilon(A) =
\left(\upsilon_{ij}\right)$ where $\upsilon_{ij}   = \ind(a_{ij} \not=0)$, where $\mathbb{I}(\cdot)$ is the indicator function.
Let $I$ be the identity matrix. 
We let $C$ be a constant which may change from line to line.
For two numbers $a, b$, $a \wedge b := \min(a, b)$ and $a \vee b := \max(a, b)$. 
We write $a \asymp b$ if $ca \le b \le Ca$ for some positive absolute
constants $c,C$ which are independent of $n, f, m$ or sparsity
parameters. Let $(a)_+ := a \vee 0$.
We write $a =O(b)$ if $a \le Cb$ for some positive absolute
constants $C$ which are independent of $n, f, m$ or sparsity
parameters.
 These absolute constants $C, C_1, c, c_1, \ldots$ may change line by line.

\section{Main results}
\label{sec::tworesults}
In this section, 
we will state our main results in Theorems~\ref{thm::lasso}
and~\ref{thm::DS} where we consider the regression model in \eqref{eq::oby}
and \eqref{eq::obX} with random matrices $X_0, W \in \R^{f \times m}$
as defined in Definition~\ref{def::subgdata}.


For the Lasso-type estimator, we are interested in the case where the smallest eigenvalue
of the column-wise covariance matrix $A$ does not approach $0$ too quickly
and the effective rank of the row-wise covariance matrix $B$ is 
bounded from below (cf.~\eqref{eq::trBLasso}).
For the Conic Programming estimator, we impose a restricted eigenvalue condition as formulated
in~\cite{BRT09,RZ13} on $A$ and assume that the sparsity of $\beta^*$
is bounded by $o(\sqrt{f/\log m})$. These conditions will be relaxed
in Section~\ref{sec::smallW} where we allow $\tau_B$ to approach 0.

Before stating our main result for the Lasso-type estimator in
Theorem~\ref{thm::lasso}, we need to introduce some more notation and assumptions.
Let $a_{\max} = \max_{i} a_{ii}$ and $b_{\max} =\max_{i} b_{ii}$ be the maximum diagonal entries of $A$ and $B$ respectively.
In general, under (A1), one can think of $\lambda_{\min}(A) \le 1$ and 
for $s \ge 1$,
$$1 \le a_{\max} \le \rho_{\max}(s, A) \le \lambda_{\max}(A),$$
where $\lambda_{\mathrm{max}}(A)$ denotes the maximum eigenvalue of
$A$. 
\bnum
\item[(A2)]
The minimal eigenvalue $\lambda_{\min}(A)$
of the covariance matrix $A$ is bounded: $1 \ge \lambda_{\min}(A) > 0$.
\item[(A3)]
Moreover, we assume that the condition number $\kappa(A)$ is upper bounded
by $O\left(\sqrt{\frac{f}{\log m}}\right)$ and $\tau_B = O(\lambda_{\max}(A))$.
\enum
Throughout the rest of the paper, $s_0 \ge 1$ is understood to be the largest integer chosen such that the following inequality still holds:
\ben
 \label{eq::s0cond}
\sqrt{s_0} \vp(s_0) \le \frac{\lambda _{\min}(A)}{32 C}\sqrt{\frac{f }{\log m}}
\; \text{ where  }\; \vp(s_0) := \rho_{\max}(s_0, A)+\tau_B
\een
where we denote by $\tau_B = \tr(B)/f$ and $C$ is to be defined.
Denote by
\ben
\label{eq::defineM}
M_A = \frac{64 C \vp(s_0)}{\lambda_{\min}(A)} \ge 64 C.
\een
Throughout this paper, for the Lasso-type estimator, we will use the expression 
$$\tau := \frac{\alpha}{s_0}, \; \; \text{where} \; \; \alpha =
\lambda_{\min}(A)/2;$$
(A2) thus ensures that the Lower-$\RE$ condition as in
Definition~\ref{def::lowRE} is not vacuous. 
(A3) ensures that~\eqref{eq::s0cond} holds for some $s_0 \ge 1$.
\begin{theorem}{\textnormal{(\bf{Estimation for the Lasso-type estimator})}}
\label{thm::lasso}
Set $1 \leq f \leq m$.
Suppose $m$ is sufficiently large. Suppose (A1), (A2) and (A3) hold.
Consider the regression model in  \eqref{eq::oby} and \eqref{eq::obX}
with independent random matrices $X_0, W$ as in
Definition~\ref{def::subgdata},  and an error vector $\e \in \R^f$
independent of $X_0, W$,  with independent entries $\e_{j}$ satisfying
$\E \e_{j} = 0$ and  $\norm{\e_{j}}_{\psi_2} \leq M_{\e}$.
Let $C_0, c' > 0$ be  some absolute constants. Let $D_2 := 2(\twonorm{A} + \twonorm{B})$.
Suppose that  $\fnorm{B}^2/\twonorm{B}^2 \ge \log m$.
Suppose that $c' K^4 \le 1$ and
\ben
\label{eq::trBLasso}
\quad
r(B) := \frac{\tr(B)}{\twonorm{B}} & \ge & 16c' K^4 \frac{f}{\log m}
\log \frac{\V m \log m }{f}
\een
where $\V$ is a constant which depends on $\lambda_{\min}(A)$,
$\rho_{\max}(s_0, A)$ and $\tr(B)/f$. 

Let $b_0, \phi$ be numbers which satisfy
\ben
\label{eq::snrcond}
\frac{M^2_{\e}}{K^2 b_0^2}   \le \phi  \le 1.
\een
Assume that the sparsity of $\beta^*$ satisfies for some $0 < \phi \le
1$
\ben
\label{eq::dlasso}
&& d:= \abs{\supp(\beta^*)} \le 
\frac{c' \phi K^4}{128 M_A^2} \frac{f}{\log  m}< f/2.  
\een
Let $\hat\beta$ be an optimal solution to the Lasso-type estimator as in~\eqref{eq::origin} with 
\ben
\label{eq::psijune}
&& \lambda \ge 4 \psi \sqrt{\frac{\log m}{f}} \; \; \text{ where } \;\;
\psi  := C_0 D_2 K \left(K \twonorm{\beta^*}+ M_{\e}\right)
\een
Then for any $d$-sparse vectors $\beta^* \in \R^m$, such that
$\phi b_0^2 \le \twonorm{\beta^*}^2 \le b_0^2$, we have with probability  at least $1- 16/m^3$,
\bens
\twonorm{\hat{\beta} -\beta^*} \leq \frac{20}{\alpha}  \lambda \sqrt{d} \; \;
\text{ and } \; \norm{\hat{\beta} -\beta^*}_1 \leq \frac{80}{\alpha}
\lambda d.
\eens
\end{theorem}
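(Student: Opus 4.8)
The plan is to reduce the two bounds to a deterministic guarantee for the constrained Lasso~\eqref{eq::origin} and then supply its two probabilistic ingredients. Write $h = \hat\beta - \beta^*$ and note that $\beta^*$ is feasible, since $\onenorm{\beta^*} \le \sqrt{d}\,\twonorm{\beta^*} \le b_0\sqrt{d}$ by $d$-sparsity and $\twonorm{\beta^*} \le b_0$. Optimality of $\hat\beta$ then yields the basic inequality
\bens
\half\, h^T \hat\Gamma h \;\le\; \ip{\hat\gamma - \hat\Gamma\beta^*,\, h} + \lambda\big(\onenorm{\beta^*} - \onenorm{\hat\beta}\big).
\eens
Everything hinges on two events: a deviation bound $\norm{\hat\gamma - \hat\Gamma\beta^*}_\infty \le \lambda/2$, and a Lower-$\RE$ condition for $\hat\Gamma$ with curvature $\alpha = \lambda_{\min}(A)/2$ and tolerance $\tau = \alpha/s_0$. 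Granting these, H\"{o}lder's inequality together with the decomposition $\onenorm{\beta^*} - \onenorm{\hat\beta} \le \onenorm{h_S} - \onenorm{h_{S^c}}$, where $S = \supp(\beta^*)$, forces $h$ into the cone $\onenorm{h_{S^c}} \le 3\onenorm{h_S}$, whence $\onenorm{h} \le 4\sqrt{d}\,\twonorm{h}$. The Lower-$\RE$ tolerance term is then absorbed because $\tau\onenorm{h}^2 \le 16 d\,\tau\,\twonorm{h}^2 \le \half\alpha\twonorm{h}^2$ once $s_0$ exceeds a fixed multiple of $d$, which is exactly what~\eqref{eq::s0cond} and the sparsity bound~\eqref{eq::dlasso} guarantee. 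Rearranging gives a bound of the form $\twonorm{h} \le c\lambda\sqrt{d}/\alpha$, and tracking the constants (which is where the factor $20$ arises) yields $\twonorm{h} \le \frac{20}{\alpha}\lambda\sqrt{d}$ and then $\onenorm{h} \le \frac{80}{\alpha}\lambda d$ via the cone relation.

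It remains to establish the two events with probability at least $1 - 16/m^3$. For the deviation bound I would substitute $y = X_0\beta^* + \e$ and $X = X_0 + W$ into~\eqref{eq::hatGamma} to obtain the exact decomposition
\bens
\hat\gamma - \hat\Gamma\beta^* \;=\; \inv{f}\Big(X_0^T\e + W^T\e - X_0^T W\beta^* - \big(W^T W - \hat\tr(B)\, I_m\big)\beta^*\Big),
\eens
and bound each of the four $\ell_\infty$ terms by a quarter of $\lambda/2$. The first two are linear in the independent subgaussian noise $\e$ and are controlled by standard subgaussian maximal inequalities at scale $M_\e\sqrt{\log m/f}$; the third is a product of two independent subgaussian factors and contributes at scale $K\twonorm{\beta^*}\sqrt{\log m/f}$; the fourth is the delicate one, since $W = B^{1/2}Z_2$ has entries that are \emph{dependent within each column}. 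For that term I would center using $\E W^T W = \tr(B)\, I_m$, split off the estimation error $\hat\tr(B) - \tr(B)$ arising from~\eqref{eq::trBest}, and invoke the concentration inequalities for the corrected Gram matrix $X^T X$ developed in Section~\ref{sec::Best}. The choice $\psi = C_0 D_2 K(K\twonorm{\beta^*} + M_\e)$ with $D_2 = 2(\twonorm{A} + \twonorm{B})$ is calibrated so that the four contributions sum to below $\lambda/2$ whenever $\lambda \ge 4\psi\sqrt{\log m/f}$, and the effective-rank condition~\eqref{eq::trBLasso} on $r(B)$ is precisely what keeps the fluctuation of $W^T W$ small enough for the fourth term to comply.

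For the Lower-$\RE$ condition I would write $\hat\Gamma = A + (\hat\Gamma - A)$, observe that the trace correction makes $\E\hat\Gamma \approx A$, and control the restricted quadratic form of the fluctuation $\hat\Gamma - A$ uniformly over nearly-sparse directions by combining the deterministic reduction of Section~\ref{sec::AD} with the Gram-matrix concentration of Section~\ref{sec::Best}. Assumption (A2) makes $\alpha = \lambda_{\min}(A)/2$ a genuine curvature, while (A3) and~\eqref{eq::s0cond} (through $M_A$ and $\vp(s_0)$) ensure the tolerance can be taken as small as $\tau = \alpha/s_0$ on the relevant event; the indefiniteness of $\hat\Gamma$ is harmless because the side constraint $\onenorm{\beta} \le b_0\sqrt{d}$ keeps $\onenorm{h}$ bounded. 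The main obstacle throughout is the fourth deviation term together with the Lower-$\RE$ fluctuation: both are quadratic forms in the column-dependent noise $W$, for which classical independent-entry tools fail, so one must rely on the Kronecker-sum/matrix-variate concentration results of Sections~\ref{sec::AD} and~\ref{sec::Best}. A union bound over the four deviation terms and the Lower-$\RE$ event, each holding with probability $1 - O(1/m^3)$, produces the stated $1 - 16/m^3$.
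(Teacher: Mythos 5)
Your proposal is correct and follows essentially the same route as the paper: reduce to the deterministic master result (Theorem~\ref{thm::main}, with the cone argument of Lemma~\ref{lemma:magic-number} where the feasibility constraint $\onenorm{\beta}\le b_0\sqrt d$ absorbs the indefiniteness of $\hat\Gamma$), then verify the deviation bound $\norm{\hat\gamma-\hat\Gamma\beta^*}_\infty\le\lambda/2$ via exactly the four-term decomposition of Lemma~\ref{lemma::low-noise} and the Lower-$\RE$ condition via the restricted control of $\hat\Gamma-A$ in Lemmas~\ref{lemma::lowerREI} and~\ref{lemma::dmain}. The only difference is one of packaging, not substance.
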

We give an outline of the proof
of Theorem~\ref{thm::lasso} in Section~\ref{sec::lassooutline}.
We prove Theorem~\ref{thm::lasso} in Section~\ref{sec::proofofthmlasso}.

\noindent{\bf Discussions.}
Denote the Signal-to-noise ratio by 
\bens
\SNR := {K^2 \twonorm{\beta^*}^2}/{M^{2}_{\e}}
\; \text{ where } \; \signal := K^2 \twonorm{\beta^*}^2 \; \text{ and
} \; \; \noise := M^{2}_{\e}.
\eens
The two conditions on $b_0, \phi$ imply that $\noise \le \phi
\signal$. Notice that this could be restrictive if $\phi$ is small.

We will show in Section~\ref{sec::lassooutline} that condition
~\eqref{eq::snrcond} is not needed in order for the error
bounds in terms of the $\ell_p, p=1, 2$ norm of $\hat\beta - \beta^*$,
as shown in the Theorem~\ref{thm::lasso} statement  to hold.  It was
indeed introduced so as to simplify the expression for the condition on $d$ as shown in
\eqref{eq::dlasso}. There we provide a slightly more general condition on $d$ in
\eqref{eq::dlassoproof}, where~\eqref{eq::snrcond} is not required.
In summary, we prove that Theorem~\ref{thm::lasso} holds with $\noise = M_{\e}^2$ and $\signal
=\phi K^2 b_0^2$ in arbitrary orders, so long as condition
\eqref{eq::trBLasso} holds and
$$d =O\left(\inv{M_A^2} \frac{f}{\log m}\right).$$
For both cases, we require that 
$\lambda  \asymp (\twonorm{A} + \twonorm{B}) K \sqrt{\signal+\noise}
\sqrt{\frac{\log m}{f}}$ as expressed in \eqref{eq::psijune}.
That is, when either the noise level $M_{\e}$ or the signal strength
$K \norm{\beta^*}$ increases, we need to increase
$\lambda$ correspondingly; moreover, when $\noise$ dominates the
signal $K^2 \twonorm{\beta^*}^2$, we have for $d \asymp \inv{M_A^2} \frac{f}{\log m}$,
\bens
\twonorm{\hat{\beta} -\beta^*} /\twonorm{\beta^*} 
\leq \frac{20}{\alpha}  D_2 K^2 \sqrt{\frac{\noise}{\signal}}
\inv{M_A} \asymp D_2 K^2 \sqrt{\frac{\noise}{\signal}}  \inv{\vp(s_0)}
\eens
which eventually becomes a vacuous bound when $\noise \gg \signal$.
We will present an improved bound in Theorem~\ref{thm::lassora}.
We further elaborate on the relationships among the noise, the
measurement error and the signal strength in Section~\ref{sec::discusslassocoro}.
\begin{theorem}
\label{thm::DS}
Suppose (A1) holds.
Set $0< \delta < 1$. Suppose that $f < m \ll \exp(f)$ and $1\le  d_0 < f$.
Let $\lambda >0$ be the same parameter as in~\eqref{eq::Conic}.
Assume that $\RE(2d_0,  3(1+ \lambda), A^{1/2})$ holds. 
Suppose that $\fnorm{B}^2/\twonorm{B}^2 \ge \log m$.
Suppose that the sparsity of $\beta^*$ is bounded by 
\ben
\label{eq::sqrt-sparsity}
d_0 := \abs{\supp(\beta^*)} \le c_0 \sqrt{f/\log m}
\een
for some constant $c_0>0$;
Suppose $k_0 := 1+\lambda$
\ben
\label{eq::samplebound}
f & \geq & \frac{2000 d K^4}{\delta^2} \log \left(\frac{60 e m}{d
    \delta}\right) \; \text{where } \\
\label{eq::sparse-dim-Ahalf}
d & = & 2d_0 + 2d_0 a_{\max} \frac{16 K^2(2d_0, 3k_0, A^{1/2}) (3k_0)^2
  (3k_0 + 1)}{\delta^2}.
\een
Consider the regression model in  \eqref{eq::oby} and \eqref{eq::obX}
with $X_0$, $W$ as in Definition~\ref{def::subgdata}
 and an error vector $\e \in \R^f$, independent of $X_0, W$,  with independent entries $\e_{j}$ satisfying
$\E \e_{j} = 0$ and  $\norm{\e_{j}}_{\psi_2} \leq M_{\e}$.
Let $\hat\beta$ be an optimal solution to the Conic Programming estimator
as  in \eqref{eq::Conic} with input $(\hat\gamma, \hat\Gamma)$ as
defined in~\eqref{eq::hatGamma}, where $\tr(B)$ is as defined in
\eqref{eq::trBest}.
Choose for $D_2 = 2 (\twonorm{A} + \twonorm{B})$ and $D_0 = \sqrt{\tau_B} + \sqrt{a_{\max}}$,
\bens
\mu \asymp D_2 K^2  \sqrt{\frac{\log m}{f}} \;\;\text{ and } \; \; 
\tau \asymp D_0 K M_{\e} \sqrt{\frac{\log m}{f}}.
\eens
Then with probability at least $1-\frac{c'}{m^2} - 2 \exp(-\delta^2 f/2000 K^4)$, for $2 \ge q \ge 1$,
\ben
\label{eq::ellqnorm}
\norm{\hat{\beta} -\beta^*}_q \le C D_2  K^2  d_0^{1/q} \sqrt{\frac{\log m}{f}}
\left(\twonorm{\beta^*} + \frac{M_{\e}}{K}\right).
\een
Under the same assumptions, the predictive risk admits the following
bounds with the same probability as above,
\bens
\inv{f} \twonorm{X (\hat{\beta} -\beta^*)}^2 \le C' D_2^2  K^4  d_0 \frac{\log m}{f}
\left(\twonorm{\beta}^* + \frac{M_{\e}}{K}\right)^2
\eens
where $c', C_0, C, C' > 0$ are some absolute constants.
\end{theorem}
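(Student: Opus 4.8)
The plan is to run the standard two-stage analysis of a Dantzig/conic-type selector. The first stage shows that the true pair $(\beta^*, \twonorm{\beta^*})$ is feasible for $\U$ with the stated probability; the second stage combines optimality with a restricted-eigenvalue argument to convert the resulting $\ell_\infty$ control of $\hat\Gamma(\hat\beta - \beta^*)$ into the $\ell_q$ and predictive bounds. First I would control the effective noise $\hat\gamma - \hat\Gamma\beta^*$. Since $y - X\beta^* = \e - W\beta^*$ and $X = X_0 + W$,
\[
\hat\gamma - \hat\Gamma\beta^* = \inv{f}X_0^T\e + \inv{f}W^T\e - \inv{f}X_0^TW\beta^* - \inv{f}\left(W^TW - \hat\tr(B)I_m\right)\beta^* .
\]
The two terms $\inv{f}X_0^T\e$ and $\inv{f}W^T\e$ do not carry $\beta^*$ and are sums of products of independent subgaussians; a coordinatewise Bernstein bound and a union bound over the $m$ coordinates control their $\ell_\infty$ norm by $\asymp (\sqrt{a_{\max}} + \sqrt{\tau_B})KM_{\e}\sqrt{\log m/f} \asymp \tau$, which is exactly why $D_0 = \sqrt{\tau_B} + \sqrt{a_{\max}}$. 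The term $\inv{f}X_0^TW\beta^*$ is, conditionally on $W$, coordinatewise subgaussian with variance $\asymp a_{\max}\tau_B\twonorm{\beta^*}^2/f$, yielding an $\ell_\infty$ bound $\asymp \sqrt{a_{\max}\tau_B}\,K\twonorm{\beta^*}\sqrt{\log m/f}$. The remaining and hardest piece is $\inv{f}(W^TW - \hat\tr(B)I_m)\beta^*$: writing $W = B^{1/2}Z_2$, its $j$-th coordinate is the centered bilinear form $\inv{f}\sum_k\beta^*_k (z^j)^TBz^k$ in the columns of $Z_2$, which I would bound by a Hanson--Wright inequality together with the concentration of $\hat\tr(B)$ around $\tr(B)$ supplied by Section~\ref{sec::Best}. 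Collecting the four bounds gives, on a good event of probability $1 - c'/m^2$, the feasibility statement $\norm{\hat\gamma - \hat\Gamma\beta^*}_\infty \le \mu\twonorm{\beta^*} + \tau$ with $\mu \asymp D_2K^2\sqrt{\log m/f}$.

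Next, set $h = \hat\beta - \beta^*$ and $S = \supp(\beta^*)$. Since the constraint $\twonorm{\beta}\le t$ is active at the optimum, $\hat t = \twonorm{\hat\beta} \le \twonorm{\beta^*} + \twonorm{h}$, and optimality of $(\hat\beta, \hat t)$ against the feasible $(\beta^*, \twonorm{\beta^*})$ gives $\onenorm{\hat\beta} + \lambda\hat t \le \onenorm{\beta^*} + \lambda\twonorm{\beta^*}$. Combined with $\onenorm{\hat\beta} \ge \onenorm{\beta^*} - \onenorm{h_S} + \onenorm{h_{S^c}}$ this forces the cone condition $\onenorm{h_{S^c}} \le \onenorm{h_S} + \lambda\twonorm{h}$, hence $\onenorm{h} \lesssim (\sqrt{d_0} + \lambda)\twonorm{h}$ and places $h$ in the cone with $k_0 = 1 + \lambda$. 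Feasibility of both points then bounds $\norm{\hat\Gamma h}_\infty \le \mu(\twonorm{\beta^*} + \hat t) + 2\tau$, where the $\mu\twonorm{h}$ contribution hidden in $\hat t$ is negligible because $\sqrt{d_0}\,\mu \to 0$ under $d_0 \le c_0\sqrt{f/\log m}$.

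To convert this into an $\ell_2$ bound I would write $h^T\hat\Gamma h = h^TAh + h^T(\hat\Gamma - A)h$. The hypothesis $\RE(2d_0, 3(1+\lambda), A^{1/2})$ lower-bounds $h^TAh = \twonorm{A^{1/2}h}^2 \gtrsim \twonorm{h}^2/K^2(2d_0, 3(1+\lambda), A^{1/2})$ on the cone, while the max-norm concentration of $\hat\Gamma - A$ from Sections~\ref{sec::AD} and~\ref{sec::Best} controls the perturbation by $\onenorm{h}^2\maxnorm{\hat\Gamma - A} \lesssim d_0\sqrt{\log m/f}\,\twonorm{h}^2$, negligible precisely under $d_0 \le c_0\sqrt{f/\log m}$; this is the Lower-$\RE$ mechanism of Lemma~\ref{lemma::REcomp}. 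Chaining $\alpha\twonorm{h}^2 \lesssim h^T\hat\Gamma h \le \onenorm{h}\norm{\hat\Gamma h}_\infty \lesssim \sqrt{d_0}\twonorm{h}(\mu\twonorm{\beta^*} + \tau)$ and cancelling one power of $\twonorm{h}$ yields the $q = 2$ bound $\twonorm{h}\lesssim \sqrt{d_0}(\mu\twonorm{\beta^*} + \tau)/\alpha$, which matches~\eqref{eq::ellqnorm} after substituting $\mu,\tau$ and using $\tau_B \le \twonorm{B}$, $a_{\max}\le\twonorm{A}$ so that $D_0 \lesssim D_2$. The general $1 \le q \le 2$ bound follows from the interpolation $\norm{h}_q \le \onenorm{h}^{2/q-1}\twonorm{h}^{2-2/q}$ together with $\onenorm{h} \lesssim \sqrt{d_0}\twonorm{h}$, which produces the factor $d_0^{1/q}$, and the predictive risk follows from $\inv{f}\twonorm{Xh}^2 = h^T\hat\Gamma h + \hat\tau_B\twonorm{h}^2$ and the already-derived estimates.

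The main obstacle is the feasibility step, and within it the uniform control of the quadratic-form coordinate $\inv{f}((W^TW - \hat\tr(B)I_m)\beta^*)_j$: this demands a Hanson--Wright bound adapted to the dependent structure $W = B^{1/2}Z_2$ and a matching concentration of the plug-in $\hat\tr(B)$. It is here that the effective-rank hypothesis $\fnorm{B}^2/\twonorm{B}^2 \ge \log m$ and the sample-size condition~\eqref{eq::samplebound}--\eqref{eq::sparse-dim-Ahalf} enter, the latter being exactly what the sparse-eigenvalue/$\RE$ transfer from $A^{1/2}$ to $\hat\Gamma$ needs in order to hold with the additional probability $2\exp(-\delta^2 f/2000K^4)$ appearing in the statement.
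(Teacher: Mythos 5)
Your proposal is essentially correct, and the first stage (feasibility of $(\beta^*,\twonorm{\beta^*})$ via the four-term decomposition of $\hat\gamma-\hat\Gamma\beta^*$, Hanson--Wright for the $W^TW$ block, and concentration of $\hat\tr(B)$) is exactly the paper's route through Lemmas~\ref{lemma::Tclaim1}, \ref{lemma::trBest}, \ref{lemma::low-noise} and \ref{lemma::DS}. The second stage, however, is genuinely different. The paper never forms the quadratic form $h^T\hat\Gamma h$: following \cite{BRT14}, it works with the \emph{clean} gram matrix $\Psi=\onef X_0^TX_0$, proves the upper bound $\norm{\Psi v}_\infty\le\mu_1\twonorm{\beta^*}+\mu_2\onenorm{v}+\tau'$ (Lemma~\ref{lemma::grammatrix}, which requires the extra entrywise events $\B_{10}$ to peel $W$ off of $X$), and lower-bounds $\norm{\Psi v}_\infty$ by the $\ell_q$-sensitivity $\kappa_q(d_0,k_0)\ge c\,d_0^{-1/q}\kappa_{\RE}(2d_0,k_0)$, where $\kappa_{\RE}$ for the sample matrix $\onef Z_1A^{1/2}$ is obtained from $\RE(2d_0,3k_0,A^{1/2})$ via Theorem~\ref{thm:subgaussian-T-intro}; this transfer is precisely where \eqref{eq::samplebound}--\eqref{eq::sparse-dim-Ahalf} and the probability term $2\exp(-\delta^2 f/2000K^4)$ enter, and it delivers all $q\in[1,2]$ directly rather than by interpolation. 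Your route --- population RE on $A$ plus a max-norm perturbation bound on $\hat\Gamma-A$ absorbed under $d_0=o(\sqrt{f/\log m})$, then $\alpha\twonorm{h}^2\lesssim\onenorm{h}\norm{\hat\Gamma h}_\infty$ and norm interpolation --- is a valid alternative (it is the Lower-$\RE$ mechanism the paper deploys for the Lasso-type estimator in Theorem~\ref{thm::lasso}, not for this one), and your identity $\onef\twonorm{Xh}^2=h^T\hat\Gamma h+\hat\tau_B\twonorm{h}^2$ is a cleaner path to the predictive bound. One caveat: your claim that the constraint $\twonorm{\beta}\le t$ is active at the optimum is unjustified (shrinking $t$ tightens the other constraint, so it need not bind); the bound you actually need, $\hat t\le\inv{\lambda}\onenorm{h}+\twonorm{\beta^*}$, follows from optimality of the objective alone, exactly as in Lemma~\ref{lemma::DS-cone}, and the resulting $\mu\onenorm{h}/\lambda$ term is then absorbed as you indicate.
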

We give an outline of the proof of Theorem~\ref{thm::DS} in
Section~\ref{sec::proofall} while leaving the detailed proof in
Section~\ref{sec::proofofDSthm}.

\noindent{\bf Discussions.}
Similar results have been derived in~\cite{LW12,BRT14},
however, under different assumptions on the distribution of the noise
matrix $W$. When $W$ is a random matrix with i.i.d. subgaussian noise,
our results will essentially recover the results in~\cite{LW12} and~\cite{BRT14}. 
The choice of $\lambda$ for the Lasso estimator and parameters
$\mu, \tau$ for the DS-type  estimator satisfy
\bens
\lambda \asymp \mu \twonorm{\beta^*} + \tau
\eens
This relationship is made clear through Theorem~\ref{thm::main}
regarding the Lasso-type estimator, which follows from Theorem
1~\cite{LW12}, 
Lemmas~\ref{lemma::low-noise},~\ref{lemma::DS},~\ref{lemma::D2improv},
and~\ref{lemma::DSimprov},
 which are the key results in proving Theorems~\ref{thm::lasso},~\ref{thm::DS},~\ref{thm::lassora}, and~\ref{thm::DSoracle}.
Finally, we note that following Theorem~2 as in~\cite{BRT14}, one can
show that without the relatively restrictive sparsity
condition~\eqref{eq::sqrt-sparsity}, a bound similar to that
in~\eqref{eq::ellqnorm} holds, however with $\twonorm{\beta^*}$ being
replaced by $\onenorm{\beta^*}$,  so long as the sample size satisfies
the requirement as in \eqref{eq::samplebound}.

\section{Improved bounds when the measurement errors are small}
\label{sec::smallW}
Throughout our analysis of Theorems~\ref{thm::lasso}
and~\ref{thm::DS}, we focused on the case when the errors in $W$
are sufficiently large in the sense that $\tau_B  = \tr(B)/f > 0$ is
bounded from below;  
for example, this is explicitly indicated by
the lower bound on the effective rank $r(B) = \tr(B)/\twonorm{B}$, when
$\twonorm{B}$ is bounded away from $0$. More precisely, 
by the condition on the effective rank as in \eqref{eq::trBLasso}, we have 
\bens
\tau_{B} = \frac{\tr(B)}{f} & \ge &
 16c' K^4 \frac{\twonorm{B}}{\log m} \log \frac{\V m \log m }{f} \; \;
 \text{ where} \; \; \V = 3eM_A^3/2.
\eens
The bounds we derive in this section focus on cases
where the measurement errors in $W$ are {\it small} in their magnitudes 
in the sense of $\tau_B$ being small. For the extreme case when $\tau_B$
approaches $0$, one hopes to recover a bound close to the
regular Lasso or the Dantzig selector as the effect of the noise on
the procedure should become negligible. We show in Theorems~\ref{thm::lassora}
and~\ref{thm::DSoracle} that this is indeed the case. First, we define
some contants  which we use throughout the rest of the paper.
Denote by 
\ben
\label{eq::defineD0}
& & D_0 = \sqrt{\tau_B} + a_{\max}^{1/2}, \; \; \; D_0' =
\twonorm{B}^{1/2} + a_{\max}^{1/2},  \; \text{ and } \; \tau_B^+ :=
(\tau_B^{+/2})^2 \\
\label{eq::defineDtau}
&& 
\text{ where} \; \; \tau_B^{+/2}  :=   \sqrt{\tau_B} +
\frac{D_{\ora}}{\sqrt{m}}\; \text{ and } \;
D_{\ora} \;  = \; 2(\twonorm{A}^{1/2} + \twonorm{B}^{1/2}).
\een
We first state a more refined result for the Lasso-type estimator.
\begin{theorem}
\label{thm::lassora}
Suppose all conditions in Theorem~\ref{thm::lasso} hold, except that
we drop \eqref{eq::snrcond} and replace~\eqref{eq::psijune} with
\ben
\label{eq::psijune15}
&& \lambda \ge 2 \psi \sqrt{\frac{\log m}{f}} \; \; \text{ where } \;\;
\psi:= 2 C_0 D_0' K \left(\tau_B^{+/2} K \twonorm{\beta^*} + M_{\e} \right).
\een
Suppose that for $0< \phi \le 1$ and $C_A := \inv{128 M_{A}^2}$
\ben
\label{eq::doracle}
&& d:= \abs{\supp(\beta^*)} \le C_A \frac{f}{\log m} \left\{c' C_{\phi}
  \wedge 2 \right\} \;\;\text{ where }  \\
\nonumber
&& C_\phi := \frac{\twonorm{B} + a_{\max}}{D^2} D_{\phi} \; \;
\text{ for  }  \; \; D_{\phi}
 = \frac{M^2_{\e}K^2}{ b_0^2}  + \tau_B^{+} K^4  \phi,
\een
$D = \rho_{\max}(s_0, A) +\tau_B$, and $c', \phi, b_0, M_{\e}$ and $K$ as defined in
Theorem~\ref{thm::lasso}. 

Then for any $d$-sparse vectors 
$\beta^* \in \R^m$, such that
$\phi b_0^2 \le \twonorm{\beta^*}^2 \le b_0^2$, 
we have with probability  at least $1- 16/m^3$,
\bens
\twonorm{\hat{\beta} -\beta^*} \leq \frac{20}{\alpha}  \lambda \sqrt{d} \; \;
\text{ and } \; \norm{\hat{\beta} -\beta^*}_1 \leq \frac{80}{\alpha}
\lambda d.
\eens
\end{theorem}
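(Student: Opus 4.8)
The plan is to reduce Theorem~\ref{thm::lassora} to the master deterministic bound for the Lasso-type estimator, Theorem~\ref{thm::main} (following Theorem~1 of \cite{LW12}), which guarantees the stated $\ell_2$ and $\ell_1$ errors with $\alpha = \lambda_{\min}(A)/2$ once (i) $\hat\Gamma$ obeys a Lower-$\RE$ condition (Definition~\ref{def::lowRE}) with curvature $\alpha$ and tolerance $\tau$ satisfying $\tau d \lesssim \alpha$, and (ii) the penalty dominates the deviation, $\lambda \ge 2\maxnorm{\hat\gamma - \hat\Gamma\beta^*}$, with $\beta^*$ feasible for \eqref{eq::origin}. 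Since Theorem~\ref{thm::lassora} retains every hypothesis of Theorem~\ref{thm::lasso} except \eqref{eq::snrcond} and \eqref{eq::psijune}, ingredient (i) --- together with feasibility $\onenorm{\beta^*} \le \sqrt d\, b_0$ (from $\onenorm{\beta^*} \le \sqrt d\,\twonorm{\beta^*} \le \sqrt d\, b_0$) and the compatibility of $\tau = \alpha/s_0$ with the sparsity level --- is supplied verbatim by the Lower-$\RE$ concentration used for Theorem~\ref{thm::lasso} (Sections~\ref{sec::AD}, \ref{sec::Best}, and Lemma~\ref{lemma::REcomp}); the sparsity bound \eqref{eq::doracle} is precisely the refinement of \eqref{eq::dlasso} needed to keep $\tau d \lesssim \alpha$ while matching the smaller penalty \eqref{eq::psijune15}. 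Thus the whole content of the theorem lies in sharpening ingredient (ii).

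Writing $y - X\beta^* = \e - W\beta^*$ and using $X = X_0 + W$ with $\hat\Gamma, \hat\gamma$ from \eqref{eq::hatGamma}, I would expand
\[
\hat\gamma - \hat\Gamma\beta^* = \underbrace{\inv{f}X_0^T\e}_{T_1} + \underbrace{\inv{f}W^T\e}_{T_2} - \underbrace{\inv{f}X_0^T W\beta^*}_{T_3} + \underbrace{\inv{f}\big(\hat\tr(B) I_m - W^T W\big)\beta^*}_{T_4},
\]
and bound $\maxnorm{\cdot}$ of each piece, exploiting the mutual independence of $X_0, W, \e$ and, crucially, the column-independence of $W = B^{1/2}Z_2$. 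The two noise-carrying terms $T_1, T_2$ are inner products of independent $\psi_2$ vectors and yield $\maxnorm{T_1} + \maxnorm{T_2} \lesssim (a_{\max}^{1/2} + \twonorm{B}^{1/2}) K M_{\e}\sqrt{\log m / f} = D_0' K M_{\e}\sqrt{\log m/f}$, which is the $M_{\e}$ part of $\psi$ in \eqref{eq::psijune15}.

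The gain over Theorem~\ref{thm::lasso} is concentrated entirely in $T_3$ and $T_4$ (Lemmas~\ref{lemma::D2improv} and~\ref{lemma::DSimprov}). For $T_3$, conditioning on $W$ turns each coordinate into a weighted sum of independent mean-zero $\psi_2$ variables of scale $a_{jj}^{1/2}K$ against the fixed vector $W\beta^*$, whose norm concentrates at $\twonorm{W\beta^*} \lesssim K\twonorm{\beta^*}\sqrt{f\tau_B}$ because $\E\twonorm{W\beta^*}^2 = \twonorm{\beta^*}^2\tr(B)$; a union bound over the $m$ coordinates then gives $\maxnorm{T_3} \lesssim a_{\max}^{1/2} K^2\sqrt{\tau_B}\,\twonorm{\beta^*}\sqrt{\log m/f}$, so the signal factor is $\sqrt{\tau_B}$ rather than the cruder $\twonorm{B}$ inside $D_2$. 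For $T_4$ I would split the correction into the fluctuation of $W^TW$ about $\tr(B) I_m$ and the estimation error of $\hat\tr(B)$: a Hanson--Wright bound using the $B$-structure controls the first by $K^2\twonorm{B}^{1/2}\sqrt{\tau_B}\,\twonorm{\beta^*}\sqrt{\log m/f}$ (via $\fnorm{B} \le \sqrt{\tr(B)\twonorm{B}}$), while concentration of the quadratic form $\fnorm{X}^2 = \mvec{X}^T\mvec{X}$ with covariance $\Sigma = A\oplus B$ yields $\abs{\hat\tau_B - \tau_B} \lesssim \frac{\twonorm{A}+\twonorm{B}}{\sqrt m}\sqrt{\log m/f}$, i.e. exactly the $D_{\ora}/\sqrt m$ term inside $\tau_B^{+/2}$. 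Combining the four pieces gives $\maxnorm{\hat\gamma - \hat\Gamma\beta^*} \le \psi\sqrt{\log m/f}$ with $\psi$ as in \eqref{eq::psijune15}, whence $\lambda \ge 2\psi\sqrt{\log m/f}$ dominates the deviation on the stated event, and Theorem~\ref{thm::main} delivers the conclusion.

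The main obstacle is the sharp analysis of $T_4$: one must simultaneously track the $\tau_B$-dependence of the $W^TW$ fluctuation and the accuracy of the plug-in estimator $\hat\tr(B)$ from \eqref{eq::trBest}, ensuring both degrade at the correct rate as $\tau_B \to 0$ so that the measurement-error contribution to $\psi$ collapses to the pure-Lasso scale $\sim D_{\ora}/\sqrt m$. Finally, note that because the refined bound separates the $M_{\e}$ and $\twonorm{\beta^*}$ contributions additively --- instead of through a single product $D_2 K(K\twonorm{\beta^*} + M_{\e})$ as in \eqref{eq::psijune} --- the signal-to-noise restriction \eqref{eq::snrcond} is never invoked, which is precisely why it can be dropped in this sharper statement.
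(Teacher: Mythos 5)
Your proposal is correct and follows essentially the same route as the paper: reduce to Theorem~\ref{thm::main} with the Lower-$\RE$ machinery unchanged, sharpen the deviation bound $\norm{\hat\gamma-\hat\Gamma\beta^*}_{\infty}$ via exactly the same four-term decomposition (the paper's $U_1,\dots,U_4$ in Lemmas~\ref{lemma::low-noise} and~\ref{lemma::D2improv}, with the $\hat\tr(B)$ error from Lemma~\ref{lemma::trBest} supplying the $D_{\ora}/\sqrt m$ term in $\tau_B^{+/2}$), and verify the refined sparsity condition \eqref{eq::doracle} against \eqref{eq::taumain} (the paper's Lemma~\ref{lemma::dmainoracle}). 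The only cosmetic difference is that you bound the cross term $X_0^TW\beta^*$ by conditioning on $W$ rather than by the decoupled Hanson--Wright inequality of Lemma~\ref{lemma::oneeventA}; both yield the same $a_{\max}^{1/2}K^2\sqrt{\tau_B}\twonorm{\beta^*}$ scale.
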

We give an outline for the proof of Theorem~\ref{thm::lassora} in
Section~\ref{sec::lassooracle}, and show the actual proof in Section~\ref{sec::classoproof}.

\begin{remark}
Let us redefine the Signal-to-noise ratio by 
\bens
\SMR  &:= & 
\frac{K^2 \twonorm{\beta^*}^2}{\tau_B^+ K^2 \twonorm{\beta^*}^2 +
  M^{2}_{\e} } \; \; \text{ where } \;\\
\signal & := & K^2 \twonorm{\beta^*}^2 \; \text{ and
}  \; \; \bignoise := M^{2}_{\e} + \tau_B^+ K^2 \twonorm{\beta^*}^2 \; 
\eens
We now only require that
$\lambda  \asymp (a^{1/2}_{\max} + \twonorm{B}^{1/2}) K \sqrt{\bignoise}  \sqrt{\frac{\log m}{f}}.$
That is, when either the noise level $M_{\e}$ or the measurement error strength in terms of $\tau_B^{+/2} K \twonorm{\beta^*}$ increases, 
we need to increase the penalty parameter $\lambda$ correspondingly;
moreover, when $d \asymp \inv{M_A^2} \frac{f}{\log m}$ 
\bens
\frac{\twonorm{\hat{\beta} -\beta^*}}{\twonorm{\beta^*} }
\leq \frac{20}{\alpha}  D_0' K^2 \sqrt{\frac{\bignoise}{\signal}}
\inv{M_A} \asymp D_0' K^2 \sqrt{\frac{\bignoise}{\signal}}
\inv{\vp(s_0)},
\eens
which eventually becomes a vacuous bound when $\bignoise \gg \signal$.
\end{remark}

\subsection{A Corollary for Theorem~\ref{thm::DS}}
\label{sec::DScoro}
We next state in Theorem~\ref{thm::DSoracle} an improved bound for
the Conic programming estimator~\eqref{eq::Conic}, which improves upon
Theorem~\ref{thm::DS} when $\tau_B$ is small.
\begin{theorem}
\label{thm::DSoracle}
Suppose all conditions in Theorem~\ref{thm::DS} hold, except that
we replace the condition on $d$ as in~\eqref{eq::sqrt-sparsity} with
the following.
Suppose that the sample size $f$ and the size of the support of $\beta^*$ satisfy the following
requirements: for $C_{6} \ge D_{\ora}$ and  $r_{m,m} =2 C_0 \sqrt{ \frac{\log m}{fm}}$,
\ben
\label{eq::ora-sparsity}
d_0 & = & O \left(\tau_B^-\sqrt{\frac{f}{\log m}} \right) 
\; \;  \text{ where } \; 
\tau_B^- \le \inv{\tau_B^{1/2} + 2C_{6} K r_{m,m}^{1/2}} \\
\label{eq::samplebound}
\; \; \text{ and } \; \; 
f & \geq & \frac{2000 d K^4}{\delta^2} \log \left(\frac{60 e m}{d
    \delta}\right) \; \text{where } \\
\label{eq::sparse-dim-Ahalf}
d & = & 2d_0 + 2d_0 a_{\max} \frac{16 K^2(2d_0, 3k_0, A^{1/2}) (3k_0)^2
  (3k_0 + 1)}{\delta^2}.
\een
Let $\hat\beta$ be an optimal solution to the Conic Programming estimator
as in \eqref{eq::Conic} with input $(\hat\gamma, \hat\Gamma)$ as
defined in~\eqref{eq::hatGamma}, where $\hat\tr(B)$ is as defined in
\eqref{eq::trBest}.  
Suppose
\ben
\label{eq::tauchoice}
\tau & \asymp & D_0 M_{\e}  r_{m,f} \; \; \text{ where } \; \; r_{m,f} = C_0
K \sqrt{\frac{\log  m}{f}} \; \text{ and } \\
\label{eq::muchoice}
\mu & \asymp & D_0' \tilde\tau_B^{1/2} K r_{m,f}\;\; \; \text{ where}
\; \; \tilde\tau_B^{1/2}  := \PaulBhalf+ C_{6} K r_{mm}^{1/2}.
\een
Then with probability at least $1-\frac{c''}{m^2} - 2 \exp(-\delta^2
f/2000 K^4)$, for $2 \ge q \ge 1$, and $\tau_B^{\dagger/2} 
=(\tau_B^{1/2} + \frac{3}{2}C_{6} K r_{m,m}^{1/2})$ 
\ben
\label{eq::ellqnormimp}
\norm{\hat{\beta} -\beta^*}_q  \le  C' D_0' K^2 d_0^{1/q} \sqrt{\frac{\log m}{f}} 
\left(\tau_B^{\dagger/2} \twonorm{\beta^*} + \frac{M_{\e}}{K}\right);
\een
Under the same assumptions, the predictive risk admits the following bounds
\bens
\onef \twonorm{X (\hat{\beta} -\beta^*)}^2 \le 
C'' (\twonorm{B} + a_{\max}) K^2 d_0 \frac{\log m}{f} \left(\tau_B^{\ddagger} 
 K^2 \twonorm{\beta^*}^2 + M_{\e}^2\right)
\eens
 with the same probability as above, where $c'', C', C'' > 0$ are some
 absolute constants, and $\tau_B^{\ddagger} \asymp 2 \tau_B+ 3 C_{6}^2 K^2 r_{m,m}$.
\end{theorem}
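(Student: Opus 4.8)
The plan is to follow the standard Dantzig-selector template already used for Theorem~\ref{thm::DS}, and to replace only the concentration inputs by sharper ones that vanish as $\tau_B \to 0$. Write $S = \supp(\beta^*)$, $h = \hat\beta - \beta^*$, and $t^* = \twonorm{\beta^*}$. The argument has three stages: (i) show the pair $(\beta^*, t^*)$ is \emph{feasible} for the conic program~\eqref{eq::Conic}, so that optimality of $\hat\beta$ forces a basic inequality; (ii) extract a cone condition on $h$ together with an $\ell_\infty$ control of $\hat\Gamma h$; and (iii) invoke the restricted eigenvalue condition $\RE(2d_0, 3(1+\lambda), A^{1/2})$ to convert this into the $\ell_q$ and predictive bounds.

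For stage (i) I would decompose the centered residual as
\[
\hat\gamma - \hat\Gamma \beta^* = \tfrac{1}{f} X_0^T \e + \tfrac{1}{f} W^T \e - \tfrac{1}{f} X_0^T W \beta^* - \tfrac{1}{f}\bigl( W^T W - \hat\tr(B) I_m \bigr)\beta^*,
\]
and bound each summand in $\ell_\infty$. The two terms containing $\e$ are independent of $\beta^*$; since the columns of $X_0 = Z_1 A^{1/2}$ have scale $\sqrt{a_{\max}}$ and those of $W = B^{1/2} Z_2$ have scale $\sqrt{\tau_B}$, they contribute the $\tau$-piece of order $(\sqrt{a_{\max}} + \sqrt{\tau_B}) M_{\e} K \sqrt{\log m/f} = D_0 M_{\e} r_{m,f}$, matching~\eqref{eq::tauchoice}. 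The remaining bilinear and quadratic terms are proportional to $\twonorm{\beta^*} = t^*$ and supply the $\mu t^*$-piece. The whole point of the oracle refinement is that, applying the sharpened statements Lemma~\ref{lemma::D2improv} and Lemma~\ref{lemma::DSimprov} from Section~\ref{sec::Best}, these two terms are controlled not by the crude operator-norm factor $D_2$ of Theorem~\ref{thm::DS}, but by a factor proportional to $\tilde\tau_B^{1/2} = \hat\tau_B^{1/2} + C_{6} K r_{m,m}^{1/2}$, which tracks the genuine measurement-error magnitude and collapses with $\tau_B$. This yields $\mu \asymp D_0' \tilde\tau_B^{1/2} K r_{m,f}$ as in~\eqref{eq::muchoice}, hence $\norm{\hat\gamma - \hat\Gamma \beta^*}_{\infty} \le \mu t^* + \tau$ and feasibility, on the stated event of probability at least $1 - c''/m^2 - 2\exp(-\delta^2 f/2000 K^4)$.

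Stage (ii) is routine. Feasibility of $(\beta^*, t^*)$ and optimality of $(\hat\beta, \hat t)$ give $\onenorm{\hat\beta} + \lambda \hat t \le \onenorm{\beta^*} + \lambda t^*$, from which, using $\hat t \ge \twonorm{\hat\beta}$ and $t^* = \twonorm{\beta^*}$, one derives the cone membership of $h$ with constant $3(1+\lambda) = 3k_0$, matching the RE index $(2d_0, 3k_0, A^{1/2})$; moreover the triangle inequality across the two feasibility constraints gives $\norm{\hat\Gamma h}_{\infty} \le \mu(\hat t + t^*) + 2\tau$. In stage (iii) I would sandwich the quadratic form: on one side $h^T \hat\Gamma h \le \norm{\hat\Gamma h}_{\infty}\onenorm{h}$, on the other $\RE(2d_0, 3k_0, A^{1/2})$ together with the concentration of $\hat\Gamma$ about $A$ (Section~\ref{sec::AD}) gives $h^T \hat\Gamma h \gtrsim \twonorm{h}^2$ on the cone, yielding $\twonorm{h} \lesssim \sqrt{d_0}\,(\mu \twonorm{\beta^*} + \tau)$ up to the RE constant. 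The cone condition gives $\onenorm{h} \lesssim d_0\,(\mu \twonorm{\beta^*} + \tau)$, and the interpolation $\norm{h}_q \le \onenorm{h}^{2/q - 1}\twonorm{h}^{2 - 2/q}$ delivers the $d_0^{1/q}$ scaling of~\eqref{eq::ellqnormimp}; the predictive bound follows from $\tfrac1f \twonorm{Xh}^2 = h^T \hat\Gamma h + \hat\tau_B \twonorm{h}^2$ and the same estimates.

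The main obstacle is the quadratic measurement-error term $\tfrac1f (W^T W - \hat\tr(B) I_m)\beta^*$ in the small-$\tau_B$ regime. Two effects must be separated: the off-diagonal fluctuations of $W^T W = Z_2^T B Z_2$ about zero, which must be shown to scale like $\twonorm{B}^{1/2}\tau_B^{1/2}\twonorm{\beta^*}\sqrt{\log m/f}$ rather than like $\twonorm{B}$; and the error of the plug-in diagonal correction $\hat\tr(B)$, which is itself random (of order $r_{m,m}$ through~\eqref{eq::trBest}) and must be absorbed by inflating $\hat\tau_B^{1/2}$ to $\tilde\tau_B^{1/2}$. Getting both to decay at the correct $\tau_B$-rate, so that the effective constant is $D_0'\tilde\tau_B^{1/2}$ and not $D_2$, is exactly the content of Lemmas~\ref{lemma::D2improv} and~\ref{lemma::DSimprov}, and the delicate bookkeeping is to verify that the event on which they hold is compatible with the RE event and carries the claimed probability, so that $\tau_B^{\dagger/2}$ and $\tau_B^{\ddagger}$ emerge with the stated dependence on $\hat\tau_B^{1/2}$ and $r_{m,m}$.
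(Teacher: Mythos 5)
Your stages (i) and (ii) match the paper: feasibility of $(\beta^*,\twonorm{\beta^*})$ via the sharpened bound of Lemmas~\ref{lemma::D2improv} and~\ref{lemma::DSimprov}, the cone condition from Lemma~\ref{lemma::DS-cone}, and the correct identification of the two small-$\tau_B$ effects (the $Z_2^TBZ_2$ fluctuation and the $\hat\tr(B)$ plug-in error absorbed into $\tilde\tau_B^{1/2}$). Stage (iii), however, diverges from the paper and as written has a gap. You propose to sandwich the quadratic form $h^T\hat\Gamma h$ between $\norm{\hat\Gamma h}_\infty\onenorm{h}$ and a restricted-eigenvalue lower bound $\gtrsim\twonorm{h}^2$, citing ``the concentration of $\hat\Gamma$ about $A$ (Section~\ref{sec::AD}).'' But the Lower-$\RE$ condition for the corrected noisy gram matrix $\hat\Gamma$ is established in the paper only via Theorem~\ref{thm::AD} and Lemma~\ref{lemma::lowerREI}, whose hypotheses --- (A2), (A3), and crucially the effective-rank lower bound $\tr(B)/\twonorm{B}\gtrsim K^4\zeta\log(m/\zeta)/\ve^2$ as in \eqref{eq::trB} --- are \emph{not} among the assumptions of Theorem~\ref{thm::DS} or~\ref{thm::DSoracle}. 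The paper avoids needing any such control of $\hat\Gamma$ from below: it works throughout with the clean matrix $\Psi=\onef X_0^TX_0$, for which $\RE(2d_0,3k_0,A^{1/2})$ transfers via Theorem~\ref{thm:subgaussian-T-intro} and yields the $\ell_q$-sensitivity bound $\kappa_q(d_0,k_0)\ge c\,d_0^{-1/q}$ (Lemma 6 of the cited BRT14). The feasibility constraint, which is stated in terms of $\hat\Gamma$, is then converted into an \emph{upper} bound on $\norm{\Psi v}_\infty$ by explicitly adding back the cross terms $\onef W^TX_0v$, $\onef(W^TW-\hat\tr(B)I_m)\hat\beta$, etc.\ (Lemma~\ref{lemma::grammatrixopt} on the event $\B_0\cap\B_{10}$); no lower bound on $h^T\hat\Gamma h$ is ever invoked. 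If you want to keep your quadratic-form route you must either add the hypotheses needed for the Lower-$\RE$ of $\hat\Gamma$ (which changes the theorem) or rework stage (iii) around $\Psi$.

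A second, smaller omission: you assert $\twonorm{h}\lesssim\sqrt{d_0}(\mu\twonorm{\beta^*}+\tau)$ ``up to the RE constant,'' but the upper bound on $\norm{\Psi v}_\infty$ (or on $\norm{\hat\Gamma h}_\infty$) contains the term $\mu_2\onenorm{v}$ with $\mu_2\asymp\mu$, which must be absorbed into the left-hand side. This is exactly where the relaxed sparsity condition \eqref{eq::ora-sparsity} enters: one needs $\mu_2(2+\lambda)d_0\le\tfrac12c(q)\kappa_{\RE}(2d_0,k_0)$, and since $\mu\asymp D_0'\tilde\tau_B^{1/2}Kr_{m,f}$ and $d_0=c_0\tau_B^-\sqrt{f/\log m}$, the product is controlled precisely by the inequality $\tilde\tau_B^{1/2}\tau_B^-\le1$ of Lemma~\ref{lemma::tauB}. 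Your sketch never exhibits where \eqref{eq::ora-sparsity} is used, yet without this absorption the argument does not close; this step is the reason the sparsity threshold scales with $\tau_B^-$ at all.
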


\subsection{Discussions}
\label{sec::discusslassocoro}
In particular, when $\tau_B \to 0$, Theorem~\ref{thm::DSoracle} 
allows us to recover a rate close that of the Dantzig selector with an exact 
recovery if $\tau_B =0$ is known a priori; see Section~\ref{sec::conclude}.
Moreover the constraint~\eqref{eq::sqrt-sparsity} on the sparsity
parameter $d_0$ appearing in Theorem~\ref{thm::DS} can now be
relaxed as in \eqref{eq::ora-sparsity}. Roughly speaking, one can think of $d_0$ being bounded 
as follows for the Conic programming estimator~\eqref{eq::Conic}:
\ben
\label{eq::ora-sparsity-rem}
d_0 & = & O \left(\tau_B^-\sqrt{\frac{f}{\log m}}  \bigwedge
  \frac{f}{ \log(m/d_0) }\right)
\; \;  \text{ where } \; 
\tau_B^- \asymp \inv{\tau_B^{1/2}}
\een
That is,  when $\tau_B$ decreases, we allow larger values of $d_0$; however, when 
$\tau_B \to 0$, the sparsity level of $d = O\left(f/ \log (m/d)\right)$ starts to 
dominate, which enables the Conic Programming estimator to achieve results similar
to the Dantzig Selector when the design matrix $X_0$ is a subgaussian random matrix satisfying the Restricted Eigenvalue conditions;
See for example~\cite{CT07,BRT09,RZ13}.

The condition on $d$ (and  $D_{\phi}$) for the Lasso estimator as defined in~\eqref{eq::doracle} 
suggests that as $\tau_B \to 0$, and thus $\tau^+_B \to 0$  
the requirement on the sparsity parameter $d$ becomes slightly more
stringent when $K^2 M_{\e}^2/b_0^2 \asymp 1$ and much more restrictive when
$K^2 M_{\e}^2/b_0^2 = o(1)$; however, suppose we require
\bens
M_{\e}^2 = \Omega(\tau_B^+ K^2 \twonorm{\beta^*}^2),
\eens 
that is, the stochastic error $\e$ in the response variable $y$ as in~\eqref{eq::oby} 
does not converge to $0$ as quickly as the measurement error $W$ in
\eqref{eq::obX} does, then the sparsity constraint becomes essentially 
unchanged as $\tau_B^+ \to 0$.
In this case, essentially, we require that for some $c'' := c' C_{\phi}$ 
\bens 
&& 
d  \le  C'_A \frac{f}{\log m} \left\{\frac{c''K^2 M^2_{\e}}{b_0^2} 
  \wedge 1 \right\} \;\;\text{ where }\;  D_{\phi}  \asymp \frac{K^2
  M^2_{\e}}{b_0^2} \text{ and } \; C'_A := \inv{64 M_{A}^2}, \\
&& \text{ given that } \; \; \tau_B^+ K^4 \phi \le  \frac{\tau_B^+ K^4
  \twonorm{\beta^*}^2}{b_0^2} \ll \frac{K^2 M^2_{\e}}{b_0^2}.
\eens 
These tradeoffs are somehow different from the behavior of the Conic 
programming estimator (cf \eqref{eq::ora-sparsity-rem}).
\silent{
Analogous to~\eqref{eq::dlasso}, we could represent the condition on $d$ as follows:
we note that the following condition on $d$ is enough for~\eqref{eq::doracle} to hold:
\bens
d & \le &
 C_A c'' \tau_B^+ K^4 \frac{\phi f}{\log m}
\le  C_A c'' D_{\phi} K^4 \frac{f}{\log m}
\eens
Indeed, for $c'' \tau_B^+ K^4 \le 1$, we have
\bens
d  & \le  & C_A (c'' \tau_B^+ K^4 \wedge 1)\frac{\phi f}{\log m}
\le C_A  \left(c'' D_{\phi} \wedge  1\right) \frac{f}{\log  m}
\eens
where clearly
\bens
D_{\phi}
 = \frac{M^2_{\e}K^2}{ b_0^2}  + \tau_B^{+} K^4  \phi \ge \tau_B^{+} K^4  \phi \ge \tau_B^{+} \phi.
\eens
This condition, however, is unnecessarily strong, when $\tau_B \to 0$, in view of the previous discussion. 
}

\section{Proof of theorems}
\label{sec::proofall}
We first consider the following 
large deviation bound on $\norm{\hat\gamma - \hat\Gamma \beta^*}$ 
as stated in Lemma~\ref{lemma::low-noise}. 
This entity appears in the constraint set in the conic programming estimator
\eqref{eq::Conic}, and is directly related to the choice of $\lambda$ 
for the lasso-type estimator in view of Theorem~\ref{thm::main}.
Events $\B_0$ and $\B_{10}$ are defined in Section~\ref{sec::stoc} in
the Appendix.
\begin{lemma}
\label{lemma::low-noise}
Suppose (A1) holds. 
Let $X = X_0 + W$, where $X_0, W$ are as defined in
Theorem~\ref{thm::lasso}. Suppose that
$$\fnorm{B}^2/\twonorm{B}^2 \ge \log m \; \; \text{ where } \; m \ge 16.$$
Let $\hat\Gamma$ and $\hat\gamma$ be as in~\eqref{eq::hatGamma}.
On event $\B_0$, we have for $D_2 = 2(\twonorm{A} +\twonorm{B})$ and some absolute constant $C_0$ 
\bens
\norm{\hat\gamma - \hat\Gamma \beta^*}_{\infty}
\le  \psi \sqrt{\frac{\log m}{f}} \; \text{ where } \;  
\psi =  C_0 D_2 K \left(K \twonorm{\beta^*} + M_{\e} \right)
\eens
is as defined in Theorem~\ref{thm::lasso}.
Then $\prob{\B_0} \ge 1- 16/m^3$.
\end{lemma}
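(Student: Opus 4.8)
The plan is to write $\hat\gamma - \hat\Gamma\beta^*$ as an explicit sum of stochastic fluctuations and bound each summand in the $\ell_\infty$ norm. Substituting $y = X_0\beta^* + \e$ and $X = X_0 + W$ into~\eqref{eq::hatGamma}, using $X^T X_0 - X^T X = -X^T W$, and centering the quadratic piece by the identity $\E W^T W = \tr(B) I_m$ (which holds because the columns of $W = B^{1/2}Z_2$ are independent with covariance $B$), a direct calculation gives
\begin{eqnarray*}
\hat\gamma - \hat\Gamma\beta^*
& = & \underbrace{\inv{f}X_0^T\e}_{T_1} + \underbrace{\inv{f}W^T\e}_{T_2} - \underbrace{\inv{f}X_0^T W\beta^*}_{T_3} \\
& & {} - \underbrace{\inv{f}\left(W^T W - \tr(B) I_m\right)\beta^*}_{T_4} - \underbrace{\inv{f}\left(\tr(B) - \hat\tr(B)\right)\beta^*}_{T_5}.
\end{eqnarray*}
The first three terms are bilinear forms in two independent subgaussian objects, the fourth is a centered quadratic form in $W$, and the fifth measures the accuracy of the trace estimator $\hat\tr(B)$. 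It then suffices to control each $\norm{T_i}_\infty$ on a high-probability event, which I would take to be $\B_0$.

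For the bilinear terms I would fix a coordinate $j$ and exploit the independence in Definition~\ref{def::subgdata}: $T_1$ and $T_2$ pair the independent response noise $\e$ against column $j$ of $X_0$ (covariance $a_{jj}I_f$) and against column $j$ of $W$ (covariance $B$), while $T_3$ pairs the independent vectors $(X_0)_{:,j}$ and $W\beta^* = B^{1/2}(Z_2\beta^*)$ through $B^{1/2}$. Conditioning on one factor makes each coordinate a subgaussian linear form, so a Bernstein/decoupled-bilinear tail bound and a union bound over the $m$ coordinates give, with probability $1 - O(1/m^3)$,
\begin{eqnarray*}
\norm{T_1}_\infty & \lesssim & K\sqrt{a_{\max}}\,M_{\e}\sqrt{\tfrac{\log m}{f}}, \\
\norm{T_2}_\infty & \lesssim & K\twonorm{B}^{1/2}M_{\e}\sqrt{\tfrac{\log m}{f}}, \\
\norm{T_3}_\infty & \lesssim & K^2\sqrt{a_{\max}\,\tau_B}\,\twonorm{\beta^*}\sqrt{\tfrac{\log m}{f}}.
\end{eqnarray*}
The hypothesis $\fnorm{B}^2/\twonorm{B}^2 \ge \log m$ forces the effective rank $\tr(B)/\twonorm{B}$ to exceed $\log m$, which keeps these forms in the subgaussian (rather than the heavier subexponential) deviation regime, so the governing scale is $\fnorm{B^{1/2}} = \tr(B)^{1/2}$ rather than $\twonorm{B}\log m$. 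Finally $a_{\max} \le \twonorm{A}$ and the elementary bound $\tau_B = \tr(B)/f \le \twonorm{B}$ let me absorb every prefactor into $D_2 = 2(\twonorm{A} + \twonorm{B})$.

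The quadratic term $T_4$ I would split coordinate-wise as $(\twonorm{W_{:,j}}^2 - \tr(B))\beta^*_j + \langle W_{:,j}, \sum_{l\ne j}W_{:,l}\beta^*_l\rangle$: the first part is a Hanson--Wright deviation of $(Z_2)_{:,j}^T B (Z_2)_{:,j}$, where $\fnorm{B}^2/\twonorm{B}^2 \ge \log m$ again ensures $\twonorm{B}\log m \le \fnorm{B}\sqrt{\log m}$ so the $\fnorm{B}$ term dominates, and the second part is a bilinear form against the independent vector $\sum_{l\ne j}W_{:,l}\beta^*_l$, handled exactly as $T_3$. The trace term $T_5$ is controlled by the concentration of $\fnorm{X}^2 = \fnorm{X_0}^2 + 2\langle X_0, W\rangle + \fnorm{W}^2$ about its mean $f\tr(A) + m\tr(B)$, which makes $|\hat\tr(B) - \tr(B)|$ small; this is precisely the gram-matrix concentration established in Section~\ref{sec::Best}. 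Both pieces again reduce to $D_2 K^2\twonorm{\beta^*}\sqrt{\log m/f}$. Taking $\B_0$ to be the intersection of the five coordinate-wise events and summing the failure probabilities yields $\prob{\B_0} \ge 1 - 16/m^3$, and on $\B_0$ the triangle inequality delivers $\norm{\hat\gamma - \hat\Gamma\beta^*}_\infty \le \psi\sqrt{\log m/f}$ with $\psi = C_0 D_2 K(K\twonorm{\beta^*} + M_{\e})$ for a suitable absolute constant $C_0$.

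The main obstacle is the $W$-dependent terms $T_2, T_3, T_4$: because the columns of $W$ are \emph{dependent} vectors with covariance $B$ (the defining feature of the model), one cannot treat the entries as i.i.d.\ and must carry the spectral quantities $\tr(B), \fnorm{B}, \twonorm{B}$ through every concentration step. Using $\fnorm{B}^2/\twonorm{B}^2 \ge \log m$ to stay in the subgaussian regime and $\tau_B \le \twonorm{B}$ to collapse the bounds into $D_2$, the delicate point is to make the quadratic term $T_4$ and the trace estimator $T_5$ concentrate at the intended $\sqrt{\log m/f}$ rate rather than at the slower rate one would get from a naive $\twonorm{B}\log m$ bound.
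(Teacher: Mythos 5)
Your proposal is correct and follows essentially the same route as the paper: the identical algebraic decomposition into the two noise--design cross terms, the $X_0^T W\beta^*$ term, the centered quadratic $(W^TW-\tr(B)I_m)\beta^*$, and the trace-estimation error, each controlled by decoupled Hanson--Wright/bilinear subgaussian tail bounds plus a union bound over coordinates, with the stable-rank condition $\fnorm{B}^2/\twonorm{B}^2\ge\log m$ playing exactly the role you identify and Lemma~\ref{lemma::trBest} handling $|\hat\tr(B)-\tr(B)|$. The only (immaterial) divergence is that the paper bounds your $T_4$ by applying Hanson--Wright directly to the full form $e_i^T(Z^TBZ-\tr(B)I_m)\bar\beta^*$ via the Kronecker lift $(e_i\otimes\bar\beta^*)\otimes B$ rather than splitting it into diagonal and off-diagonal pieces; both give the same $\fnorm{B}\sqrt{\log m}/f$ scale.
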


\begin{lemma}
\label{lemma::trBest}
Let $m \ge 2$.
Let $X$ be defined as in Definition~\ref{def::subgdata} and
$\hat\tau_B$ be as defined in \eqref{eq::trBest}. Denote by $\tau_B = \tr(B)/f$ and $\tau_A = \tr(A)/m$.
Suppose that $f \vee (r(A)  r(B)) > \log m$.
Denote by $\B_6$ the event such that 
\bens
\abs{\hat\tau_B - \tau_B} & \le &  
2 C_0 K^2 \sqrt{\frac{\log m}{m f}}
\left(\frac{\fnorm{A}  }{\sqrt{m}} +\frac{\fnorm{B}  }{\sqrt{f}}
\right) =:  D_1 K^2 r_{m,m} \\ 
\text{ where} \; \; D_1 & := &  \frac{\fnorm{A}}{\sqrt{m}} +
\frac{\fnorm{B}}{\sqrt{f}} \; 
\text{ and } \; \;   r_{m,m} = 2 C_0 \sqrt{\frac{\log m}{m f}}.
\eens 
Then $\prob{\B_6} \ge 1-\frac{3}{m^3}$.
If we replace $\sqrt{\log m}$ with $\log m$ in the definition of event
$\B_6$, then we can drop the condition on $f$ or $r(A)r(B) =
\frac{\tr(A)}{\twonorm{A}} \frac{\tr(B)}{\twonorm{B}}$ to achieve 
the same bound on event $\B_6$.
\end{lemma}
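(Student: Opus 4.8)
The plan is to reduce the whole statement to a single Hanson--Wright concentration bound for $\fnorm{X}^2$ around its mean. First I would record that mean. Since $X_0$ and $W$ are independent and mean zero, $\expct{\fnorm{X}^2}=\expct{\fnorm{X_0}^2}+\expct{\fnorm{W}^2}$; writing $X_0=Z_1A^{1/2}$, $W=B^{1/2}Z_2$ and using $\E Z_1^TZ_1=fI_m$, $\E Z_2Z_2^T=mI_f$ gives $\expct{\fnorm{X_0}^2}=\tr(A\,\E Z_1^TZ_1)=f\tr(A)$ and $\expct{\fnorm{W}^2}=\tr(B\,\E Z_2Z_2^T)=m\tr(B)$, so $\expct{\fnorm{X}^2}=f\tr(A)+m\tr(B)$. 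Hence $\E\big[\tfrac1m(\fnorm{X}^2-f\tr(A))\big]=\tr(B)$, i.e. the estimator is unbiased up to the truncation. Setting $S:=\tfrac1m(\fnorm{X}^2-f\tr(A))$, the truncation is harmless: since $\tr(B)\ge 0$ and $x\mapsto x_+$ is the metric projection onto $[0,\infty)$, $\abs{S_+-\tr(B)}\le\abs{S-\tr(B)}$. Dividing by $f$ and invoking (A1) ($\tr(A)=m$) therefore reduces everything to
\[
\abs{\hat\tau_B-\tau_B}\;\le\;\frac{1}{mf}\,\bigl\lvert\fnorm{X}^2-\expct{\fnorm{X}^2}\bigr\rvert,
\]
so it suffices to concentrate $\fnorm{X}^2$.

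The main step is to write $\fnorm{X}^2$ as one quadratic form in independent subgaussian coordinates. Let $g=(\mvec{Z_1};\mvec{Z_2})\in\R^{2mf}$, whose entries are independent, mean zero, unit variance and $\psi_2$-bounded by $K$. With $P=A^{1/2}\otimes I_f$ and $Q=I_m\otimes B^{1/2}$ one has $\mvec{X}=P\mvec{Z_1}+Q\mvec{Z_2}=[P\ Q]\,g$, hence $\fnorm{X}^2=g^TMg$ where $M=[P\ Q]^T[P\ Q]$ has diagonal blocks $A\otimes I_f$ and $I_m\otimes B$ and off-diagonal block $A^{1/2}\otimes B^{1/2}$. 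I would then compute the three quantities driving Hanson--Wright: $\tr(M)=f\tr(A)+m\tr(B)=\expct{\fnorm{X}^2}$; $\fnorm{M}^2=f\fnorm{A}^2+m\fnorm{B}^2+2m\tr(B)$; and, since $[P\ Q][P\ Q]^T=A\otimes I_f+I_m\otimes B$ is exactly the Kronecker sum $A\oplus B$ with top eigenvalue $\twonorm{A}+\twonorm{B}$, the operator norm $\twonorm{M}=\twonorm{A}+\twonorm{B}$. The cross block is where the genuinely new (bilinear, cross-$Z_1Z_2$) behaviour lives, but the single-form packaging lets Hanson--Wright absorb it automatically.

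Applying $\prob{\abs{g^TMg-\tr(M)}>t}\le 2\exp\!\big(-c\min(t^2/(K^4\fnorm{M}^2),\,t/(K^2\twonorm{M}))\big)$ with $t\asymp K^2\fnorm{M}\sqrt{\log m}$ yields deviation $O(K^2\fnorm{M}\sqrt{\log m})$ with probability at least $1-3/m^3$. To match the claimed form I bound the cross term by AM--GM: using $\tr(A)=m$ together with $\fnorm{A}\ge\tr(A)/\sqrt m=\sqrt m$ and $\fnorm{B}\ge\tr(B)/\sqrt f$, one gets $2\sqrt{m\tr(B)}\le\sqrt f\,\fnorm{A}+\sqrt m\,\fnorm{B}$, whence $\fnorm{M}\lesssim\sqrt f\,\fnorm{A}+\sqrt m\,\fnorm{B}$. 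Dividing the deviation by $mf$ and using $\tfrac{\sqrt f\,\fnorm{A}+\sqrt m\,\fnorm{B}}{mf}=\big(\tfrac{\fnorm{A}}{\sqrt m}+\tfrac{\fnorm{B}}{\sqrt f}\big)\tfrac{1}{\sqrt{mf}}=\tfrac{D_1}{\sqrt{mf}}$ then gives exactly $\lesssim K^2 D_1\sqrt{\log m/(mf)}=D_1K^2 r_{m,m}$, with the absolute constant absorbed into $C_0$.

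The one delicate point --- and the main obstacle --- is ensuring the Frobenius (Gaussian) branch of Hanson--Wright dominates the operator-norm (sub-exponential) branch, which is precisely where the hypothesis $f\vee(r(A)r(B))>\log m$ enters. With $t\asymp K^2\fnorm{M}\sqrt{\log m}$ the sub-exponential term is controlled iff $\fnorm{M}^2/\twonorm{M}^2\gtrsim\log m$, i.e. each block must have stable rank exceeding $\log m$: the $A\otimes I_f$ block contributes $f\fnorm{A}^2/\twonorm{A}^2\ge f$, the $I_m\otimes B$ block contributes $m\fnorm{B}^2/\twonorm{B}^2\ge m$ (harmless since $m\ge\log m$), and the cross block $A^{1/2}\otimes B^{1/2}$ contributes its stable rank $m\tr(B)/(\twonorm{A}\twonorm{B})=r(A)r(B)$. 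Thus $f$ and $r(A)r(B)$ are exactly the quantities that must beat $\log m$, which is what the stated condition records. For the final sentence of the lemma, replacing $\sqrt{\log m}$ by $\log m$ amounts to taking $t\asymp K^2\fnorm{M}\log m$; then $\twonorm{M}\le\fnorm{M}$ makes the sub-exponential branch automatically dominated for all $m\ge 3$, so the condition on $f$ or $r(A)r(B)$ may be dropped while retaining the (now $\log m$-scaled) bound on $\B_6$.
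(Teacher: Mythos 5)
Your route is genuinely different from the paper's and in several respects cleaner: the paper expands $XX^T-\tr(A)I_f$ and controls three pieces separately (the quadratic term $\tr(Z_1AZ_1^T)-f\tr(A)$ via Hanson--Wright on $I_f\otimes A$, the term $\tr(Z_2^TBZ_2)-m\tr(B)$ via $I_m\otimes B$, and the cross term $\tr(Z_1A^{1/2}Z_2^TB^{1/2})$ via the decoupled inequality applied to $\mvec{Z_1}^T(B^{1/2}\otimes A^{1/2})\mvec{Z_2}$), then takes a union bound over three events; you package everything into a single quadratic form $g^TMg$ with $g=(\mvec{Z_1};\mvec{Z_2})$ and apply Hanson--Wright once. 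Your computations of $\tr(M)$, $\fnorm{M}^2=f\fnorm{A}^2+m\fnorm{B}^2+2\tr(A)\tr(B)$ and $\twonorm{M}=\twonorm{A}+\twonorm{B}$ are correct, your treatment of the truncation via the contractivity of $x\mapsto x_+$ is correct and tidier than the paper's case split, and the reduction of $\fnorm{M}$ to $\sqrt{f}\fnorm{A}+\sqrt{m}\fnorm{B}$ via $2m\tr(B)\le 2\sqrt{fm}\,\fnorm{A}\fnorm{B}$ (using $\fnorm{A}\ge\sqrt m$ and $\fnorm{B}\ge\tr(B)/\sqrt f$) is sound.

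The gap is in your last paragraph, where you claim that $\fnorm{M}^2/\twonorm{M}^2\gtrsim\log m$ holds ``iff each block has stable rank exceeding $\log m$,'' crediting the cross block with a contribution of $r(A)r(B)$. This block-by-block accounting divides each block's squared Frobenius norm by that block's own operator norm, but the actual denominator is the common quantity $\twonorm{M}^2=(\twonorm{A}+\twonorm{B})^2$. The cross block therefore contributes $\frac{2\tr(A)\tr(B)}{(\twonorm{A}+\twonorm{B})^2}=\frac{2\twonorm{A}\twonorm{B}}{(\twonorm{A}+\twonorm{B})^2}\,r(A)r(B)$, which is far smaller than $r(A)r(B)$ when $\twonorm{A}$ and $\twonorm{B}$ are very disparate. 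Concretely, take $f=3$, $B=\e I_f$ with $\e$ tiny, and $A$ with one eigenvalue $2m/\log m$ and the remaining trace spread over unit eigenvalues: then $r(A)r(B)=\tfrac{3}{2}\log m>\log m$, yet $\fnorm{M}^2/\twonorm{M}^2=O(1)$ and the sub-exponential branch only yields $\exp(-c\sqrt{\log m})$, not $m^{-3}$. So the literal hypothesis $f\vee(r(A)r(B))>\log m$ does not rescue your single-form bound. To be fair, the paper's own proof has the same issue --- it explicitly invokes $f>\log m$ for the $\tr(Z_1AZ_1^T)$ term and $r(A)r(B)\ge\log m$ for the cross term, so it really uses the conjunction rather than the stated disjunction. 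Under that reading your argument does close, and in fact more simply than you state: if $\twonorm{A}\ge\twonorm{B}$ then $\fnorm{M}^2/\twonorm{M}^2\ge f\fnorm{A}^2/(4\twonorm{A}^2)\ge f/4$, while if $\twonorm{B}\ge\twonorm{A}$ then it is at least $m/4\ge(\log m)/4$, so the diagonal blocks alone suffice once $f\gtrsim\log m$. You should either restate the hypothesis you actually need or supply this corrected case analysis in place of the per-block stable-rank heuristic.
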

We prove Lemma~\ref{lemma::trBest} in
Section~\ref{sec::proofoferrors} in the Appendix.
We prove Lemma~\ref{lemma::low-noise} in Section~\ref{sec::lownoiseproof}. 
We mention in passing that Lemma~\ref{lemma::low-noise} is essential
in proving Theorem~\ref{thm::DS} as well.

We state variations on this inequality in Lemma~\ref{lemma::D2improv}
and the remark which immediately follows.
\begin{theorem}
\label{thm::main}
Consider the regression model in 
\eqref{eq::oby} and \eqref{eq::obX}.  Let $d \le f/2$.
Let $\hat\gamma, \hat\Gamma$ be as constructed in \eqref{eq::hatGamma}.
Suppose that the matrix $\hat\Gamma$ satisfies the Lower-$\RE$ condition with curvature $\alpha >0$ and
tolerance $\tau > 0$,
\ben
\label{eq::taumain}
\sqrt{d} \tau \le \min
 \left\{\frac{\alpha}{32 \sqrt{d}}, \frac{\lambda}{4b_0} \right\}
\een
where $d, b_0$ and $\lambda$ are as defined in \eqref{eq::origin}.
Then for any $d$-sparse vectors $\beta^* \in \R^m$, such that
$\twonorm{\beta^*} \le b_0$ and 
\ben
\label{eq::psimain}
&& \norm{\hat\gamma - \hat\Gamma \beta^*}_{\infty}
\le  \half \lambda, \text{ the following bounds hold:}  \\
\label{eq::2-loss}
&& \twonorm{\hat{\beta} -\beta^*} \leq \frac{20}{\alpha}  \lambda \sqrt{d}, \; \;
\text{ and } \; \norm{\hat{\beta} -\beta^*}_1 \leq \frac{80}{\alpha} \lambda d
\een
where $\hat\beta$ is an optimal solution to the Lasso-type estimator as in~\eqref{eq::origin}.
\end{theorem}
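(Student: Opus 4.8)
The plan is to run the standard primal (basic-inequality) argument for $\ell_1$-penalized M-estimators, adapted to the fact that $\hat\Gamma$ need not be positive semidefinite. Write $h = \hat\beta - \beta^*$ and $S = \supp(\beta^*)$, so $|S| = d$. First I would verify feasibility: since $\twonorm{\beta^*} \le b_0$ and $\beta^*$ is $d$-sparse, Cauchy--Schwarz on the support gives $\onenorm{\beta^*} \le \sqrt{d}\,\twonorm{\beta^*} \le b_0 \sqrt{d}$, so $\beta^*$ lies in the constraint set of \eqref{eq::origin}. Comparing the objective at $\hat\beta$ with its value at $\beta^*$, substituting $\hat\beta = \beta^* + h$, and collecting the quadratic and linear pieces yields the basic inequality
\begin{equation*}
\half\, h^T \hat\Gamma h \;\le\; \ip{\hat\gamma - \hat\Gamma\beta^*,\, h} + \lambda\big(\onenorm{\beta^*} - \onenorm{\hat\beta}\big).
\end{equation*}

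Next I would bound the right-hand side. By H\"older and the deviation hypothesis \eqref{eq::psimain}, $\ip{\hat\gamma - \hat\Gamma\beta^*,\, h} \le \norm{\hat\gamma - \hat\Gamma\beta^*}_{\infty}\,\onenorm{h} \le \half\lambda\,\onenorm{h}$. For the penalty difference, the usual split over $S$ and $S^c$ (using $\beta^*_{S^c} = 0$ and the triangle inequality) gives $\onenorm{\beta^*} - \onenorm{\hat\beta} \le \onenorm{h_S} - \onenorm{h_{S^c}}$. Writing $\onenorm{h} = \onenorm{h_S} + \onenorm{h_{S^c}}$ and collecting terms yields the one-sided bound
\begin{equation*}
\half\, h^T \hat\Gamma h \;\le\; \tfrac{3}{2}\lambda\,\onenorm{h_S} - \half\lambda\,\onenorm{h_{S^c}}.
\end{equation*}

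The heart of the argument is to turn the left side into a genuine $\ell_2$ quantity. Applying the Lower-$\RE$ condition for $\hat\Gamma$ gives $h^T\hat\Gamma h \ge \alpha\twonorm{h}^2 - \tau\onenorm{h}^2$, and the tolerance term $\tau\onenorm{h}^2$ is the obstacle: because $\hat\Gamma$ is not PSD, convexity alone does not localize the error, so a crude bound on $\onenorm{h}^2$ is fatal. Here I would exploit the $\ell_1$ constraint directly: both $\hat\beta$ and $\beta^*$ have $\ell_1$ norm at most $b_0\sqrt{d}$, hence $\onenorm{h} \le 2 b_0\sqrt{d}$, so $\tau\onenorm{h}^2 \le 2 b_0\sqrt{d}\,\tau\,\onenorm{h}$. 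The second half of \eqref{eq::taumain}, namely $\sqrt{d}\,\tau \le \lambda/(4 b_0)$, converts this into $\tau\onenorm{h}^2 \le \half\lambda\,\onenorm{h} = \half\lambda(\onenorm{h_S} + \onenorm{h_{S^c}})$. Substituting, the $\onenorm{h_{S^c}}$ contributions cancel favorably, leaving $\tfrac{\alpha}{2}\twonorm{h}^2 \le \tfrac{7}{4}\lambda\,\onenorm{h_S}$ together with the cone inequality $\onenorm{h_{S^c}} \le 7\,\onenorm{h_S}$ (obtained by discarding the nonnegative $\ell_2$ term).

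Finally I would close the loop using the sparsity of $S$: $\onenorm{h_S} \le \sqrt{d}\,\twonorm{h_S} \le \sqrt{d}\,\twonorm{h}$, which fed into $\tfrac{\alpha}{2}\twonorm{h}^2 \le \tfrac{7}{4}\lambda\,\onenorm{h_S}$ gives $\twonorm{h} \le \tfrac{7}{2}\lambda\sqrt{d}/\alpha$; this is well within the stated $20\lambda\sqrt{d}/\alpha$. The $\ell_1$ bound then follows from the cone inequality, $\onenorm{h} = \onenorm{h_S} + \onenorm{h_{S^c}} \le 8\,\onenorm{h_S} \le 8\sqrt{d}\,\twonorm{h}$, giving $\onenorm{h} \le 28\lambda d/\alpha \le 80\lambda d/\alpha$. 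The only genuinely delicate step, and the reason both the radius $b_0\sqrt{d}$ of the feasible set and the bound $\sqrt{d}\,\tau \le \lambda/(4b_0)$ in \eqref{eq::taumain} are imposed, is the control of the tolerance term $\tau\onenorm{h}^2$: it is precisely the interplay of the feasibility constraint with \eqref{eq::taumain} that absorbs this term into the linear order and compensates for the loss of convexity. (The first half of \eqref{eq::taumain}, $\sqrt{d}\,\tau \le \alpha/(32\sqrt{d})$, is what keeps the curvature term $\tfrac{\alpha}{2}\twonorm{h}^2$ dominant throughout, so that the above rearrangements stay in force; the argument parallels Theorem~1 of~\cite{LW12}.)
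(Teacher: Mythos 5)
Your proposal is correct and follows essentially the same route as the paper: the basic inequality from optimality, H\"older with \eqref{eq::psimain}, the triangle-inequality split of the penalty over $S$ and $S^c$, the Lower-$\RE$ bound, and the feasibility constraint $\onenorm{h}\le 2b_0\sqrt{d}$ combined with $\sqrt{d}\,\tau\le\lambda/(4b_0)$ to absorb the tolerance term. The only difference is bookkeeping: the paper first proves a cone lemma ($\onenorm{h_{S^c}}\le 3\onenorm{h_S}$) and then invokes the first half of \eqref{eq::taumain} to re-absorb $\tau\onenorm{h}^2\le 16d\tau\twonorm{h}^2$ in the final step, whereas you absorb the tolerance once via feasibility and thereby obtain sharper constants without ever using $\sqrt{d}\,\tau\le\alpha/(32\sqrt{d})$.
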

We defer the proof of Theorem~\ref{thm::main} to
Section~\ref{sec::proofofmain}, for clarity of presentation.
In section~\ref{sec::lassooutline}, we provide two Lemmas~\ref{lemma::lowerREI} and~\ref{lemma::dmain}
 in checking the RE conditions as well condition \eqref{eq::taumain}.
One can then combine with Theorem~\ref{thm::main}, Lemmas~\ref{lemma::low-noise},
~\ref{lemma::lowerREI} and~\ref{lemma::dmain}
 to prove Theorem~\ref{thm::lasso}. 
In more details, Lemma~\ref{lemma::lowerREI} checks the Lower and the Upper $\RE$ conditions on the modified gram matrix:
\ben
\label{eq::modifiedgramA}
\hat \Gamma_A :=   X^T X - \hat\tr(B) I_{m}
\een
while Lemma~\ref{lemma::dmain} checks condition \eqref{eq::taumain} as
stated in Theorem~\ref{thm::main} for curvature $\alpha$ and tolerance $\tau$ 
as derived in Lemma~\ref{lemma::lowerREI}. Finally Lemma~\ref{lemma::low-noise} ensures that 
\eqref{eq::psimain} holds with high probability for $\lambda$ 
chosen as in~\eqref{eq::psijune}. We defer stating these lemmas in Section~\ref{sec::lassooutline}.
The full proof of Theorem~\ref{thm::lasso} appears in Section~\ref{sec::proofofthmlasso}. 

For Theorem~\ref{thm::DS}, our first goal is to show that the
following holds  with high probability
\bens
\norm{\hat\gamma - \hat\Gamma  \beta^*}_{\infty} 
& = & 
\norm{\onef X^T(y - X \beta^*) + \onef \hat\tr(B) \beta^*}_{\infty} \;
\le \; \mu \twonorm{\beta^*}  + \tau,
\eens
where  $\mu, \tau$ are as chosen in \eqref{eq::paraDS}.
This forms the basis for proving the $\ell_q$ convergence, where $q
\in [1, 2]$,  for the Conic Programming estimator \eqref{eq::Conic}.
This follows immediately from Lemma~\ref{lemma::low-noise}. More
explicitly, we will state it in Lemma~\ref{lemma::DS}.
Before we proceed, we first need to introduce some notation and definitions.
Let $X_0 = Z_1 A^{1/2}$ be defined as in Definition~\ref{def::subgdata}. Let $k_0 = 1+\lambda$. 
First we need to define the $\ell_q$-sensitivity parameter for $\Psi
:= \onef X_0^T X_0$ following~\cite{BRT14}: 
\ben
\label{eq::sense}
\kappa_{q}(d_0, k_0) 
& = & \min_{J : \abs{J} \le d_0} \min_{\Delta \in \Cone_J(k_0)}
\frac{\norm{\Psi \Delta}_{\infty}}{\norm{\Delta}_q} \; \; \text{ where
} \; \\
\W_J(k_0) &  = & \left\{x \in \R^m \;| \; \mbox{ s.t. } \; \norm{x_{J^c}}_1 \leq k_0 \norm{x_{J}}_1 \right\}.
\een
See also~\cite{GT11}.
Let $(\hat\beta, \hat{t})$ be the optimal solution
to~\eqref{eq::Conic} and denote by $v = \hat\beta-\beta^*$.
We will state the following auxiliary lemmas, the first of which is 
deterministic in nature.
The two lemmas reflect the two geometrical constraints 
on the optimal solution to \eqref{eq::Conic}. 
The optimal solution $\hat\beta$ satisfies:
\bnum
\item
$v$ obeys the following cone constraint:
$\onenorm{v_{\Sc}} \le   k_0 \onenorm{v_S}$ and $\hat{t} \le
\inv{\lambda} \onenorm{v} + \twonorm{\beta^*}$.
\item
$\norm{\Psi v}_{\infty}$ is upper bounded by a 
quantity at the order of $O\left(\mu (\twonorm{\beta^*} + \onenorm{v}) + \tau\right)$
\enum
Now combining Lemma 6 of~\cite{BRT14} and an earlier result of the two
authors (cf. Theorem~\ref{thm:subgaussian-T-intro}~\cite{RZ13}), we can show that 
 the $\RE(2d_0,  3(1+ \lambda), A^{1/2})$ condition and the sample requirement as in~\eqref{eq::samplebound}
 are enough to ensure that the $\ell_q$-sensitivity parameter satisfies the following
lower bound for all $1\le q \le 2$: for some contant $c$,
\ben
\nonumber
\kappa_{q}(d_0, k_0) & \ge & c d_0^{-1/q} \; \;  \text{ which ensures
  that for } \; \; v =  \hat\beta-\beta^*, \\
\label{eq::lqcond}
{\norm{\Psi v}_{\infty}}  &\ge & \kappa_{q}(d_0,
k_0) {\norm{v}_q} \ge  c d_0^{-1/q} \norm{v}_q \; \; \text{ where } \;
\; \Psi = \onef X_0^T X_0.
\een
Combining \eqref{eq::lqcond} with Lemmas \ref{lemma::DS},~\ref{lemma::DS-cone} 
and~\ref{lemma::grammatrix} gives us both the lower and upper bounds on 
$\norm{\Psi v}_{\infty}$, with the lower bound being 
$\kappa_{q}(d_0, k_0) \norm{v}_q$ and the upper bound as specified in Lemma~\ref{lemma::grammatrix}.
Following some algebraic manipulation, this yields the bound on the
$\norm{v}_q$ for all $1\le q \le 2$. 
We state Lemmas \ref{lemma::DS} to \ref{lemma::grammatrix} in
Section~\ref{sec::DSoutline} while leaving the proof for 
Theorem~\ref{thm::DS} in Section~\ref{sec::proofofDSthm}.

\subsection{Additional technical results for Theorem~\ref{thm::lasso}}
\label{sec::lassooutline}
The main focus of the current section is
to apply Theorem~\ref{thm::main} to show Theorem~\ref{thm::lasso}, which applies to the general
 subgaussian model as considered in the present work.
We first state Lemma~\ref{lemma::lowerREI}, which follows immediately
from Corollary~\ref{coro::BC}.
First, we replace (A3) with (A3') which reveals some additional information
regarding the constant hidden inside the $O(\cdot)$ notation.
\bnum
\item[(A3')]
Suppose (A3) holds; moreover, for $D_2 = 2(\twonorm{A} +
\twonorm{B})$, $m f \ge 1024 C_0^2 D_2^2 K^4 \log
m/{\lambda_{\min}^2(A)}$ or equivalently,
\bens
\frac{\lambda_{\min}(A)}{\twonorm{A} + \twonorm{B}}
> C_K \sqrt{\frac{\log m}{mf}}  \text{ for some large enough contant $C_K$}.
\eens
\enum
\begin{lemma}{{\bf(Lower and Upper-RE conditions)}}
\label{lemma::lowerREI} 
Suppose (A1), (A2) and (A3') hold. 
Denote by $\V := 3eM_A^3 /2$, where $M_A$ is as defined
in~\eqref{eq::defineM}. 
Let $s_0$ be as defined in \eqref{eq::s0cond}.
Suppose  that for some $c' > 0$, 
\ben
\label{eq::trBlem}
\frac{\tr(B)}{\twonorm{B}} 
& \ge & c' K^4 \frac{s_0}{\ve^2} \log\left(\frac{3 e m}{s_0
    \ve}\right)\; \; \;\text{ where } \; \ve =\inv{2 M_A}.
\een
Let $\A_0$ be the event that the modified gram
matrix $\hat\Gamma_A$ as defined in~\eqref{eq::modifiedgramA}
satisfies the Lower as well as Upper $\RE$ conditions with
\bens
\text{curvature} && 
\alpha = \half \lambda_{\min}(A), \;\text{smoothness} \; \; \bar\alpha
= 3 \lambda_{\max}(A)/2, \\
\text{ and tolerance } && 
\frac{512 C^2 \vp(s_0)^2}{\lambda_{\min}(A)}\frac{\log m}{f}
 \le  \tau :=  \frac{\alpha}{s_0}  \le \frac{1024 C^2 \vp^2(s_0+1)}{\lambda_{\min}(A)}\frac{\log m}{f}
\eens
for $\alpha, \bar\alpha$ and $\tau$ as defined in
Definitions~\ref{def::lowRE} and \ref{def::upRE}, and $C, s_0, \vp(s_0)$ in~\eqref{eq::s0cond}.
Then $\prob{\A_0} \ge 1- 4\exp\left(-\frac{c_3 f}{M_A^2 \log m}
  \log\left(\frac{\V m \log m}{f}\right)\right) -2\exp\left(- \frac{4c_2 f}{M_A^2 K^4 }\right) - 6/m^3$.
\end{lemma}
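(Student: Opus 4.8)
The plan is to read Lemma~\ref{lemma::lowerREI} off as an instantiation of the uniform deviation bound for the corrected Gram matrix supplied by Corollary~\ref{coro::BC}, after isolating the error incurred by estimating $\tr(B)$. Recall that $X=X_0+W$ with $X_0=Z_1A^{1/2}$ and $W=B^{1/2}Z_2$ independent, so $\E\,\tfrac1f X^T X = A+\tau_B I_m$ with $\tau_B=\tr(B)/f$. Writing $\hat\Gamma=\tfrac1f\hat\Gamma_A=\tfrac1f X^TX-\hat\tau_B I_m$ for $\hat\Gamma_A$ as in \eqref{eq::modifiedgramA}, I would first record the decomposition
$$\hat\Gamma - A \;=\; \Big(\tfrac1f X^T X - \tau_B I_m - A\Big) + (\tau_B-\hat\tau_B)I_m,$$
so that for every $\theta$,
$$\theta^T(\hat\Gamma-A)\theta \;=\; \theta^T\Delta\,\theta + (\tau_B-\hat\tau_B)\twonorm{\theta}^2, \qquad \Delta:=\tfrac1f X^TX-\tau_B I_m-A.$$
It thus suffices to bound the centered quadratic form $\theta^T\Delta\,\theta$ uniformly and to absorb the scalar perturbation $(\tau_B-\hat\tau_B)$ into the curvature.

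Second, I would apply Corollary~\ref{coro::BC} to $\Delta$ with relative accuracy $\ve=1/(2M_A)$ and sparsity level $s_0$ of \eqref{eq::s0cond}, which is exactly the regime in which the effective-rank hypothesis \eqref{eq::trBlem} on $r(B)=\tr(B)/\twonorm{B}$ guarantees the concentration (the $\rho_{\max}(s_0,A)$ contribution comes from the $X_0^TX_0$ part via the subgaussian $\RE$ theorem, the $\tau_B$ contribution from the $W^TW$ part via the matrix-variate concentration, so that the deviation scale is governed by $\vp(s_0)=\rho_{\max}(s_0,A)+\tau_B$). This yields, off an event of probability at most $4\exp(\cdots)+2\exp(\cdots)$, a bound $|\theta^T\Delta\,\theta|\le \ve'\twonorm{\theta}^2+\tau\onenorm{\theta}^2$ for all $\theta$, with $\ve'$ a fixed multiple of $\ve\,\vp(s_0)$; the constant $M_A=64C\vp(s_0)/\lambda_{\min}(A)$ of \eqref{eq::defineM} together with $\ve=1/(2M_A)$ is calibrated precisely so that $\ve'\le\tfrac14\lambda_{\min}(A)$. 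The tolerance is read off deterministically from the defining property of $s_0$: squaring $\sqrt{s_0}\,\vp(s_0)\le\frac{\lambda_{\min}(A)}{32C}\sqrt{f/\log m}$ gives $1/s_0\ge 1024C^2\vp(s_0)^2(\log m)/(\lambda_{\min}^2(A)f)$, hence $\tau=\tfrac{\lambda_{\min}(A)}{2s_0}\ge \frac{512C^2\vp(s_0)^2}{\lambda_{\min}(A)}\frac{\log m}{f}$, and maximality of $s_0$ (so $s_0+1$ violates \eqref{eq::s0cond}) together with $1/s_0\le 2/(s_0+1)$ supplies the matching upper bound with $\vp(s_0+1)$.

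Third, I would control the trace-estimation term. By Lemma~\ref{lemma::trBest}, on the event $\B_6$ one has $|\tau_B-\hat\tau_B|\le D_1K^2 r_{m,m}$, and assumption (A3') (the lower bound on $\lambda_{\min}(A)/(\twonorm{A}+\twonorm{B})$ relative to $\sqrt{(\log m)/(mf)}$) forces this to be at most $\tfrac14\lambda_{\min}(A)$. Adding the two estimates gives $|\theta^T(\hat\Gamma-A)\theta|\le\tfrac12\lambda_{\min}(A)\twonorm{\theta}^2+\tau\onenorm{\theta}^2$ uniformly. Since $\theta^T A\theta\in[\lambda_{\min}(A),\lambda_{\max}(A)]\,\twonorm{\theta}^2$, subtracting yields the Lower-$\RE$ bound with curvature $\alpha=\tfrac12\lambda_{\min}(A)$, while adding yields the Upper-$\RE$ bound with smoothness $\lambda_{\max}(A)+\tfrac12\lambda_{\min}(A)\le\tfrac32\lambda_{\max}(A)$, both with tolerance $\tau=\alpha/s_0$. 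A union bound over the concentration failure of Corollary~\ref{coro::BC} (the two exponential terms) and the events controlling $\hat\tau_B$ via Lemma~\ref{lemma::trBest} then delivers the stated probability $1-4\exp(\cdots)-2\exp(\cdots)-6/m^3$.

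The genuine analytic content — the uniform sparse concentration of $\theta^T\Delta\,\theta$ and the peeling from $s_0$-sparse vectors to all of $\R^m$ in the $\twonorm{\cdot}^2/\onenorm{\cdot}^2$ form — resides entirely in Corollary~\ref{coro::BC}, which is invoked as a black box. The only place demanding care is the budgeting of constants: verifying that the single accuracy $\ve=1/(2M_A)$ simultaneously absorbs the random fluctuation $\ve'$ and, together with (A3'), the trace-estimation error, so that their sum stays below $\tfrac12\lambda_{\min}(A)$ and the curvature $\tfrac12\lambda_{\min}(A)$ survives intact. This calibration, rather than any probabilistic estimate, is the main obstacle to a fully rigorous write-up.
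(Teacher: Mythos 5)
Your proposal is correct and follows essentially the same route as the paper: the sparse bilinear concentration for the corrected Gram matrix (this is Theorem~\ref{thm::AD}, whose probabilistic content you fold into your invocation of the purely deterministic Corollary~\ref{coro::BC}), the peeling to all of $\R^m$ via Corollary~\ref{coro::BC}, the calibration $\ve=1/(2M_A)$, and the two-sided bound on $\tau=\alpha/s_0$ obtained from the maximality of $s_0$ in \eqref{eq::s0cond} all match Section~\ref{sec::records}. Your one reorganization --- treating $(\tau_B-\hat\tau_B)I_m$ as a scalar perturbation absorbed after the peeling step, rather than as the term $IV$ inside the sparse-vector bound of Theorem~\ref{thm::AD} --- is immaterial, since a multiple of the identity contributes only to the $\twonorm{\cdot}^2$ coefficient and commutes with the sparse-to-general extension.
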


\begin{lemma}
  \label{lemma::dmain}
Suppose all conditions in Lemma~\ref{lemma::lowerREI} hold.
Suppose that $s_0 \ge 3$ and
\ben
\label{eq::dlassoproof}
&& d:= \abs{\supp(\beta^*)} \le C_A \frac{f}{\log m} \left\{c' D_\phi
  \wedge 2 \right\} \;\;\text{ where }  C_A := \inv{128 M_{A}^2}, \\
\nonumber
&& D_{\phi} = \left(\frac{K^2M^2_{\e}}{b_0^2}   +  K^4 \phi \right) \ge
 K^4 \phi \ge \phi
\een
where $c', \phi, b_0, M_{\e}$ and $K$ are as defined in
Theorem~\ref{thm::lasso}, where we assume that
$ \twonorm{\beta^*}^2 \ge \phi b_0^2  \; \text{ for some } \; 0 < \phi
\le 1$.
Then the following condition holds
\ben
\label{eq::dcond}
d \le  \frac{s_0}{32} \bigwedge
\left(\frac{s_0}{\alpha}\right)^2 \frac{\log m}{f} \left(\frac{\psi}{b_0}\right)^2
\een
where $\psi$ is as defined in~\eqref{eq::psijune} and 
$\alpha = \lambda_{\min}(A)/2$.
\end{lemma}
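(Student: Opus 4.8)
The plan is to show that the ``practical'' sparsity bound \eqref{eq::dlassoproof} forces the two ``technical'' inequalities making up \eqref{eq::dcond}, the point being that \eqref{eq::dcond} is exactly what verifies condition \eqref{eq::taumain} of Theorem~\ref{thm::main}: with $\tau=\alpha/s_0$ the first term $d\le s_0/32$ is equivalent to $\sqrt d\,\tau\le\alpha/(32\sqrt d)$, while the second term, combined with $\lambda\ge4\psi\sqrt{\log m/f}$ from \eqref{eq::psijune}, yields $\sqrt d\,\tau\le\lambda/(4b_0)$. The whole argument rests on a two–sided control of $s_0$. Since $s_0$ is by construction the \emph{largest} integer satisfying \eqref{eq::s0cond}, $s_0$ itself obeys $s_0\vp^2(s_0)\le\frac{\lambda_{\min}^2(A)}{1024C^2}\frac{f}{\log m}$ while $s_0+1$ violates it; equivalently this is the two–sided bound on $\tau=\alpha/s_0$ already recorded in Lemma~\ref{lemma::lowerREI}.

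The first step I would carry out is to extract a clean lower bound on $s_0$. The failure of \eqref{eq::s0cond} at $s_0+1$ (the right-hand inequality for $\tau$ in Lemma~\ref{lemma::lowerREI}) gives $s_0\ge\frac{\lambda_{\min}^2(A)}{2048C^2\vp^2(s_0+1)}\frac{f}{\log m}$. To phrase this through $\vp(s_0)$ rather than $\vp(s_0+1)$ I would invoke the standard sub-multiplicativity of sparse eigenvalues, $\rho_{\max}(s_0+1,A)\le\rho_{\max}(2s_0,A)\le2\rho_{\max}(s_0,A)$ for $s_0\ge1$, so that $\vp(s_0+1)\le2\vp(s_0)$ and hence
\[
s_0\ \ge\ \frac{\lambda_{\min}^2(A)}{8192\,C^2\vp^2(s_0)}\,\frac{f}{\log m}.
\]
This single inequality drives both halves of \eqref{eq::dcond}.

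For the first half I would use the trivial branch $d\le2C_A\frac{f}{\log m}=\frac{1}{64M_A^2}\frac{f}{\log m}$ of \eqref{eq::dlassoproof} and substitute $M_A^2=4096\,C^2\vp^2(s_0)/\lambda_{\min}^2(A)$ from \eqref{eq::defineM}, obtaining $d\le\frac{\lambda_{\min}^2(A)}{262144\,C^2\vp^2(s_0)}\frac{f}{\log m}$; since $262144=32\cdot8192$, the displayed lower bound on $s_0$ gives $\frac{\lambda_{\min}^2(A)}{262144\,C^2\vp^2(s_0)}\frac{f}{\log m}\le s_0/32$, hence $d\le s_0/32$ (the constants cancel exactly, which is reassuring). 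For the second half I would first lower-bound the signal factor: from $(K\twonorm{\beta^*}+M_{\e})^2\ge K^2\twonorm{\beta^*}^2+M_{\e}^2\ge K^2\phi b_0^2+M_{\e}^2$ and $D_\phi=K^2(M_{\e}^2/b_0^2+K^2\phi)$ one gets $(\psi/b_0)^2\ge C_0^2D_2^2D_\phi$. Feeding in the $c'D_\phi$ branch $d\le c'C_A D_\phi\frac{f}{\log m}$, the expression for $M_A^2$, and the lower bound on $s_0$, the factor $D_\phi$ cancels and the target inequality collapses to a requirement of the form $\vp^2(s_0)\le C_0^2D_2^2/(\mathrm{const}\cdot c'\,C^2)$.

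This final reduction is where I expect the only genuine subtlety to lie, and it is closed using the crude a priori bound $\vp(s_0)=\rho_{\max}(s_0,A)+\tau_B\le\twonorm{A}+\twonorm{B}=D_2/2$, so that $\vp^2(s_0)\le D_2^2/4$, together with the fact that $c'$ is an absolute constant we may take small relative to $C_0^2/C^2$. In short, the lemma is essentially a bookkeeping exercise whose crux is twofold: (i) converting the failure of \eqref{eq::s0cond} at $s_0+1$ into a lower bound on $s_0$ expressed through $\vp(s_0)$, via the sparse-eigenvalue doubling; and (ii) checking that the slack built into $C_A=1/(128M_A^2)$ and into the absolute constant $c'$ is precisely enough to absorb the residual $\vp^2(s_0)$. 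Everything else is direct substitution of the definitions of $M_A$, $\alpha$, $\psi$ and $D_\phi$.
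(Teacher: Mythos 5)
Your overall architecture matches the paper's: you verify \eqref{eq::dcond} by turning the maximality of $s_0$ in \eqref{eq::s0cond} into a lower bound on $s_0$, handle the two halves via the two branches of \eqref{eq::dlassoproof}, and invoke the doubling $\vp(s_0+1)\le 2\vp(s_0)$ (equivalently $M_+\le 2M_A$ in the paper's notation). Your treatment of the first half, $d\le s_0/32$, is correct and the constants cancel exactly as you say.

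The gap is in the closure of the second half. Carrying out your substitutions explicitly: with $M_A^2=1024\,C^2\vp^2(s_0)/\alpha^2$ and your lower bound $s_0\ge\frac{\alpha^2}{2048\,C^2\vp^2(s_0)}\frac{f}{\log m}$, the branch $d\le \frac{c'D_\phi}{128M_A^2}\frac{f}{\log m}$ together with $(\psi/b_0)^2\ge C_0^2D_2^2D_\phi$ reduces the target to $c'\vp^2(s_0)\le \frac{C_0^2D_2^2}{32\,C^2}$. Since $C=C_0/\sqrt{c'}$ forces $C_0^2/C^2=c'$ \emph{identically}, your proposed escape hatch --- ``take $c'$ small relative to $C_0^2/C^2$'' --- is vacuous: the $c'$ cancels and you are left needing $\vp^2(s_0)\le D_2^2/32$, while the a priori bound only gives $\vp(s_0)\le \lambda_{\max}(A)+\tau_B\le D_2/2$, i.e.\ $\vp^2(s_0)\le D_2^2/4$. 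The factor you are missing comes from converting to $\vp(s_0)$ and to $s_0$ too early: you pay $(s_0+1)/2\le s_0$ and $\vp(s_0+1)\le 2\vp(s_0)$ \emph{before} squaring, which costs a factor of $16$ where the paper pays only $2\cdot 4=8$. The paper instead upper-bounds $c'\le\bigl(\frac{C_0D_2}{CD}\bigr)^2$ with $D=\vp(s_0+1)$ (absorbing the $D_2\ge D$ slack on the $d$ side), keeps $(s_0+1)$ and $\vp(s_0+1)$ in the chain, and only at the last step uses $(s_0+1)^2\le 2s_0^2$ (valid for $s_0\ge 3$, which is why that hypothesis appears) and $16CD\le \alpha M_A=32C\vp(s_0)$; the latter is \emph{exactly} $\vp(s_0+1)\le 2\vp(s_0)$ with no residual requirement on $\vp(s_0)$ versus $D_2$. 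Your proof needs to be rearranged along these lines; as written, the final inequality does not close.
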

We prove Lemmas~\ref{lemma::lowerREI} and \ref{lemma::dmain} in Sections~\ref{sec::records} and~\ref{sec::dmain} respectively.

\begin{remark}
Clearly for $d, b_0, \phi$ as bounded in Theorem~\ref{thm::lasso}, 
we have by assumption~\eqref{eq::snrcond}
the following upper and lower bound on $D_{\phi}$:
\bens
2 K^4 \phi \ge D_{\phi} := \left(\frac{M^2_{\e}K^2}{b_0^2}   +  K^4 \phi \right) \ge
K^4 \phi.
\eens
In this regime, the conditions on $d$ as in~\eqref{eq::dlassoproof}
can be conveniently expressed as in \eqref{eq::dlasso}. 
\end{remark}

\subsection{Technical lemmas for Theorem~\ref{thm::DS} }
\label{sec::DSoutline}
We state the technical lemmas needed for proving  Theorem~\ref{thm::DS}.
The proof for Lemma~\ref{lemma::DS-cone} follows directly from that
in~\cite{BRT14} in view of  Lemma~\ref{lemma::DS}.
\begin{lemma}
\label{lemma::DS}
Suppose all conditions in Lemma~\ref{lemma::low-noise} hold.
Then on event $\B_0$ as defined therein,
the pair $(\beta, t) = (\beta^*, \twonorm{\beta^*})$ 
belongs to the feasible set of the minimization problem
\eqref{eq::Conic} with $r_{m,f} := C_0 K  \sqrt{\frac{\log m}{f}}$,
\ben
\label{eq::paraDS}
\mu \asymp 2 D_2 K r_{m,f} \; \; \text{ and } \; \; 
\tau \asymp D_0 M_{\e} r_{m,f} 
\een
where $D_0 =  (\sqrt{\tau_B} + \sqrt{a_{\max}})$ 
and $D_2 = 2 (\twonorm{A} + \twonorm{B})$ as in Theorem~\ref{thm::DS}.
\end{lemma}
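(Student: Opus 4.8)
The plan is to verify directly that the candidate pair $(\beta^*,\twonorm{\beta^*})$ satisfies both defining constraints of the feasible set $\U$ in \eqref{eq::Conic}. The constraint $\twonorm{\beta}\le t$ holds with equality for $t=\twonorm{\beta^*}$, so everything reduces to establishing the first constraint, namely
\[
\norm{\hat\gamma-\hat\Gamma\beta^*}_\infty\le\mu\twonorm{\beta^*}+\tau
\]
on the event $\B_0$, for the stated choices $\mu\asymp 2D_2 K r_{m,f}$ and $\tau\asymp D_0 M_{\e}r_{m,f}$.

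First I would rewrite the residual. Substituting $y-X\beta^*=\e-W\beta^*$ (from \eqref{eq::oby} and \eqref{eq::obX}) and $\hat\Gamma=\inv{f}X^TX-\inv{f}\hat\tr(B)I_m$ into the identity $\hat\gamma-\hat\Gamma\beta^*=\inv{f}X^T(y-X\beta^*)+\inv{f}\hat\tr(B)\beta^*$, and using $X^TW=X_0^TW+W^TW$, gives
\[
\hat\gamma-\hat\Gamma\beta^*=\inv{f}X^T\e-\inv{f}\big(X_0^TW+(W^TW-\hat\tr(B)I_m)\big)\beta^*.
\]
The first term is the \emph{response-noise part} (linear in $\e$), and the second is the \emph{measurement-error part} (scaling with $\twonorm{\beta^*}$). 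This is the decomposition underlying Lemma~\ref{lemma::low-noise}; the point here is to track the two pieces separately rather than through the single combined constant $D_2$ of that lemma.

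For the response-noise part $\inv{f}X^T\e=\inv{f}X_0^T\e+\inv{f}W^T\e$, each coordinate is, after conditioning on $\e$, a sum of products of independent $\psi_2$ variables. The natural scales are $\sqrt{a_{\max}}$ for $X_0$ (the $j$th column has entries of $\psi_2$-norm $K\sqrt{a_{jj}}$) and $\sqrt{\tau_B}$ for $W$; the latter is the crux, obtained from the estimate $\E\twonorm{B^{1/2}\e}^2\le C M_{\e}^2\tr(B)=C M_{\e}^2 f\tau_B$, which replaces the crude worst-case factor $\twonorm{B}^{1/2}$ by $\sqrt{\tau_B}$. A union bound over the $m$ coordinates then yields, on $\B_0$,
\[
\norm{\inv{f}X^T\e}_\infty\le C(\sqrt{a_{\max}}+\sqrt{\tau_B})K M_{\e}\sqrt{\tfrac{\log m}{f}}=C\,D_0 M_{\e}r_{m,f}\asymp\tau.
\]
For the measurement-error part, I would note that this is exactly the portion of $\hat\gamma-\hat\Gamma\beta^*$ carrying the factor $\twonorm{\beta^*}$ in Lemma~\ref{lemma::low-noise}; tracking that portion gives
\[
\norm{\inv{f}\big(X_0^TW+(W^TW-\hat\tr(B)I_m)\big)\beta^*}_\infty\le C\,D_2 K^2\twonorm{\beta^*}\sqrt{\tfrac{\log m}{f}}=C\,D_2 K r_{m,f}\twonorm{\beta^*}\asymp\mu\twonorm{\beta^*},
\]
where the bilinear piece $\inv{f}X_0^TW\beta^*$ and the centered quadratic piece (controlled by a Hanson--Wright bound together with the concentration $\abs{\hat\tau_B-\tau_B}\le D_1 K^2 r_{m,m}$ of Lemma~\ref{lemma::trBest}, subsumed in $\B_0$) are both absorbed into $D_2=2(\twonorm{A}+\twonorm{B})$.

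Adding the two displays shows $\norm{\hat\gamma-\hat\Gamma\beta^*}_\infty\le\mu\twonorm{\beta^*}+\tau$ on $\B_0$, which is the desired feasibility; the factor $2$ in $\mu\asymp 2D_2 K r_{m,f}$ leaves enough slack to absorb the hidden constants. The main obstacle is the response-noise part: obtaining the refined constant $D_0$ (with $\sqrt{\tau_B}$) rather than the crude $D_2$ (with $\twonorm{B}^{1/2}$) is precisely what keeps $\tau$ small when the measurement error shrinks, and — as the analysis above shows — the combined bound of Lemma~\ref{lemma::low-noise} is too coarse for this, since it only gives $\tau$ with $D_2$ unless one imposes a signal-to-noise condition $K\twonorm{\beta^*}\gtrsim M_{\e}$. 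The separation of the $\e$-linear terms from the $W$-quadratic terms, and the expected-norm bound $\E\twonorm{B^{1/2}\e}^2\le C M_{\e}^2\tr(B)$ in place of $\twonorm{B^{1/2}\e}\le\twonorm{B}^{1/2}\twonorm{\e}$, is therefore the essential step.
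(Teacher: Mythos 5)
Your proposal is correct and follows essentially the same route as the paper: the paper's proof of Lemma~\ref{lemma::DS} simply invokes the intermediate display in the proof of Lemma~\ref{lemma::low-noise} (where the term $U_1 = \onef\norm{X_0^T\e + W^T\e}_\infty \le D_0 M_{\e} r_{m,f}$ is kept separate from the $\twonorm{\beta^*}$-proportional terms $U_2+U_3+U_4 \le 2D_2 K r_{m,f}\twonorm{\beta^*}$), which is exactly the decomposition you reconstruct. Your closing observation that the \emph{statement} of Lemma~\ref{lemma::low-noise} is too coarse and one must return to its proof is precisely why the paper phrases its one-line argument as ``by the \emph{proof} of Lemma~\ref{lemma::low-noise}.''
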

\begin{lemma}
\label{lemma::DS-cone}
Let $\mu, \tau >0$ be set.
Suppose that the pair $(\beta, t) = (\beta^*, \twonorm{\beta^*})$ belongs to 
the feasible set of the minimization problem \eqref{eq::Conic}, for which
$(\hat\beta, \hat{t})$ is an optimal solution.
Denote by $v = \hat\beta-\beta^*$. Then
\bens
\onenorm{v_{\Sc}} & \le &  (1+\lambda) \onenorm{v_S} \; \text{ and }
\; \hat{t} \; \le \; \inv{\lambda}\onenorm{v} + \twonorm{\beta^*}.
\eens
\end{lemma}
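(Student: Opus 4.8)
The plan is to run the standard ``basic inequality'' argument for $\ell_1$-penalized estimators, but adapted to the conic formulation in which the scalar variable $t$ enters the objective. This lemma is purely deterministic: the only hypothesis used is that $(\beta^*,\twonorm{\beta^*})$ is feasible and $(\hat\beta,\hat t)$ is optimal for \eqref{eq::Conic}. Write $S=\supp(\beta^*)$, so that $\beta^*_{\Sc}=0$, and recall $v=\hat\beta-\beta^*$. Since $(\beta^*,\twonorm{\beta^*})$ is feasible and $(\hat\beta,\hat t)$ minimizes $\onenorm{\beta}+\lambda t$, comparing objective values gives $\onenorm{\hat\beta}+\lambda\hat t\le\onenorm{\beta^*}+\lambda\twonorm{\beta^*}$. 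Splitting $\onenorm{\hat\beta}$ across $S$ and $\Sc$, using $\hat\beta_{\Sc}=v_{\Sc}$ and the reverse triangle inequality $\onenorm{\beta^*+v_S}\ge\onenorm{\beta^*}-\onenorm{v_S}$, reduces this to the single inequality
$$\onenorm{v_{\Sc}}+\lambda\hat t\le\onenorm{v_S}+\lambda\twonorm{\beta^*},$$
from which both conclusions will follow.

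The bound on $\hat t$ is then immediate: dropping the nonnegative term $\onenorm{v_{\Sc}}$ gives $\lambda\hat t\le\onenorm{v_S}+\lambda\twonorm{\beta^*}\le\onenorm{v}+\lambda\twonorm{\beta^*}$, and dividing by $\lambda$ yields $\hat t\le\inv{\lambda}\onenorm{v}+\twonorm{\beta^*}$. For the cone constraint, after rearranging the displayed inequality the only quantity left to control is the deficit $\twonorm{\beta^*}-\hat t$. This is where the feasibility constraint $\twonorm{\hat\beta}\le\hat t$ is brought in, together with the fact that $\beta^*$ is supported on $S$. I would restrict $\hat\beta$ to $S$ (which only decreases its Euclidean norm), write $\hat\beta_S=\beta^*+v_S$, and apply the reverse triangle inequality in $\ell_2$:
$$\hat t\ge\twonorm{\hat\beta}\ge\twonorm{\hat\beta_S}=\twonorm{\beta^*+v_S}\ge\twonorm{\beta^*}-\twonorm{v_S}.$$
Hence $\twonorm{\beta^*}-\hat t\le\twonorm{v_S}\le\onenorm{v_S}$, and substituting into the displayed inequality gives $\onenorm{v_{\Sc}}\le\onenorm{v_S}+\lambda(\twonorm{\beta^*}-\hat t)\le(1+\lambda)\onenorm{v_S}$, as claimed.

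The one delicate point — and the place where a naive argument produces the wrong constant $\frac{1+\lambda}{1-\lambda}$ rather than $1+\lambda$ — is precisely the estimate of $\twonorm{\beta^*}-\hat t$. Bounding it crudely by the full $\twonorm{v}$ through $\twonorm{\beta^*}-\twonorm{\hat\beta}\le\twonorm{v}$ would reintroduce $\onenorm{v_{\Sc}}$ on the right-hand side and force a division by $1-\lambda$, degrading the constant and requiring $\lambda<1$. The remedy is to project onto $S$ \emph{before} invoking the reverse triangle inequality, so that only $\twonorm{v_S}$ appears; this step relies on the orthogonal decomposition $\twonorm{\hat\beta}^2=\twonorm{\hat\beta_S}^2+\twonorm{\hat\beta_{\Sc}}^2\ge\twonorm{\hat\beta_S}^2$ and on $\supp(\beta^*)\subseteq S$. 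I expect this to be the main (and only) genuinely nontrivial manipulation; the rest is bookkeeping with the basic inequality.
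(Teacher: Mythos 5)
Your proof is correct and follows essentially the same route as the paper: the basic inequality from optimality, the reverse triangle inequality in $\ell_1$ on $S$, and crucially the restriction $\twonorm{\hat\beta}\ge\twonorm{\hat\beta_S}$ before applying the reverse triangle inequality in $\ell_2$ to get $\twonorm{\beta^*}-\hat t\le\twonorm{v_S}\le\onenorm{v_S}$. The delicate point you flag is exactly the step the paper uses ($\twonorm{\beta^*}-\twonorm{\hat\beta}\le\twonorm{\beta^*}-\twonorm{\hat\beta_S}\le\twonorm{v_S}$), so there is nothing to add.
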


\begin{lemma}
\label{lemma::grammatrix}
On event $\B_0\cap \B_{10}$, 
\bens
\norm{\Psi v}_{\infty} \le \mu_1 \twonorm{\beta^*} + \mu_2
\onenorm{v} + \tau
\eens
where $\mu_1 = 2 \mu$, $\mu_2  = \mu(\inv{\lambda} + 1)$ and $\tau' = 2 \tau$
for $\mu, \tau$ as defined in \eqref{eq::paraDS}.
\end{lemma}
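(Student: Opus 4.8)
The plan is to compare the signal Gram matrix $\Psi = \onef X_0^T X_0$ against the estimator's data matrix $\hat\Gamma$ and to absorb their discrepancy into the $\onenorm{v}$ term. Writing $\Psi v = \hat\Gamma v - (\hat\Gamma - \Psi) v$ and applying the triangle inequality entrywise,
\[
\norm{\Psi v}_\infty \le \norm{\hat\Gamma v}_\infty + \norm{(\hat\Gamma - \Psi) v}_\infty ,
\]
I would bound the two terms separately: the first uses only the feasibility geometry of the conic program, while the second is where the stochastic event $\B_{10}$ enters.

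For the first term, I would exploit the telescoping identity $\hat\Gamma v = (\hat\gamma - \hat\Gamma \beta^*) - (\hat\gamma - \hat\Gamma \hat\beta)$, valid since $v = \hat\beta - \beta^*$. On $\B_0$, Lemma~\ref{lemma::DS} certifies that $(\beta^*, \twonorm{\beta^*})$ is feasible for \eqref{eq::Conic}, so $\norm{\hat\gamma - \hat\Gamma \beta^*}_\infty \le \mu \twonorm{\beta^*} + \tau$; feasibility of the optimizer $(\hat\beta, \hat{t}) \in \U$ gives $\norm{\hat\gamma - \hat\Gamma \hat\beta}_\infty \le \mu \hat{t} + \tau$. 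Combining these and then substituting the cone estimate $\hat{t} \le \inv{\lambda} \onenorm{v} + \twonorm{\beta^*}$ from Lemma~\ref{lemma::DS-cone} yields
\[
\norm{\hat\Gamma v}_\infty \le \mu\big(\twonorm{\beta^*} + \hat{t}\big) + 2\tau \le 2\mu \twonorm{\beta^*} + \frac{\mu}{\lambda} \onenorm{v} + 2\tau .
\]
This already produces the terms $\mu_1 \twonorm{\beta^*} = 2\mu \twonorm{\beta^*}$ and $\tau' = 2\tau$, together with part of the coefficient of $\onenorm{v}$.

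For the remainder I would use the elementary $\ell_\infty$--$\ell_1$ duality $\norm{(\hat\Gamma - \Psi) v}_\infty \le \maxnorm{\hat\Gamma - \Psi}\, \onenorm{v}$, together with the algebraic identity
\[
\hat\Gamma - \Psi = \onef\big(X_0^T W + W^T X_0\big) + \onef\big(W^T W - \hat\tr(B) I_m\big) ,
\]
which follows from $X = X_0 + W$ and the definition \eqref{eq::hatGamma}. Both summands are mean-zero fluctuations (the columns of $W$ are independent with $\E\, \omega^j (\omega^j)^T = B$, so $\E W^T W = \tr(B) I_m$, and $\hat\tr(B)$ estimates $\tr(B)$). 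The event $\B_{10}$ is precisely the high-probability event on which the entrywise maximum of this difference is controlled, $\maxnorm{\hat\Gamma - \Psi} \le \mu$, so that $\norm{(\hat\Gamma - \Psi) v}_\infty \le \mu \onenorm{v}$. Adding this to the previous display gives
\[
\norm{\Psi v}_\infty \le 2\mu \twonorm{\beta^*} + \mu\Big(\frac{1}{\lambda} + 1\Big) \onenorm{v} + 2\tau ,
\]
which is exactly the claim with $\mu_1 = 2\mu$, $\mu_2 = \mu(\inv{\lambda} + 1)$ and $\tau' = 2\tau$. The only genuinely nontrivial ingredient is the entrywise concentration bounding $\maxnorm{\hat\Gamma - \Psi}$, i.e.\ controlling the cross term $X_0^T W$ (a product of two independent subgaussian matrices) and the centered quadratic $W^T W - \E W^T W$ uniformly over all $m^2$ entries; but this is already packaged into the definition of $\B_{10}$ in the Appendix, so within this lemma the argument is purely deterministic.
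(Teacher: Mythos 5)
Your proof is correct, and it reaches the stated constants $\mu_1=2\mu$, $\mu_2=\mu(\inv{\lambda}+1)$, $\tau'=2\tau$ by a genuinely different decomposition than the paper's. The paper writes $\Psi v = \onef(X-W)^TX_0 v$, substitutes $y=X_0\beta^*+\e$, and expands into five terms $I,\dots,V$: only the feasibility of the optimizer $\hat\beta$ is used (term $I \le \mu\hat t+\tau$), while the contribution of $\beta^*$ is re-derived from scratch through the separate stochastic terms $II=\onef\norm{X^T\e}_\infty$, $III=\onef\norm{(X^TW-\tr(B)I)\hat\beta}_\infty$ and $IV=\onef\norm{(\hat\tr(B)-\tr(B))\hat\beta}_\infty$, with $\hat\beta$ split as $\beta^*+v$ inside $III$. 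You instead use the two-sided feasibility identity $\hat\Gamma v=(\hat\gamma-\hat\Gamma\beta^*)-(\hat\gamma-\hat\Gamma\hat\beta)$, so that Lemma~\ref{lemma::DS} (feasibility of $(\beta^*,\twonorm{\beta^*})$) absorbs all the $\beta^*$-dependent stochastic work already done in Lemma~\ref{lemma::low-noise}, and you handle the remainder $(\hat\Gamma-\Psi)v$ in one stroke via $\norm{(\hat\Gamma-\Psi)v}_\infty\le\maxnorm{\hat\Gamma-\Psi}\,\onenorm{v}$. This is more modular and avoids duplicating the bounds on $X_0^T\e$, $W^T\e$ and $(W^TW-\hat\tr(B)I)\beta^*$; what it costs is that you must verify the single entrywise bound $\maxnorm{\hat\Gamma-\Psi}\le\mu$, which requires not only $\B_{10}$ (for $\onef\norm{X_0^TW}_{\max}$ and $\onef\norm{Z^TBZ-\tr(B)I}_{\max}$) but also the event $\B_6\subset\B_0$ to control $\onef\abs{\hat\tr(B)-\tr(B)}$ --- the same arithmetic ($2\sqrt{\tau_B}a_{\max}^{1/2}\le\tau_B+a_{\max}$, $\fnorm{B}/\sqrt{f}\le\sqrt{\tau_B\twonorm{B}}$, $2D_1\le D_2$) that the paper uses to show $III+IV+V\le\mu(\onenorm{v}+\twonorm{\beta^*})$. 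So attribute that last piece to $\B_0\cap\B_{10}$ rather than to $\B_{10}$ alone; with that understood, the argument is complete.
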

We prove Lemmas~\ref{lemma::DS},~\ref{lemma::DS-cone} and~\ref{lemma::grammatrix} in Section~\ref{sec::proofofDSlemma}.

\subsection{Improved bounds for the Lasso-type estimator}
\label{sec::lassooracle}
We give an outline illustrating where the improvement for the lasso error
bounds as stated in Theorem~\ref{thm::lassora} come from.
We emphasize the impact of this improvement over sparsity parameter $d_0$.
The proof for Theorem~\ref{thm::lassora} 
follows exactly the same
line of arguments as in Theorem~\ref{thm::lasso} except that we now
use the improved bound on the error term 
$\norm{\hat\gamma -   \hat\Gamma \beta^*}_{\infty}$ given in Lemma~\ref{lemma::D2improv}
instead of that in Lemma~\ref{lemma::low-noise} which is used in
proving Theorems~\ref{thm::lasso} and~\ref{thm::DS}. 
See Section~\ref{sec::classoproof} for details, as well as the proof
for Theorem~\ref{thm::lassora} and  the following two lemmas.
\begin{lemma}
\label{lemma::D2improv}
Suppose all conditions in Lemma~\ref{lemma::low-noise} hold.
Let $D_0, D_0', D_{\ora}$, and
$\tau_B^{+/2} :=  \tau_B^{1/2}  +  \frac{D_{\ora}}{\sqrt{m}}$  be as defined in~\eqref{eq::defineD0} and~\eqref{eq::defineDtau}.
On event $\B_0$,
\ben
\norm{\hat\gamma - \hat\Gamma \beta^*}_{\infty}
& \le & 
\label{eq::psioracle} 
\psi \sqrt{\frac{\log m}{f}}
\een
where 
$\psi:= C_0 K \left(D_0' \tau_B^{+/2} K \twonorm{\beta^*} + D_0  M_{\e} \right)$.
Then $\prob{\B_0} \ge 1- 16/m^3$.
\end{lemma}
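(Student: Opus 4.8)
The plan is to reduce the claim to the same term-by-term concentration that underlies Lemma~\ref{lemma::low-noise}, but bookkeeping the prefactors sharply instead of collapsing them into the uniform $D_2 = 2(\twonorm{A}+\twonorm{B})$. Writing $\hat\Gamma = \inv{f} X^T X - \hat\tau_B I_m$ with $\hat\tau_B = \inv{f}\hat\tr(B)$, and substituting $y - X\beta^* = \e - W\beta^*$ together with $X = X_0 + W$ into \eqref{eq::hatGamma}, I would first expand
\begin{eqnarray*}
\hat\gamma - \hat\Gamma\beta^*
& = & \inv{f} X^T(\e - W\beta^*) + \hat\tau_B\beta^* \\
& = & \underbrace{\inv{f} X_0^T\e}_{T_1} + \underbrace{\inv{f} W^T\e}_{T_2} - \underbrace{\inv{f} X_0^T W\beta^*}_{T_3} - \underbrace{\left(\inv{f} W^T W - \tau_B I_m\right)\beta^*}_{T_4} + \underbrace{(\hat\tau_B - \tau_B)\beta^*}_{T_5}.
\end{eqnarray*}
Because the underlying randomness and the event are unchanged, I would work on the same $\B_0$ as in Lemma~\ref{lemma::low-noise} (which already incorporates the trace-estimation event $\B_6$ of Lemma~\ref{lemma::trBest}), so that $\prob{\B_0}\ge 1-16/m^3$ follows verbatim. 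The entire content of the improvement is that the five blocks admit $\tau_B$-sensitive bounds.

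For the two $\e$-carrying terms I would condition on $\e$ and use that $X_0, W$ are independent of it. The $j$-th coordinate of $T_1$ is $\inv{f}\ip{(X_0)_{\cdot j},\e}$, and $(X_0)_{\cdot j}$ is a $\psi_2$ vector in $\R^f$ with independent entries of variance $a_{jj}\le a_{\max}$, hence conditionally subgaussian with parameter $K\sqrt{a_{\max}}\twonorm{\e}$; a union bound over the $m$ coordinates together with $\twonorm{\e}\lesssim\sqrt{f}M_{\e}$ gives $\norm{T_1}_\infty\lesssim K\sqrt{a_{\max}}\,M_{\e}\sqrt{\frac{\log m}{f}}$. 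For $T_2$ I would instead write the $j$-th coordinate as $\inv{f}\ip{(Z_2)_{\cdot j}, B^{1/2}\e}$, conditionally subgaussian with parameter $K\twonorm{B^{1/2}\e}$, and use the quadratic-form control $\twonorm{B^{1/2}\e}^2 = \e^T B\e\approx\tr(B)M_{\e}^2 = f\tau_B M_{\e}^2$ to obtain $\norm{T_2}_\infty\lesssim K\sqrt{\tau_B}\,M_{\e}\sqrt{\frac{\log m}{f}}$. Summing reproduces the prefactor $D_0 M_{\e} = (\sqrt{\tau_B}+\sqrt{a_{\max}})M_{\e}$, which is the saving over bounding $\twonorm{B}$ crudely.

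The two $\beta^*$-terms are where the $\tau_B^{1/2}$ part of $\tau_B^{+/2}$ is extracted, and they are the main obstacle since the fluctuations of the quadratic forms in $W$ must be tracked rather than absorbed. For $T_3$ I would condition on $W$: the $j$-th coordinate is $\inv{f}\ip{(X_0)_{\cdot j}, W\beta^*}$, conditionally subgaussian with parameter $K\sqrt{a_{\max}}\twonorm{W\beta^*}$, so a union bound gives $\norm{T_3}_\infty\lesssim K\sqrt{a_{\max}}\,\inv{f}\twonorm{W\beta^*}\sqrt{\log m}$; since $\twonorm{W\beta^*}^2 = (\beta^*)^T W^T W\beta^*$ concentrates around $f\tau_B\twonorm{\beta^*}^2$, the leading contribution is $K\sqrt{a_{\max}}\sqrt{\tau_B}\twonorm{\beta^*}\sqrt{\frac{\log m}{f}}$. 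For $T_4$, the diagonal part of $\inv{f}(Z_2)_{\cdot j}^T B Z_2\beta^*$ cancels the $-\tau_B\beta^*_j$ exactly, and both the residual diagonal fluctuation and the off-diagonal cross terms are controlled by the corrected-Gram-matrix concentration of Section~\ref{sec::Best}; using $\fnorm{B}\le\sqrt{f\twonorm{B}\tau_B}$ this yields $\norm{T_4}_\infty\lesssim K\sqrt{\twonorm{B}\tau_B}\twonorm{\beta^*}\sqrt{\frac{\log m}{f}}$. Together $T_3$ and $T_4$ give the $\sqrt{a_{\max}}$ and $\sqrt{\twonorm{B}}$ components of $D_0'$, each multiplied by $\tau_B^{1/2}$.

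Finally, the additive correction $D_{\ora}/\sqrt{m}$ in $\tau_B^{+/2}$ comes from $T_5$. On $\B_0$ Lemma~\ref{lemma::trBest} gives $\abs{\hat\tau_B-\tau_B}\le D_1 K^2 r_{m,m}$ with $D_1 = \frac{\fnorm{A}}{\sqrt{m}}+\frac{\fnorm{B}}{\sqrt{f}}$ and $r_{m,m} = 2C_0\sqrt{\frac{\log m}{mf}}$, and since $\tr(A)=m$ forces $\fnorm{A}\le\sqrt{m\twonorm{A}}$ and $\fnorm{B}\le\sqrt{f\twonorm{B}\tau_B}$, we get $D_1\le\sqrt{\twonorm{A}}+\sqrt{\twonorm{B}\tau_B}$; hence $\norm{T_5}_\infty\le\abs{\hat\tau_B-\tau_B}\twonorm{\beta^*}\lesssim(\sqrt{\twonorm{A}}+\sqrt{\twonorm{B}\tau_B})K^2\sqrt{\frac{\log m}{mf}}\twonorm{\beta^*}$, which at the common scale $\sqrt{\frac{\log m}{f}}$ is exactly of the form $D_0'\frac{D_{\ora}}{\sqrt{m}}K\twonorm{\beta^*}\sqrt{\frac{\log m}{f}}$ with $D_{\ora}=2(\twonorm{A}^{1/2}+\twonorm{B}^{1/2})$. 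Collecting $T_1,\dots,T_5$ and absorbing all absolute constants into $C_0$ then gives $\norm{\hat\gamma-\hat\Gamma\beta^*}_\infty\le C_0 K\bigl(D_0'\tau_B^{+/2}K\twonorm{\beta^*}+D_0 M_{\e}\bigr)\sqrt{\frac{\log m}{f}}$ on $\B_0$, as claimed. The delicate point throughout is that the fluctuation scales of the $W$-quadratic forms and of $\hat\tau_B$ must collapse precisely into $\tau_B^{1/2}$ and $D_{\ora}/\sqrt{m}$, so that the bound genuinely vanishes with $\tau_B$ down to the irreducible $\sqrt{\twonorm{A}}/\sqrt{m}$ floor coming from estimating $\tau_B$ off the $X_0$ component.
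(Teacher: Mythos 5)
Your proposal is correct and follows essentially the same route as the paper: the paper's proof of Lemma~\ref{lemma::D2improv} simply reuses the term-by-term bound \eqref{eq::oracle} already established inside the proof of Lemma~\ref{lemma::low-noise} (your $T_1,\dots,T_5$ are exactly the paper's $U_1,\dots,U_4$ with $U_1$ split into the $X_0^T\e$ and $W^T\e$ pieces), combined with the observations $2D_1\le D_{\ora}D_0'$ and $\norm{\beta^*}_{\infty}\le\twonorm{\beta^*}$ on the same event $\B_0=\B_4\cap\B_5\cap\B_6$. The only cosmetic difference is that you obtain the individual concentration bounds by conditioning on $\e$ and $W$ and invoking norm concentration, whereas the paper applies the decoupled Hanson--Wright inequality (Lemma~\ref{lemma::oneeventA} via Lemma~\ref{lemma::Tclaim1}) directly; both yield the same $D_0 M_{\e}$, $D_0'\tau_B^{1/2}\twonorm{\beta^*}$, and $D_1 K r_{m,m}\twonorm{\beta^*}$ contributions.
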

Moreover, we replace Lemma~\ref{lemma::dmain} with Lemma
\ref{lemma::dmainoracle}, the proof of which follows from Lemma~\ref{lemma::dmain} with 
$d$ now being bounded as in \eqref{eq::doracle}  and $\psi$ being
redefined as immediately above in \eqref{eq::psioracle}.
\begin{lemma}
\label{lemma::dmainoracle}
Suppose all conditions in Lemma~\ref{lemma::lowerREI} hold.
Suppose that \eqref{eq::doracle} holds.
Then~\eqref{eq::dcond} holds with $\psi$ as defined in Theorem~\ref{thm::lassora} and 
$\alpha = \lambda_{\min}(A)/2$.
\end{lemma}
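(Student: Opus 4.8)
The plan is to mirror the proof of Lemma~\ref{lemma::dmain} verbatim, feeding in the sharpened noise level $\psi$ from \eqref{eq::psijune15} in place of the one in \eqref{eq::psijune}, and the sparsity budget \eqref{eq::doracle} in place of \eqref{eq::dlassoproof}. The target \eqref{eq::dcond} is a minimum of two bounds, $d\le s_0/32$ and $d\le (s_0/\alpha)^2(\log m/f)(\psi/b_0)^2$, with $\alpha=\lambda_{\min}(A)/2$. The first branch is inherited with no change: the ``$\wedge\,2$'' term of \eqref{eq::doracle} is identical to that of \eqref{eq::dlassoproof} (both yield $d\le 2C_A\,\frac{f}{\log m}=\frac{1}{64M_A^2}\frac{f}{\log m}$), and the passage from this to $d\le s_0/32$ uses only the definition of $s_0$ as the largest integer satisfying \eqref{eq::s0cond} together with $M_A$ in \eqref{eq::defineM}. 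Hence the entire novelty sits in the second branch, which is where the refined $\psi$ enters.

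For that branch I would first lower-bound $\psi^2/b_0^2$. With $\psi=2C_0 D_0'K(\tau_B^{+/2}K\twonorm{\beta^*}+M_{\e})$ and $\tau_B^{+}=(\tau_B^{+/2})^2$, the elementary inequality $(\tau_B^{+/2}K\twonorm{\beta^*}+M_{\e})^2\ge \tau_B^{+}K^2\twonorm{\beta^*}^2+M_{\e}^2$ combined with the hypothesis $\twonorm{\beta^*}^2\ge\phi b_0^2$ gives
\[
\frac{\psi^2}{b_0^2}\;\ge\; 4C_0^2 (D_0')^2\Big(\tau_B^{+}K^4\phi+\frac{M_{\e}^2K^2}{b_0^2}\Big)\;=\;4C_0^2 (D_0')^2 D_\phi ,
\]
with $D_\phi$ exactly the quantity in \eqref{eq::doracle}. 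Thus the second branch's right side is at least $\frac{16C_0^2 (D_0')^2 D_\phi\, s_0^2}{\lambda_{\min}^2(A)}\frac{\log m}{f}$. On the other side I would use the $c'C_\phi$ term of \eqref{eq::doracle}, crucially observing that $D=\rho_{\max}(s_0,A)+\tau_B=\vp(s_0)$, so that $C_\phi=\frac{\twonorm{B}+a_{\max}}{\vp(s_0)^2}D_\phi$, together with $M_A^2=4096\,C^2\vp(s_0)^2/\lambda_{\min}^2(A)$ from \eqref{eq::defineM}.

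Cancelling the common factor $D_\phi$ and the powers of $\vp(s_0)$ and $\lambda_{\min}(A)$, the desired inequality $d\le (s_0/\alpha)^2(\log m/f)(\psi/b_0)^2$ reduces to
\[
\frac{c'\,(\twonorm{B}+a_{\max})\,\lambda_{\min}^4(A)}{8388608\,C^2 C_0^2\,(D_0')^2\,\vp(s_0)^4\,s_0^2}\Big(\frac{f}{\log m}\Big)^2\;\le\;1 .
\]
Two structural facts close this. First, $(D_0')^2=(\twonorm{B}^{1/2}+a_{\max}^{1/2})^2\ge \twonorm{B}+a_{\max}$, so the weight $\frac{\twonorm{B}+a_{\max}}{D^2}$ placed in \eqref{eq::doracle} is exactly absorbed by the $(D_0')^2$ coming from the improved error bound of Lemma~\ref{lemma::D2improv}; this matching is the whole point of the refinement. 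Second, the tolerance sandwiching already established in Lemma~\ref{lemma::lowerREI}, read at $\tau=\alpha/s_0=\lambda_{\min}(A)/(2s_0)$, gives $\frac{f}{\log m}\le \frac{2048\,C^2 s_0\,\vp^2(s_0+1)}{\lambda_{\min}^2(A)}$, and with the sparse-eigenvalue comparison $\vp(s_0+1)\le 2\vp(s_0)$ one obtains $(f/\log m)^2\lesssim s_0^2\vp(s_0)^4\,C^4/\lambda_{\min}^4(A)$. Substituting both collapses the left side to an absolute-constant multiple of $c'$, which is $\le 1$ once $c'$ is chosen below an absolute constant depending only on $C,C_0$; this is consistent with the standing assumption $c'K^4\le 1$.

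The main obstacle is the bookkeeping around $s_0$: turning \eqref{eq::s0cond} into the needed \emph{upper} bound on $f/\log m$ requires the maximality of $s_0$ (encoded in the upper tolerance bound of Lemma~\ref{lemma::lowerREI}) and a controlled ratio $\vp(s_0+1)/\vp(s_0)$, so I must either carry the $\vp(s_0+1)$ factor through as that lemma does or invoke the slowly-varying behaviour of $\rho_{\max}(\cdot,A)$. Everything else is the matching of absolute constants, and the conceptual heart—that the prefactor $(D_0')^2$ dominates $\twonorm{B}+a_{\max}$ precisely as the new sparsity weight requires—is exactly what lets the proof of Lemma~\ref{lemma::dmain} transfer without loss.
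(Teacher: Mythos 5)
Your overall route is the paper's route: split \eqref{eq::dcond} into the branch $d\le s_0/32$ (inherited verbatim from Lemma~\ref{lemma::dmain} via the ``$\wedge\,2$'' term) and the branch $d\le (s_0/\alpha)^2(\log m/f)(\psi/b_0)^2$, lower-bound $(\psi/b_0)^2$ by $4C_0^2(D_0')^2D_\phi$ via $(a+b)^2\ge a^2+b^2$ and $\twonorm{\beta^*}^2\ge\phi b_0^2$, absorb the weight $\twonorm{B}+a_{\max}$ in $C_\phi$ into $(D_0')^2$, and close with the maximality of $s_0$ in \eqref{eq::s0cond}. This is exactly how the paper argues, including the identification of the $(D_0')^2$-versus-$(\twonorm{B}+a_{\max})$ matching as the point of the refinement.

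The one step that does not survive scrutiny is the closing sentence of your constant-chase: after substituting the upper bound on $(f/\log m)^2$ coming from the maximality of $s_0$, the left side collapses to a multiple of $c'C^2/C_0^2$, and since $C=C_0/\sqrt{c'}$ this product equals $1$ identically --- it cannot be made small by shrinking $c'$, so the appeal to ``$c'$ chosen below an absolute constant'' is circular. With your bookkeeping (which keeps $D=\vp(s_0)$ throughout, as in the statement of Theorem~\ref{thm::lassora}) the residual constant comes out to about $4$, not $\le 1$. The paper absorbs this slack by working with $D=\vp(s_0+1)$ inside the proof (so that the factor $\vp(s_0+1)/\vp(s_0)\le 2$ lands on the favourable side, via $M_A=64C\vp(s_0)/\lambda_{\min}(A)\ge 32C\vp(s_0+1)/\lambda_{\min}(A)=16C\vp(s_0+1)/\alpha$), which is exactly the ``controlled ratio $\vp(s_0+1)/\vp(s_0)$'' you flag as an obstacle but then do not actually deploy. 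So the fix is not to tune $c'$ but to carry the $\vp(s_0+1)$ factor through as Lemma~\ref{lemma::dmain} does (or, equivalently, to adjust the numerical constant $128$ in $C_A$); once that is done your argument closes and coincides with the paper's.
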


\subsection{Improved bounds for the DS-type estimator}
\label{sec::DSoracle}
An ``oracle'' rate for the Conic programming estimator~\eqref{eq::Conic} is defined
as follows. Recall the following notation: $r_{m,f} = C_0 K \sqrt{\frac{\log m}{f}}$.
 The trick is that we assume that we know the noise level
in $W$ by knowing $\tau_B := \tr(B)/f$, then we can set 
\bens
\mu \asymp D_0' (\tau_B^{1/2}  + D_{\ora}/\sqrt{m}) K  r_{m,f}
\; \; \text{ while retaining} \; \; \tau \asymp D_0 M r_{m,f}
\eens
in view of the improved error bounds over 
$\norm{\hat\gamma -   \hat\Gamma \beta^*}_{\infty}$ as given in
Lemma~\ref{lemma::D2improv}.
Without knowing this parameter, we could rely on the estimate from
$\hat\tau_B$ as in~\eqref{eq::trBest}, which is what we do next.
For a chosen parameter $C_{6}$, we use $\PaulBhalf + C_{6} K
r_{m,m}^{1/2}$ to replace $\tau_B^{+/2} := \tau_B^{1/2}  +
D_{\ora}/\sqrt{m}$ and set
\bens
\mu & \asymp & C_0 D_0' (\PaulBhalf + C_{6} K r_{m,m}^{1/2}) K^2  
\sqrt{\frac{\log m}{f}} \; \; \; \text{ where } \;\;  C_{6} \ge
D_{\ora}, \\ 
r_{m,m} & = & C_0 \sqrt{\frac{\log m}{mf}} > C_0 \frac{\sqrt{\log
    m}}{m} \;  \text{ and } \; D_{\ora} :=  2(\twonorm{A}^{1/2}  + \twonorm{B}^{1/2} ).
\eens
Notice that we know neither $D_0'$ nor $D_{\ora}$,
where recall $D_0' =\sqrt{\twonorm{B}} + a_{\max}^{1/2}$.
However, assuming that we normalize 
the column norms of design matrix $X$ to be roughly at the same scale, we have 
\bens
D_0' \asymp 1 \; \; \text{ while } \; \; D_{\ora}/\sqrt{m} = o(1) \;
\; \text{in case} \; \; \twonorm{A}, \twonorm{B} \le M
\eens
for some large enough constant $M$.
This is crucial in deriving and putting the faster rates of convergence in estimating $\hat\beta$ and in
predictive error $\twonorm{X v}^2$ when $\tau_B = o(1)$ in
perspective, in view of Lemmas~\ref{lemma::DSimprov} and~\ref{lemma::grammatrixopt}.
Lemma~\ref{lemma::DSimprov} follows directly from Lemma~\ref{lemma::D2improv}.
\begin{lemma}
\label{lemma::DSimprov}
Suppose all conditions in Lemma~\ref{lemma::D2improv} hold.
Let $D_0 =  (\sqrt{\tau_B} + \sqrt{a_{\max}}) \asymp 1$ under (A1).
Then on event $\B_0$, the pair $(\beta, t) = (\beta^*, \twonorm{\beta^*})$ 
belongs to the feasible set $\U$ of the minimization problem
\eqref{eq::Conic} with
\ben
\label{eq::paraDSimprov}
\mu \ge 
D_0' \tau_B^{+/2} K r_{m,f}  \; \; \text{ and } \; \; 
\tau \ge D_0 M_{\e} r_{m,f}.
\een
where  $\tau_B^{+/2} :=  \tau_B^{1/2}  +  \frac{D_{\ora}}{\sqrt{m}}$  is as defined in~\eqref{eq::defineDtau}.
\end{lemma}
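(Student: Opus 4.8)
The plan is to verify directly that the candidate pair $(\beta^*, \twonorm{\beta^*})$ satisfies both defining constraints of the feasible set $\U$ in \eqref{eq::Conic}. Recall that $(\beta, t) \in \U$ requires (i) $\twonorm{\beta} \le t$ and (ii) $\norm{\hat\gamma - \hat\Gamma \beta}_{\infty} \le \mu t + \tau$. With the choice $t = \twonorm{\beta^*}$, constraint (i) reads $\twonorm{\beta^*} \le \twonorm{\beta^*}$, which holds trivially and needs no work; the whole content of the lemma lies in checking (ii).

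To verify (ii), I would invoke Lemma~\ref{lemma::D2improv}, whose hypotheses are assumed here. On the event $\B_0$ that lemma supplies the bound
\[
\norm{\hat\gamma - \hat\Gamma \beta^*}_{\infty} \le \psi \sqrt{\frac{\log m}{f}}, \qquad \psi = C_0 K \left(D_0' \tau_B^{+/2} K \twonorm{\beta^*} + D_0 M_{\e}\right).
\]
The key observation is the algebraic identity $r_{m,f} = C_0 K \sqrt{\log m / f}$, so that the factor $C_0 K \sqrt{\log m / f}$ appearing inside $\psi \sqrt{\log m / f}$ is exactly $r_{m,f}$. Substituting this identity and distributing over the two summands of $\psi$ yields
\[
\norm{\hat\gamma - \hat\Gamma \beta^*}_{\infty} \le D_0' \tau_B^{+/2} K r_{m,f} \, \twonorm{\beta^*} + D_0 M_{\e} r_{m,f}.
\]

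Finally I would match the two resulting terms against the chosen parameters. Since $\mu \ge D_0' \tau_B^{+/2} K r_{m,f}$ and $\tau \ge D_0 M_{\e} r_{m,f}$ by the hypotheses in \eqref{eq::paraDSimprov}, the right-hand side above is at most $\mu \twonorm{\beta^*} + \tau$, which is precisely constraint (ii) with $t = \twonorm{\beta^*}$. This establishes $(\beta^*, \twonorm{\beta^*}) \in \U$ on $\B_0$, as claimed. I expect no genuine obstacle here: the argument is a direct reorganization of the concentration bound of Lemma~\ref{lemma::D2improv} together with the identity defining $r_{m,f}$. The only point requiring a little care is to align the two-term split of $\psi$ with the two roles of the tuning parameters, so that the measurement-error/signal piece $D_0' \tau_B^{+/2} K r_{m,f} \twonorm{\beta^*}$ is absorbed into $\mu t$ while the additive stochastic-noise piece $D_0 M_{\e} r_{m,f}$ is absorbed into $\tau$, giving a genuine term-by-term domination.
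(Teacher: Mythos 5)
Your proposal is correct and follows exactly the paper's own argument: invoke Lemma~\ref{lemma::D2improv} on the event $\B_0$, use the identity $r_{m,f}=C_0K\sqrt{\log m/f}$ to rewrite $\psi\sqrt{\log m/f}$ as $D_0'\tau_B^{+/2}Kr_{m,f}\twonorm{\beta^*}+D_0M_{\e}r_{m,f}$, and dominate the two terms by $\mu\twonorm{\beta^*}$ and $\tau$ respectively. No gaps.
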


\begin{lemma}
\label{lemma::tauB}
On event $\B_6$ and (A1), the choice of $\tilde\tau_B^{1/2}$ as in
\eqref{eq::muchoice} satisfies for $m \ge 16$ and $C_0 \ge 1$,
\ben
\label{eq::tildetauB}
\tau_B^{+/2}    &  \le &   \tilde\tau_B^{1/2} 
\le \tau_B^{1/2} + \frac{3}{2} C_{6} K r_{mm}^{1/2} =:
\tau^{\dagger/2}_B \\
\label{eq::tildetauBbound}
\tilde\tau_B  & \le &  2 \tau_B +3 C_{6}^2 K^2 r_{mm} \asymp \tau^{\ddagger}_B \; \text{ and
  moreover  }\; \;  \tilde\tau_B^{1/2}  \tau_B^- \le  1
\een
\end{lemma}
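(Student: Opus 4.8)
The plan is to carry out the entire argument on the event $\B_6$ of Lemma~\ref{lemma::trBest}, on which $\abs{\hat\tau_B-\tau_B}\le D_1K^2 r_{m,m}$ with $D_1=\fnorm{A}/\sqrt{m}+\fnorm{B}/\sqrt{f}$, and to convert this additive control on $\hat\tau_B$ into control on its square root $\PaulBhalf$, from which the three displayed inequalities follow by elementary algebra. The one analytic input I would record first is the elementary bound $\abs{\sqrt{a}-\sqrt{b}}\le\sqrt{\abs{a-b}}$ for $a,b\ge 0$, which applied to $a=\hat\tau_B$, $b=\tau_B$ yields on $\B_6$ the two-sided estimate $\tau_B^{1/2}-D_1^{1/2}Kr_{m,m}^{1/2}\le \PaulBhalf \le \tau_B^{1/2}+D_1^{1/2}Kr_{m,m}^{1/2}$.

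Next I would control $D_1$ in terms of $D_{\ora}=2(\twonorm{A}^{1/2}+\twonorm{B}^{1/2})$. Using (A1), that is $\tr(A)=m$, together with $\fnorm{A}^2\le\twonorm{A}\tr(A)$ gives $\fnorm{A}/\sqrt{m}\le\twonorm{A}^{1/2}$; likewise $\fnorm{B}^2\le\twonorm{B}\tr(B)$ and $\tr(B)=f\tau_B$ give $\fnorm{B}/\sqrt{f}\le\tau_B^{1/2}\twonorm{B}^{1/2}$. Since $\tau_B=O(1)$ in the regime considered (boundedness of $\twonorm{B}$ under the column normalization discussed in Section~\ref{sec::DSoracle}), this yields $D_1=O(D_{\ora})$; noting also that $a_{\max}\ge 1$ forces $\twonorm{A}^{1/2}\ge 1$ and hence $D_{\ora}\ge 2$, one may choose $C_6$ as a fixed multiple of $D_{\ora}$ (with $C_6\ge D_{\ora}$) so that both $D_1^{1/2}\le\tfrac12 C_6$ and $2D_1\le C_6^2$ hold.

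With these ingredients the two-sided root bound $\tau_B^{+/2}\le\tilde\tau_B^{1/2}\le\tau_B^{\dagger/2}$ follows from $\tilde\tau_B^{1/2}=\PaulBhalf+C_6Kr_{m,m}^{1/2}$. For the upper side I substitute $\PaulBhalf\le\tau_B^{1/2}+D_1^{1/2}Kr_{m,m}^{1/2}$ and absorb $D_1^{1/2}\le\tfrac12 C_6$ into the gap between the coefficients $1$ and $\tfrac32$ of $C_6Kr_{m,m}^{1/2}$. For the lower side $\tau_B^{+/2}=\tau_B^{1/2}+D_{\ora}/\sqrt{m}$, I substitute $\PaulBhalf\ge\tau_B^{1/2}-D_1^{1/2}Kr_{m,m}^{1/2}$ and reduce to showing $(C_6-D_1^{1/2})Kr_{m,m}^{1/2}\ge D_{\ora}/\sqrt{m}$; the crucial scale comparison here is $r_{m,m}^{1/2}\sqrt{m}=(2C_0)^{1/2}(m\log m/f)^{1/4}\ge\sqrt{2}$, valid because $f<m$, $m\ge 16$ and $C_0\ge 1$. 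The second line is then routine: $(a+b)^2\le 2a^2+2b^2$ gives $\tilde\tau_B\le 2\hat\tau_B+2C_6^2K^2r_{m,m}\le 2\tau_B+(2D_1+2C_6^2)K^2r_{m,m}\le 2\tau_B+3C_6^2K^2r_{m,m}$ by $2D_1\le C_6^2$; and since $\tau_B^-\le(\tau_B^{1/2}+2C_6Kr_{m,m}^{1/2})^{-1}$ while $\tilde\tau_B^{1/2}\le\tau_B^{1/2}+\tfrac32 C_6Kr_{m,m}^{1/2}$, the product $\tilde\tau_B^{1/2}\tau_B^-$ is a fraction whose numerator lies below its denominator ($\tfrac32<2$), hence is at most $1$.

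I expect the main obstacle to be the constant calibration in the lower-bound direction: one must guarantee that the genuine, $o(1)$ ``oracle'' term $D_{\ora}/\sqrt{m}$ is swallowed by $C_6Kr_{m,m}^{1/2}$, which forces $C_6$ to exceed $D_{\ora}$ by a fixed factor rather than merely satisfy $C_6\ge D_{\ora}$ in the worst case $f\approx m$; this is precisely where $f<m$, $m\ge 16$, $K\ge 1$ and $C_0\ge 1$ must be used to extract the factor $r_{m,m}^{1/2}\sqrt{m}\ge\sqrt{2}$. By comparison the square-root-Lipschitz step and the bound $D_1=O(D_{\ora})$ are straightforward.
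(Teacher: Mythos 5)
Your proposal is correct and follows essentially the same route as the paper: the square-root Lipschitz bound $\abs{\sqrt{a}-\sqrt{b}}\le\sqrt{\abs{a-b}}$ applied to the event $\B_6$ estimate $\abs{\hat\tau_B-\tau_B}\le D_1K^2r_{m,m}$, absorption of $D_{\ora}/\sqrt{m}$ into $C_{6}Kr_{m,m}^{1/2}$ using $m\ge 16$, $C_0\ge 1$, and the same elementary algebra for \eqref{eq::tildetauBbound}. The one place you diverge is the calibration $D_1^{1/2}\le\tfrac12 C_{6}$: you get $D_1=O(D_{\ora})$ by invoking $\tau_B=O(1)$ (an assumption not in the lemma), whereas the paper obtains this unconditionally from $D_1\le\twonorm{A}+\twonorm{B}\le\bigl(\twonorm{A}^{1/2}+\twonorm{B}^{1/2}\bigr)^2=(D_{\ora}/2)^2$, hence $\sqrt{D_1}\le D_{\ora}/2\le C_{6}/2$, without having to enlarge $C_{6}$ beyond the stated requirement $C_{6}\ge D_{\ora}$.
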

We next state an updated result in Lemma~\ref{lemma::grammatrixopt}.
\begin{lemma}
\label{lemma::grammatrixopt}
On event $\B_0\cap \B_{10}$,  the solution $\hat\beta$ to \eqref{eq::Conic} with
$\mu, \tau$ as in \eqref{eq::muchoice}  and \eqref{eq::tauchoice},
satisfies for $v := \hat\beta - \beta^*$
\bens
\norm{\onef X_0^T X_0 v}_{\infty} \le \mu_1 
\twonorm{\beta^*}+ \mu_2 \onenorm{v} + \tau'
\eens
where $\mu_1 = 2 \mu $, $\mu_2  =2 \mu (1+\inv{2 \lambda})$ and 
$\tau' = 2\tau$.
\end{lemma}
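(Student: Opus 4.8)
The plan is to reproduce the logical skeleton of Lemma~\ref{lemma::grammatrix}, but to feed in the sharper feasibility certificate of the oracle regime (Lemma~\ref{lemma::DSimprov}) together with the sharper concentration of the correction matrix that becomes available when $\tau_B$ is small. Write $\Psi = \onef X_0^T X_0$ and record the only source of discrepancy between the observable constraint matrix $\hat\Gamma$ and the clean gram matrix $\Psi$ appearing in the $\ell_q$-sensitivity bound~\eqref{eq::lqcond}:
\bens
\hat\Gamma - \Psi & = & \onef\left(X_0^T W + W^T X_0 + W^T W\right) - \onef \hat\tr(B) I_m.
\eens
Throughout we work on the intersection of the relevant high-probability events; besides $\B_0$ and $\B_{10}$ this implicitly uses the guarantee of Lemma~\ref{lemma::tauB} (available on $\B_6$) to control $\abs{\hat\tau_B - \tau_B}$.

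First I would certify feasibility. By Lemma~\ref{lemma::DSimprov}, on $\B_0$ the pair $(\beta^*, \twonorm{\beta^*})$ lies in the feasible set $\U$ for $\mu \ge D_0' \tau_B^{+/2} K r_{m,f}$ and $\tau \ge D_0 M_{\e} r_{m,f}$; since the $\mu,\tau$ actually used in~\eqref{eq::muchoice} and~\eqref{eq::tauchoice} are built from the data-driven $\tilde\tau_B^{1/2} = \PaulBhalf + C_{6} K r_{m,m}^{1/2}$, Lemma~\ref{lemma::tauB} gives $\tau_B^{+/2} \le \tilde\tau_B^{1/2}$, so these choices dominate the oracle thresholds and feasibility is preserved. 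Hence
\bens
\norm{\hat\gamma - \hat\Gamma \beta^*}_{\infty} & \le & \mu \twonorm{\beta^*} + \tau, \\
\norm{\hat\gamma - \hat\Gamma \hat\beta}_{\infty} & \le & \mu \hat{t} + \tau,
\eens
the second inequality because $(\hat\beta, \hat{t}) \in \U$. Subtracting inside $\hat\Gamma v = (\hat\gamma - \hat\Gamma \beta^*) - (\hat\gamma - \hat\Gamma \hat\beta)$ and using the triangle inequality gives $\norm{\hat\Gamma v}_{\infty} \le \mu(\twonorm{\beta^*} + \hat{t}) + 2\tau$. Invoking the cone lemma (Lemma~\ref{lemma::DS-cone}), $\hat{t} \le \inv{\lambda}\onenorm{v} + \twonorm{\beta^*}$, then yields $\norm{\hat\Gamma v}_{\infty} \le 2\mu\twonorm{\beta^*} + \frac{\mu}{\lambda}\onenorm{v} + 2\tau$.

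It remains to pass from $\hat\Gamma v$ to $\Psi v$. Writing $\Psi v = \hat\Gamma v - (\hat\Gamma - \Psi)v$ and bounding crudely $\norm{(\hat\Gamma - \Psi)v}_{\infty} \le \maxnorm{\hat\Gamma - \Psi}\onenorm{v}$, the task reduces to showing $\maxnorm{\hat\Gamma - \Psi} \le 2\mu$ on $\B_{10}$. Adding this to the previous display produces exactly $\norm{\Psi v}_{\infty} \le 2\mu\twonorm{\beta^*} + \left(2\mu + \frac{\mu}{\lambda}\right)\onenorm{v} + 2\tau$, i.e.\ the claim with $\mu_1 = 2\mu$, $\mu_2 = 2\mu(1 + \inv{2\lambda})$ and $\tau' = 2\tau$; note that $\mu_1$ and $\tau'$ come entirely from the $\hat\Gamma v$ bound, while the extra $2\mu$ in $\mu_2$ is precisely the contribution of the correction matrix.

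The main obstacle is the last step: establishing $\maxnorm{\hat\Gamma - \Psi} = O(\mu)$ with $\mu$ of the \emph{improved} order $D_0' \tilde\tau_B^{1/2} K r_{m,f}$ rather than the crude $D_2 K r_{m,f}$ of Lemma~\ref{lemma::grammatrix}. This is the content of $\B_{10}$, and it is where the $\sqrt{\tau_B}$ (as opposed to $\sqrt{\twonorm{B}}$) scaling must be extracted. Concretely, each entry of the cross term $\onef X_0^T W$ is, conditionally on $W$, mean-zero subgaussian with variance $a_{ii}\twonorm{\omega^j}^2/f^2 \asymp a_{\max}\tau_B/f$, giving a max-norm of order $\sqrt{a_{\max}\tau_B}\sqrt{\tfrac{\log m}{f}}$; the off-diagonal entries of $\onef W^T W$ have standard deviation $\fnorm{B}/f \le \sqrt{\twonorm{B}\tau_B/f}$, again of order $\sqrt{\twonorm{B}\tau_B}\sqrt{\tfrac{\log m}{f}}$, the two together producing the prefactor $D_0' = \twonorm{B}^{1/2} + a_{\max}^{1/2}$; and the diagonal, after subtracting $\hat\tr(B)/f$, is handled by combining a Hanson--Wright bound on $\onef\twonorm{\omega^j}^2 - \tau_B$ with the estimation error $\abs{\hat\tau_B - \tau_B}$ of Lemma~\ref{lemma::trBest} (the slack $C_{6} K r_{m,m}^{1/2}$ baked into $\tilde\tau_B^{1/2}$ is exactly what absorbs this error). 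The delicate point is that all three pieces must be shown to be $O(\mu)$ \emph{simultaneously} and uniformly over the $m^2$ entries, which forces the sub-exponential tail bookkeeping behind $\B_{10}$ and the hypothesis $\fnorm{B}^2/\twonorm{B}^2 \ge \log m$ inherited from Lemma~\ref{lemma::low-noise}.
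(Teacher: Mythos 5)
Your proof is correct and reaches the stated constants $\mu_1 = 2\mu$, $\mu_2 = 2\mu(1+\inv{2\lambda})$, $\tau' = 2\tau$, but it gets there by a genuinely different decomposition than the paper. The paper expands $\onef X_0^T X_0 v = \onef(X-W)^T X_0 v$ and, after substituting $y = X_0\beta^* + \e$, arrives at five terms $I$--$V$; the cross-term $\onef\norm{(X^TW - \tr(B)I)\hat\beta}_{\infty}$ must then be split as $\hat\beta = \beta^* + v$, with the $\beta^*$-part bounded by the $\twonorm{\beta^*}$-scaled events ($\B_5$) and the $v$-part by the max-norm events ($\B_{10}$), so the $\twonorm{\beta^*}$-scaled stochastic bounds are invoked twice (once inside the feasibility certificate, once directly in $III+IV+V$). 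You instead write $\Psi v = \hat\Gamma v - (\hat\Gamma - \Psi)v$, dispose of $\hat\Gamma v$ entirely through the two feasibility constraints (for $(\beta^*,\twonorm{\beta^*})$ via Lemma~\ref{lemma::DSimprov} and Lemma~\ref{lemma::tauB}, and for $(\hat\beta,\hat t)$ by optimality) together with the cone bound $\hat t \le \inv{\lambda}\onenorm{v} + \twonorm{\beta^*}$, and reduce the remainder to a single max-norm estimate $\maxnorm{\hat\Gamma - \Psi} \le 2\mu$, which indeed follows on $\B_{10}\cap\B_6$ from $\onef\maxnorm{X_0^TW} \lesssim r_{m,f}K\tau_B^{1/2}a_{\max}^{1/2}$, $\onef\maxnorm{W^TW - \tr(B)I} \lesssim r_{m,f}K\tau_B^{1/2}\twonorm{B}^{1/2}$, and $\onef\abs{\hat\tr(B)-\tr(B)} \le D_1K^2 r_{m,m} \le D_0'K r_{m,f} D_{\ora}/\sqrt{m}$, whose sum is at most $2D_0'Kr_{m,f}\tau_B^{+/2} \le 2\mu$. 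Your route buys a cleaner bookkeeping: the $\twonorm{\beta^*}$ contributions enter only through feasibility, and the correction matrix touches only $v$. One small note: your closing paragraph treats establishing $\B_{10}$ as "the main obstacle," but since the lemma is stated conditionally on $\B_0\cap\B_{10}$ you need not re-derive those tail bounds — citing Lemmas~\ref{lemma::Tclaim1} and~\ref{lemma::trBest} suffices, and your heuristic variance computations (conditioning on $W$) are looser than the decoupled Hanson--Wright arguments the paper actually uses to prove those events hold.
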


\section{Lower and Upper RE conditions}
\label{sec::AD}
The goal of this section is to show that for $\Delta$ defined
in~\eqref{eq::defineAD}, the presumption in 
Lemmas~\ref{lemma::bigcone} and~\ref{lemma::bigconeII} as restated in \eqref{eq::Deltacond} holds with high probability (cf
Theorem~\ref{thm::AD}). 
We first state a deterministic result showing that the Lower and Upper $\RE$
conditions hold for $\hat\Gamma_A$ under condition
\eqref{eq::Deltacond} in Corollary~\ref{coro::BC}.
This allows us to prove Lemma~\ref{lemma::lowerREI} in  Sections~\ref{sec::records}.
See Sections~\ref{sec::geometry} and~\ref{sec::appendLURE},
where we show that Corollary~\ref{coro::BC} follows immediately from
the geometric analysis result as stated in Lemma~\ref{lemma::bigconeII}.
\begin{corollary}
\label{coro::BC}
Let $1/8 > \delta > 0$. Let $1 \le \zeta < m/2$.
Let $A_{m \times m}$ be a symmetric positive semidefinite 
covariance matrice. Let $\hat\Gamma_A$ be an $m \times m$ symmetric
matrix and $\Delta = \hat\Gamma_A - A$.
Let $E=\cup_{|J| \leq \zeta} E_J$, where $E_J = \spin\{e_j: j \in J\}$. 
Suppose that $\forall u, v \in E \cap S^{m-1}$
\ben
\label{eq::Deltacond}
\abs{u^T \Delta v} \le \delta \le \inv{8}\lambda_{\min}(A).
\een
Then the Lower and Upper $\RE$ conditions holds: for all $\up \in \R^m$,
\ben
\label{eq::ADlow}
\up^T \hat\Gamma_A \up & \ge & \half \lambda_{\min}(A)
 \twonorm{\up}^2 
-\frac{\lambda_{\min}(A)}{2 \zeta} \onenorm{\up}^2 \\
\label{eq::ADup}
\up^T \hat\Gamma_A \up & \le &  \frac{3}{2} \lambda_{\max}(A) \twonorm{\up}^2 
+\frac{\lambda_{\min}(A)}{2 \zeta} \onenorm{\up}^2.
\een
\end{corollary}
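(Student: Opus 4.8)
The plan is to write $\hat\Gamma_A = A + \Delta$ and treat the two summands separately. The quadratic form $\upsilon^T A \upsilon$ is controlled by the Rayleigh bounds $\lambda_{\min}(A)\twonorm{\upsilon}^2 \le \upsilon^T A \upsilon \le \lambda_{\max}(A)\twonorm{\upsilon}^2$, so the whole task reduces to bounding the perturbation $\abs{\upsilon^T\Delta\upsilon}$ for an \emph{arbitrary} $\upsilon\in\R^m$, given only the control \eqref{eq::Deltacond} of the bilinear form on the sparse sphere $E\cap S^{m-1}$. Concretely, I aim to establish the global estimate
\[
\abs{\upsilon^T\Delta\upsilon} \;\le\; 2\delta\Big(\twonorm{\upsilon}^2 + \tfrac{1}{\zeta}\onenorm{\upsilon}^2\Big) \qquad\text{for all }\upsilon\in\R^m,
\]
which is exactly the geometric content attributed to Lemma~\ref{lemma::bigconeII}; everything else is bookkeeping.

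To pass from sparse to general vectors I would use a block (shelling) decomposition. Fix $\upsilon$, reorder coordinates in decreasing order of magnitude, and partition $\{1,\dots,m\}$ into consecutive blocks $T_0,T_1,T_2,\dots$ of size $\zeta$ (the last possibly smaller), with $T_0$ indexing the $\zeta$ largest entries. Each restriction $\upsilon_{T_i}$ is $\zeta$-sparse, so by homogeneity \eqref{eq::Deltacond} gives $\abs{\upsilon_{T_i}^T\Delta\upsilon_{T_j}}\le\delta\,\twonorm{\upsilon_{T_i}}\twonorm{\upsilon_{T_j}}$. Summing over all pairs yields $\abs{\upsilon^T\Delta\upsilon} \le \delta\big(\sum_i \twonorm{\upsilon_{T_i}}\big)^2$. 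The standard shelling estimate $\twonorm{\upsilon_{T_{i+1}}}\le \zeta^{-1/2}\onenorm{\upsilon_{T_i}}$ (each entry on $T_{i+1}$ is at most the average magnitude on $T_i$), together with $\twonorm{\upsilon_{T_0}}\le\twonorm{\upsilon}$, gives $\sum_i\twonorm{\upsilon_{T_i}}\le \twonorm{\upsilon}+\zeta^{-1/2}\onenorm{\upsilon}$; squaring via $(a+b)^2\le 2a^2+2b^2$ produces the displayed global bound. Equivalently, this is the statement that $\conv(E\cap \Ball_2^m)$ contains, up to a constant, the body $\{\upsilon:\twonorm{\upsilon}+\zeta^{-1/2}\onenorm{\upsilon}\le 1\}$, so that the supremum of the bilinear form over the sparse sphere dominates it everywhere.

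Finally I would insert $\delta\le\tfrac18\lambda_{\min}(A)$, so that $2\delta\,\twonorm{\upsilon}^2\le\tfrac14\lambda_{\min}(A)\twonorm{\upsilon}^2$ and $\tfrac{2\delta}{\zeta}\onenorm{\upsilon}^2\le\tfrac{\lambda_{\min}(A)}{4\zeta}\onenorm{\upsilon}^2$. Combining with the Rayleigh bounds, the lower side gives $\upsilon^T\hat\Gamma_A\upsilon\ge \tfrac34\lambda_{\min}(A)\twonorm{\upsilon}^2-\tfrac{\lambda_{\min}(A)}{4\zeta}\onenorm{\upsilon}^2$, which is stronger than \eqref{eq::ADlow}; on the upper side, absorbing $\tfrac14\lambda_{\min}(A)\le\tfrac12\lambda_{\max}(A)$ into the curvature term yields \eqref{eq::ADup}. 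The only genuinely non-routine step is the sparse-to-global lifting of the second paragraph; the constants $\tfrac14$ and $\tfrac12$ in the conclusion are deliberately loose enough to absorb the slack, so no sharp tracking of numerical factors is needed once that lemma is in hand.
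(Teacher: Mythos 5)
Your proposal is correct, and the final assembly (Rayleigh bounds for $A$ plus a global perturbation bound for $\Delta$, then absorbing $\delta\le\tfrac18\lambda_{\min}(A)$) matches the paper's; the difference is in how the sparse-to-global lifting is obtained. The paper routes this through Lemmas~\ref{eq::embedding}--\ref{lemma::bigconeII}: it first shows the inclusion $\sqrt{\zeta}\,\Ball_1^m\cap \Ball_2^m\subset 2\conv\big(\bigcup_{|J|\le\zeta}E_J\cap S^{m-1}\big)$, decouples the two arguments of the bilinear form, and uses convexity to push the supremum to extreme points (picking up a factor $2\times 2=4$), then splits $\R^m$ into the cone $\{\onenorm{\up}\le\sqrt{\zeta}\twonorm{\up}\}$ and its complement to arrive at $\abs{\up^T\Delta\up}\le 4\delta\big(\twonorm{\up}^2+\tfrac1\zeta\onenorm{\up}^2\big)$. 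You instead use the standard shelling decomposition into size-$\zeta$ blocks ordered by magnitude, bound each cross term $\abs{\up_{T_i}^T\Delta\up_{T_j}}\le\delta\twonorm{\up_{T_i}}\twonorm{\up_{T_j}}$ directly from \eqref{eq::Deltacond}, and sum; this is more elementary, avoids the convex-hull machinery entirely, and yields the sharper constant $2\delta$ in place of $4\delta$ (so your intermediate lower bound $\tfrac34\lambda_{\min}(A)\twonorm{\up}^2-\tfrac{\lambda_{\min}(A)}{4\zeta}\onenorm{\up}^2$ dominates \eqref{eq::ADlow}, and the upper bound follows since $\tfrac14\lambda_{\min}(A)\le\tfrac12\lambda_{\max}(A)$). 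The one thing the paper's convex-hull route buys that shelling does not is reusability: the same embedding lemma is invoked elsewhere (e.g.\ in the proof of Theorem~\ref{thm::AD} and Lemma~\ref{lemma::bigcone}) for genuinely decoupled bilinear forms over $\ell_1$-constrained sets, whereas your argument is tailored to the quadratic-form statement needed here. Either way the corollary is fully proved.
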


\begin{theorem}
\label{thm::AD}
Let $A_{m \times m}, B_{f \times f}$ be symmetric positive definite 
covariance matrices. Let $E=\cup_{|J| \leq \zeta} E_J$ for $1 \le \zeta
< m/2$. Let $Z, X$ be $f \times m$
random matrices defined as in Theorem~\ref{thm::lasso}.
Let $\hat\tau_B$ be defined as in~\eqref{eq::trBest}.
Let
\ben
\label{eq::defineAD}
\Delta := \hat\Gamma_{A} -A := \onef X^TX - \hat\tau_B I_{m} -A.
\een
Suppose that for some absolute constant $c' > 0$ and $0 < \ve \le \inv{C}$ 
\ben
\label{eq::trB}
\frac{\tr(B)}{\twonorm{B}} & \ge & 
\left(c' K^4 \frac{ \zeta}{\ve^2} \log\left(\frac{3e m}{\zeta \ve}\right)\right) \bigvee \log m
\een
where $C = C_0/\sqrt{c'}$ for $C_0$ as chosen to
satisfy~\eqref{eq::defineC0}.

Then with probability at least $1- 4 \exp\left(-c_2\ve^2 \frac{\tr(B)}{K^4\twonorm{B}}\right)-2
  \exp\left(-c_2\ve^2 \frac{f}{K^4}\right) - 6/m^3$, where
  $c_2 \ge 2$, we  have for all $u, v \in E \cap S^{m-1}$ and
  $\vp(\zeta) = \tau_B+  \rho_{\max}(\zeta, A)$, and $D_1 \le \frac{\fnorm{A}}{\sqrt{m}}  + \frac{\fnorm{B}}{\sqrt{f}}$,
\bens
\abs{u^T \Delta v} \le  8 C \vp(\zeta) \ve + 4 C_0 D_1 K^2 \sqrt{\frac{\log m}{m f}}.
\eens
\end{theorem}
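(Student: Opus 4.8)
The plan is to separate $\Delta$ into the part governed by the sparse concentration of the Gram matrices of the signal $X_0$ and the noise $W$, and the part coming from the estimation error $\hat\tau_B-\tau_B$, and to control each uniformly over $u,v\in E\cap S^{m-1}$. Writing $X=X_0+W$ with $X_0=Z_1A^{1/2}$ and $W=B^{1/2}Z_2$, and using $\E\,\onef X_0^TX_0=A$, $\E\,\onef W^TW=\tau_B I_m$ and $\onef\E X_0^TW=0$, I decompose
\bens
\Delta & = & \underbrace{\left(\onef X_0^TX_0-A\right)}_{\Delta_1}
+\underbrace{\onef\left(X_0^TW+W^TX_0\right)}_{\Delta_2} \\
& & +\;\underbrace{\left(\onef W^TW-\tau_B I_m\right)}_{\Delta_3}
+\underbrace{(\tau_B-\hat\tau_B)I_m}_{\Delta_4}.
\eens
The first three terms will produce the $8C\vp(\zeta)\ve$ contribution and $\Delta_4$ the $4C_0D_1K^2\sqrt{\log m/(mf)}$ contribution.

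The term $\Delta_4$ is immediate: $u^T\Delta_4 v=(\tau_B-\hat\tau_B)\langle u,v\rangle$ with $\abs{\langle u,v\rangle}\le 1$, so on the event $\B_6$ of Lemma~\ref{lemma::trBest} (probability at least $1-3/m^3$) one has $\abs{u^T\Delta_4 v}\le\abs{\hat\tau_B-\tau_B}\le D_1K^2r_{m,m}=2C_0D_1K^2\sqrt{\log m/(mf)}$, within the stated budget. For $\Delta_1,\Delta_2,\Delta_3$ I would run a covering argument over $E\cap S^{m-1}$: a union over the at most $\binom{m}{\zeta}\le(em/\zeta)^\zeta$ admissible supports, and on each a $\delta$-net of the $\zeta$-sphere of cardinality at most $(3/\delta)^\zeta$, so that a standard approximation step reduces each supremum to the net up to a constant factor. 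Polarizing via $u^T\Delta_i v=\tfrac14\big[(u+v)^T\Delta_i(u+v)-(u-v)^T\Delta_i(u-v)\big]$, with $u\pm v$ supported on at most $2\zeta$ coordinates, further reduces matters to the quadratic forms $w^T\Delta_i w$ at fixed net points, where subgaussian tail bounds apply.

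For $\Delta_1$, $w^T\onef X_0^TX_0 w=\onef\twonorm{Z_1A^{1/2}w}^2$ with $w^TAw\le\rho_{\max}(\zeta,A)\twonorm{w}^2$, so the concentration of the isotropic subgaussian $Z_1$ (cf.\ Theorem~\ref{thm:subgaussian-T-intro} of~\cite{RZ13}) gives a deviation of order $\rho_{\max}(\zeta,A)\ve$ after paying the $\log(3em/(\zeta\ve))$ covering cost, on an event failing with probability $\exp(-c_2\ve^2 f/K^4)$. The cross term $\Delta_2$ is mean zero and, conditionally on $X_0$, a linear form in $W$; the same net gives a deviation of order $\sqrt{\rho_{\max}(\zeta,A)\,\tau_B}\,\ve$, which by AM--GM is at most $\tfrac12\vp(\zeta)\ve$. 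The decisive term is $\Delta_3$: here $w^T\onef W^TW w=\onef(Z_2w)^TB(Z_2w)$, and a Hanson--Wright estimate for $Z_2$ shows that the deviation of $\onef(Z_2w)^TB(Z_2w)$ from $\tau_B\twonorm{w}^2$ is, in the relevant regime, of order $\onef\fnorm{B}K^2\sqrt{\log N}$ per net point. This is where hypothesis~\eqref{eq::trB} enters and is the crux: a single quadratic form only concentrates at scale $\tau_B\ve$ once $\tr(B)/\twonorm{B}$ is large, because the Frobenius scale must be converted through $\fnorm{B}^2\le\twonorm{B}\tr(B)$, giving $\onef\fnorm{B}\sqrt{\tr(B)/\twonorm{B}}\le\tau_B$. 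Balancing the per-point deviation against the covering number $N\le(3em/(\zeta\delta))^{\zeta}$ forces exactly $\tr(B)/\twonorm{B}\gtrsim K^4(\zeta/\ve^2)\log(3em/(\zeta\ve))$, and, substituting $C=C_0/\sqrt{c'}$, produces the bound $8C\,\tau_B\ve$ on an event failing with probability $\exp(-c_2\ve^2\tr(B)/(K^4\twonorm{B}))$.

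Summing the three contributions gives $\abs{u^T(\Delta_1+\Delta_2+\Delta_3)v}\le 8C\vp(\zeta)\ve$, and combining their failure probabilities with that of $\B_6$ yields the stated probability $1-4\exp(-c_2\ve^2\tr(B)/(K^4\twonorm{B}))-2\exp(-c_2\ve^2 f/K^4)-6/m^3$, uniformly over $u,v\in E\cap S^{m-1}$. The main obstacle is precisely the uniform control of $\Delta_3$: the target scale $\tau_B\ve=(\tr(B)/f)\ve$ is far below the single-form Hanson--Wright scale $\twonorm{B}\ve$, and reconciling the two while paying the union-bound cost over $\binom{m}{\zeta}$ nets is exactly what the effective-rank hypothesis~\eqref{eq::trB} is engineered to achieve.
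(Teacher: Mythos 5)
Your proposal is correct and follows essentially the same route as the paper: the identical four-term decomposition of $\Delta$, with the $\hat\tau_B-\tau_B$ piece handled by Lemma~\ref{lemma::trBest}, and the three random pieces controlled by Hanson--Wright-type bounds combined with the conversion $\fnorm{B}^2\le\twonorm{B}\tr(B)$ and a union bound over $\varepsilon$-nets of the $\zeta$-sparse sphere, which is exactly how hypothesis~\eqref{eq::trB} enters (Lemmas~\ref{lemma::normA},~\ref{lemma::orthogSp} and Corollary~\ref{coro::tartan}). The only cosmetic differences are that you reduce bilinear forms to quadratic ones by polarization and treat the cross term conditionally on $X_0$, whereas the paper bounds the bilinear forms directly via a decoupled Hanson--Wright inequality (Lemma~\ref{lemma::oneeventA}); both yield the same $8C\vp(\zeta)\ve$ budget after the AM--GM step.
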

We prove Theorem~\ref{thm::AD} in Section~\ref{sec::proofofthmAD}.
As a corollary of Theorem~\ref{thm::AD}, we will state Corollary~\ref{coro::sparseK} in Section~\ref{sec::Best}.

\section{Concentration bounds for error-corrected gram matrices}
\label{sec::Best}
In this section, we show an upper bound on the operator norm
convergence as well as an isometry property for estimating $B$ using the
corrected gram matrix $\tilde{B} := \onem (X X^T - \tr(A) I_f)$.
Theorem~\ref{thm::kronopB} and Corollary~\ref{coro::kronopB}
state that for the matrix $B \succ 0$ with the smaller dimension, 
$\tilde{B}$ tends to stay positive definite after this error
correction step with an overwhelming probability, 
where we rely on $f$ being dominated by the effective rank of the
positive definite matrix $A$.
When we subtract a diagonal matrix $\tau_B I_m$ from the gram matrix
$\onef X^TX$ to form an estimator, 
we clearly introduce a large number of negative eigenvalues when $f
\ll m$.  This in general is a bad idea. 
However, the sparse eigenvalues for $\tilde{A}$ can stay pretty close to those of $A$ as
we will show in Corollary~\ref{coro::sparseK}.
\begin{theorem}
\label{thm::kronopB}
Let $\ve >0$.  Let $X$ be defined as in Definition~\ref{def::subgdata}.
Suppose that  for some $c' > 0$ and $0<\ve < 1/2$,
\ben
\label{eq::trAlow}
\frac{\tr(A)}{\twonorm{A}} & \ge & c' f K^4 \frac{\log(3/\ve)}{\ve^2}.
\een 
Then with probability at least 
$1 - 2 \exp\left(-c\ve^2 \frac{m}{K^4}\right) - 4 \exp\left(-c_5 \ve^2 \frac{\tr(A)}{K^4\twonorm{A}}\right)$,
\bens
\twonorm{\inv{m} X X^T - \frac{\tr(A)I_f}{m}- B}
& \le & C_2 \ve \left(\tau_A +  \twonorm{B}\right)
\eens
where $C_2, c_5$ are absolute constants depending on $c', C$, 
where $C > 4 \max(\inv{c c'}, \inv{\sqrt{4 c c'}})$ is a large enough
constant.
\end{theorem}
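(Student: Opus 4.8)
The plan is to recenter around the mean and split the error-corrected Gram matrix into three pieces, each controlled by a net argument on the sphere $S^{f-1}$. First I would record the mean: writing $X_0=Z_1A^{1/2}$ and $W=B^{1/2}Z_2$ with $Z_1,Z_2$ independent and isotropic, and using $\E Z_1 A Z_1^T=\tr(A)I_f$ together with $\E Z_2Z_2^T=mI_f$, one gets $\E[\inv{m}XX^T]=\frac{\tr(A)}{m}I_f+B$, so the quantity to bound is a genuine fluctuation. Since $X_0$ and $W$ are independent and mean zero, I would decompose
\[
\inv{m}XX^T-\frac{\tr(A)}{m}I_f-B \;=\; T_1+T_2+T_3,
\]
where $T_1=\inv{m}\big(Z_1AZ_1^T-\tr(A)I_f\big)$, $T_2=\inv{m}\big(X_0W^T+WX_0^T\big)$, and $T_3=B^{1/2}\big(\inv{m}Z_2Z_2^T-I_f\big)B^{1/2}$, and bound each in operator norm. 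All three are symmetric, so $\twonorm{T_i}\le 2\max_{u\in\N}\abs{u^TT_iu}$ for a fixed $1/4$-net $\N$ of $S^{f-1}$ with $\abs{\N}\le 9^f$; the whole argument then reduces to a union bound of scalar concentration inequalities over $\N$.

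For $T_1$, fix $u\in\N$ and set $g=Z_1^Tu\in\R^m$, whose coordinates are independent, mean zero, unit variance and $\psi_2$ with constant $O(K)$. Then $u^TT_1u=\inv{m}\big(g^TAg-\tr(A)\big)$ with $\E g^TAg=\tr(A)$, so the Hanson--Wright inequality gives a deviation $\ve\,\tr(A)$ with failure probability $\exp\!\big(-c\,\ve^2\,\tr(A)^2/(K^4\fnorm{A}^2)\big)$; using $\fnorm{A}^2\le\twonorm{A}\tr(A)$ this exponent is at least $c\,\ve^2 r(A)/K^4$ with $r(A)=\tr(A)/\twonorm{A}$. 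The effective-rank hypothesis \eqref{eq::trAlow}, $r(A)\ge c'K^4 f\log(3/\ve)/\ve^2$, is exactly what makes this beat the net cardinality $9^f$, yielding $\twonorm{T_1}\le C\ve\,\tau_A$ on an event of probability $\ge 1-4\exp(-c_5\ve^2 r(A)/K^4)$ (the second exponential in the statement). For $T_3$, the columns of $W$ are $m$ independent subgaussian vectors in $\R^f$ with covariance $B$, so $\inv{m}WW^T$ is their sample covariance; the standard subgaussian sample-covariance bound gives $\twonorm{T_3}\le CK^2\twonorm{B}(\sqrt{f/m}+\ve)$, and since \eqref{eq::trAlow} forces $m\ge r(A)\ge c'K^4 f/\ve^2$ the relative error $K^2\sqrt{f/m}$ is $O(\ve)$, so $\twonorm{T_3}\le C\ve\twonorm{B}$ with probability $\ge 1-2\exp(-c\ve^2 m/K^4)$ (the first exponential).

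The hard part will be the cross term $T_2$, where both matrix factors are random. For fixed $u\in\N$ I would write $u^TT_2u=\frac{2}{m}\ip{A^{1/2}g,\,h}$ with $g=Z_1^Tu$ and $h=Z_2^TB^{1/2}u$ independent, $h$ having independent coordinates of variance $u^TBu\le\twonorm{B}$. Conditioning on $Z_1$, the form $\ip{A^{1/2}g,\,h}$ is subgaussian in $h$ with parameter $O\big(K\sqrt{\twonorm{B}}\,\twonorm{A^{1/2}g}\big)$, and on the $T_1$-event $\twonorm{A^{1/2}g}^2=g^TAg\le 2\tr(A)$; a subgaussian tail then gives $\frac{1}{m}\abs{u^TT_2u}\le C\ve\sqrt{\tau_A\twonorm{B}}\le C\ve(\tau_A+\twonorm{B})$ with conditional failure probability $\exp(-c\ve^2 m/K^2)$, again absorbed by the net via \eqref{eq::trAlow}. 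The delicacy here is the two-stage (condition-then-deviate) argument: I must verify that the randomness of $\twonorm{A^{1/2}g}$ is controlled on the same good event from $T_1$, rather than producing a spurious $\fnorm{A}$-type factor. Combining the three bounds by the triangle inequality yields $\twonorm{\inv{m}XX^T-\frac{\tr(A)}{m}I_f-B}\le C_2\ve(\tau_A+\twonorm{B})$, and a union bound over the three events gives the stated probability; the absolute constant $C_2$ stays clean precisely because each $K$-factor has already been charged to the effective-rank condition.
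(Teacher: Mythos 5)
Your proposal is correct and follows essentially the same route as the paper's proof: the identical three-term decomposition of $\inv{m}XX^T-\frac{\tr(A)}{m}I_f-B$, a net over $S^{f-1}$, and the effective-rank hypothesis \eqref{eq::trAlow} used to absorb the net cardinality $e^{O(f)}$ into each concentration exponent. The only substantive variation is the cross term, which the paper controls via the decoupled Hanson--Wright inequality (Lemma~\ref{lemma::oneeventA}) over a product of nets while you condition on $Z_1$ and use a linear subgaussian tail in $h=Z_2^TB^{1/2}u$ on the $T_1$-good event; both work (your two-stage argument is sound because the bound on $\twonorm{A^{1/2}Z_1^Tu}$ is exactly the event already paid for in $T_1$), and the stray $K^2$ in your stated sample-covariance bound for $T_3$ should be moved into the exponent, as the paper does, so that $C_2$ remains free of $K$.
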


\begin{corollary}
\label{coro::kronopB}
Suppose all conditions in Theorem~\ref{thm::kronopB} hold.
Suppose
\ben
\label{eq::trAdelta}
\frac{\tr(A)}{\twonorm{A}} & \ge & 
c' f K^4 \frac{C_3^2}{\delta^2}\log\left(\frac{3 C_3}{\delta}\right).
\een
where $C_3 = C_2 \left(\frac{\tau_A}{\lambda_{\min}(B)} \vee 1\right)$ for $C_2$ as in Theorem~\ref{thm::kronopB}.
Then with the probability as stated in Theorem~\ref{thm::kronopB},
\bens
 (1+2\delta) B \succ \frac{X X^T}{m} -\frac{\tr(A)I_f}{m} \succ  (1-2\delta) B \succ 0
\eens
where for the last inequality to hold, we assume that  $\lambda_{\min}(B) > 0$.
\end{corollary}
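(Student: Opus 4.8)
The plan is to deduce the multiplicative (relative) spectral sandwich directly from the \emph{additive} operator-norm bound of Theorem~\ref{thm::kronopB}, by choosing the accuracy parameter $\ve$ there as a suitable function of $\delta$ and the condition-number-type quantity $C_3$. Throughout write $\tilde B := \frac1m X X^T - \frac{\tr(A)}{m} I_f$, so that the object controlled by Theorem~\ref{thm::kronopB} is exactly $\twonorm{\tilde B - B}$, and the corollary's claim is the two-sided relation $(1-2\delta)B \prec \tilde B \prec (1+2\delta)B$.

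First I would set $\ve = \delta/C_3$. With this choice the hypothesis of the corollary, $\tr(A)/\twonorm{A} \ge c' f K^4 C_3^2 \delta^{-2}\log(3C_3/\delta)$, is \emph{literally} the hypothesis $\tr(A)/\twonorm{A} \ge c' f K^4 \ve^{-2}\log(3/\ve)$ required by Theorem~\ref{thm::kronopB}; one checks $0 < \ve < 1/2$ from $C_3 \ge C_2 \ge 1$ and $\delta$ small. Invoking the theorem then gives, on its stated event,
\[
\twonorm{\tilde B - B} \le C_2 \ve(\tau_A + \twonorm{B}) = \frac{C_2 \delta}{C_3}(\tau_A + \twonorm{B}).
\]
The probability statement of the corollary is inherited verbatim, since it is the probability of Theorem~\ref{thm::kronopB} with $\ve = \delta/C_3$ substituted.

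Second, I would convert this additive bound into the PSD sandwich. Setting $\eta := \twonorm{\tilde B - B}$, the operator-norm bound is equivalent to $B - \eta I_f \preceq \tilde B \preceq B + \eta I_f$, and since $\lambda_{\min}(B) I_f \preceq B$ we have $\eta I_f \preceq \frac{\eta}{\lambda_{\min}(B)} B$, whence
\[
B - \eta I_f \preceq \tilde B \preceq B + \eta I_f,
\qquad
\Big(1 - \tfrac{\eta}{\lambda_{\min}(B)}\Big) B \preceq \tilde B \preceq \Big(1 + \tfrac{\eta}{\lambda_{\min}(B)}\Big) B.
\]
It thus suffices to verify $\eta \le 2\delta\,\lambda_{\min}(B)$ (arranging strict inequality, using the slack in the hypothesis, to recover the strict $\prec,\succ$). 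Plugging in the bound on $\eta$ and the definition $C_3 = C_2\big(\frac{\tau_A}{\lambda_{\min}(B)}\vee 1\big)$ reduces this to the scalar inequality $\tau_A + \twonorm{B} \le 2(\tau_A \vee \lambda_{\min}(B))$; under the standing normalization (A1), $\tau_A = \tr(A)/m = 1$, and the $\vee 1$ in $C_3$ together with this normalization absorbs the $\tau_A$ and $\twonorm{B}$ terms. The last relation $(1-2\delta)B \succ 0$ then holds because $\delta<1/2$ and $\lambda_{\min}(B)>0$, which is the only place that final hypothesis is used.

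The main obstacle I anticipate is precisely this last scalar reduction: passing from the additive error $C_2\ve(\tau_A+\twonorm{B})$ to a \emph{multiplicative} error relative to $B$ forces $C_3$ to swallow not only $\tau_A/\lambda_{\min}(B)$ but also the contribution of $\twonorm{B}$, so the estimate implicitly needs $\twonorm{B} \lesssim \tau_A \vee \lambda_{\min}(B)$ (automatic under (A1) when $\twonorm{B}\le\tau_A=1$). The clean way to handle it is to carry the exact constant rather than $O(\cdot)$ notation: fix $\ve=\delta/C_3$, confirm the two hypotheses coincide, and then discharge the single inequality $C_2\ve(\tau_A+\twonorm{B}) \le 2\delta\,\lambda_{\min}(B)$. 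Everything else is a routine application of $\lambda_{\min}(B) I_f \preceq B$ and the triangle inequality for $\preceq$.
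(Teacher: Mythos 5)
There is a genuine gap in your final reduction, and it is exactly at the point you flagged but then dismissed. Starting from the operator-norm conclusion of Theorem~\ref{thm::kronopB}, your sandwich requires $\eta := C_2\ve(\tau_A+\twonorm{B}) \le 2\delta\,\lambda_{\min}(B)$, and with $\ve=\delta/C_3$ and $C_3=C_2\bigl(\tfrac{\tau_A}{\lambda_{\min}(B)}\vee 1\bigr)$ this is equivalent to $\tau_A+\twonorm{B}\le 2(\tau_A\vee\lambda_{\min}(B))$. That inequality is \emph{not} automatic under (A1): assumption (A1) normalizes $\tr(A)=m$, hence $\tau_A=1$, but places no upper bound whatsoever on $\twonorm{B}$ (indeed the paper's main regime has $\tr(B)/f$ bounded \emph{below}, and $\twonorm{B}$ enters the rates through $D_2=2(\twonorm{A}+\twonorm{B})$ as a free parameter). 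Whenever $B$ is ill-conditioned — $\twonorm{B}$ large relative to $\lambda_{\min}(B)$ and to $\tau_A$ — your scalar inequality fails, so the route through the stated operator-norm bound cannot recover the corollary with the given constant $C_3$; it would force $C_3$ to contain an extra factor of order $\twonorm{B}/\lambda_{\min}(B)$.

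The paper avoids this by not using the operator-norm bound at all. It reuses the \emph{pointwise} quadratic-form estimate from inside the proof of Theorem~\ref{thm::kronopB}: for each $u\in S^{f-1}$,
\[
\Bigl|\tfrac{1}{m}u^T XX^T u-\tfrac{\tr(A)}{m}\twonorm{u}^2-u^TBu\Bigr|\le C_2\ve\, u^TBu+C_2\ve\,\tau_A ,
\]
in which the $B$-dependent part of the error is already \emph{multiplicative} in $u^TBu=\twonorm{B^{1/2}u}^2$ (it never gets relaxed to $\twonorm{B}$). Only the additive $C_2\ve\,\tau_A$ term must be absorbed, via $C_2\ve\,\tau_A\le\delta\,\lambda_{\min}(B)\le\delta\,u^TBu$, and that is precisely what the factor $\tau_A/\lambda_{\min}(B)\vee 1$ in $C_3$ is designed for; combined with $C_2\ve\le\delta$ this yields $u^T\tilde Bu\ge(1-2\delta)u^TBu$ and symmetrically the upper bound. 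Your choice $\ve=\delta/C_3$ and the verification that \eqref{eq::trAdelta} matches \eqref{eq::trAlow} are fine; to repair the argument you should replace the appeal to the theorem's operator-norm conclusion by the sharper per-vector bound from its proof.
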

Next we show a large deviation bound on the
sparse eigenvalues of the error corrected $\tilde{A}$: $\tilde{A} :=
\onef X^T X - \tau_B I_m$.
\begin{corollary}
\label{coro::sparseK}
 Let $X$ be defined as in Definition~\ref{def::subgdata}.
Let $\tilde{A} := \onef X^TX - \tau_B I_{m}$. Suppose
\ben
\label{e::trBlow}
\frac{\tr(B)}{\twonorm{B}} & \ge & c'k K^4 \frac{\log(3e m/k \ve)}{\ve^2}.
\een
Then with probability at least 
$1 -2\exp(-c_4 \ve^2 \frac{f}{K^4}) - 4\exp(-c_4 \ve^2\frac{\tr(B)}{K^4\twonorm{B}})$,
\bens
\rho_{\max}(k, \tilde{A} )
\le  \rho_{\max}(k,A) (1+10\ve) + C_4 \ve \tau_B 
\eens
where $C_4$ is an absolute constant.
Moreover, suppose
for $C_5 = C_4 \left(\frac{\tau_B}{\rho_{\min}(k, A)} \vee 1\right)$ 
\ben
\label{eq::trBlow}
\frac{\tr(B)}{\twonorm{B}} & \ge &
 c'k  K^4 \frac{C_5^2}{\delta^2}\log(\frac{12C_5 e m}{k \delta}).
\een
Then with the probability as stated immediately above, we have
\bens
\rho_{\min}(k, \tilde{A} ) \ge \rho_{\min}(k,A) (1-2\delta).
\eens
\end{corollary}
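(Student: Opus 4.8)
The plan is to obtain both inequalities as consequences of Theorem~\ref{thm::AD} with $\zeta = k$, after passing from the estimated $\hat\tau_B$ to the true value $\tau_B$. Throughout I read $\rho_{\max}(k,\cdot)$ and $\rho_{\min}(k,\cdot)$ as the largest and smallest $k$-sparse eigenvalues of a symmetric quadratic form, consistent with the paper's own use of $a_{\max}\le\rho_{\max}(s,A)\le\lambda_{\max}(A)$. Set $\tilde\Delta := \tilde A - A = \onef X^T X - \tau_B I_m - A$. This differs from the matrix $\Delta = \hat\Gamma_A - A$ of Theorem~\ref{thm::AD} only by $(\hat\tau_B-\tau_B)I_m$, and the additive term $4C_0 D_1 K^2\sqrt{\log m/(mf)}$ appearing there is precisely the contribution of this correction, controlled via Lemma~\ref{lemma::trBest}. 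Dropping it, the same net/covering argument gives, on the same event and under the trace condition \eqref{e::trBlow} (with $\zeta=k$), the intermediate bound
\[
\sup_{u,v\in E\cap S^{m-1}} \bigl|u^T \tilde\Delta v\bigr| \le 8C\,\vp(k)\,\ve,\qquad \vp(k)=\tau_B+\rho_{\max}(k,A),
\]
where $E=\cup_{|J|\le k}E_J$. Specializing to $u=v=t$ records $|t^T\tilde\Delta t|\le 8C\,\vp(k)\,\ve$ uniformly over $k$-sparse unit vectors $t$, which is the only probabilistic input needed below.

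For the largest sparse eigenvalue I would write $t^T\tilde A t = t^T A t + t^T\tilde\Delta t$ and maximize over $k$-sparse unit $t$, yielding $\rho_{\max}(k,\tilde A)\le \rho_{\max}(k,A)+8C\,\vp(k)\,\ve$. Splitting $\vp(k)=\rho_{\max}(k,A)+\tau_B$ turns the first piece into a multiplicative factor $(1+8C\ve)$ on $\rho_{\max}(k,A)$ and leaves $8C\ve\,\tau_B$; rescaling $\ve$ by the absolute constant $8C$ (which only alters $c'$ and the constant inside the logarithm of \eqref{e::trBlow}) and absorbing constants into $C_4$ gives the asserted $\rho_{\max}(k,\tilde A)\le\rho_{\max}(k,A)(1+10\ve)+C_4\ve\,\tau_B$.

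For the smallest sparse eigenvalue I would minimize instead, obtaining $\rho_{\min}(k,\tilde A)\ge\rho_{\min}(k,A)-8C\,\vp(k)\,\ve$. To convert this additive bound into the multiplicative form $(1-2\delta)\rho_{\min}(k,A)$ it suffices to force $8C\ve\,\vp(k)\le 2\delta\,\rho_{\min}(k,A)$, i.e. to take $\ve$ small relative to $\delta$ by the factor $\vp(k)/\rho_{\min}(k,A)$. The dominant part of this ratio is $\tau_B/\rho_{\min}(k,A)$, which is exactly what $C_5=C_4\bigl(\tfrac{\tau_B}{\rho_{\min}(k,A)}\vee 1\bigr)$ records, the remaining sparse condition number $\rho_{\max}(k,A)/\rho_{\min}(k,A)$ being absorbed into $C_4$. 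Choosing $\ve\asymp\delta/C_5$ and substituting back into \eqref{e::trBlow} reproduces the strengthened trace requirement \eqref{eq::trBlow} and delivers $\rho_{\min}(k,\tilde A)\ge(1-2\delta)\rho_{\min}(k,A)$; this parallels how Corollary~\ref{coro::kronopB} follows from Theorem~\ref{thm::kronopB}.

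The main obstacle is the uniform control over all $k$-sparse directions underlying the displayed intermediate bound — the covering-number/net estimate inside Theorem~\ref{thm::AD} that simultaneously produces the factor $\vp(k)=\tau_B+\rho_{\max}(k,A)$ and the stated deviation probability. Once that uniform quadratic-form bound is in hand, both eigenvalue claims are short; the only remaining care is the constant bookkeeping converting the additive fluctuation $8C\,\vp(k)\,\ve$ into multiplicative $(1\pm)$ factors, which is genuinely tighter for $\rho_{\min}$ since there the error must be small relative to $\rho_{\min}(k,A)$ itself, forcing the dependence of the trace/sample condition on the ratio encoded by $C_5$.
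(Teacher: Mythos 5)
Your treatment of the upper bound $\rho_{\max}(k,\tilde A)$ is fine: the additive fluctuation $8C\,\vp(k)\,\ve$ from Theorem~\ref{thm::AD} (with the $\hat\tau_B$-correction term dropped, which is legitimate since $\tilde A$ uses the true $\tau_B$) splits as $8C\ve\,\rho_{\max}(k,A)+8C\ve\,\tau_B$, and a rescaling of $\ve$ by an absolute constant recovers the stated form. The genuine gap is in the $\rho_{\min}$ part. Your route produces the \emph{fully additive} pointwise bound $\abs{t^T\tilde A t - t^TAt}\le 8C\ve\bigl(\rho_{\max}(k,A)+\tau_B\bigr)$, so forcing $\rho_{\min}(k,\tilde A)\ge(1-2\delta)\rho_{\min}(k,A)$ requires $\ve\lesssim \delta\,\rho_{\min}(k,A)/\bigl(\rho_{\max}(k,A)+\tau_B\bigr)$, i.e.\ the admissible $\ve$ (and hence the trace condition \eqref{eq::trBlow}) must shrink with the sparse condition number $\rho_{\max}(k,A)/\rho_{\min}(k,A)$. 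You propose to absorb that ratio into $C_4$, but $C_4$ is an \emph{absolute} constant in the statement, and the hypothesis \eqref{eq::trBlow} only compensates for $\tau_B/\rho_{\min}(k,A)$ through $C_5$; nothing in the corollary's assumptions controls $\rho_{\max}(k,A)/\rho_{\min}(k,A)$. So as written your argument proves a strictly weaker statement.

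The paper avoids this by not passing through the off-diagonal uniform bound of Theorem~\ref{thm::AD}. It decomposes $u^T\tilde A u$ directly and keeps the dominant term \emph{multiplicative} in $u^TAu$: the signal part is controlled by the norm concentration $\abs{\onef\twonorm{Z_1A^{1/2}u}^2-\twonorm{A^{1/2}u}^2}\le 3\ve\twonorm{A^{1/2}u}^2$ (a relative error in $u^TAu$ itself, not in $\rho_{\max}(k,A)$), and the cross term $\onef\abs{u^TA^{1/2}Z_1^TB^{1/2}Z_2u}\lesssim C\ve\,\tau_B\twonorm{A^{1/2}u}/\twonorm{B}^{1/2}$ is split by AM--GM into $4\ve\twonorm{A^{1/2}u}^2+4C^2\ve\,\tau_B$. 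This yields the pointwise estimate $\abs{u^T\tilde A u-u^TAu}\le 10\ve\,u^TAu+C_4\ve\,\tau_B$, from which the lower bound needs only $10\ve\le\delta$ and $C_4\ve\,\tau_B\le\delta\,\rho_{\min}(k,A)$ --- exactly what $C_5=C_4\bigl(\tau_B/\rho_{\min}(k,A)\vee 1\bigr)$ encodes, with no dependence on the condition number of $A$. To repair your proof you would need to retain this multiplicative structure, e.g.\ by specializing to $u=v$ \emph{before} bounding, writing $u^TA^{1/2}(\onef Z_1^TZ_1-I)A^{1/2}u = h(u)^T(\onef Z_1^TZ_1-I)h(u)\,\twonorm{A^{1/2}u}^2$ with $h(u)=A^{1/2}u/\twonorm{A^{1/2}u}$, rather than invoking the $\sup_{u,v}$ bound of Theorem~\ref{thm::AD} and reading off the diagonal.
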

We prove Theorem~\ref{thm::kronopB} in Section~\ref{sec::proofofgramB}.
We also prove the concentration of measure bounds on error-corrected
gram matrices in Corollaries~\ref{coro::kronopB}
and~\ref{coro::sparseK} in Sections~\ref{sec::kronopB}
and~\ref{sec::sparseK} respectively.

\section{Proof of Lemma~\ref{lemma::REcomp}}
\label{sec::proofoflemmaREcomp}

We define $\W(d_0, k_0)$, where $0 < d_0 < m$
 and $k_0$ is a positive number,
as the set of vectors in $\R^m$ which satisfy the following cone constraint:
\bens
\label{eq::cone-init}
\W(d_0, k_0) = \left\{x \in \R^m \;|\; \exists I \in \{1, \ldots, p\}, \size{I} = d_0
\; \mbox{ s.t. } \; \norm{x_{I^c}}_1 \leq k_0 \norm{x_{I}}_1
\right\}. 
\eens
For each vector $x \in \R^p$, let ${T_0}$ denote the locations of the $s_0$
largest coefficients of $x$ in absolute values. The following elementary estimate~\cite{RZ13} will be used in conjunction with the RE condition.
\begin{lemma}
\label{lemma::lower-bound-Az}
For each vector $x \in \W(s_0, k_0)$, let
${T_0}$ denotes the locations of the $s_0$
largest coefficients of $x$ in absolute values.  Then
\ben
\label{eq::cone-top-norm}
\twonorm{x_{T_{0}}} 
\geq \frac{\twonorm{x}}{\sqrt{1 + k_0}}.
\een
\end{lemma}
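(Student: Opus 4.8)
The plan is to prove the equivalent statement $\twonorm{x_{T_0^c}}^2 \le k_0 \twonorm{x_{T_0}}^2$. Indeed, squaring the target bound and using that $T_0$ and $T_0^c$ have disjoint supports gives $\twonorm{x}^2 = \twonorm{x_{T_0}}^2 + \twonorm{x_{T_0^c}}^2$, so the claim $\twonorm{x_{T_0}} \ge \twonorm{x}/\sqrt{1+k_0}$ is exactly the assertion $\twonorm{x_{T_0^c}}^2 \le k_0\twonorm{x_{T_0}}^2$.

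The first step is to transfer the cone constraint from the arbitrary witnessing set $I$ in the definition of $\W(s_0,k_0)$ (with $|I| = s_0$) to the set $T_0$ of the $s_0$ largest coordinates. Since $T_0$ maximizes $\onenorm{x_J}$ over all $J$ with $|J| = s_0$, we have $\onenorm{x_{T_0}} \ge \onenorm{x_I}$, and therefore $\onenorm{x_{T_0^c}} = \onenorm{x} - \onenorm{x_{T_0}} \le \onenorm{x} - \onenorm{x_I} = \onenorm{x_{I^c}} \le k_0 \onenorm{x_I} \le k_0 \onenorm{x_{T_0}}$.

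The second step is the $\ell_1$-to-$\ell_2$ conversion, which is where the extremality of $T_0$ is used twice. Let $m_0 := \min_{i \in T_0}|x_i|$. Because $T_0$ collects the largest entries in absolute value, every coordinate outside $T_0$ has magnitude at most $m_0$; and averaging over $T_0$ gives $m_0 \le \onenorm{x_{T_0}}/s_0$. Combining these with the cone inequality from the first step,
\[
\twonorm{x_{T_0^c}}^2 \;\le\; \Big(\max_{i\in T_0^c}|x_i|\Big)\,\onenorm{x_{T_0^c}} \;\le\; m_0\, k_0\, \onenorm{x_{T_0}} \;\le\; \frac{k_0}{s_0}\,\onenorm{x_{T_0}}^2 .
\]
Applying Cauchy--Schwarz in the form $\onenorm{x_{T_0}} \le \sqrt{s_0}\,\twonorm{x_{T_0}}$ turns the right-hand side into $k_0\twonorm{x_{T_0}}^2$, which is the desired inequality; adding $\twonorm{x_{T_0}}^2$ to both sides yields $\twonorm{x}^2 \le (1+k_0)\twonorm{x_{T_0}}^2$.

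I expect no genuine obstacle; this is the standard cone/shifting estimate from compressed-sensing theory. The only delicate points are the two order-of-magnitude comparisons exploiting the extremality of $T_0$: the transfer of the cone inequality in the first step, and the pair of bounds $\max_{i\in T_0^c}|x_i| \le m_0$ and $m_0 \le \onenorm{x_{T_0}}/s_0$ that make the tail flat enough for the $\ell_1$ control to dominate the $\ell_2$ control. A fully equivalent alternative would partition $T_0^c$ into consecutive blocks of size $s_0$ in decreasing order of magnitude and sum $\twonorm{x_{T_{j+1}}} \le \onenorm{x_{T_j}}/\sqrt{s_0}$; the direct argument above is shorter and avoids the telescoping bookkeeping.
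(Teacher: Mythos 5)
Your proof is correct: the reduction to $\twonorm{x_{T_0^c}}^2 \le k_0\twonorm{x_{T_0}}^2$, the transfer of the cone constraint from the witnessing set $I$ to $T_0$ via the extremality of $\onenorm{x_{T_0}}$, and the chain $\twonorm{x_{T_0^c}}^2 \le \norm{x_{T_0^c}}_\infty\,\onenorm{x_{T_0^c}} \le \frac{k_0}{s_0}\onenorm{x_{T_0}}^2 \le k_0\twonorm{x_{T_0}}^2$ all check out. The paper itself gives no proof, citing the estimate to an earlier work of the authors, and your argument is precisely the standard one used there, so there is nothing to contrast; the explicit handling of the arbitrary witnessing set $I$ is a welcome point of care that the terse statement glosses over.
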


\begin{proofof}{Lemma~\ref{lemma::REcomp}}
Part I: Suppose that the Lower-$\RE$ condition holds for $\Gamma := A^T A$.
Let $x \in \W(s_0, k_0)$. Then 
\bens
\onenorm{x} \le (1+k_0) \onenorm{x_{T_0}} \le  (1 + k_0)\sqrt{s_0} \twonorm{x_{T_{0}}}.
\eens
Thus for $x \in \W(s_0, k_0) \cap S^{p-1}$ and  $\tau (1 + k_0)^2 s_0 \le \alpha/2$, 
we have 
\bens
\twonorm{A x} = (x^T A^T A x)^{1/2} & \ge &  
\left(\alpha \twonorm{x}^2 - \tau \onenorm{x}^2\right)^{1/2} \\
& \ge &  
\left(\alpha \twonorm{x}^2 - \tau (1 + k_0)^2 s_0 \twonorm{x_{T_{0}}}^2\right)^{1/2} \\& \ge &  
\left(\alpha - \tau (1 + k_0)^2 s_0 \right)^{1/2}\ge
\sqrt{\frac{\alpha}{2}}.
\eens
Thus the $\RE(s_0, k_0, A)$ condition holds with
\bens
\inv{K(s_0, k_0, A)} & := & 
\min_{x \in \W(s_0, k_0)}\frac{\twonorm{Ax}}{\twonorm{x_{T_0}}}
\ge \sqrt{\frac{\alpha}{2}} 
\eens
where we use the fact that for any $J \in \{1, \ldots, p\}$ such that $\abs{J} \le
s_0$, $\twonorm{x_J} \le \twonorm{x_{T_0}}$.
We now show the other direction.

Part II. 
Assume that $\RE(4R^2, 2R-1, A)$ holds for some integer $R > 1$.
Assume that for some $R  >1$
\bens
\onenorm{x} \le R \twonorm{x}.
\eens
Let $(x_i^*)_{i=1}^p$ be non-increasing arrangement of
$(\abs{x_i})_{i=1}^p$. Then
\bens
\onenorm{x} 
& \le & R \left(\sum_{j=1}^s (x^*_j)^2 +
  \sum_{j=s+1}^{\infty}\left(\frac{\onenorm{x}}{j}\right)^2
\right)^{1/2} \\
& \le & 
R \left(\twonorm{x^*_{J}}^2 + \onenorm{x}^2 \inv{s}\right)^{1/2}  \le R \left(\twonorm{x^*_{J}} + \onenorm{x}  \inv{\sqrt{s}}\right)
\eens
where $J := \{1, \ldots, s\}$.
Choose $s = 4R^2$.   Then
\bens
\onenorm{x} \le R\twonorm{x^*_{J}} + \half \onenorm{x}.
\eens
Thus we have 
\ben
\label{eq::T02bound}
\onenorm{x} & \le & 2 R\twonorm{x^*_{J}} \le  2 R\onenorm{x^*_{J}} \;
\; \text{ and hence } \; \; \\
\onenorm{x^*_{{J}^c}} & \le &  (2 R - 1) \onenorm{x^*_{J}}.
\een
Then $x \in \W(4R^2, 2R-1)$.
Then for all $x \in S^{p-1}$ such that $\onenorm{x} \le R \twonorm{x}$,
we have for $k_0 =2R-1$ and $s_0 := 4R^2$,
\bens
x^T \Gamma x \ge \frac{\twonorm{x_{T_0}}^2 }{K^2(s_0, k_0, A)} 
\ge 
\frac{\twonorm{x}^2}{\sqrt{s_0} K^2(s_0, k_0, A)} =:
\alpha \twonorm{x}^2
\eens
where we use the fact that $(1 + k_0) \twonorm{x_{T_{0}}}^2 \ge
\twonorm{x}^2$ by Lemma~\ref{lemma::lower-bound-Az} with $x_{T_0}$ as
defined therein. 
Otherwise, suppose that $\onenorm{x} \ge R \twonorm{x}$.
Then for a given $\tau > 0$,
\ben
\label{eq::alphatau}
\alpha\twonorm{x}^2 - \tau \onenorm{x}^2 \le
(\inv{\sqrt{s_0}K^2(s_0, k_0, A)} - \tau R^2) \twonorm{x}^2.
\een
Thus we have by the choice of $\tau$ as in~\eqref{eq::tauchoice}  and \eqref{eq::alphatau}
\bens
x^T \Gamma x  \ge  
\lambda_{\min}(\Gamma) \twonorm{x}^2 &\ge & 
(\inv{\sqrt{s_0}K^2(s_0, k_0, A)} - \tau R^2) \twonorm{x}^2 \\
& \ge & 
\alpha \twonorm{x}^2 - \tau \onenorm{x}^2.
\eens
The Lemma thus holds.
\end{proofof}

\section{Proof of Theorem~\ref{thm::lasso}}
\label{sec::proofofthmlasso}
\begin{proofof2}
First we note that it is sufficient to have~\eqref{eq::trBLasso} in order for \eqref{eq::trBlem} to hold. 
~\eqref{eq::trBLasso} guarantees that
for $\V=3 e M_A^3/2$ 
\ben
\nonumber
r(B) := \frac{\tr(B)}{\twonorm{B}} & \ge & 16 c' K^4 \frac{f}{\log m}
\log \frac{\V m \log m }{f}  \\
\nonumber
&\ge &
16 c' K^4 \frac{f}{\log m} 
\log\left(\frac{3 e m M_A^3\log m }{2f}\right) \\
\nonumber
&= & c' K^4 \inv{\ve^2} \frac{4}{M_A^2}\frac{f}{\log m} 
\log\left(\frac{6 e m M_A }{\frac{4}{M_A^2}(f/\log m)}\right) \\
\label{eq::trBlemup}
& \ge & 
c' K^4 \inv{\ve^2} s_0
\log\left(\frac{6 e m M_A }{s_0} \right) = c' K^4\frac{s_0}{\ve^2} \log\left(\frac{3 e m}{s_0 \ve }\right)
\een
where $\ve = \inv{2M_A} \le \inv{128 C}$, and the last inequality
holds given that $k \log (c m/k)$ on the 
RHS of~\eqref{eq::trBlemup} is a 
monotonically increasing function of $k$, and
\bens
 s_0 & \le & \frac{4 f}{M_A^2 \log m} 
\; \text{  and } \;  M_A =  \frac{64 C (\rho_{\max}(s_0,A) +
  \tau_B)}{\lambda_{\min}(A)} \ge 64 C.
\eens
Next we check that the choice of $d$ as in~\eqref{eq::dlasso} ensures
that \eqref{eq::dlassoproof} holds.
Indeed, for $c' K^4 \le 1$, we have
\bens
d  & \le  & C_A (c' K^4 \wedge 1)\frac{\phi f}{\log m}
\le C_A  \left(c' D_{\phi} \wedge  1\right) \frac{f}{\log  m}.
\eens
By Lemma~\ref{lemma::lowerREI}, we have on event $\A_0$, the
modified gram matrix $\hat \Gamma_A :=   \onef (X^T X - \hat\tr(B) I_{m})$
satisfies the Lower $\RE$ conditions with
\ben
\label{eq::localtau}
\text{ curvature}\; \;
\alpha = \half \lambda_{\min}(A) 
\text{ and tolerance } \; \; \tau = \frac{\lambda_{\min}(A)}{2s_0} =
\frac{\alpha}{s_0}.
\een
Theorem~\ref{thm::lasso} follows from Theorem~\ref{thm::main}, so long
as we can show that condition~\eqref{eq::taumain} holds for $\lambda \ge 4 \psi
\sqrt{\frac{\log m}{f}}$ where  the parameter $\psi$ is as
defined~\eqref{eq::psijune}, and $\alpha$ and $\tau =
\frac{\alpha}{s_0}$ are as defined immediately above.
Combining~\eqref{eq::localtau} and~\eqref{eq::taumain},
we need to show~\eqref{eq::dcond} holds.
This is precisely the content of Lemma~\ref{lemma::dmain}. 
This is the end of the proof for Theorem~\ref{thm::lasso}
\end{proofof2}

\section{Proof of Theorem~\ref{thm::DS}}
\label{sec::proofofDSthm}
For the set $\Cone_J(k_0)$ as in~\eqref{eq::cone-init}, 
\bens
\kappa_{\RE}(d_0, k_0)  & := &  \min_{J : \abs{J} \le d_0}
\min_{\Delta \in \Cone_J(k_0)} 
\frac{\abs{\Delta^T \Psi \Delta}}{\twonorm{\Delta_J}^2} 
= \left(\inv{K(d_0, k_0,  (1/\sqrt{f}) Z_1 A^{1/2})}\right)^2.   
\eens
Recall the following Theorem \ref{thm:subgaussian-T-intro} from~\cite{RZ13}.
\begin{theorem}{\textnormal{\cite{RZ13}}}
\label{thm:subgaussian-T-intro}
Set $0< \delta < 1$,  
$k_0 > 0$, and $0< d_0 < p$.
Let $A^{1/2}$ be an $m \times m$ matrix satisfying $\RE(d_0, 3k_0, A^{1/2})$ condition
as in Definition~\ref{def:memory}. Let $d$ be as defined in~\eqref{eq::sparse-dim-A}
\ben
\label{eq::sparse-dim-A}
d & = & d_0 + d_0 \max_j  \twonorm{A^{1/2} e_{j}}^2 \frac{16 K^2(d_0, 3k_0, A^{1/2}) (3k_0)^2 (3k_0 + 1)}{\delta^2}.
\een
Let $\Psi$ be an $n \times m$ matrix whose rows are
independent isotropic $\psi_2$ random vectors in $\R^m$ with constant $\alpha$.
Suppose the sample size satisfies
\ben
\label{eq::UpsilonSampleBound-intro}
n \geq \frac{2000 d \alpha^4}{\delta^2} \log \left(\frac{60 e m}{d \delta}\right).
\een
Then with probability at least $1- 2 \exp(-\delta^2 n/2000 \alpha^4)$,
$\RE(d_0, k_0, (1/\sqrt{n})\Psi A^{1/2})$ condition holds for matrix $(1/\sqrt{n}) \Psi A$
with
\ben
\label{eq::RE-subg}
0< K(d_0, k_0,  (1/\sqrt{n}) \Psi A^{1/2}) \leq \frac{K(d_0, k_0, A^{1/2})}{1-\delta}.
\een
\end{theorem}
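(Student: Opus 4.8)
The plan is to show that the normalized random map $\Phi := (1/\sqrt{n})\Psi$ acts as an approximate isometry, up to a factor $(1-\delta)$, on the image under $A^{1/2}$ of the restricted cone, and then to combine this with the population $\RE$ condition assumed on $A^{1/2}$. Writing $B := A^{1/2}$ and letting $r_1,\dots,r_n$ be the independent isotropic $\psi_2$ rows of $\Psi$, the starting observation is that for a fixed $v$,
\[
\frac{1}{n}\twonorm{\Psi B v}^2 = \frac{1}{n}\sum_{i=1}^n \langle r_i, Bv\rangle^2
\]
is an average of i.i.d.\ nonnegative variables whose mean is exactly $\twonorm{Bv}^2$ by isotropy, and whose $\psi_1$-norm is at most $\alpha^2\twonorm{Bv}^2$, since each $\langle r_i, Bv\rangle$ is $\psi_2$ with constant $\alpha\twonorm{Bv}$. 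Bernstein's inequality for subexponential sums then gives, for a single $v$, a relative deviation of order $\exp(-c\,\delta^2 n/\alpha^4)$, which already accounts for both the $\alpha^4$ in the exponent and the target failure probability $2\exp(-\delta^2 n/2000\alpha^4)$. The entire difficulty is to upgrade this pointwise estimate to a uniform one.

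I would carry out the uniformization by reducing the cone to sparse vectors, working over the enlarged cone $\W(d_0,3k_0)$, which contains the $k_0$-cone appearing in the conclusion. For $v\in\W(d_0,3k_0)$ with $\size{J}\le d_0$ and $\onenorm{v_{J^c}}\le 3k_0\,\onenorm{v_J}$ — here the stronger hypothesis $\RE(d_0,3k_0,A^{1/2})$ enters — one gets $\twonorm{v_J}\le K(d_0,3k_0,A^{1/2})\twonorm{Bv}$ and hence $\onenorm{v}\le(1+3k_0)\sqrt{d_0}\,K(d_0,3k_0,A^{1/2})\twonorm{Bv}$, so $v/\twonorm{Bv}$ is effectively $d_0$-sparse. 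The integer $d$ in~\eqref{eq::sparse-dim-A} is calibrated precisely so that, after converting between coordinate norms and the $B$-weighted norm through $\max_j\twonorm{A^{1/2}e_j}^2$ and the factor $K^2(d_0,3k_0,A^{1/2})$, the normalized enlarged cone lies in a bounded multiple of $\conv(U_d)$, where $U_d$ is the set of $d$-sparse vectors in the Euclidean unit ball. This containment is the device by which the weaker $k_0$ conclusion is read off from the stronger $3k_0$ hypothesis, since passing to sparse building blocks is exactly what inflates the admissible cone parameter by the factor $3$.

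With the cone reduced to $\conv(U_d)$, I would prove a restricted-isometry bound on sparse vectors through the symmetric bilinear form
\[
Q(u, v) := \langle \Phi B u, \Phi B v\rangle - \langle Bu, Bv\rangle .
\]
For a fixed coordinate subspace $E_S=\span\{e_j : j\in S\}$ with $\size{S}\le d$, a $(1/4)$-net of its unit sphere has cardinality at most $9^d$, and the pointwise Bernstein bound applies on each net point; a union bound over the $\binom{m}{d}\le (em/d)^d$ choices of $S$ then controls $Q$ simultaneously on all $d$-sparse pairs, yielding $\abs{Q(u,v)}\le\tfrac{\delta}{2}\twonorm{Bu}\twonorm{Bv}$. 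Matching the net cardinality $9^d(em/d)^d$ against the deviation $\exp(-c\delta^2 n/\alpha^4)$ forces $n\gtrsim d\,\alpha^4\delta^{-2}\log(em/d\delta)$, which is exactly~\eqref{eq::UpsilonSampleBound-intro} and leaves the stated probability.

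Finally I would transfer the sparse bound to the whole cone by bilinearity and then close with the population estimate. Writing a normalized cone vector, after rescaling, as a convex combination $v=\sum_i\lambda_i s_i$ of elements of $\conv(U_d)$, one has $Q(v,v)=\sum_{i,j}\lambda_i\lambda_j Q(s_i,s_j)$, bounded by $\tfrac{\delta}{2}\big(\sum_i\lambda_i\twonorm{Bs_i}\big)^2\le\delta\,\twonorm{Bv}^2$ after undoing the scaling from the containment lemma; hence $\twonorm{\Phi B v}\ge\sqrt{1-\delta}\,\twonorm{Bv}\ge(1-\delta)\twonorm{Bv}$ for every $v\in\W(d_0,3k_0)$, and in particular for every $v$ in the $k_0$-cone. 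Combining with the population $\RE(d_0,k_0,A^{1/2})$ bound $\twonorm{Bv}\ge\twonorm{v_J}/K(d_0,k_0,A^{1/2})$ — valid since the $k_0$-cone sits inside the $3k_0$-cone of the hypothesis — gives $\twonorm{\Phi B v}\ge(1-\delta)\twonorm{v_J}/K(d_0,k_0,A^{1/2})$, which is~\eqref{eq::RE-subg}. The main obstacle is the geometric containment step: pinning down the constant $d$ in~\eqref{eq::sparse-dim-A} and verifying that the bilinear extension over $\conv(U_d)$ loses no more than the budgeted factor $\delta$, since a direct net over all of $\W(d_0,3k_0)$ would be hopelessly lossy and it is the convexity structure that lets the sparse isometry propagate cleanly to the entire restricted cone.
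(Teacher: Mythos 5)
The paper does not actually prove this theorem --- it is imported verbatim from \cite{RZ13} --- and the argument there is exactly the one you outline: a pointwise Bernstein bound upgraded to a uniform almost-isometry on $d$-sparse vectors via nets and a union bound over supports, a Maurey-type convex-hull containment of the enlarged $3k_0$-cone (normalized through the population $\RE$ constant) inside a bounded multiple of $\conv\left(U_d\right)$ with $d \asymp d_0\, K^2(d_0,3k_0,A^{1/2})\, k_0^3 \max_j \twonorm{A^{1/2}e_j}^2/\delta^2$, a bilinear extension of the deviation over the convex hull, and a final transfer through $K(d_0,k_0,A^{1/2})$. Your sketch is correct at this level of detail, and the step you flag as the main obstacle --- calibrating $d$ so that the convex-hull/polarization step loses only the budgeted factor $\delta$ --- is precisely where the work in \cite{RZ13} is concentrated.
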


\begin{proofof}{Theorem~\ref{thm::DS}}
Suppose $\RE(2d_0, 3k_0, A^{1/2})$ holds.
Then for $d$ as defined in~\eqref{eq::sparse-dim-Ahalf} and $f
= \Omega(d K^4 \log (m/d))$, we have with probability at least $1 - 2
\exp(\delta^2 f/2000 K^4)$, the 
$\RE(2d_0, k_0, \inv{\sqrt{f}} Z_1 A^{1/2})$ condition holds with 
\bens
\kappa_{\RE}(2d_0, k_0)  & = &  \left(\inv{K(2d_0, k_0,  (1/\sqrt{f}) Z_1 A^{1/2})}\right)^2  \ge
\left(\inv{2 K(2d_0, k_0,  A^{1/2})}\right)^2  
\eens
by Theorem~\ref{thm:subgaussian-T-intro}.

The rest of the proof follows from~\cite{BRT14} Theorem 1 and thus we only provide
a sketch. In more details, in view of the lemmas shown in
Section~\ref{sec::proofall}, we need
\bens
\kappa_{q}(d_0, k_0) \ge c d_0^{-1/q}
\eens
to hold for some constant $c$ for $\Psi := \onef X_0^T X_0$.
It is shown in Appendix C in~\cite{BRT14} that under the $\RE(2d_0,
k_0, \inv{\sqrt{f}} Z_1 A^{1/2})$ 
condition, for any $d_0 \le m/2$ and $1\le q\le 2$, we have
\ben
\nonumber
\kappa_{1}(d_0, k_0) & \ge & c d_0^{-1} \kappa_{\RE}(d_0, k_0), \\
\label{eq::kqbound}
\kappa_{q}(d_0, k_0) & \ge & c(q) d_0^{-1/q} \kappa_{\RE}(2d_0, k_0)
\een
where $c(q) > 0$ depends on $k_0$ and $q$.
The theorem is thus proved following exactly the same line of
arguments as in the proof of Theorem 1 in~\cite{BRT14} in view of the
$\ell_q$ sensitivity condition derived immediately above, 
in view of Lemmas~\ref{lemma::DS},~\ref{lemma::DS-cone} and~\ref{lemma::grammatrix}.
Indeed, we have for $v := \hat\beta - \beta^*$, we have by definition
of $\ell_q$ sensitivity as in~\eqref{eq::sense}
\ben
\nonumber
 c(q) d_0^{-1/q} \kappa_{\RE}(2d_0, k_0) \norm{v}_q
&  \le &  
\nonumber
\kappa_{q}(d_0, k_0) \norm{v}_q \le  \norm{\onef X_0^T X_0 v}_{\infty}
\\
\nonumber
& \le & \mu_1 \twonorm{\beta^*} + \mu_2 \onenorm{v} + \tau \\
\nonumber
& \le & \mu_1 \twonorm{\beta^*} + \mu_2 (2+\lambda) \onenorm{v_S} +
\tau \\
\nonumber
& \le & \mu_1 \twonorm{\beta^*} + \mu_2 (2+\lambda) d_0^{1-1/q}
\norm{v_S}_q + \tau \\
\label{eq::prelude}
& \le & \mu_1 \twonorm{\beta^*} + \mu_2 (2+\lambda) d_0^{1-1/q} \norm{v}_q + \tau.
\een
Thus we have for $d_0 = c_0 \sqrt{f/\log m}$ where $c_0$ is sufficiently small,
\bens
&& d_0^{-1/q} 
(c(q) \kappa_{\RE}(2d_0, k_0)-  \mu_2 (2+\lambda)
  d_0) \norm{v}_q
\le \mu_1 \twonorm{\beta^*} + \tau \\
\text{ hence }  && 
\norm{v}_q  \le  C( 4 D_2  r_{m,f} K \twonorm{\beta^*} + 2 D_0 M_{\e}
r_{m,f} ) d_0^{1/q} \\
\nonumber
&  & \quad \quad \quad \le  4 C D_2  r_{m,f} (K \twonorm{\beta^*} + M_{\e}) d_0^{1/q}
\eens
for some constant $C = 1/\left(c(q) \kappa_{\RE}(2d_0, k_0)-  \mu_2 (2+\lambda)
  d_0\right) \ge 1/\left(2 c(q) \kappa_{\RE}(2d_0, k_0)\right)$ given that 
$$\mu_2(2+\lambda) d_0= 2 D_2 K r_{m,f}(\inv{\lambda} + 1)
(2+\lambda) c_0 \sqrt{f/\log m}=  2 c_0 C_0 D_2 K^2 (2+\lambda)
(\inv{\lambda} + 1)$$ 
is sufficiently small 
 and thus \eqref{eq::ellqnorm} holds.
The prediction error bound follows exactly the same line of arguments
as in~\cite{BRT14} which we omit here. 
See proof of Theorem~\ref{thm::DSoracle} in Section~\ref{sec::DSoracleproof} for details.
\end{proofof}

\section{Proof of Theorem~\ref{thm::lassora}}
\label{sec::classoproof}
\begin{proofof2}
The proof is identical to the proof of Theorem~\ref{thm::lasso} up till
~\eqref{eq::localtau}, except that we replace the condition on $d$ 
as in the theorem statement by~\eqref{eq::doracle}: that is,
\bens
&& d:= \abs{\supp(\beta^*)} \le C_A \frac{f}{\log m} \left\{c' C_\phi
  \wedge 2 \right\} \;\;\text{ where }  C_A := \inv{128 M_{A}^2}, \\
\nonumber
&& C_{\phi} := \frac{\twonorm{B} + a_{\max}}{D^2}
\left(\frac{K^2M^2_{\e}}{b_0^2}   +  \tau_B^+ K^4 \phi \right) \ge
\frac{\twonorm{B} + a_{\max}}{D^2} \tau_B^+ 
\eens
where $c', \phi, b_0, M_{\e}$ and $K$ are as defined in
Theorem~\ref{thm::lasso}, where we assume that
$ b_0^2 \ge \twonorm{\beta^*}^2 \ge \phi b_0^2  \; \text{ for some } \; 0 < \phi
\le 1$.
Theorem~\ref{thm::lassora} 
follows from Theorem~\ref{thm::main}, so long
as we can show that condition~\eqref{eq::taumain} holds for $\lambda \ge 2 \psi
\sqrt{\frac{\log m}{f}}$ where the parameter $\psi$ is as
defined~\eqref{eq::psioracle}, and $\alpha$ and $\tau = \frac{\alpha}{s_0}$ are as defined 
in~\eqref{eq::localtau}.
Combining~\eqref{eq::localtau} and~\eqref{eq::taumain},
we need to show~\eqref{eq::dcond} holds.
This is precisely the content of Lemma~\ref{lemma::dmainoracle}.
This is the end of the proof for Theorem~\ref{thm::lassora}.
\end{proofof2}

\section{Proof of Theorem~\ref{thm::DSoracle}}
\begin{proofof2}
\label{sec::DSoraproof}
Throughout this proof, we assume that $\B_0 \cap \B_{10}$ holds. 
The rest of the proof follows that of Theorem~\ref{thm::DS}, except for the last
part.
Let $\mu_1, \mu_2, \tau$ be as defined in
Lemma~\ref{lemma::grammatrix}. 
We have for $\mu_2  :=  2 \mu (1+\inv{2\lambda}) $ where  $\mu = D_0'
K r_{m,f}  \tilde\tau_B^{1/2}$,
and $d_0 =  c_0  \tau_B^-\sqrt{f/\log m}$, 
\ben
\label{eq::half}
\mu_2(2+\lambda) d_0 & = &
 2 C_0 D_0' K^2  \tilde\tau_B^{1/2} 
(\inv{2\lambda} + 1)
(2+\lambda)  c_0  \tau_B^- \\
\nonumber
&  \le &  2 c_0 C_0 D_0' K^2 (2+\lambda) (\inv{2\lambda} + 1)  
\le \half c(q) \kappa_{\RE}(2d_0, k_0) 
\een
which holds when $c_0$ is sufficiently small,  where by
\eqref{eq::tildetauBbound} $\tau_B^{-}   \tilde\tau_B^{1/2} \le 1$. Hence
\bens
\mu_2  d_0 \le \frac{c(q) \kappa_{\RE}(2d_0, k_0)}{2(2+\lambda)}
\eens
Thus for $c_0$ sufficiently small, $\mu_1 = 2 \mu$,
by \eqref{eq::kqbound}, \eqref{eq::half}, \eqref{eq::prelude} and~\eqref{eq::tildetauB},
\ben
\nonumber
\lefteqn{ d_0^{-1/q} \half (c(q) \kappa_{\RE}(2d_0, k_0)) \norm{v}_q }\\
& =  & 
\nonumber
d_0^{-1/q} (c(q) \kappa_{\RE}(2d_0, k_0)-  \mu_2 (2+\lambda)  d_0) \norm{v}_q \\
\nonumber
&\le & (\kappa_{q}(d_0, k_0) - \mu_2 (2+\lambda) d_0^{1-1/q})
\norm{v}_q \le \mu_1 \twonorm{\beta^*} + \tau  \\
\label{eq::middle}
& \le & 
 2 D_0' r_{m,f} K^2 ((\tau_B^{1/2} + (3/2)C_{6} K r_{m,m}^{1/2})\twonorm{\beta^*} + M_{\e}/K)
\een
and thus \eqref{eq::ellqnormimp} holds, following the proof in Theorem~\ref{thm::DS}.
The prediction error bound follows exactly the same line of arguments
as in~\cite{BRT14}, which we now include for the sake completeness.
Following~\eqref{eq::ellqnormimp}, we have by \eqref{eq::middle}, 
\bens
\onenorm{v} & \le &  
C_{11} d_0 (\mu_1 \twonorm{\beta^*} + \tau ) \; \text{ where } \;
C_{11} = 2/\left(c(q) \kappa_{\RE}(2d_0, k_0)\right) \\
\text{ and hence } \; 
\mu_2 \onenorm{v} & \le & C_{11} \mu_2 d_0 (\mu_1 \twonorm{\beta^*} + \tau) \\
& \le & 
C_{11} \inv{2(2+\lambda)}  \left(c(q) \kappa_{\RE}(2d_0, k_0)\right) (\mu_1\twonorm{\beta^*} + \tau) \\
& = & 
\inv{2  + \lambda} (\mu_1\twonorm{\beta^*} + \tau) 
\eens
Thus we have by \eqref{eq::middle}, the bounds immediately above, and~\eqref{eq::tildetauBbound}
\bens
\onef \twonorm{X (\hat{\beta} -\beta^*)}^2 & \le & 
\onenorm{v} \norm{\onef X_0^T X_0 v}_{\infty} \\
& \le &  
C_{11}  d_0(\mu_1 \twonorm{\beta^*}+ \tau)  \left( \mu_1 \twonorm{\beta^*}+
  \mu_2 \onenorm{v} + 2 \tau \right) \\ 
& \le &  
C_{11} d_0(\mu_1 \twonorm{\beta^*}+ \tau)(1+\inv{2+\lambda})  \left( \mu_1 \twonorm{\beta^*}+ 2 \tau \right) \\ 
& = &  
C' (D_0')^2  K^4 d_0 \frac{\log m}{f} \left( \tilde\tau_B^{1/2} \twonorm{\beta^*} +
  \frac{M_{\e}}{K}\right)^2\\
& \le &  
C'' (\twonorm{B} + a_{\max}) 
K^2 d_0 \frac{\log m}{f} \left( (2\tau_B+ 3 C_6^2 K^2 r_{m,m})
  K^2 \twonorm{\beta^*}^2 + M_{\e}^2\right)
\eens
 where $(D_0')^2 \le 2\twonorm{B} + 2a_{\max}$.
The theorem is thus proved.
\end{proofof2}

\section{Conclusion}
\label{sec::conclude}
In view of the main Theorems~\ref{thm::lasso} and~\ref{thm::DS}, at
this point, we do not really think one estimator is preferable to the other.
While the rates we obtain for both estimators are at the same order
 for $q=1, 2$, the conditions under which these  rates are obtained
 are somewhat different.  Lasso estimator allows large values of
 sparsity,  while Conic-programming estimator conceptually is more
 adaptive by not fixing an upper bound on $\twonorm{\beta^*}$ a priori,
 the cost of which seems to be a more stringent requirement on the
 sparsity level. The lasso-type procedure can recover a sparse model using 
$O(\log m)$ number of measurements per nonzero component 
despite the measurement error in $X$ and the stochastic noise $\e$
while the Dantzig selector-type allows only $d \asymp \sqrt{f/\log
  m}$ to achieve the error rate at the same order as the Lasso-type estimator.

However, we show in Theorem~\ref{thm::DSoracle} in Section
\ref{sec::DSoracle} that this restriction on the sparsity can be
relaxed for the Conic programming estimator~\eqref{eq::Conic}, 
when we make a different choice for the parameter $\mu$ based on a
more refined analysis.  Eventually, as $\tau_B \to 0$, this relaxation
on $d$ as in \eqref{eq::ora-sparsity-rem} enables the Conic
Programming estimator to achieve bounds which are essentially
identical to  the Dantzig Selector when the design matrix $X_0$ is a
subgaussian random matrix satisfying the Restricted Eigenvalue conditions;
See for example~\cite{CT07,BRT09,RZ13}.
For the Lasso estimator, when we require that  the stochastic error $\e$ in the response variable $y$ as in~\eqref{eq::oby}
does not converge to $0$ as quickly as the measurement error $W$ in
\eqref{eq::obX} does, then the sparsity constraint becomes essentially
unchanged as $\tau_B \to 0$.
These tradeoffs are somehow different from the behavior of the Conic
programming estimator versus the Lasso estimator; however, we believe the
differences are minor.

We now state a slightly sharper bound than those in Lemma~\ref{lemma::D2improv} which
provides a significant improvement on the error bounds in case $\tau_B =o(1)$ while
$\twonorm{A} \ge 1$ for the Lasso-type estimator in \eqref{eq::origin}
as well as the  Conic programming estimator~\eqref{eq::Conic}.
Recall $D'_0 :=\sqrt{\twonorm{B}} + a_{\max}^{1/2}$.
By~\eqref{eq::oracle},
\bens
\label{eq::oracleII}
\norm{\hat\gamma - \hat\Gamma \beta^*}_{\infty}
& \le &
D_0' K  \tau_B^{1/2} \twonorm{\beta^*} r_{m,f}
+ \frac{2D_1 K}{\sqrt{m}} \norm{\beta^*}_{\infty} r_{m,f} + D_0 M_{\e} r_{m,f}
\eens
When $\tau_B \to 0$, we have for $D_0 =\sqrt{\tau_B} + a_{\max}^{1/2} \to 
a_{\max}^{1/2}$
\bens
\norm{\hat\gamma - \hat\Gamma \beta^*}_{\infty}
& = &  
O\left(D_1 K\inv{\sqrt{m}}\norm{\beta^*}_{\infty} +  D_0 K M_{\e}\right)
K \sqrt{\frac{\log m}{f}}
\eens
where $D_1 = \frac{\fnorm{A}}{\sqrt{m}} +  \frac{\fnorm{B}}{\sqrt{f}} \to \twonorm{A}^{1/2}$  under (A1), given that $\fnorm{B}/
\sqrt{f} \le \tau_B^{1/2} \twonorm{B}^{1/2} \to 0$,  and the
  first term inside the bracket comes from the estimation error in
  $\hat\tr(B)/f$, which can be made go away if we were to assume  that
  $\tr(B)$ is also known.
In this case,  the error term involving $\twonorm{\beta^*}$ in~\eqref{eq::psijune} vanishes, and we only need
to set 
\ben
\label{eq::psijune15lasso}
&& \lambda \ge 2 \psi \sqrt{\frac{\log m}{f}} \; \; \text{ where } \;\;
\psi  \asymp D_0 K M_{\e} + \twonorm{A}^{1/2} K^2 m^{-1/2} \norm{\beta^*}_{\infty}.
\een
Moreover, suppose that $\tr(B)$ is given, then one can drop the second
term  in $\psi$  as in \eqref{eq::psijune15lasso} and hence recover
the lasso bound when the design matrix $X$  is assumed to be free of measurement errors.

Finally, we note that the bounds corresponding to the Upper $\RE$ 
condition as stated in Corollary~\ref{coro::BC}, Theorem~\ref{thm::AD}
and Lemma~\ref{lemma::lowerREI} are  not needed for
Theorem~\ref{thm::lasso}. 
They are useful to ensure algorithmic convergence and to bound the optimization error for the gradient
descent-type of algorithms as considered in~\cite{LW12}, when one is
interested in approximately solving the non-convex optimization
function~\eqref{eq::origin}.  Our numerical results validate such
algorithmic and statistical convergence properties. 

\section*{Acknowledgements}
The authors are grateful for the helpful discussions with Prof. Rob
Kass.

\appendix

\section{Outline}
In Sections~\ref{sec::aux} and~\ref{sec::stoc}, we 
present variations of  the Hanson-Wright inequality
as recently derived in~\cite{RV13} (cf. Lemma~\ref{lemma::oneeventA}),
concentration of measure bounds and  stochastic error bounds in
Lemma~\ref{lemma::Tclaim1}.

In Sections~\ref{sec::lassoall} and~\ref{sec::proofofDS}, we prove the
technical lemmas for Theorems~\ref{thm::lasso} and~\ref{thm::DS} respectively. 
In Section~\ref{sec::DSoracleproof}, we prove the Lemmas needed for Proof of Theorem~\ref{thm::DSoracle}.
In order to prove Corollary~\ref{coro::BC},
we need to first state some geometric analysis results Section~\ref{sec::geometry}.
We prove Corollary~\ref{coro::BC} in Section~\ref{sec::appendLURE} and 
Theorem~\ref{thm::AD} in Section~\ref{sec::proofofthmAD}.
Results presented in Section~\ref{sec::Best} are proved in
Section~\ref{sec::proofofgramB}.
In particular, we prove Theorem~\ref{thm::kronopB} in Section~\ref{sec::proofofgramB}.
We also prove the concentration of measure bounds on error-corrected
gram matrices in Corollaries~\ref{coro::kronopB}
and~\ref{coro::sparseK} in Sections~\ref{sec::kronopB}
and~\ref{sec::sparseK} respectively.
The results appearing in Section~\ref{sec::proofofgramB} are proved in Section~\ref{sec::proofofnormA}.

\section{Some auxiliary results}
\label{sec::aux}
We first need to state the following form of the Hanson-Wright
inequality as recently derived in Rudelson and Vershynin~\cite{RV13},
and an auxiliary result in Lemma~\ref{lemma::oneeventA} which may be
of independent interests.
\begin{theorem}
\label{thm::HW}
Let $X = (X_1, \ldots, X_m) \in \R^m$ be a random vector with independent components $X_i$ which satisfy
$\expct{X_i} = 0$ and $\norm{X_i}_{\psi_2} \leq K$. Let $A$ be an $m \times m$ matrix. Then, for every $t > 0$,
\bens
\prob{\abs{X^T A X - \expct{X^T A X} } > t} 
\leq 
2 \exp \left[- c\min\left(\frac{t^2}{K^4 \fnorm{A}^2}, \frac{t}{K^2 \twonorm{A}} \right)\right].
\eens
\end{theorem}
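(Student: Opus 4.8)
The plan is to follow the decoupling-and-MGF route of Rudelson and Vershynin, separating the diagonal and off-diagonal contributions to the centered quadratic form. Writing $X^T A X - \expct{X^T A X} = D + S$, where $D = \sum_i a_{ii}(X_i^2 - \E X_i^2)$ and $S = \sum_{i\neq j} a_{ij} X_i X_j$, I would bound the tails of $D$ and $S$ separately and then combine them by splitting the deviation level $t$ and applying a union bound. It clearly suffices to bound the MGF of each piece and invoke a Chernoff argument.

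For the diagonal part $D$, each summand $a_{ii}(X_i^2 - \E X_i^2)$ is centered with a controlled sub-exponential norm, $\norm{X_i^2 - \E X_i^2}_{\psi_1} \le C K^2$, since $\norm{X_i}_{\psi_2} \le K$. As these summands are independent, a Bernstein inequality for sums of independent sub-exponentials yields a tail of the form $2\exp\left(-c\min\left(t^2/(K^4\sum_i a_{ii}^2),\; t/(K^2\max_i\abs{a_{ii}})\right)\right)$; bounding $\sum_i a_{ii}^2 \le \fnorm{A}^2$ and $\max_i\abs{a_{ii}} \le \twonorm{A}$ already matches the target form. This part is routine.

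The main work, and the main obstacle, is the off-diagonal chaos $S$, which I would control at the level of its moment generating function. First I would apply a decoupling inequality (de la Pe\~na and Montgomery-Smith) to pass from $S$ to the decoupled form $S' = \sum_{i,j} a_{ij} X_i X_j'$ with $X'$ an independent copy of $X$, at the cost of a constant factor: $\expct{\exp(\lambda S)} \le \expct{\exp(4\lambda S')}$. Conditioning on $X'$, the sum $S' = \sum_i X_i (A X')_i$ is a linear combination of the independent subgaussian coordinates $X_i$, so its conditional MGF is at most $\exp\left(C K^2 \lambda^2 \twonorm{A X'}^2\right)$. It then remains to take the expectation over $X'$ of $\exp\left(\sigma \twonorm{A X'}^2\right)$ with $\sigma \asymp K^2 \lambda^2$. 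Since $\twonorm{A X'}^2 = X'^T (A^T A) X'$ with $A^T A$ positive semidefinite, I would establish, by comparison with a Gaussian chaos whose MGF is diagonalizable and equal to $\prod_k (1 - 2c\sigma K^2 s_k^2)^{-1/2}$ in the singular values $s_k$ of $A$, the bound $\expct{\exp(\sigma \twonorm{A X'}^2)} \le \exp\left(C' \sigma K^2 \fnorm{A}^2\right)$, valid precisely when $\sigma K^2\twonorm{A}^2$ is below a small constant. This is where both scales enter: the Frobenius norm appears through $\tr(A^T A) = \fnorm{A}^2$ in the exponent, while the operator norm $\twonorm{A}$ dictates the admissible range of $\sigma$, hence of $\lambda$.

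Assembling these estimates yields $\expct{\exp(\lambda S)} \le \exp\left(C K^4 \lambda^2 \fnorm{A}^2\right)$ for all $\abs{\lambda} \le c/(K^2 \twonorm{A})$. A Chernoff bound followed by optimization over $\lambda$ in this range then produces the two-regime tail: the unconstrained optimizer $\lambda^* \asymp t/(K^4 \fnorm{A}^2)$ gives the subgaussian term $\exp\left(-c t^2/(K^4\fnorm{A}^2)\right)$ when it is admissible, and otherwise the boundary choice $\lambda \asymp 1/(K^2\twonorm{A})$ gives the exponential term $\exp\left(-c t/(K^2 \twonorm{A})\right)$; together these give $\exp\left(-c\min\left(t^2/(K^4\fnorm{A}^2),\; t/(K^2\twonorm{A})\right)\right)$. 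Combining with the diagonal bound via $t \mapsto t/2$ completes the argument. I expect the delicate step to be the Gaussian-comparison bound on $\expct{\exp(\sigma\twonorm{A X'}^2)}$, which must simultaneously respect the operator-norm constraint on $\sigma$ and produce the Frobenius-norm constant in the exponent; the decoupling step, while essential, is a black-box application.
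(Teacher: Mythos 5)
Your outline is correct and is essentially the argument of Rudelson and Vershynin, which is exactly what the paper relies on: Theorem~\ref{thm::HW} is imported verbatim from~\cite{RV13} and not reproved here. The diagonal/off-diagonal split, decoupling, conditional subgaussian MGF bound, and the Gaussian comparison for $\E \exp(\sigma\twonorm{AX'}^2)$ under the constraint $\sigma K^2 \twonorm{A}^2 \lesssim 1$ all match the cited proof, so there is no gap to report.
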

We note that following the proof of Theorem~\ref{thm::HW}, it
is clear that the following holds:
Let $X = (X_1, \ldots, X_m) \in \R^m$ be a random vector 
as defined in Theorem~\ref{thm::HW}.
Let $Y, Y'$ be  independent copies of $X$. Let $A$ be an $m \times m$ matrix. Then, for every $t > 0$,
\ben
\label{eq::HWdecoupled} 
\prob{\abs{Y^T A Y'} > t} 
\leq 
2 \exp \left[- c\min\left(\frac{t^2}{K^4 \fnorm{A}^2}, \frac{t}{K^2 \twonorm{A}} \right)\right].
\een
We next need to state Lemma~\ref{lemma::oneeventA}, which we prove in Section~\ref{sec::proofoftensorA}.
\begin{lemma}
\label{lemma::oneeventA}
Let $u, w \in S^{f-1}$. 
Let $A \succ 0$ be a $m \times m$ symmetric positive definite matrix.
Let $Z$ be an $f \times m$
random matrix with independent entries $Z_{ij}$ satisfying
$\E Z_{ij} = 0$ and  $\norm{Z_{ij}}_{\psi_2} \leq K$.
Let $Z_1, Z_2$ be independent copies of $Z$.
Then for every $t > 0$,
\bens
\prob{\abs{u^T Z_1 A^{1/2} Z_2^T w} >  t}
& \le & 
2 \exp\left(-c\min\left(\frac{t^2}{K^4\tr(A)}, \frac{t}{K^2 \twonorm{A}^{1/2}}\right)\right), \\
\prob{\abs{u^T Z A Z^T w - \E u^T Z A Z^T w } >  t}
& \le & 
2 \exp\left(-c\min\left(\frac{t^2}{K^4\fnorm{A}^2}, \frac{t}{K^2 \twonorm{A}}\right)\right)
\eens
where $c$ is the same constant as defined in Theorem~\ref{thm::HW}.
\end{lemma}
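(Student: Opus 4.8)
The plan is to reduce both inequalities to the (possibly decoupled) Hanson--Wright inequality by vectorizing $Z$ and recognizing the resulting bilinear or quadratic form as one whose matrix is a Kronecker product. The key observation is that after vectorization the relevant coordinates are exactly the entries of $Z$, so the $\psi_2$-constant entering the bound is $K$ itself; this is what lets the constant $c$ be taken identical to that of Theorem~\ref{thm::HW}.

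First I would treat the bilinear form. Writing $\zeta_1 = \mvec{Z_1}$ and $\zeta_2 = \mvec{Z_2}$, which are independent copies of $\mvec{Z} \in \R^{fm}$ with independent mean-zero coordinates of $\psi_2$-norm at most $K$, I would expand
\[
u^T Z_1 A^{1/2} Z_2^T w = \sum_{i,j,k,l} u_i (Z_1)_{ij} (A^{1/2})_{jk} (Z_2)_{lk} w_l = \zeta_1^T N \zeta_2,
\]
where, under the natural identification of $\R^{fm}$ with $\R^f \otimes \R^m$ (index pairs $(i,j)$), the $(fm)\times(fm)$ matrix $N$ has block $(i,l)$ equal to $u_i w_l A^{1/2}$; that is, $N = (uw^T)\otimes A^{1/2}$. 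Since $u,w \in S^{f-1}$ we have $\twonorm{uw^T} = \fnorm{uw^T} = 1$, so by multiplicativity of both norms under Kronecker products, $\fnorm{N}^2 = \fnorm{A^{1/2}}^2 = \tr(A)$ and $\twonorm{N} = \twonorm{A^{1/2}} = \twonorm{A}^{1/2}$. Because $\zeta_1,\zeta_2$ are independent and mean-zero, $\E[\zeta_1^T N \zeta_2] = 0$, and applying the decoupled Hanson--Wright inequality \eqref{eq::HWdecoupled} in dimension $fm$ with the matrix $N$ yields exactly the first bound.

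For the quadratic form I would argue identically but with a single copy. With $\zeta = \mvec{Z}$,
\[
u^T Z A Z^T w = \sum_{i,j,k,l} u_i Z_{ij} A_{jk} Z_{lk} w_l = \zeta^T M \zeta, \qquad M = (uw^T)\otimes A,
\]
so that $\fnorm{M} = \fnorm{A}$ and $\twonorm{M} = \twonorm{A}$ by the same computation. Since Theorem~\ref{thm::HW} is stated for an arbitrary (not necessarily symmetric) matrix, I can apply it directly to $M$ in dimension $fm$ to obtain the second bound with $\fnorm{A}, \twonorm{A}$ in place of $\fnorm{M}, \twonorm{M}$; even if one preferred to symmetrize, replacing $M$ by $(M+M^T)/2$ only decreases both norms, which weakens the exponent in the correct direction.

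The only steps requiring genuine care are the two Kronecker-product identifications and the attendant norm computations, in particular the operator-norm claims $\twonorm{(uw^T)\otimes A^{1/2}} = \twonorm{A}^{1/2}$ and $\twonorm{(uw^T)\otimes A} = \twonorm{A}$, which rest on $\twonorm{uw^T}=1$ and Kronecker multiplicativity. There is no substantive probabilistic obstacle: once the forms are exhibited as $\zeta_1^T N \zeta_2$ and $\zeta^T M \zeta$ with the norms above, both tail bounds are immediate consequences of \eqref{eq::HWdecoupled} and Theorem~\ref{thm::HW} respectively. If one wished to avoid invoking the off-diagonal form \eqref{eq::HWdecoupled}, an alternative is to stack $\zeta_1,\zeta_2$ into a single vector in $\R^{2fm}$ and apply the symmetric Hanson--Wright inequality to the block-antidiagonal matrix built from $N$, whose Frobenius and operator norms are controlled by those of $N$.
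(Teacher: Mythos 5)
Your proposal is correct and follows essentially the same route as the paper: the paper likewise vectorizes $Z_1, Z_2$ (as $\mvec{Z_1^T}, \mvec{Z_2^T}$), identifies the bilinear and quadratic forms with the Kronecker-product matrices $(u\otimes w)\otimes A^{1/2}$ and $(u\otimes w)\otimes A$, bounds their Frobenius and operator norms via its Lemma~\ref{lemma::Auv}, and applies \eqref{eq::HWdecoupled} and Theorem~\ref{thm::HW} respectively. The only cosmetic difference is that you state the norm identities as equalities using $\twonorm{uw^T}=\fnorm{uw^T}=1$, where the paper records them as inequalities, which changes nothing.
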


\subsection{Proof of Lemma~\ref{lemma::oneeventA}} 
\label{sec::proofoftensorA}
Lemma~\ref{lemma::Auv} is a well-known fact.
\begin{lemma}
\label{lemma::Auv}
Let $A_{u w} := (u \otimes w) \otimes A \; \text{where } \; u, w \in
\Sp^{p-1}$ where $p \ge 2$. Then $\twonorm{A_{uw}}  \le \twonorm{A} \; \text{ and } \;
\fnorm{A_{uw}}  \le \fnorm{A}.$
\end{lemma}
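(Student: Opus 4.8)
The plan is to reduce both bounds to the standard multiplicativity of the operator and Frobenius norms under the Kronecker product, together with the observation that the rank-one factor $u \otimes w$ has unit norm in both senses. This lemma is exactly what is needed to control the coefficient matrix of the bilinear form $u^T Z_1 A^{1/2} Z_2^T w$ appearing in Lemma~\ref{lemma::oneeventA}, so the two displayed norm bounds feed directly into the decoupled Hanson-Wright estimate~\eqref{eq::HWdecoupled}. First I would fix the convention that, for $u, w \in \Sp^{p-1}$, the symbol $u \otimes w$ denotes the rank-one matrix $u w^T$, i.e. the rank-one operator $x \mapsto \langle w, x\rangle u$. Its only nonzero singular value equals $\twonorm{u}\,\twonorm{w} = 1$, so that $\twonorm{u \otimes w} = 1$; and since a rank-one matrix has a single nonzero singular value, $\fnorm{u \otimes w}^2 = \twonorm{u}^2 \twonorm{w}^2 = 1$ as well.

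Next I would invoke the two multiplicativity identities $\twonorm{M \otimes N} = \twonorm{M}\,\twonorm{N}$ and $\fnorm{M \otimes N} = \fnorm{M}\,\fnorm{N}$, valid for arbitrary matrices $M, N$. Both follow at once from the fact that the singular values of $M \otimes N$ are exactly the pairwise products $\sigma_i(M)\,\sigma_j(N)$: the largest such product is $\twonorm{M}\,\twonorm{N}$, which gives the operator-norm identity, while $\fnorm{M \otimes N}^2 = \sum_{i,j}\sigma_i(M)^2\,\sigma_j(N)^2 = \fnorm{M}^2\,\fnorm{N}^2$ gives the Frobenius identity. If a self-contained derivation is preferred, one may instead verify both directly from the block structure of $M \otimes N$, whose $(i,l)$ block is $M_{il} N$.

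Combining the two steps with $M = u \otimes w$ and $N = A$ yields $\twonorm{A_{uw}} = \twonorm{u \otimes w}\,\twonorm{A} = \twonorm{A}$ and $\fnorm{A_{uw}} = \fnorm{u \otimes w}\,\fnorm{A} = \fnorm{A}$, so in fact both bounds hold with equality. There is essentially no obstacle here; the only point requiring care is to be explicit about the meaning of $u \otimes w$ and about the fact that $\twonorm{u \otimes w} = \fnorm{u \otimes w} = 1$ holds in \emph{both} norms, so that the two displayed bounds are not conflated. For this reason I would state the singular-value characterization of $M \otimes N$ once and then apply it to each norm separately.
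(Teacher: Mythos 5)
Your argument is correct and is the standard one: the paper itself offers no proof, simply declaring Lemma~\ref{lemma::Auv} ``a well-known fact,'' and your derivation via the singular values of a Kronecker product together with $\twonorm{u\otimes w}=\fnorm{u\otimes w}=1$ is exactly the argument the authors are implicitly relying on (and, as you note, it yields equality, which implies the stated inequalities). Your reading of $u\otimes w$ as the rank-one matrix $uw^T$ is also consistent with how the paper uses $A_{uw}$ in the quadratic-form identities of Lemma~\ref{lemma::oneeventA}.
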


\begin{proofof}{Lemma~\ref{lemma::oneeventA}}
\silent{
Let $x_1, \ldots, x_m, x'_1, \ldots, x'_m \in \R^f$ be the column vectors
$Z_1, Z_2$ respectively.
We can re-write the quadratic forms as follows:
\bens
u^T Z_1^T B^{1/2} Z_2 w
& = & \sum_{i,j=1, m} u_i w_j x_i B^{1/2} x'_j  \\
& = & \mvec{Z_1}^T \big((u \otimes w) \otimes B^{1/2} \big)\mvec{Z_2} \\
& =: & \mvec{Z_1}^T B_{uw}^{1/2} \mvec{Z_2} \\
u^T Z^T B Z w
& = & \mvec{Z}^T \big((u \otimes w) \otimes B \big)\mvec{Z}
=:\mvec{Z}^T B_{uw} \mvec{Z}
\eens
where clearly by independence of $Z_1, Z_2$,
\bens
\E \mvec{Z_1}^T \big((u \otimes w) \otimes B^{1/2}\big) \mvec{Z_2} & = & 0 \\
\E \mvec{Z}^T \big((u \otimes u) \otimes B\big) \mvec{Z} & = & 
\tr\big((u \otimes u) \otimes B\big) = \tr(B).
\eens
Thus we invoke Theorem~\ref{thm::HW},~\eqref{eq::HWdecoupled} and Lemma~\ref{lemma::Auv} to  conclude
\bens
\prob{\abs{u^T Z_1^T B^{1/2} Z_2 w} >  t}
& \le & 
2 \exp\left(-c\min\left(\frac{t^2}{K^4\fnorm{B^{1/2}_{uw}}^2},
 \frac{t}{K^2 \twonorm{B_{uw}^{1/2}}}\right)\right)  \\
& \le & 
2 \exp\left(-c\min\left(\frac{t^2}{K^4\tr(B)},
 \frac{t}{K^2 \twonorm{B^{1/2}}}\right)\right)   \\
\prob{\abs{u^T Z^T B Z w - \E u^T Z^T B Z w } >  t}
& \le & 
2 \exp\left(-c\min\left(\frac{t^2}{K^4\fnorm{B_{uw}}^2},
 \frac{t}{K^2 \twonorm{B_{uw}}}\right)\right)  \\
& \le & 
2 \exp\left(-c\min\left(\frac{t^2}{K^4\fnorm{B}^2},
 \frac{t}{K^2 \twonorm{B}}\right)\right).
\eens}
Let $z_1, \ldots, z_f, z'_1, \ldots, z'_f \in \R^m$ be the row vectors
$Z_1, Z_2$ respectively.
Notice that we can write the quadratic form as follows:
\bens
u^T Z_1 A^{1/2} Z_2^T w
& = &
\sum_{i,j=1, m} u_i w_j z_i A^{1/2} z'_j  \\
& = & \mvec{Z_1^T}^T \big((u \otimes w) \otimes A^{1/2}
\big)\mvec{Z_2^T} \\
& =: & \mvec{Z_1^T}^T A_{uw}^{1/2} \mvec{Z_2^T}, \\
u^T Z A Z^T w
& = & \mvec{Z^T}^T \big((u \otimes w) \otimes A \big)\mvec{Z^T} \\
& =: & \mvec{Z^T}^T A_{uw} \mvec{Z^T}
\eens
where clearly by independence of $Z_1, Z_2$,
\bens
\E \mvec{Z_1^T}^T \big((u \otimes w) \otimes A^{1/2}\big) \mvec{Z_2^T}
& = & 0, \; \; \text{ and }  \\
\E \mvec{Z}^T \big((u \otimes u) \otimes A\big) \mvec{Z} & = & 
\tr\big((u \otimes u) \otimes A\big) = \tr(A).
\eens
Thus we invoke~\eqref{eq::HWdecoupled} and
Lemma~\ref{lemma::Auv} to show the concentration bounds on event
$\{\abs{u^T Z_1 A^{1/2} Z_2^T w} >  t\}$:
\bens
\prob{
 \abs{u^T Z_1 A^{1/2} Z_2^T w} >  t}
& \le & 
2 \exp\left(-\min\left(\frac{t^2}{K^4\fnorm{A^{1/2}_{uw}}^2},
 \frac{t}{K^2 \twonorm{A_{uw}^{1/2}}}\right)\right)  \\
& \le &
2 \exp\left(-\min\left(\frac{t^2}{K^4\tr(A)},
 \frac{t}{K^2 \twonorm{A^{1/2}}}\right)\right).
\eens
Similarly, we have  by Theorem~\ref{thm::HW} and Lemma~\ref{lemma::Auv},
\bens
\lefteqn{
\prob{\abs{u^T Z A Z^T w - \E u^T Z A Z^T w } >  t}}\\
& \le & 
2 \exp\left(-c\min\left(\frac{t^2}{K^4\fnorm{A_{uw}}^2},
 \frac{t}{K^2 \twonorm{A_{uw}}}\right)\right)  \\
& \le & 
2 \exp\left(-c\min\left(\frac{t^2}{K^4\fnorm{A}^2},
 \frac{t}{K^2 \twonorm{A}}\right)\right).
\eens
The Lemma thus holds.
\end{proofof}

\subsection{Stochastic error terms}
\label{sec::stoc}
The following large deviation bounds in
Lemmas~\ref{lemma::Tclaim1} and \ref{lemma::trBest}
are the key results in proving Lemmas~\ref{lemma::low-noise}
and~\ref{lemma::grammatrix}.
Let $C_0$ satisfy \eqref{eq::defineC0} for $c$ 
as defined in Theorem~\ref{thm::HW}.
Throughout this section, we denote by:
\bens
r_{m,f} =  C_0 K \sqrt{\frac{\log m}{f}} \; \; \text{ and } \; \
r_{m,m} =  2 C_0 \sqrt{\frac{\log m}{mf}}.
\eens
We also define some events $\B_4, \B_5, \B_6, \B_{10}$;
Denote by $\B_0 := B_4 \cap \B_5 \cap \B_6$, which we use throughout
this paper. 
\begin{lemma}
\label{lemma::Tclaim1}
Assume that the stable rank of $B$, $\fnorm{B}^2/\twonorm{B}^2 \ge \log m$.
Let $Z, X_0$ and $W$ as defined in Theorem~\ref{thm::lasso}.
Let $Z_0, Z_1$ and  $Z_2$ be independent copies of $Z$.
Let $\e^T \sim Y  M_{\e}/K$ where $Y := e_1^T Z_0^T$.
Let  $\tau_B = \frac{\tr(B)}{f}$. 
Denote by $\B_4$ the event such that 
\bens
\onef \norm{A^{\half} Z_1^T \e}_{\infty}
& \le & r_{m,f} M_{\e} a_{\max}^{1/2} \\
 \; \text{ and } \; 
\onef \norm{ Z_2^T B^{\half} \e}_{\infty}
& \le &   r_{m,f} M_{\e}\sqrt{\tau_B}.
 \eens
Then $\prob{\B_4} \ge  1 - 4/m^3$.
Moreover, denote by $\B_5$ the  event such that 
\bens
\onef \norm{(Z^T B Z- \tr(B) I_{m}) \beta^*}_{\infty} & \le &  
r_{m,f} K  \twonorm{\beta^*}  \frac{\fnorm{B}}{\sqrt{f}} \\
\text{and} \;\;\onef \norm{X_0^T W \beta^*}_{\infty}
& \le &   r_{m,f} K  \twonorm{\beta^*}  \sqrt{\tau_B} a^{1/2}_{\max}.
\eens
Then $\prob{\B_5} \ge 1 -  4 /m^3$.

Finally, denote by $\B_{10}$ the event such that
\bens
\onef \norm{(Z^T B Z- \tr(B) I_{m})}_{\max} & \le &  r_{m,f} K \frac{\fnorm{B}}{\sqrt{f}} \\
\text{and} \;\; \onef \norm{X_0^T W}_{\max}
& \le &  r_{m,f} K  \twonorm{\beta^*} \sqrt{\tau_B} a^{1/2}_{\max}.
\eens
Then $\prob{\B_{10}} \ge 1 -  4/m^2$.
\end{lemma}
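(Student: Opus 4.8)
The plan is uniform across the three events: each asserts that a maximum, over $m$ coordinates (for $\B_4,\B_5$) or $m^2$ entries (for $\B_{10}$), of a stochastic expression is small, so for a single fixed coordinate I reduce the relevant scalar to one of two canonical shapes and then union bound. The two shapes are (i) a quadratic form in the independent $\psi_2$ entries of a single copy of $Z$, to which the Hanson--Wright bound of Theorem~\ref{thm::HW} applies, and (ii) a \emph{decoupled} bilinear form in two independent families of $\psi_2$ variables, to which the decoupled inequality~\eqref{eq::HWdecoupled} applies (this is exactly why that inequality, and Lemma~\ref{lemma::oneeventA}, are recorded). In each case the tail is $2\expf{-c\min\left(s^2/(K^4\fnorm{B}^2),\,s/(K^2\twonorm{B})\right)}$, and the purpose of the hypothesis $\fnorm{B}^2/\twonorm{B}^2\ge\log m$ is to force both terms of this minimum to be $\gtrsim\log m$ at the working scale: with $s\asymp C_0K^2\sqrt{\log m}\,\fnorm{B}$ the sub-Gaussian term is $\asymp C_0^2\log m$, while the sub-exponential term equals $C_0\sqrt{\log m}\,\fnorm{B}/\twonorm{B}\ge C_0\log m$ precisely because $\fnorm{B}/\twonorm{B}\ge\sqrt{\log m}$. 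Choosing the absolute constant $C_0$ in $r_{m,f}=C_0K\sqrt{\log m/f}$ large makes each per-coordinate failure $O(m^{-4})$, which survives the union bound.

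For $\B_4$ I would condition on $\e$, which is independent of $Z_1,Z_2$. Conditionally, $\onef e_j^TA^{1/2}Z_1^T\e$ is a linear form in the entries of $Z_1$ with subgaussian norm $\le CK\twonorm{A^{1/2}e_j}\twonorm{\e}\le CKa_{\max}^{1/2}\twonorm{\e}$, and $\onef e_j^TZ_2^TB^{1/2}\e$ is a linear form in $Z_2$ with norm $\le CK\twonorm{B^{1/2}\e}$. It then suffices to control $\twonorm{\e}^2$ and $\twonorm{B^{1/2}\e}^2=\e^TB\e$ on a high-probability event; $\twonorm{\e}^2\le 2fM_{\e}^2$ and $\e^TB\e\le 2\tr(B)M_{\e}^2/K^2=2f\tau_BM_{\e}^2/K^2$ both hold off a set of probability $\le2/m^3$ by Theorem~\ref{thm::HW} (the second again via $\fnorm{B}^2/\twonorm{B}^2\ge\log m$). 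Substituting $s=f\,r_{m,f}M_{\e}a_{\max}^{1/2}$ and $s=f\,r_{m,f}M_{\e}\sqrt{\tau_B}$ into the two conditional subgaussian tails produces exponent $\asymp C_0^2\log m$, and a union bound over $j=1,\dots,m$ gives $\prob{\B_4}\ge1-4/m^3$.

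For $\B_5$ and $\B_{10}$ the two summands are structurally different. The term $X_0^TW=A^{1/2}Z_1^TB^{1/2}Z_2$ involves two independent matrices, so every entry is already decoupled: conditioning on $Z_2$, $\onef e_j^TA^{1/2}Z_1^TB^{1/2}(Z_2\beta^*)$ is linear in $Z_1$ with scale $CKa_{\max}^{1/2}\twonorm{B^{1/2}Z_2\beta^*}$, and $\twonorm{B^{1/2}Z_2\beta^*}^2=(Z_2\beta^*)^TB(Z_2\beta^*)\le2\tr(B)\twonorm{\beta^*}^2$ with high probability by Theorem~\ref{thm::HW}, reproducing the target $r_{m,f}K\twonorm{\beta^*}\sqrt{\tau_B}a_{\max}^{1/2}$; the same computation with $\beta^*$ replaced by a coordinate vector gives the entrywise $\maxnorm{\cdot}$ bound for $\B_{10}$. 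The term $Z^TBZ$ is the delicate one, because the same $Z$ sits on both sides. I would split $\beta^*=\beta^*_je_j+\beta^*_{-j}$ and use that the columns of $Z$ are independent: the diagonal part $\beta^*_j(Ze_j)^TB(Ze_j)$ is a quadratic form in column $j$ with mean $\beta^*_j\tr(B)$, handled by Theorem~\ref{thm::HW}, while the off-diagonal part $(Ze_j)^TB(Z\beta^*_{-j})$ pairs column $j$ against $Z\beta^*_{-j}$, which depends only on the remaining columns and is therefore a mean-zero decoupled bilinear form; applying~\eqref{eq::HWdecoupled} (after rescaling the second vector, whose entries have $\psi_2$ scale $\asymp K\twonorm{\beta^*_{-j}}$) yields the $\fnorm{B}$-scaling $r_{m,f}K\twonorm{\beta^*}\fnorm{B}/\sqrt{f}$. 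The $\B_{10}$ bound on $Z^TBZ-\tr(B)I_m$ is the same estimate applied entrywise, the diagonal $j=k$ giving the quadratic form and the off-diagonal $j\ne k$ the decoupled form, now unioned over all $m^2$ entries, which is why its guarantee weakens to $1-4/m^2$.

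The main obstacle is precisely the $Z^TBZ$ term. Because one copy of $Z$ appears on both sides, its $(j,k)$ entry is neither a pure quadratic form nor a genuinely decoupled product, and the naive move of conditioning on one side forces the cross term to be bounded by $\twonorm{B}\twonorm{Z\beta^*_{-j}}$, i.e.\ with the operator norm $\twonorm{B}$ in place of the required $\fnorm{B}$ --- too lossy to close the argument once divided by $f$. The fix is the diagonal/off-diagonal column split above, which isolates a true quadratic form from a truly decoupled bilinear form by exploiting independence of the columns of $Z$; only the decoupled inequality~\eqref{eq::HWdecoupled} (rather than conditioning) delivers the Frobenius scaling, and only then does the stable-rank hypothesis $\fnorm{B}^2/\twonorm{B}^2\ge\log m$ suffice to dominate the sub-exponential tail and let the union bounds close.
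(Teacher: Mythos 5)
Your proposal is correct and reaches the stated bounds with the right rates, but it takes a genuinely different route from the paper on both classes of terms. For the bilinear terms ($X_0^T\e$, $W^T\e$, $X_0^TW\beta^*$) the paper realizes $\e$ as a scaled row of an auxiliary independent copy $Z_0$ and applies the decoupled Hanson--Wright inequality~\eqref{eq::HWdecoupled} through Lemma~\ref{lemma::oneeventA} in one step, with the $\tr(B)$ and $\twonorm{B}^{1/2}$ branches of the minimum controlled by $r(B)\ge\fnorm{B}^2/\twonorm{B}^2\ge\log m$; you instead condition on one factor, use subgaussian linear-form concentration, and separately control $\twonorm{\e}$, $\e^TB\e$, and $(Z_2\beta^*)^TB(Z_2\beta^*)$ on high-probability events --- equivalent up to constants, at the cost of an extra layer of events. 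The real divergence is the $Z^TBZ$ term: the paper never splits it, but writes $u^T Z^T B Z v=\mvec{Z^T}^T\bigl((u\otimes v)\otimes B\bigr)\mvec{Z^T}$ and applies Theorem~\ref{thm::HW} to this single quadratic form in the $fm$ independent entries of $Z$, with Lemma~\ref{lemma::Auv} giving $\fnorm{(u\otimes v)\otimes B}\le\fnorm{B}$ and $\twonorm{(u\otimes v)\otimes B}\le\twonorm{B}$; your diagonal/off-diagonal column split into a true quadratic form plus a decoupled bilinear form is a more elementary substitute that exploits column independence explicitly, though your closing claim that \emph{only} decoupling delivers the Frobenius scaling is true only within your conditioning framework --- the paper's vectorization gets it without any decoupling. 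Two minor points: when you apply~\eqref{eq::HWdecoupled} to $(Ze_j)^T B (Z\beta^*_{-j})$ the two vectors are independent but not identically distributed, so you should note that the decoupled inequality holds in that generality; and your entrywise $\B_{10}$ bound without the factor $\twonorm{\beta^*}$ is what the paper's own proof actually establishes (the extra $\twonorm{\beta^*}$ in the lemma statement appears to be a typo), so no correction is needed there.
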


We prove Lemmas~\ref{lemma::Tclaim1} in Section~\ref{sec::proofoferrors}.

\subsection{Stochastic error bounds}s
\label{sec::proofoferrors}
Following Lemma~\ref{lemma::oneeventA}, we have for all $t >0$, $B
\succ 0$ being an $f \times f$ symmetric positive definite matrix, and 
$v, w \in \R^m$
\ben
\label{eq::crossB}
\; \; \; 
\prob{\abs{v^T Z_1^T B^{1/2} Z_2 w} >  t}
& \le & 
2 \exp\left[-c\min\left(\frac{t^2}{K^4\tr(B)},
 \frac{t}{K^2 \twonorm{B}^{1/2}}\right) \right] \\
\nonumber
\prob{\abs{v^T Z^T B Z w - \E v^T Z^T B Z w } >  t}
&  \le & 
2 \exp\left(-c\min\left(\frac{t^2}{K^4\fnorm{B}^2},
 \frac{t}{K^2 \twonorm{B}}\right)\right).
\een
\subsection{Proof for Lemma~\ref{lemma::Tclaim1}}
\begin{proofof2}
Let $e_1, \ldots, e_m \in \R^m$ be the canonical basis spanning
$\R^m$.
Let $x_1, \ldots, x_m, x'_1, \ldots, x'_m \in \R^f$ be the column vectors
$Z_1, Z_2$ respectively. Let $Y \sim e_1^T Z_0^T$. Let $w_i =
\frac{A^{1/2}  e_i}{\twonorm{A^{1/2} e_i}}$ for all $i$.
Clearly the condition on the stable rank of $B$ guarantees that 
$$f \ge r(B) = \frac{\tr(B)}{\twonorm{B}} =
\frac{\tr(B)\twonorm{B}}{\twonorm{B}^2}\ge \fnorm{B}^2 /\twonorm{B}^2 
 \ge \log m.$$
By~\eqref{eq::HWdecoupled}, we obtain for $t' = C_0 M_{\e} K
\sqrt{\tr(B)\log m}$ and $t = C_0 K^2 \sqrt{\log m} \tr(B)^{1/2}$:
\bens
\lefteqn{
\prob{\exists j, \abs{\e^T B^{1/2} Z_2 e_j} >  t'}= } \\
&& 
\prob{\exists j, \frac{M_{\e}}{K}\abs{e_1^T Z_0^T B^{1/2} Z_2 e_j} > 
C_0 M_{\e} K \sqrt{\log m}\tr(B)^{\half}} \\
& \le &
\exp(\log m)\prob{\abs{Y^T B^{1/2} x'_j} > C_0 K^2
  \sqrt{\log m}\tr(B)^{\half} } 
\le  2/m^3
\eens
where the last inequality holds by the union bound, 
given that $\frac{\tr(B)}{\twonorm{B}} \ge \log m$, and for all $j$
\bens 
\prob{\abs{Y^T B^{1/2} x'_j} >  t} & \le &
2 \exp\left(-c\min\left(\frac{t^2}{K^4 \tr(B)},
 \frac{t}{K^2 \twonorm{B}^{1/2}}\right)\right), \\
& \le & 2 \exp\left(-c\min\left(C_0^2 \log m,
\frac{C_0 \log^{1/2} m \sqrt{\tr(B)}}{\twonorm{B}^{1/2}} \right)\right) \\
& \le &  2 \exp\left(-c \min(C_0^2, C_0) \log m\right)  \le 
2 \exp\left(-4 \log m\right).
\eens
Let $v, w \in S^{m-1}$. 
Thus we have by Lemma~\ref{lemma::oneeventA}, 
for $t_0 = C_0 M_{\e} K \sqrt{f \log m}$ and $\tau= C_0 K^2 \sqrt{f \log m}$, 
$w_j= \frac{A^{1/2}  e_j}{\twonorm{A^{1/2} e_j}}$ and $f \ge \log m$,
\bens
\lefteqn{
\prob{\exists j, \abs{\e^T Z_1 w_j} >  t_0}
\le  \prob{\exists j, \frac{M_{\e}}{K}\abs{Y^T Z_1 w_j} > C_0 M_{\e} K \sqrt{f \log m} } }\\
& \le &
m \prob{\abs{Y^T  Z_1 w_j} > C_0 K^2 \sqrt{f \log m} } \\ 
& = & \exp(\log m)\prob{\abs{e_1^T Z_0^T Z_1 w_j} > \tau} 
 \le  2 \exp\left(-c\min\left(\frac{\tau^2}{K^4 f},\frac{\tau}{K^2}\right)\right), \\
& \le & 2 \exp\left(-c\min\left(\frac{(C_0 K^2 \sqrt{f \log m})^2}{K^4 f},
 \frac{  C_0  K^2 \sqrt{f \log m}}{K^2}\right)+ \log m\right) \\
 & \le &
2 m \exp\left(-c\min\left(C_0^2 \log m, C_0 \log^{1/2} m \sqrt{f}
\right)\right) \\
& \le & 
2 m \exp\left(-c \min(C_0^2, C_0) \log m\right)  \le  2 \exp\left(-3\log m\right).
\eens
Therefore we have  with probability at least $1 - 4/m^3$,
\bens
\label{eq::eventB4a}
\norm{ Z_2^T B^{\half} \e}_{\infty}
& := & \max_{j=1, \ldots, m} \ip{\e^T B^{1/2} Z_2, e_j}  \le t' = C_0
M_{\e} K \sqrt{\tr(B)\log m}  \\
\norm{A^{\half} Z_1^T \e}_{\infty}
& := & \max_{j=1, \ldots, m} \ip{A^{1/2} e_j, Z_1^T  \e} \le 
\max_{j=1, \ldots, m} \twonorm{A^{1/2} e_j}
\max_{j=1, \ldots, m}  \ip{w_j, Z_1^T \e} \\
& \le & 
a_{\max}^{1/2} t_0 = a_{\max}^{1/2}  C_0 M_{\e} K \sqrt{f \log m}.
\eens
The ``moreover'' part follows exactly the same arguments as above.
Denote by $\bar\beta^* := \beta^*/\twonorm{\beta^*} \in E \cap
S^{m-1}$ and $w_i := A^{1/2} e_i/\twonorm{ A^{1/2} e_i}$.
By~\eqref{eq::crossB}
\bens
\lefteqn{\prob{\exists i,  \ip{w_i, Z_1^T B^{1/2} Z_2 \bar \beta^*}
    \ge  
C_0 K^2 \sqrt{\log m } {\tr(B)^{1/2}} }} \\
 & \le &
\sum_{i=1}^m \prob{\ip{w_i, Z_1^T B^{1/2} Z_2 \bar\beta^*} \ge  C_0 K^2
\sqrt{\log m \tr(B)}} \\
& \le & 
2 \exp\left(-c\min\left(C_0^2\log m,C_0 \log m\right)+ \log m\right) \le  2/ m^3.
\eens
Now for $t = C_0 K^2 \sqrt{\log m} \fnorm{B}$, and  $\fnorm{B}/\twonorm{B} \ge \sqrt{\log m}$,
\bens
\lefteqn{\prob{\exists e_i:  \ip{e_i, (Z^T B Z- \tr(B) I_{m}) \bar \beta^*} 
 \ge  C_0 K^2 \sqrt{\log m} \fnorm{B}}} \\
& \le & 
2m\exp \left[- c\min\left(\frac{t^2}{K^4  \fnorm{B}^2}, 
\frac{t}{K^2 \twonorm{B}} \right)\right] 
\le  2/ m^3.
\eens
By the two inequalities immediately above, we have with probability at
least $1 -  4 /m^3$, 
\bens
\lefteqn{\norm{X_0^T W \beta^*}_{\infty} 
= \norm{A^{1/2} Z_1^T B^{1/2} Z_2 \beta^*}_{\infty}  } \\
& \le & \twonorm{\beta^*} \max_{e_i}\twonorm{A^{1/2} e_i}
 \left(\sup_{w_i} \ip{w_i, Z_1^T B^{1/2} Z_2 \bar\beta^*}\right)  \\
& \le &  C_0 K^2 \twonorm{\beta^*}\sqrt{\log m} a_{\max}^{1/2}  \sqrt{\tr(B)}
\eens
and 
\bens
\lefteqn{
\norm{(Z^T B Z- \tr(B) I_{m}) \beta^*}_{\infty}
 =  \norm{(Z^T B Z- \tr(B) I_{m}) \bar \beta^*}_{\infty} \twonorm{\beta^*} }\\
&=  & \twonorm{\beta^*} \left(\sup_{e_i} \ip{e_i, (Z^T B Z- \tr(B)
    I_{m}) \bar \beta^*}\right)  \\
& \le &  C_0 K^2 \twonorm{\beta^*}\sqrt{\log m} \fnorm{B}.
\eens
The last two bounds follow exactly the same arguments as above, except
that we replace $\beta^*$ with $e_j, j=1, \ldots, m$ and apply the
union bounds to $m^2$ events instead of $m$, and thus 
$\prob{\B_{10}} \ge 1 -  4 /m^2$, 
\end{proofof2}

\section{Proofs for the Lasso-type estimator}
\label{sec::lassoall}

\subsection{Proof of Lemma~\ref{lemma::low-noise}}
\label{sec::lownoiseproof}
\begin{proofof2}
Clearly the condition on the stable rank of $B$ guarantees that 
$$f \ge r(B) = \frac{\tr(B)}{\twonorm{B}} =
\frac{\tr(B)\twonorm{B}}{\twonorm{B}^2}\ge \fnorm{B}^2 /\twonorm{B}^2 
 \ge \log m.$$
Thus the conditions in
Lemmas~\ref{lemma::Tclaim1} and \ref{lemma::trBest} hold.
First notice that 
\bens
\hat\gamma & = & 
\inv{f} \left(X_0^T X_0 \beta^*
 +  W^T X_0 \beta^* + X_0^T \e +  W^T \e\right) \\
(\inv{f} X^T X - \frac{\hat\tr(B)}{f} I_{m})\beta^* 
& = & 
\inv{f} (X_0^T X_0 + W^T X_0 + X_0^T W 
+ W^T W - \frac{\hat\tr(B)}{f} I_{m})\beta^*
\eens
Thus 
\bens
\norm{\hat\gamma - \hat\Gamma \beta^*}_{\infty} 
& \le &
\norm{\hat\gamma - \onef \left(X^T X - \hat\tr(B) I_{m}\right)\beta^*}_{\infty} \\
 & = & 
\inv{f}\norm{ X_0^T \e +  W^T \e- \left(W^T W + X_0^T W - \hat\tr(B) I_m\right)\beta^* }_{\infty} \\
 & \le & 
\inv{f}\norm{ X_0^T \e +  W^T \e}_{\infty}  +\inv{f}\norm{(W^T W-
  \hat\tr(B) I_{m}) \beta^*}_{\infty} 
+\norm{\inv{f} X_0^T W \beta^*}_{\infty}  \\
 & \le & 
 \onef \norm{ X_0^T \e +  W^T \e}_{\infty} + \onef(\norm{(Z^T B
     Z- \tr(B) I_{m}) \beta^*}_{\infty}) + \onef \norm{X_0^T W \beta^*}_{\infty} \\
&& + \onef \abs{\hat\tr(B) - \tr(B)} \norm{\beta^*}_{\infty} =:  U_1+  U_2 + U_3 + U_4 
\eens
By Lemma~\ref{lemma::Tclaim1} we have  on $\B_4$ for $D_0 := \sqrt{\tau_B} + a_{\max}^{1/2}$,
\bens
U_1 = \onef \norm{ X_0^T \e +  W^T \e}_{\infty}   = 
\onef\norm{A^{\half} Z_1^T \e +  Z_2^T B^{\half} \e}_{\infty}  \le
r_{m,f}M_{\e} D_0
\eens
and on event $\B_5$ for  $D'_0 :=\sqrt{\twonorm{B}} + a_{\max}^{1/2}$,
\bens
\lefteqn{U_2 +  U_3 = 
\onef\norm{(Z^T B Z- \tr(B) I_{m}) \beta^*}_{\infty} +
\onef \norm{X_0^T W \beta^*}_{\infty}} \\
& \le & r_{m,f} K \twonorm{\beta^*}
\left(\frac{\fnorm{B}}{\sqrt{f}} + \sqrt{\tau_B}
  a^{1/2}_{\max}\right) \le  K r_{m,f} \twonorm{\beta^*}  \tau_B^{1/2} D_0'
\eens
where recall $\fnorm{B} \le \sqrt{\tr(B)}\twonorm{B}^{1/2}$.
Denote by $\B_0 := \B_4 \cap \B_5 \cap \B_6$.
We have on $\B_0$ and under (A1),  by Lemmas~\ref{lemma::Tclaim1}
and~\ref{lemma::trBest} and $D_1$ defined therein,
\ben
\nonumber
\norm{\hat\gamma - \hat\Gamma \beta^*}_{\infty} 
& \le &
U_1+  U_2 + U_3 + U_4 \\
& \le & 
\nonumber
 r_{m,f} M_{\e} D_0 + D_0' \tau_B^{1/2} K r_{m,f} \twonorm{\beta^*} 
+ \onef \abs{\hat\tr(B) - \tr(B)} \norm{\beta^*}_{\infty} \\
& \le & 
\label{eq::oracleone}
D_0 M_{\e} r_{m,f} + D_0' K  \tau_B^{1/2} \twonorm{\beta^*} r_{m,f}  +
D_1 K^2 \norm{\beta^*}_{\infty} r_{m,m} \\
& \le & 
\label{eq::oracle}
D_0 M_{\e} r_{m,f} + D_0' K  \tau_B^{1/2} \twonorm{\beta^*} r_{m,f}
+ 2 D_1 K \inv{\sqrt{m}} \norm{\beta^*}_{\infty} r_{m,f} \\
& \le &
\nonumber 
r_{m,f} 
\left(\left(\frac{3}{4}D_2 + D_2 \inv{\sqrt{m}}\right)
 K \twonorm{\beta^*} + D_0  M_{\e} \right)
\een
where $2D_1 \le 2 \twonorm{A} +  2 \twonorm{B} = D_2$, for 
$(D'_0)^2 \le 2 \twonorm{B} + 2 a_{\max}$
\bens
D_0 \le D'_0  & \le &  \sqrt{2(\twonorm{B} + a_{\max})}
 \le 2(a_{\max} + \twonorm{B}) = D_2, \\
\text{ and } \; \; 
D'_0 \tau_{B}^{1/2} & \le &  (\twonorm{B}^{1/2} + a_{\max}^{1/2})\tau_{B}^{1/2} 
 \le \tau_B + \half(\twonorm{B} + a_{\max})  \le \frac{3}{4} D_2
\eens
given that under (A1) : $\tau_A = 1$, $\twonorm{A} \ge a_{\max} \ge a_{\max}^{1/2} \ge 1$.
Hence the lemma holds for $m \ge 16$ and $\psi = C_0 D_2 K \left(K \twonorm{\beta^*} + M_{\e} \right)$.
Finally, we have by the union bound, $\prob{\B_0} \ge 1 -  16/m^3$.
\end{proofof2}

\subsection{Proof of Lemma~\ref{lemma::trBest}}
\begin{proofof2}
First we write
\bens
 X X^T - \tr(A) I_{f}
& = & 
\big( Z_1 A^{1/2} + B^{1/2} Z_2) \big(Z_1 A^{1/2} +B^{1/2} Z_2\big)^T - \tr(A) I_{f} \\ 
& = & 
\big( Z_1 A^{1/2} + B^{1/2} Z_2) \big(Z_2^T B^{1/2} +A^{1/2} Z_1^T\big) 
- \tr(A) I_{f} \\
& = & 
 Z_1 A^{1/2} Z_2^T B^{1/2} + B^{1/2} Z_2  Z_2^T B^{1/2} \\
&& + B^{1/2} Z_2  A^{1/2} Z_1^T+
 Z_1 A Z_1^T- \tr(A) I_{f}.
\eens
Thus we have for $\check\tr(B)  := \onem \big(\fnorm{X}^2 -f \tr(A)\big)$
\bens
\lefteqn{\onef(\check\tr(B) - \tr(B)) := 
\inv{mf}\big(\fnorm{X}^2 -f \tr(A) -m \tr(B) \big) } \\
& = & \inv{mf} (\tr(XX^T) -   f\tr(A) - m\tr(B))\\
& = &  
\frac{2}{mf} 
\tr( Z_1 A^{1/2} Z_2^T B^{1/2} ) + 
\left(\frac{\tr(B^{1/2} Z_2  Z_2^T B^{1/2})}{mf} -\frac{
    \tr(B)}{f}\right) \\
&&
+ \frac{\tr( Z_1 A Z_1^T)}{mf}- \frac{\tr(A)}{m}
\eens
By constructing a new matrix $A_f = I_f \otimes A$ which is block diagonal with $f$
identical submatrices $A$ along its diagonal, we prove the following
large deviation bound: for $t_1 = C_0 K^2 \fnorm{A} \sqrt{f \log m }$
and $f > \log m$,
\bens
\lefteqn{\prob{\abs{\tr(Z_1 A Z_1^T)- f \tr(A)} \ge t_1} = 
\prob{\abs{\mvec{Z_1}^T (I \otimes A) \mvec{Z_1}- f \tr(A)} \ge t_1}
}\\
& \le &  \exp\left(-c\min\left(\frac{t_1^2}{K^4 \fnorm{A_f}^2},
    \frac{t_1}{K^2 \twonorm{A_f}}\right)\right) \\
& \le & 2\exp\left(-c\min\left(\frac{ (C_0 K^2 \sqrt{f \log m}
      \fnorm{A})^2}{K^4 f \fnorm{A}^2},
\frac{C_0 K^2 \sqrt{f \log m} \fnorm{A}}{K^2 \twonorm{A}}\right)\right) \\
& \le &  2 \exp\left(-4 \log m\right)
\eens
where the first inequality holds by Theorem~\ref{thm::HW} and the
second inequality holds given that $\fnorm{A_f}^2 = f \fnorm{A}$ and $\twonorm{A_f}^2 = \twonorm{A}$.
Similarly, by constructing a new matrix $B_m = I_m \otimes B$ which is block diagonal with $m$
identical submatrices $B$ along its diagonal, we prove the following
large deviation bound: for $t_2 =C_0  K^2\fnorm{B} \sqrt{m \log m}$
and $m \ge 2$,
\bens
\lefteqn{\prob{\abs{\tr(Z_2^T B Z_2)- m \tr(B)} \ge t_2} = 
\prob{\abs{\mvec{Z_2}^T (I_m \otimes B) \mvec{Z_2}- m \tr(B)} \ge t_2}}\\
& \le &  \exp\left(-c\min\left(\frac{t_2^2}{K^4 m \fnorm{B}^2}, \frac{t_2}{K^2 \twonorm{B}}\right)\right) \\
& \le & 2\exp\left(-c\min\left(\frac{ (C_0 K^2 \sqrt{m \log m}  \fnorm{B})^2}{K^4 m\fnorm{B}^2},
\frac{C_0 K^2 \sqrt{m \log m} \fnorm{B}}{K^2 \twonorm{B}}\right)\right) \\
& \le &  2 \exp\left(-4 \log m\right).
\eens
Finally, we have by~\eqref{eq::HWdecoupled} for $t_0 = C_0 K^2 \sqrt{\tr(A) \tr(B) \log m}$, 
\bens
\lefteqn{\prob{ \abs{\mvec{Z_1}^T B^{1/2} \otimes
    A^{1/2} \mvec{Z_2}} >t_0}} \\
& \le & 2\exp\left(-c\min\left(\frac{t_0^2}{K^4 
\fnorm{B^{1/2} \otimes A^{1/2}}^2},\frac{t_0}{K^2 \twonorm{B^{1/2}\otimes A^{1/2}}}\right)\right)\\
& = & 
2\exp\left(-c\min\left(\frac{(C_0 \sqrt{\tr(A) \tr(B) \log
        m})^2}{\tr(A) \tr(B)}, 
\frac{C_0\sqrt{\tr(A) \tr(B) \log m} }{\twonorm{B}^{1/2} \twonorm{A}^{1/2}}\right)\right)\\
& \le & 2 \exp(-4 \log m)
\eens
where we used and the fact that $r(A) r(B) \ge \log m$,
$\twonorm{B^{1/2} \otimes A^{1/2}} = \twonorm{B}^{1/2} \twonorm{A}^{1/2}$ and
\bens
\fnorm{B^{1/2} \otimes A^{1/2}}^2 & = & 
\tr((B^{1/2} \otimes A^{1/2})(B^{1/2} \otimes A^{1/2}))
=\tr(B \otimes A) = \tr(A)\tr(B).
\eens
Thus we have with probability $1- 6/m^4$,
\bens
\lefteqn{\onef\abs{\check\tr(B) - \tr(B)}
= \inv{mf} \abs{\tr(XX^T) -   f\tr(A) - m\tr(B)}}\\
& \le &
\frac{2}{mf} \abs{\mvec{Z_1}^T (B^{1/2} \otimes A^{1/2})\mvec{Z_2}} \\
&& + \abs{\frac{\tr(Z_2^T B Z_2)}{mf} - \frac{\tr(B)}{f}}
+ \abs{\frac{\tr(Z_1 A Z_1^T)}{mf}- \frac{\tr(A)}{m }}\\
& \le &    \inv{mf} (2t_0 + t_1 + t_2 )  =\frac{\sqrt{\log m}}{\sqrt{mf}}  C_0 K^2 \left(\frac{\fnorm{A}  }{\sqrt{m} }+ 2\sqrt{\tau_A \tau_B} +
  \frac{\fnorm{B}}{\sqrt{f}}\right) \\
& \le & 2 C_0 \frac{\sqrt{\log m}}{\sqrt{mf}}  K^2 D_1 =: D_1 K^2 r_{m,m} 
\eens
where recall  $ r_{m,m} = 2 C_0 \frac{\sqrt{\log m}}{\sqrt{mf}}$,
$D_1=\frac{\fnorm{A}  }{\sqrt{m} }+ \frac{\fnorm{B}}{\sqrt{f}}$, and
\bens
 2\sqrt{\tau_A \tau_B} \le  \tau_A + \tau_B \le \frac{\fnorm{A}
 }{\sqrt{m} }+  \frac{\fnorm{B}}{\sqrt{f}}.
\eens
To see this, recall
\ben
\label{eq::tracefnorm}
m \tau_A & = & \sum_{i=1}^m\lambda_{i}(A) \le \sqrt{m} (\sum_{i=1}^m
\lambda^2_{i}(A))^{1/2} = \sqrt{m} \fnorm{A} \\
\nonumber
f \tau_B & = & \sum_{i=1}^f \lambda_{i}(B) \le \sqrt{f} (\sum_{i=1}^f
\lambda^2_{i}(B))^{1/2} = \sqrt{f} \fnorm{B} 
\een
where $\lambda_{i}(A), i=1, \ldots, m$ and $\lambda_{i}(B), i=1,
\ldots, f$  denote the eigenvalues of positive semidefinite covariance
matrices $A$ and $B$ respectively.

Denote  by $\B_6$ the following event  
$$\left\{\onef\abs{\check\tr(B) - \tr(B)}  \le D_1
  K^2 r_{m,m}\right\}$$
Clearly $\hat\tr(B) := (\check\tr(B))_+$ by definition~\eqref{eq::trBest}.
As a consequence, on $\B_6$, 
 $\hat\tr(B) = \check\tr(B) > 0$ when   $\tau_B > D_1  K^2 r_{m,m}$;  hence
\bens
\onef\abs{\hat\tr(B) - \tr(B)} = \onef\abs{\check\tr(B) - \tr(B)} \le
D_1  K^2 r_{m,m}.
\eens
Otherwise,  it is possible that $\check\tr(B) < 0$.
However, suppose we set  
$$\hat \tau_B : = 
\onef \hat\tr(B) :=\onef (\check\tr(B) \vee 0),$$ 
then we can also guarantee that 
\bens
\abs{\hat\tau_B - \tau_B} = \abs{\tau_B} \le D_1  K^2 r_{m,m} \; \; 
\text{ in case } \; \; \tau_B \le D_1  K^2 r_{m,m}.
\eens
The lemma is thus proved.
\end{proofof2}

\section{Proof of Theorem~\ref{thm::main}}
\label{sec::proofofmain}
Denote by  $\beta = \beta^*$. Let $S := \supp{\beta}$, $d = \size{S}$ and 
$$\upsilon = \hat{\beta} - \beta.$$
where $\hat\beta$ is as defined in \eqref{eq::origin}.
We first show Lemma~\ref{lemma:magic-number}, followed by the 
proof of Theorem~\ref{thm::main}.

\begin{lemma}{\textnormal~\cite{BRT09,LW12}}
\label{lemma:magic-number}
Suppose that~\eqref{eq::psimain} holds.
Suppose that there exists a parameter $\psi$ such that 
\bens
\sqrt{d} \tau \le \frac{\psi}{b_0} \sqrt{\frac{\log m}{f}}, \quad \text{ and } \quad
\lambda \geq 4 \psi \sqrt{\frac{\log m}{f}}
\eens
where $b_0, \lambda$ are as defined in \eqref{eq::origin}. Then 
$\norm{\upsilon_{S^c}}_1 \leq 3 \norm{\upsilon_{S}}_1.$
\end{lemma}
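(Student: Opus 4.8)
The plan is to run the standard basic-inequality (cone-constraint) argument for the $\ell_1$-penalized program \eqref{eq::origin}, adapted to the fact that $\hat\Gamma$ need not be positive semidefinite. First I would check that $\beta^*$ is feasible: since $\beta^*$ is $d$-sparse and $\twonorm{\beta^*}\le b_0$, we have $\onenorm{\beta^*}\le\sqrt{d}\,\twonorm{\beta^*}\le b_0\sqrt{d}$, so $\beta^*$ lies in the constraint set of \eqref{eq::origin}. Optimality of $\hat\beta$ then gives $\half\hat\beta^T\hat\Gamma\hat\beta-\ip{\hat\gamma,\hat\beta}+\lambda\onenorm{\hat\beta}\le\half\beta^{*T}\hat\Gamma\beta^*-\ip{\hat\gamma,\beta^*}+\lambda\onenorm{\beta^*}$. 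Writing $\upsilon=\hat\beta-\beta^*$ and expanding the quadratic, the common terms cancel and one is left with
\[
\half\upsilon^T\hat\Gamma\upsilon+\ip{\hat\Gamma\beta^*-\hat\gamma,\upsilon}+\lambda\onenorm{\hat\beta}\le\lambda\onenorm{\beta^*}.
\]

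Next I would insert three estimates. (i) Since $\supp(\beta^*)=S$, the triangle inequality gives $\onenorm{\hat\beta}\ge\onenorm{\beta^*}-\onenorm{\upsilon_S}+\onenorm{\upsilon_{S^c}}$, hence $\lambda(\onenorm{\beta^*}-\onenorm{\hat\beta})\le\lambda(\onenorm{\upsilon_S}-\onenorm{\upsilon_{S^c}})$. (ii) By H\"older's inequality and the deviation bound \eqref{eq::psimain}, $\ip{\hat\gamma-\hat\Gamma\beta^*,\upsilon}\le\norm{\hat\gamma-\hat\Gamma\beta^*}_{\infty}\onenorm{\upsilon}\le\half\lambda\,\onenorm{\upsilon}$. (iii) The key new ingredient: because $\hat\Gamma$ is not PSD, the quadratic term can be negative, so instead of discarding it (as in the classical Lasso) I would lower-bound it through the Lower-$\RE$ hypothesis, $\upsilon^T\hat\Gamma\upsilon\ge\alpha\twonorm{\upsilon}^2-\tau\onenorm{\upsilon}^2\ge-\tau\onenorm{\upsilon}^2$. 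Combining the three with $\onenorm{\upsilon}=\onenorm{\upsilon_S}+\onenorm{\upsilon_{S^c}}$ yields
\[
\tfrac{\lambda}{2}\onenorm{\upsilon_{S^c}}\le\tfrac{3\lambda}{2}\onenorm{\upsilon_S}+\tfrac{\tau}{2}\onenorm{\upsilon}^2.
\]

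It then remains to absorb the perturbation $\tfrac{\tau}{2}\onenorm{\upsilon}^2$. Here I would exploit the side constraint in \eqref{eq::origin}: both $\hat\beta$ and $\beta^*$ have $\ell_1$-norm at most $b_0\sqrt{d}$, so $\onenorm{\upsilon}\le2b_0\sqrt{d}$ and hence $\onenorm{\upsilon}^2\le2b_0\sqrt{d}\,\onenorm{\upsilon}$. The two calibration hypotheses combine into $\sqrt{d}\,\tau\le\tfrac{\psi}{b_0}\sqrt{\tfrac{\log m}{f}}$ with $\lambda\ge4\psi\sqrt{\tfrac{\log m}{f}}$, giving $\tau b_0\sqrt{d}\le\tfrac{\lambda}{4}$, so the perturbation is dominated by the leading terms. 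Substituting this bound, collecting the $\onenorm{\upsilon_S}$ and $\onenorm{\upsilon_{S^c}}$ contributions on either side (and, if one needs the sharp factor, invoking the remaining half $\sqrt{d}\tau\le\alpha/(32\sqrt{d})$ of \eqref{eq::taumain} rather than only the $\lambda/(4b_0)$ half), the coefficient of $\onenorm{\upsilon_{S^c}}$ stays strictly positive and rearranging yields the cone inequality $\onenorm{\upsilon_{S^c}}\le3\onenorm{\upsilon_S}$.

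The main obstacle is precisely step (iii). In the classical Lasso $\hat\Gamma\succeq0$, so the quadratic term is simply nonnegative and the cone constant drops out immediately; here the surrogate $\hat\Gamma=\tfrac1f X^TX-\tfrac1f\hat\tr(B)I_m$ is in general indefinite, and its quadratic form can push in the wrong direction. Controlling it forces one both to pay the $-\tau\onenorm{\upsilon}^2$ price coming from the Lower-$\RE$ condition and then to tame that price using the $\ell_1$-ball radius $b_0\sqrt{d}$ together with the scaling relations among $\tau$, $\lambda$, $d$, $b_0$, and $\psi$. The explicit side constraint $\onenorm{\beta}\le b_0\sqrt{d}$ in \eqref{eq::origin} exists exactly to make this absorption possible, and checking that the displayed hypotheses on $\psi$ are strong enough to keep the cone constant at $3$ is the crux of the computation.
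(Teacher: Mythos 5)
Your argument follows the same route as the paper: the basic inequality from optimality of $\hat\beta$, H\"older for the cross term, the Lower-$\RE$ bound $\upsilon^T\hat\Gamma\upsilon\ge-\tau\onenorm{\upsilon}^2$ to control the indefinite quadratic, and the side constraint $\onenorm{\upsilon}\le 2b_0\sqrt{d}$ together with $\tau b_0\sqrt{d}\le\tfrac{\lambda}{4}$ to absorb the perturbation. All of those steps are sound. The one place your computation does not close is the final constant. From your intermediate display
\[
\tfrac{\lambda}{2}\onenorm{\upsilon_{S^c}}\le\tfrac{3\lambda}{2}\onenorm{\upsilon_S}+\tfrac{\tau}{2}\onenorm{\upsilon}^2
\quad\text{and}\quad
\tfrac{\tau}{2}\onenorm{\upsilon}^2\le\tau b_0\sqrt{d}\,\onenorm{\upsilon}\le\tfrac{\lambda}{4}\bigl(\onenorm{\upsilon_S}+\onenorm{\upsilon_{S^c}}\bigr),
\]
you get $\tfrac{\lambda}{4}\onenorm{\upsilon_{S^c}}\le\tfrac{7\lambda}{4}\onenorm{\upsilon_S}$, i.e.\ cone constant $7$, not the claimed $3$. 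The culprit is step (ii): you bounded the deviation term by $\tfrac{\lambda}{2}\onenorm{\upsilon}$ using \eqref{eq::psimain} at face value. The paper instead bounds it by $\psi\sqrt{\tfrac{\log m}{f}}\,\onenorm{\upsilon}\le\tfrac{\lambda}{4}\onenorm{\upsilon}$ — the level delivered by Lemma~\ref{lemma::low-noise} combined with the hypothesis $\lambda\ge 4\psi\sqrt{\tfrac{\log m}{f}}$ — and with that quarter-level bound the same bookkeeping gives $0\le 3\lambda\onenorm{\upsilon_S}-\lambda\onenorm{\upsilon_{S^c}}$, hence the constant $3$.

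Your proposed remedy for the constant — invoking the $\sqrt{d}\tau\le\tfrac{\alpha}{32\sqrt{d}}$ half of \eqref{eq::taumain} — does not help here: that condition relates $d\tau$ to $\alpha$ and is only used later, in the proof of Theorem~\ref{thm::main}, to guarantee $\alpha-16d\tau\ge\alpha/2$ on the cone; it gives no relation between $\tau b_0\sqrt{d}$ and $\lambda$ and cannot tighten the cone constant. The correct fix is solely to sharpen the deviation bound from $\tfrac{\lambda}{2}$ to $\tfrac{\lambda}{4}$ as above. (To be fair, the lemma's stated hypothesis \eqref{eq::psimain} only asserts the $\tfrac{\lambda}{2}$ level, so the paper's own proof is implicitly using the stronger $\psi$-level bound; but as written, your derivation proves the cone inequality only with constant $7$.)
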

\begin{proof}
By the optimality of $\hat{\beta}$, we have
\begin{eqnarray*}
\lambda_{n} \norm{\beta}_1 - 
\lambda_{n} \norm{\hat\beta}_1 
& \geq & 
\inv{2} \hat\beta \hat\Gamma  \hat\beta - \inv{2} \beta \hat\Gamma \beta -
\ip{\hat\gamma, v} \\
& = & 
\inv{2} \up \hat\Gamma \up +\ip{\up, \hat\Gamma \beta} -
\ip{\up, \hat\gamma} \\
& = & 
\inv{2} \up \hat\Gamma \up -\ip{\up, \hat\gamma - \hat\Gamma \beta} 
\end{eqnarray*}
Hence,
we have for $\lambda \geq 4 \psi \sqrt{\frac{\log m}{f}}$,
\begin{eqnarray}
\label{eq::precondition}
\half \up \hat\Gamma \up 
& \leq &
\ip{\up, \hat\gamma - \hat\Gamma \beta} +
 \lambda_{n} \left(\norm{\beta}_1-  \norm{\hat\beta}_1 \right)\\
\nonumber
& \leq & \lambda_{n}
\left(\norm{\beta}_1-  \norm{\hat\beta}_1\right) +
\norm{\hat\gamma - \hat\Gamma \beta}_{\infty} \norm{\upsilon}_1 
\end{eqnarray}
Hence
\begin{eqnarray}
\label{eq::upperbound}
\up \hat\Gamma \up 
& \leq & 
\lambda_{n} \left(2 \norm{\beta}_1- 2 \norm{\hat\beta}_1\right) +
2 \psi \sqrt{\frac{\log m}{f}} \norm{\upsilon}_1 \\
& \leq & 
\nonumber
\lambda_{n} \left(2\norm{\beta}_1 - 2\norm{\hat\beta}_1 + \half
  \norm{\upsilon}_1\right) \\
\label{eq::finalupperbound}
& \leq & \lambda_{n} \half \left(5 \onenorm{\upsilon_{S}} - 3\onenorm{\upsilon_{S^c}}\right).
\end{eqnarray}
where by the triangle inequality, and $\beta_{\Sc} = 0$, we have
\begin{eqnarray} 
\nonumber
2 \onenorm{\beta} -  2 \onenorm{\hat\beta} + \half \onenorm{\upsilon}
& = &
2 \onenorm{\beta_S} - 2 \onenorm{\hat\beta_{S}} -
2 \onenorm{\upsilon_{\Sc}} + \half\onenorm{\upsilon_S} +\half \onenorm{\upsilon_{S^c}} \\
& \leq &
\nonumber
2 \norm{\up_{S}}_1 -2 \norm{\up_{\Sc}}_1 + \half \norm{\up_S}_1 
+ \half \norm{\up_{S^c}}_1 \\ 
& \leq & 
\label{eq::magic-number-2}
 \half \left(5 \onenorm{\upsilon_{S}} - 3\onenorm{\upsilon_{S^c}}\right).
\end{eqnarray}
We now give a lower bound on the LHS of~\eqref{eq::precondition},
applying the lower-$\RE$ condition as in Definition~\ref{def::lowRE},
\ben
\nonumber
\up^T \hat\Gamma \up
& \ge &
\alpha \twonorm{\up}^2 - \tau \onenorm{\up}^2
\ge  - \tau \onenorm{\up}^2\\
\text{ thus } \;
- \up^T \hat\Gamma \up
& \le & 
\nonumber
\onenorm{\up}^2 \tau \le \onenorm{\up} 2 b_0 \sqrt{d} \tau\\
& \le &
\nonumber
 \onenorm{\up} 2 b_0 
\frac{\psi}{b_0} \sqrt{\frac{\log m}{f}}
= \onenorm{\up} 2 \psi \sqrt{\frac{\log m}{f}}\\
\label{eq::magic-number-neg}
& \le & \half \lambda (\onenorm{\up_S} + \onenorm{\up_{\Sc}})
\een
where we use the assumption that 
\bens
\sqrt{d} \tau \le \frac{\psi}{b_0} \sqrt{\frac{\log m}{f}}, 
  \quad \text{ and } \;
\onenorm{\up} \le \onenorm{\hat{\beta}} + \onenorm{\beta}
\le 2 b_0 \sqrt{d}
\eens
which holds by the triangle inequality and the fact that both 
$\hat{\beta}$ and $\beta$ have $\ell_1$ norm being bounded by $b_0 \sqrt{d}$.
Hence by~\eqref{eq::finalupperbound}
and~\eqref{eq::magic-number-neg}
\ben
\label{eq::magic-number}
0 
& \le &
 - \up \hat\Gamma \up + \frac{5}{2} \lambda \onenorm{\upsilon_{S}} -
\frac{3}{2} 
\lambda \onenorm{\upsilon_{S^c}} \\
\nonumber
& \le &
\half \lambda \onenorm{\upsilon_{S}} + \half \lambda \onenorm{\upsilon_{S^c}}
+ \frac{5}{2}\lambda \onenorm{\upsilon_{S}} - \frac{3}{2} \lambda \onenorm{\upsilon_{S^c}} \\
& \le &
3 \lambda \onenorm{\upsilon_{S}} - \lambda \onenorm{\upsilon_{S^c}}
\een
Thus we have 
\bens
 \onenorm{\upsilon_{S^c}} \le 3 \onenorm{\upsilon_{S}}
\eens
Thus Lemma~\ref{lemma:magic-number} holds.
\end{proof}

\begin{proofof}{Theorem~\ref{thm::main}}
Following the conclusion of Lemma~\ref{lemma:magic-number}, we have
\ben
\label{eq::onenorm}
\onenorm{\upsilon} \le 4 \onenorm{\upsilon_{S}} \le 4 \sqrt{d} \twonorm{\upsilon}.
\een
Moreover, we have 
by the lower-$\RE$ condition as in
Definition~\ref{def::lowRE}
\ben
\label{eq::prelow}
\up^T \hat\Gamma \up
& \ge & 
\alpha \twonorm{\up}^2 - \tau \onenorm{\up}^2 \ge 
(\alpha  - 16 d \tau) \twonorm{\up}^2 \ge \half \alpha \twonorm{\up}^2
\een
where the  last inequality follows from the assumption that 
$16 d \tau \le \alpha/2$.

Combining the bounds in   \eqref{eq::prelow}, \eqref{eq::onenorm} and \eqref{eq::upperbound},
 we have
\bens
\half \alpha  \twonorm{\upsilon}^2 
&\le &
\up^T \hat\Gamma \up \le 
\lambda_{n} \left(2 \norm{\beta}_1- 2 \norm{\hat\beta}_1\right) +
2 \psi \sqrt{\frac{\log m}{f}} \norm{\upsilon}_1 \\
&\le &
\frac{5}{2} \lambda  \norm{\upsilon_S}_1 
\le 10 \lambda \sqrt{d} \twonorm{\up}
\eens
And thus we have $\twonorm{\up} \le 20 \lambda \sqrt{d}$.
The theorem is thus proved. 
\end{proofof}

\subsection{Proof of Lemma~\ref{lemma::lowerREI}}
\label{sec::records}
\begin{proofof2}
In view of Remark~\ref{rem::error-bound}, Condition \eqref{eq::trBlem}
implies that  \eqref{eq::trB} in Theorem~\ref{thm::AD} holds for 
$\zeta = s_0$ and $\ve =\inv{2 M_A}$.
Now, by Theorem~\ref{thm::AD}, we have $\forall u, v \in E \cap
S^{m-1}$, under (A1) and (A3), condition \eqref{eq::Deltacond} holds
under event $\A_0$, and so long as  
$m f \ge 1024 C_0^2 D_2^2 K^4 \log m/\lambda_{\min}(A)^2$, 
\bens
\abs{u^T \Delta v} & \le&
  8C \vp(s_0) \ve + 2 C_0 D_2 K^2\sqrt{\frac{\log m}{mf}} =: \delta  \text{ with } \; 
\delta \le \inv{8}   \lambda_{\min}(A)  \le \inv{8} \; \; \\
&& \text{ which holds for all } \; \;
\ve  \le \half \frac{\lambda_{\min}(A) }{64 C \vp(s_0)} 
:= \inv{2M_A}  \le \inv{128 C}
 \eens
with $\prob{\A_0} \ge 1- 4 \exp\left(-c_2\ve^2 \frac{\tr(B)}{K^4\twonorm{B}}\right)-2
  \exp\left(-c_2\ve^2 \frac{f}{K^4}\right) - 6 /m^3$. 
Hence, by Corollary~\ref{coro::BC}, 
$\forall  \theta \in \R^m$,
\bens
\theta^T \hat\Gamma_A \theta \ge \alpha \twonorm{\theta}^2 - \tau
\onenorm{\theta}^2\; \; \text{ and } \; \; 
\theta^T \hat\Gamma_A \theta \le \bar\alpha \twonorm{\theta}^2 + \tau
\onenorm{\theta}^2
\eens
where $\alpha = \half \lambda_{\min}(A)$ and $\bar\alpha = \frac{3}{2}
\lambda_{\max}(A)$ and 
\bens
\lefteqn{
\frac{512 C^2 \vp(s_0)^2}{\lambda_{\min}(A)}\frac{\log m}{f}
 \le  \tau =  \frac{\alpha}{s_0} \le \frac{2\alpha}{s_0+1}} \\
& \le &\frac{1024 C^2 \vp^2(s_0+1)}{\lambda_{\min}(A)}\frac{\log
  m}{f}.
\eens 
where we plugged in $s_0$ as defined in~\eqref{eq::s0cond}.
The lemma is thus proved in view of Remark~\ref{rem::error-bound}.
\end{proofof2}

\begin{remark}
\label{rem::error-bound}
Clearly the condition on $\tr(B)/\twonorm{B}$ as stated in
Lemma~\ref{lemma::lowerREI} ensures that we have
for $\ve = \inv{2M_A}$ and $s_0 \asymp \frac{4 f}{M_A^2 \log m}$
\bens
\ve^2 \frac{\tr(B)}{K^4\twonorm{B}} 
& \ge & \frac{\ve^2}{K^4} c' K^4 \frac{s_0}{\ve^2} \log\left(\frac{3e m}{s_0 \ve}\right) \\
& \ge & \inv{4 M_A^2 K^4} 4c' K^4 M_A^2 s_0 \log\left(\frac{6e mM_A}{s_0}\right) \\
& \ge & c' s_0 \log\left(\frac{6e m M_A}{s_0}\right) 
\eens
and hence
\bens
\exp\left(-c_2\ve^2 \frac{\tr(B)}{K^4\twonorm{B}}\right)
& \le &
\exp\left(-c' c_2 s_0 \log\left(\frac{6e m M_A}{s_0}\right)\right) \\
& \asymp &
\exp\left(-c_3\frac{4 f}{M_A^2 \log m} \log\left(\frac{3e M_A^3 m \log m}{2f}\right)\right) 
\eens
\end{remark}

\subsection{Proof of Lemma~\ref{lemma::dmain}}
\label{sec::dmain}
\begin{proofof2}
Let
\bens 
M_{+} & = & \frac{64 C \vp(s_0+1)}{\lambda_{\min}(A)}  \text{ where }
\; \; \vp(s_0+1) = \rho_{\max}(s_0+1,A) + \tau_B =: D
\eens
By definition of $s_0$, we have 
\bens
\sqrt{s_0+1} \vp(s_0+1) & \ge & \frac{\lambda _{\min}(A)}{32
  C}\sqrt{\frac{f }{\log m}}  \; \; \text{ and hence}\\
s_0+ 1& \ge & \frac {\lambda^2 _{\min}(A)}{1024 C^2\vp^2(s_0+1)}
\frac{f}{\log m}=
\left(\frac{\alpha}{16 C D} \right)^2 \frac{f}{\log  m} 
\ge\inv{M_A^2}\frac{f}{\log m} 
\eens
The first inequality in~\eqref{eq::taumain} holds given that $M_{+} \le 2 M_{A}$ and hence
\bens
d \le
\inv{64 M_{A}^2}\frac{f}{\log m} &\le &  
\inv{16 M_{+}^2}\frac{f}{\log m}  \le \frac{s_0 + 1}{64} \le \frac{s_0}{32}
\eens

Moreover, for $D= \rho_{\max}(s_0+1,A) +\tau_B \le D_2$ and  $C = C_0
/\sqrt{c'}$, we have
\bens 
d  & \le  &  C_A  c' D_{\phi}\frac{f}{\log m}  \le
\inv{128M_A^2}
\left(\frac{C_0 D_2}{C D}\right)^2 D_{\phi}  \frac{f}{\log m} \\
& \le & 
\half \left(\inv{16 C D}\right)^2  
4 C_0^2 D_2^2 D_{\phi}  \frac{f}{M_A^2 \log m} \\
& \le  & 
\half \frac{(s_0 +1)^2}{\alpha^2} \frac{\log m}{f}
\left(\frac{\psi}{b_0}\right)^2 \le \frac{(s_0)^2}{\alpha^2} \frac{\log m}{f}
\left(\frac{\psi}{b_0}\right)^2
\eens 
where assuming that $s_0 \ge 3$, we have
\ben
\nonumber
\frac{2 s_0^2}{\alpha^2} & \ge & \left(\frac{s_0+1}{\alpha}\right)^2 
  \ge \frac{\alpha^2}{(16 C D )^4} \left(\frac{f}{\log
      m}\right)^2  \\
\label{eq::dphicondition}
\left(\frac{\psi}{b_0}\right)^2 
& = &
 4 C_0^2 D_2^2  \frac{K^2}{b_0^2} \left(M_{\e}+
  K\twonorm{\beta^*} \right)^2  \\
\nonumber
& \ge & 
 4  C_0^2 D_2^2 D_{\phi} = 
4 C_0^2 D_2^2\left(\frac{K^2M^2_{\e}}{b_0^2}+  K^4 \phi \right).
\een 
We have shown that \eqref{eq::dcond} indeed holds, and the lemma is
thus proved.
\end{proofof2}

\begin{remark}
Throughout this paper, we assume that $C_0$ is a large enough constant
such that  for $c$ as defined in Theorem~\ref{thm::HW},
\ben\label{eq::defineC0}
c \min\{C_0^2, C_0\} \ge 4.
\een 
By definition of $s_0$, we have for $\vp^2(s_0) \ge 1$,
\bens
s_0  \vp^2(s_0) & \le &  \frac{c'\lambda^2_{\min}(A)}{1024
  C_0^2}\frac{f }{\log m} \; \; \text{ and hence}\\
s_0 & \le & 
\frac{c'\lambda^2_{\min}(A)}{1024
  C_0^2}\frac{f }{\log m} \le \frac{\lambda^2_{\min}(A)}{1024
  C_0^2}\frac{f }{\log m} =: \check{s}_0.
\eens
\end{remark}

\begin{remark}
The proof shows that one can take $C = C_0/\sqrt{c'}$, and take
\bens
\V = 3 e M_A^3/2 =
  \frac{3 e 64^3 C^3 \vp^3(s_0)}{2\lambda^3_{\min}(A)} \le 
  \frac{3 e 64^3 C_0^3 \vp^3(\check{s}_0)}
{2 (c')^{3/2}\lambda^3_{\min}(A)}.
\eens
Hence a sufficient condition on $r(B)$ is:
\ben
\label{eq::trBLassorem}
r(B) \ge 16c' K^4 \frac{f}{\log m}
\left(3\log\frac{ 64 C_0 \vp(\check{s}_0)}{\sqrt{c'}
\lambda_{\min}(A)}  + \log \frac{3 e m \log m }{2f} \right).
\een
\end{remark}

It remains to prove  Lemmas~\ref{lemma::D2improv} and~\ref{lemma::dmainoracle}.

\begin{proofof}{Lemma~\ref{lemma::D2improv}}
Suppose that event $\B_0$ holds.
By~\eqref{eq::oracle} and that fact that 
$2 D_1 := 2(\frac{\fnorm{A}  }{\sqrt{m}} +\frac{\fnorm{B}  }{\sqrt{f}}
)\le  2(\twonorm{A}^{1/2} + \twonorm{B}^{1/2}) (\sqrt{\tau_A} +
\sqrt{\tau_B}) \le 
D_{\ora} D_0'$, where recall $D_0'=\twonorm{B}^{1/2} + a_{\max}^{1/2}$,
\bens
\label{eq::oracleII}
\norm{\hat\gamma - \hat\Gamma \beta^*}_{\infty}
& \le &
D_0' K  \tau_B^{1/2} \twonorm{\beta^*} r_{m,f}
+ 2D_1 K \inv{\sqrt{m}} \norm{\beta^*}_{\infty} r_{m,f} + D_0 M_{\e} r_{m,f} \\
& \le &
D_0' K \twonorm{\beta^*} r_{m,f}
\left( \tau_B^{1/2}  +  \frac{D_{\ora}}{\sqrt{m}} \right) + D_0  M_{\e} r_{m,f}  \\
& \le &
D_0' \left( \tau_B^{1/2}  +  \frac{D_{\ora}}{\sqrt{m}} \right) K \twonorm{\beta^*} r_{m,f}
+ D_0  M_{\e} r_{m,f} 
\eens
The lemma is thus proved.
\end{proofof}

\begin{proofof}{Lemma~\ref{lemma::dmainoracle}}
Recall that we require 
\bens
d & \le &  
 C_A \left\{c' C_{\phi} \wedge 2 \right\}
\frac{f}{\log m} \; \text{ where }\; \; C_{\phi} =
\frac{\twonorm{B} + a_{\max} }{D^2} D_{\phi} \\
&& \; \text{ where }  \; C_A = \inv{128 M_A^2}\; \; \; \text{ and } \;
\; b_0^2 \ge \twonorm{\beta^*}^2 \ge \phi b_0^2.
\eens
The proof for $d \le s_0/32$ follows exactly that of
Lemma~\ref{lemma::dmain}.
In order to show the second inequality, we follow the same line of
arguments where we need to replace one inequality. By definition of $D_0'$, we have
$ \twonorm{B} + a_{\max} \le (D_0')^2 \le 2(\twonorm{B} + a_{\max} )$.
Now suppose that for $C_{\phi} = 
\frac{\twonorm{B} + a_{\max} }{D^2} D_{\phi}$
\bens
d & :=  &  C_A c'  C_{\phi}  \frac{f}{\log m} \le 
 C_A  \frac{f}{\log m}\left(\frac{C_0 D_0'}{C D}\right)^2 D_{\phi}
\eens
where $1 \le D= \rho_{\max}(s_0+1,A) +\tau_B \le D_2$ and  $C = C_0
/\sqrt{c'}$. 
\bens
d  & \le  &  C_A  c' C_{\phi}\frac{f}{\log m}  \le
\inv{128M_A^2}
\left(\frac{C_0 D_0'}{C D}\right)^2 D_{\phi}  \frac{f}{\log m} \\
& \le & 
\half \left(\inv{16 C D}\right)^2 
4 C_0^2 (D_0')^2 D_{\phi}  \frac{f}{M_A^2 \log m} \\
& \le  & 
\half \frac{(s_0 +1)^2}{\alpha^2} \frac{\log m}{f}
\left(\frac{\psi}{b_0}\right)^2 \le \frac{(s_0)^2}{\alpha^2} \frac{\log m}{f}
\left(\frac{\psi}{b_0}\right)^2
\eens
where assuming that $s_0 \ge 3$, we have the following
 inequality by
definition of $s_0$ and $\alpha = \lambda_{\min}(A)/2$ 
\bens
\nonumber
\frac{2 s_0^2}{\alpha^2} & \ge & \left(\frac{s_0+1}{\alpha}\right)^2 
  \ge \frac{\alpha^2}{(16 C D )^4} \left(\frac{f}{\log
      m}\right)^2  
\eens
which is identical in the proof of Lemma~\ref{lemma::dmain}, while  
we replace~\eqref{eq::dphicondition} with 
\bens
4  C_0^2 (D_0')^2 D_{\phi}
& = & 
4  C_0^2 (D_0')^2 (\frac{K^2M^2_{\e}}{b_0^2}+  \tau_B^+ K^4 \phi)\\
& \le  & 
4 C_0^2 (D_0')^2  \frac{K^2}{b_0^2} \left(M_{\e}+ \tau_B^{+/2}  K\twonorm{\beta^*} \right)^2  \le 
\left(\frac{\psi}{b_0}\right)^2
\eens
where $D_{\phi} :=\frac{K^2M^2_{\e}}{b_0^2}+  \tau_B^+ K^4 \phi$ and 
$\psi =2  C_0  \left(D_0' K^2 (\tau_B^{1/2} + \frac{D_{\ora}}{\sqrt{m}} )\twonorm{\beta^*} + D_0 M_{\e} K\right)$ as
in~\eqref{eq::psioracle}.
\end{proofof}

\section{Proofs for the Conic Programming estimator}
\label{sec::proofofDS}
\subsection{Proof of Lemmas~\ref{lemma::DS} and~\ref{lemma::grammatrix}}
\label{sec::proofofDSlemma}
We next provide proofs for Lemmas~\ref{lemma::DS} and~\ref{lemma::grammatrix} in this section.

\begin{proofof}{Lemma~\ref{lemma::DS}}
Suppose event $\B_0$ holds.
Then by the proof of Lemma~\ref{lemma::low-noise},
\bens
\norm{\onef X^T(y - X \beta^*) + \onef \hat\tr(B) \beta^*}_{\infty} 
& = & \norm{\hat\gamma - \hat\Gamma  \beta^*}_{\infty}  \\
& \le & 
2 C_0 D_2 K^2\twonorm{\beta^*}\sqrt{\frac{\log m}{f}}
+ C_0 D_0 K M_{\e} \sqrt{\frac{\log m}{f}} \\
& =:& 
\mu \twonorm{\beta^*} + \tau
\eens
The lemma follows immediately for the chosen $\mu, \tau$ as
in~\eqref{eq::paraDS} given that $(\beta^*, \twonorm{\beta^*}) \in \U$.
\end{proofof}

\begin{proofof}{Lemma~\ref{lemma::DS-cone}}
By optimality of $(\hat\beta, \hat{t})$, we have
\bens
\onenorm{\hat\beta} 
+ \lambda \twonorm{\hat\beta} \le \norm{\hat\beta}_1 +
\lambda \hat{t} \le \onenorm{\beta^*} + \lambda \twonorm{\beta^*}
\eens
Thus we have for $S := \supp(\beta^*)$,
\bens
\onenorm{\hat\beta} =
\onenorm{\hat\beta_{\Sc} }+ \onenorm{\hat\beta_{S}}
& \le &  \onenorm{\beta^*}   +   \lambda (\twonorm{\beta^*} -  
 \twonorm{\hat\beta}  )
\eens
Now by the triangle inequality, 
\bens
\onenorm{\hat\beta_{\Sc} } =  
\onenorm{v_{\Sc} } 
& \le & 
 \onenorm{\beta^*_{S}} -   \onenorm{\hat\beta_{S}} + 
 \lambda (\twonorm{\beta^*} -  \twonorm{\hat\beta} ) \\
& \le & 
\onenorm{v_{S}} +  \lambda ( \twonorm{\beta^*} - 
 \twonorm{\hat\beta} ) \\
& \le & 
\onenorm{v_{S}} +  \lambda ( \twonorm{\beta^*} -  \twonorm{\hat\beta_S} ) \\
& = & 
\onenorm{v_{S}} 
+  \lambda  \twonorm{v_S}  \le (1+\lambda) \onenorm{v_{S}}.
 \eens
The lemma thus holds given 
\bens
\hat{t} & \le & 
\inv{\lambda }( \onenorm{\beta^*} -  \norm{\hat\beta}_1)+
\twonorm{\beta^*} 
\le \inv{\lambda }\onenorm{v} + \twonorm{\beta^*} 
\eens
\end{proofof}

\begin{proofof}{Lemma~\ref{lemma::grammatrix}}
Recall the following shorthand notation:
\bens 
D_0 & = &  (\sqrt{\tau_B} + \sqrt{a_{\max}}) \; 
\;\text{ and }  D_2\; = \; 2 (\twonorm{A} + \twonorm{B}) 
\eens 
First we rewrite an upper bound for $v = \hat\beta -\beta^*$, 
$D = \tr(B)$ and $\hat{D} = \hat\tr(B)$
\bens
\norm{X_0^T X_0 v}_{\infty} & =
& \norm{(X-W)^T X_0 (\hat\beta- \beta^*)}_{\infty} 
\le 
\norm{X^T X_0 (\hat\beta - \beta^*)}_{\infty} +
\norm{ W^T X_0 v}_{\infty}\\
& \le & 
\norm{X^T(X \hat\beta - y)- \hat{D} \hat\beta}_{\infty} +  \norm{X^T \e}_{\infty}
+ \norm{(X^T W -D) \hat\beta}_{\infty} \\
&+& \norm{(\hat{D}-D) \hat\beta}_{\infty} +\norm{W^T X_0 v}_{\infty}
\eens
where 
\bens
\norm{X^T X_0 (\hat\beta - \beta^*)}_{\infty}
& \le & 
\norm{X^T (X_0 \hat\beta -  y + \e )}_{\infty}\\
& = & 
\norm{X^T ((X -W)\hat\beta -  y)}_{\infty} + \norm{X^T  \e }_{\infty} \\
& \le &
\norm{X^T (X \hat\beta -  y) - \hat{D} \hat\beta}_{\infty} + \norm{X^T
  \e }_{\infty} \\
&& +  
\norm{(X^T W - D) \hat \beta}_{\infty} + \norm{(\hat{D}- D) \hat \beta}_{\infty}.
\eens
On event $\B_0$, we have by Lemma~\ref{lemma::DS-cone} and the fact that $\hat\beta \in \U$
\bens
I :=  \norm{\hat\gamma - \hat\Gamma  \hat\beta}_{\infty} 
& = & 
\norm{\onef X^T(y - X \hat\beta) + \onef \hat{D}
  \hat\beta}_{\infty}\le 
\mu \hat{t}  + \tau \\
& \le & 
 \mu (\inv{\lambda}\onenorm{v} +
\twonorm{\beta^*} )+ \tau \\
& = &
2 D_2 K r_{m,f} (\inv{\lambda}\onenorm{v} +
\twonorm{\beta^*} )+ D_0 r_{m,f} M_{\e}  
\eens
and on event $\B_4$,
\bens
II & := & \onef \norm{X^T \e}_{\infty}
\le  \onef (\norm{X_0^T \e}_{\infty} + \norm{W^T \e}_{\infty}) \\
& \le &  r_{m,f} M_{\e}(a_{\max}^{1/2} + \sqrt{\tau_B})  = D_0 r_{m,f} M_{\e}
\eens
Thus on event $\B_0$, we have 
\bens
I + II \le 2 D_2 K r_{m,f} (\inv{\lambda}\onenorm{v} +\twonorm{\beta^*} )+ 2 D_0 r_{m,f} M_{\e} =
\mu((\inv{\lambda}\onenorm{v} +\twonorm{\beta^*} )+ 2 \tau.
\eens
Now on event $\B_6$, we have for $2 D_1 \le  D_2$
\bens
IV := \norm{(\hat{D}- D) \hat \beta}_{\infty} 
& \le & 
\abs{\hat{D}- D} \norm{\hat\beta}_{\infty} 
\le  2 D_1  K  \inv{\sqrt{m}}  r_{m,f}  (\norm{\beta^*}_{\infty} + \norm{v}_{\infty} ) \\
& \le &  D_2 K \inv{\sqrt{m}} r_{m,f} (\twonorm{\beta^*} + \norm{v}_1)
\eens
On event $\B_5 \cap \B_{10}$, we have
\bens
III := \onef \norm{(X^T W -D) \hat \beta}_{\infty} 
& \le &
\onef \norm{(X^T W -D) \beta^*}_{\infty} + \onef \norm{(X^T W -D) v}_{\infty} \\
& \le &
 \onef\norm{X_0^T W\beta^*}_{\infty} + \norm{(W^T W -D)
   \beta^*}_{\infty} \\
& + & 
\onef \left( \norm{(Z^T B Z- \tr(B) I_{m})}_{\max} + \onef \norm{X_0^T
    W}_{\max}\right)\onenorm{v} \\
& \le &
r_{m,f} K \left( \frac{\fnorm{B}}{\sqrt{f}} +\sqrt{\tau_B}  a^{1/2}_{\max} \right)(\onenorm{v} + \twonorm{\beta^*}) \\
\text{ and } \; 
V = \onef  \norm{W^T X_0 v}_{\infty} & \le & 
 \onef \norm{W^T X_0 }_{\max} \onenorm{v} \le 
r_{m,f} K \sqrt{\tau_B}  a^{1/2}_{\max} \onenorm{v}.
\eens
Thus we have on $\B_0 \cap \B_{10}$, for $D_0 \le D_2$ and $\tau_A = 1$
\bens 
III + IV + V
& \le &  
r_{m,f} K \left(\twonorm{B} +\tau_B+  a_{\max} +  \frac{2}{\sqrt{m}} 
 (\twonorm{A} +  \twonorm{B} ) \right)(\onenorm{v} + \twonorm{\beta^*}) \\
& \le &  
r_{m,f} K \left(4 \twonorm{B} + 3 \twonorm{A}\right)(\onenorm{v} +
\twonorm{\beta^*}) \\
& \le & 2 D_2 K  r_{m,f}  (\onenorm{v} +\twonorm{\beta^*}) \\
& \le &  \mu   (\onenorm{v}
+\twonorm{\beta^*}) 
\eens
Thus we have
\bens
\norm{\onef X_0^T X_0 v}_{\infty}
& \le &I + II + III + IV + V \\
& \le &  
\mu (\inv{\lambda}\onenorm{v} 
+\twonorm{\beta^*}) + 2 D_0 M_{\e} r_{m,f}
+  \mu  (\onenorm{v} +\twonorm{\beta^*})\\
& \le &  
2 \mu \twonorm{\beta^*} + \mu (\inv{\lambda} + 1)\onenorm{v} + 2 \tau.
\eens 
The lemma thus holds.
\end{proofof}

\section{Proof for Theorem~\ref{thm::DSoracle}}
\label{sec::DSoracleproof}

We prove Lemmas~\ref{lemma::DSimprov} to~\ref{lemma::grammatrixopt} in
this section.

\begin{proofof}{Lemma~\ref{lemma::DSimprov}}
Suppose event $\B_0$ holds. 
Then by the proof of Lemma~\ref{lemma::D2improv},
we have for 
$D_0' = \twonorm{B}^{1/2} + a_{\max}^{1/2}$ and $\tau_B^{+/2} =
\sqrt{\tau_B} + \frac{D_{\ora}}{\sqrt{m}}$, where $D_{\ora} = 2 ( \twonorm{B}^{1/2} + \twonorm{A}^{1/2})$,
\bens
\norm{\hat\gamma - \hat\Gamma \beta^*}_{\infty}
& \le & 
D_0' \tau_B^{+/2}  K r_{m,f} \twonorm{\beta^*} + D_0 M_{\e} r_{m,f}.
\eens
The lemma follows immediately for $\mu, \tau$ as chosen in \eqref{eq::paraDSimprov}.
\end{proofof}

\begin{proofof}{Lemma~\ref{lemma::tauB}}
We first show \eqref{eq::tildetauB} and~\eqref{eq::tildetauBbound}.
Recall $r_{m,m}:=2 C_0 \sqrt{\frac{\log m}{m f}}\ge  2C_0 \frac{\log^{1/2}
 m}{m}$.
By Lemma~\ref{lemma::trBest}, we have on event $\B_6$,  
\bens
\label{eq::tauBoracle}
\abs{\hat\tau_{B} - \tau_B}
&  \le  & D_1 K^2 r_{m,m}.
\eens
Moreover, we have under (A1)  $1 = \tau_A  \le  D_1 :=
\frac{\fnorm{A}}{m^{1/2}} + \frac{\fnorm{B}}{f^{1/2}}$ in view of
\eqref{eq::tracefnorm}.
And
\bens
D_1 \le \twonorm{A} + \twonorm{B} \le (\frac{D_{\ora}}{2})^2
\eens 
and hence
$$\sqrt{D_1} \le \frac{D_{\ora}}{2} = \twonorm{B}^{1/2} + \twonorm{A}^{1/2}.$$
By definition and construction, we have $\tau_B, \hat\tau_B \ge 0$,
\bens
\abs{\hat\tau_B^{1/2} - \tau^{1/2}_B} & \le &  \hat\tau_B^{1/2} +
  \tau^{1/2}_B; \\
\text{ and hence } \; \;  \abs{\hat\tau_B^{1/2} - \tau^{1/2}_B}^2 & \le &  
\abs{(\hat\tau_B^{1/2} + \tau_B^{1/2})(\hat\tau_B^{1/2} - \tau_B^{1/2})} 
= \abs{\hat\tau_{B} - \tau_B} 
\eens
Thus, on event $\B_6$, we have
\bens
\abs{\hat\tau_B^{1/2} - \tau^{1/2}_B} & \le &  
 \sqrt{\abs{\hat\tau_{B} - \tau_B}} \le
\sqrt{D_1} K r^{1/2}_{m,m} \le \frac{D_{\ora}}{2} K r^{1/2}_{m,m}
\eens
Thus we have for $C_{6} \ge D_{\ora} \ge 2\sqrt{D_1}$ and $D_{\ora}
=2 (\twonorm{A}^{1/2} +\twonorm{B}^{1/2})$,
\ben
\label{eq::right}
\hat\tau_B ^{1/2}  - \frac{D_{\ora}}{2} K r_{mm}^{1/2} \le
\tau_B ^{1/2}  \le  
\hat\tau_B ^{1/2}  + \frac{ D_{\ora}}{2} K r_{mm}^{1/2} 
\een
Thus we have for $\tau_B^{+/2}$ as defined
in~\eqref{eq::defineDtau}, \eqref{eq::right} and the fact that
\bens
r^{1/2}_{m,m}:= \sqrt{2 C_0 } \frac{(\log m)^{1/4}}{\sqrt{m}} \ge
2/\sqrt{m} \; \text{  for $m \ge 16$ and $C_0 \ge 1$},
\eens
the following inequalities hold: for $K \ge 1$,
\ben 
\label{eq::tildeBbounds}
\tau_B^{+/2} &  := &   \tau_B^{1/2} + D_{\ora} m^{-1/2} \\
\nonumber
& \le &  
\hat\tau_B^{1/2} + \frac{D_{\ora}}{2} K r_{mm}^{1/2} +
\frac{D_{\ora}}{2} r^{1/2}_{m,m} \\
\nonumber
& \le & 
\hat\tau_B^{1/2} + D_{\ora} K r_{mm}^{1/2} \le \tilde\tau_B^{1/2}  
\een
where the last inequality holds by the choice of 
$\tilde\tau_B^{1/2} \ge \hat\tau_B^{1/2} + D_{\ora} K r_{mm}^{1/2}$ as in \eqref{eq::muchoice}.
Moreover, we have on event $\B_6$, by \eqref{eq::right}
\bens
\tilde\tau_B^{1/2}  & := &  \hat\tau_B^{1/2} +   C_{6} K r_{mm}^{1/2}
\le  \tau_B^{1/2} +   \frac{D_{\ora}}{2} K r_{mm}^{1/2} +   C_{6} K r_{mm}^{1/2} \\
& \le &  \tau_B^{1/2} + \frac{3}{2} C_{6} K r_{mm}^{1/2}\\
\tilde\tau_B  
& := &  
(\PaulBhalf+ C_{6} K r_{mm}^{1/2})^2 \le 
2 \hat \tau_B +2 C_{6}^2 K^2
 r_{mm} \\
& \le &  2 \tau_B + 2 D_1 K^2 r_{m,m} + 2 C_{6}^2 K^2 r_{mm} \\
& \le &  2 \tau_B + \frac{D_{\ora}^2}{2} K^2 r_{m,m} + 2 C_{6}^2 K^2
r_{mm}
\le  2 \tau_B + 3 C_{6}^2 K^2 r_{mm}
\eens
and thus \eqref{eq::tildetauB} and~\eqref{eq::tildetauBbound} hold
given that $2 D_1 \le D_{\ora}^2/2 \le C_6^2/2$.
Finally, we have
\bens
\tilde\tau_B^{1/2}  \tau_B^- \le 
(\tau_B^{1/2} + \frac{3}{2} C_{6} K r_{mm}^{1/2} )
\tau_B^- \le \frac{\tau_B^{1/2} + \frac{3}{2} C_{6} K r_{mm}^{1/2}}
{\tau_B^{1/2} + 2C_{6} K r_{m,m}^{1/2}} \le 1
\eens
for $\tau_B^-$ as defined in~\eqref{eq::ora-sparsity}.
\end{proofof}

\begin{remark}
The set $\U$ in our setting is equivalent to the following:
for $\mu, \tau$ as defined in \eqref{eq::muchoice} and $\beta \in \R^m$,
\ben
\label{eq::defineGamma}
\; \; \; \; 
\U=  \left\{(\beta, t) \; : \; \norm{\onef X^T(y - X \beta) + \onef \hat\tr(B) \beta}_{\infty} \le \mu t + \tau, \twonorm{\beta} \le t\right\}.
\een
\end{remark}

\begin{proofof}{Lemma~\ref{lemma::grammatrixopt}}
For the rest of the proof, we will follow the notation in the proof for
Lemma~\ref{lemma::grammatrix}.
Notice that the bounds as stated in Lemma~\ref{lemma::DS-cone} remain
true with $\tau, \mu$ chosen as in~\eqref{eq::paraDSimprov}, so long
as  $(\beta^*, \twonorm{\beta^*}) \in \U$. This indeed holds by Lemma~\ref{lemma::DSimprov}:
for $\tau$~\eqref{eq::tauchoice} and $\mu$  \eqref{eq::muchoice} as chosen  in Theorem~\ref{thm::DSoracle}, we have by
\eqref{eq::tildeBbounds}, 
\bens
 \mu \asymp D_0' \tilde\tau_B^{1/2}  K r_{m,f}   \ge D_0' K r_{m,f} \tau_B^{+/2}
\eens
where $\tau_B^{+/2} =  ( \sqrt{\tau_B} + \frac{D_{\ora}}{\sqrt{m}})$, 
which ensures that  $(\beta^*, \twonorm{\beta^*}) \in \U$ by
Lemma~\ref{lemma::DSimprov}.

On event $\B_0$, we have by Lemma~\ref{lemma::DS-cone} and 
the fact that $\hat\beta \in \U$ as in \eqref{eq::defineGamma}
\bens
I + II & := & 
\norm{\hat\gamma - \hat\Gamma  \hat\beta}_{\infty} + \onef
\norm{X^T \e}_{\infty} \\
& \le & 
\norm{\onef X^T(y - X \hat\beta) + \onef \hat{D}  \hat\beta}_{\infty} + \tau \le \mu \hat{t}  + 2 \tau \\
& \le & 
\mu (\inv{\lambda}\onenorm{v} + \twonorm{\beta^*} )+2 \tau 
\eens
for  $\mu, \tau$ as chosen in \eqref{eq::muchoice}  and
\eqref{eq::tauchoice} respectively.
Now on event $\B_6$, we have
\bens
IV := \norm{(\hat{D}- D) \hat \beta}_{\infty} 
& \le & 
\abs{\hat{D}- D} \norm{\hat\beta}_{\infty} 
\le  2 D_1  K  \inv{\sqrt{m}}  r_{m,f}  (\norm{\beta^*}_{\infty} + \norm{v}_{\infty} ) \\
& \le &  D_0' \frac{D_{\ora}}{\sqrt{m}} K  r_{m,f} (\twonorm{\beta^*} + \norm{v}_1)
\eens
where $2D_1 \le D_{\ora} D_0'$ for $1 \le D_0' :=  \twonorm{B}^{1/2} +
a_{\max}^{1/2}$ and $D_{\ora} =   2 \left(\twonorm{B}^{1/2} + \twonorm{A}^{1/2} \right)$, where  $a_{\max} \ge \tau_A = 1$ under (A1).
Hence
\bens
\lefteqn{III + IV + V 
\le r_{m,f} K \sqrt{\tau_B} \left(\twonorm{B}^{1/2} + a^{1/2}_{\max} \right)(\onenorm{v} + \twonorm{\beta^*})} \\
&&
+ 2 D_1 K \inv{\sqrt{m}} r_{m,f} (\twonorm{\beta^*} + \norm{v}_1)
+ r_{m,f} K \sqrt{\tau_B}  a^{1/2}_{\max} \onenorm{v} \\
& \le &  
D_0' K r_{m,f}  (\onenorm{v} + \twonorm{\beta^*})  
( \sqrt{\tau_B} +  \frac{D_{\ora}}{\sqrt{m}}) + r_{m,f} K \sqrt{\tau_B}  a^{1/2}_{\max} \onenorm{v} \\ 
&\le &
D_0' K r_{m,f} \tau_B^{+/2} (\onenorm{v} + \twonorm{\beta^*} ) + D_0'  K r_{m,f} \sqrt{\tau_B} 
\onenorm{v} \\ 
&\le &
C_0 D_0' K^2 \sqrt{\frac{\log m}{f}}
(\tau_B^{1/2} + \frac{D_{\ora}}{\sqrt{m}}) (2\onenorm{v} + \twonorm{\beta^*} ) \\
& \le & \mu (2\onenorm{v} + \twonorm{\beta^*}) 
\eens
for $\mu$ as defined in   \eqref{eq::muchoice} in view of \eqref{eq::tildeBbounds}.
Thus we have 
\bens
I + II + III + IV + V 
& \le &  \mu  (\inv{\lambda}\onenorm{v} + \twonorm{\beta^*} )+  2 \tau
+  \mu (2\onenorm{v} + \twonorm{\beta^*})  \\
& = & 
2\mu ((1 + \inv{2\lambda})\onenorm{v} + \twonorm{\beta^*}) + 2 \tau
\eens
and the improved bounds as stated in the Lemma thus holds.
\end{proofof}

\section{Some geometric analysis results}
\label{sec::geometry}
Let us define the following set of vectors in $\R^m$:
\bens
\Cone(s_0) := \{\up: \onenorm{\up} \le \sqrt{s_0} \twonorm{\up}\}
\eens
For each vector $x \in \R^m$, let ${T_0}$ denote the locations of the $s_0$
largest coefficients of $x$ in absolute values.
Any vector $x \in S^{m-1}$ satisfies:
\ben
\label{eq::init-norm-inf}
\norm{x_{T_0^c}}_{\infty}  \leq \norm{x_{T_0}}_{1}/s_0
& \leq & \frac{ \twonorm{x_{T_0}}}{\sqrt{s_0}}  
\een
We need to state the following result from~\cite{MPT08}.
Let $S^{m-1}$ be the unit sphere in $\R^m$, for $1 \leq s \leq m$, 
\beq
U_s \; := \; \{ x \in \R^{m}: |\supp(x)| \leq s \}
\eeq
The sets $U_s$ is an union of the $s$-sparse vectors.
The following three lemmas are well-known and mostly standard; 
See~\cite{MPT08} and~\cite{LW12}.
\begin{lemma}
\label{eq::embedding}
For every $1\le s_0 \le m$ and every $I \subset \{1, \ldots, m\}$ with
$\abs{I} \le s_0$,
\bens
\sqrt{\abs{I}} B_1^m \cap S^{m-1} \subset 2 \conv( U_{s_0} \cap S^{m-1}) 
=: 2 
\conv\left(\bigcup_{\size{J} \leq s_0} E_J \cap S^{m-1}\right) 
\eens
and moreover, for $\rho \in (0, 1]$.
\bens
\sqrt{\abs{I}} B_1^m \cap \rho B_2^{m} \subset (1+\rho) \conv( U_{s_0} \cap B_2^{m}) 
=: (1+\rho) \conv\left(\bigcup_{\size{J} \leq {s_0}} E_J \cap S^{m-1}\right) 
\eens
\end{lemma}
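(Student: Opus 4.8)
The plan is to prove both inclusions simultaneously by a single block-decomposition (``peeling'') argument, and then to absorb the leftover scaling factor using the symmetry of the set of sparse unit vectors. First I would fix a vector $x$ in the left-hand set, so that $\onenorm{x}\le\sqrt{\abs{I}}\le\sqrt{s_0}$ together with $\twonorm{x}=1$ in the first claim, or $\twonorm{x}\le\rho$ in the second. We may assume $x\neq0$. Reorder the coordinates so that $\abs{x_1}\ge\abs{x_2}\ge\cdots$, and partition $\{1,\dots,m\}$ into consecutive blocks $T_1,T_2,\dots$ of size $s_0$ (the last block possibly smaller). Then $x=\sum_j x_{T_j}$ and each $x_{T_j}$ is supported on at most $s_0$ coordinates, hence lies in some $E_J$ with $\abs{J}\le s_0$.

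The key estimate is the block bound. For $j\ge2$, every coordinate in $T_j$ is dominated in absolute value by the smallest coordinate of $T_{j-1}$, and $\abs{T_{j-1}}=s_0$, so $\max_{i\in T_j}\abs{x_i}\le \onenorm{x_{T_{j-1}}}/s_0$, whence $\twonorm{x_{T_j}}\le \sqrt{s_0}\,\max_{i\in T_j}\abs{x_i}\le \onenorm{x_{T_{j-1}}}/\sqrt{s_0}$. Summing over $j\ge2$ and reindexing gives $\sum_{j\ge2}\twonorm{x_{T_j}}\le \onenorm{x}/\sqrt{s_0}\le1$, while trivially $\twonorm{x_{T_1}}\le\twonorm{x}$. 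Therefore $c:=\sum_j\twonorm{x_{T_j}}\le 1+\twonorm{x}$, which is at most $2$ in the first case and at most $1+\rho$ in the second.

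Next I would write $x=\sum_j \twonorm{x_{T_j}}\,u_j$, where $u_j:=x_{T_j}/\twonorm{x_{T_j}}\in U_{s_0}\cap S^{m-1}$ and the zero blocks are discarded. Dividing by $c$ exhibits $x/c$ as a genuine convex combination of points of $U_{s_0}\cap S^{m-1}=\bigcup_{\abs{J}\le s_0}E_J\cap S^{m-1}$, so $x\in c\,\conv(U_{s_0}\cap S^{m-1})$.

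The remaining point, and the one place where care is genuinely needed, is passing from the scale $c$ to the claimed scale ($2$, respectively $1+\rho$). Since the set $\bigcup_{\abs{J}\le s_0}E_J\cap S^{m-1}$ is symmetric under $u\mapsto-u$, its convex hull contains the origin. Hence, writing $K:=\conv\bigl(U_{s_0}\cap S^{m-1}\bigr)$ and using $c\le2$ in the first case, the point $x/2=(c/2)(x/c)+(1-c/2)\cdot0$ is a convex combination of two elements of $K$ (as $c/2\le1$), so $x/2\in K$, i.e. $x\in 2\,\conv(U_{s_0}\cap S^{m-1})$; the same argument with $c\le1+\rho$ yields $x\in(1+\rho)\,\conv(U_{s_0}\cap S^{m-1})$ for the second inclusion. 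This symmetry-based absorption of the residual scale, rather than the elementary block estimate, is the main subtlety, since $K$ is not a cone and naive rescaling of $x/c$ by $c/2\le1$ need not stay inside $K$ without the presence of the origin.
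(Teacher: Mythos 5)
Your proof is correct, but it follows a genuinely different route from the paper's. The paper argues by duality: it takes the two convex bodies $L=\sqrt{s_0}B_1^m\cap\rho B_2^m$ and $R=(1+\rho)\conv(U_{s_0}\cap S^{m-1})$ and shows that the support function of $L$ is dominated by that of $R$ — computing $\sup_{z\in L}\ip{x,z}\le(\rho+1)\twonorm{x_{T_0}}$ via the bound $\norm{x_{T_0^c}}_\infty\le\twonorm{x_{T_0}}/\sqrt{s_0}$, and $\sup_{z\in R}\ip{x,z}=(1+\rho)\twonorm{x_{T_0}}$ by noting the maximum of the linear functional is attained at an extreme point supported on $T_0$ — and then concludes the inclusion from the support-function characterization of containment for closed convex sets. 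You instead give the classical constructive shelling argument: decompose $x$ into blocks of size $s_0$ sorted by magnitude, bound $\sum_{j\ge2}\twonorm{x_{T_j}}\le\onenorm{x}/\sqrt{s_0}\le1$, exhibit $x/c$ with $c\le1+\twonorm{x}$ as an explicit convex combination of sparse unit vectors, and then absorb the factor $c$ into the target scale using $0\in\conv(U_{s_0}\cap S^{m-1})$ (which follows from the symmetry of $U_{s_0}\cap S^{m-1}$ under $u\mapsto-u$). You are right to flag that last absorption step as the genuine subtlety — the convex hull is not a cone, so $cK\subseteq 2K$ for $c\le2$ does require $0\in K$ — and your justification is sound. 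The trade-off is that the paper's duality proof is shorter and avoids the block bookkeeping, while your constructive proof additionally produces an explicit representation of each $x$ as a short positive combination of $s_0$-sparse unit vectors, which is often reusable elsewhere; both establish the lemma as stated.
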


\begin{proof}
Fix  $x \in \R^m$.
Let $x_{T_0}$ denote the subvector of $x$
confined to the locations of its $s_0$ largest coefficients in absolute values;
moreover, we use it to represent its $0$-extended
version $x' \in \R^p$ such that $x'_{T^c} =0$ and
 $x'_{T_0} =x_{T_0}$.
Throughout this proof, $T_0$ is understood to be the locations of the $s_0$ largest 
coefficients in absolute values in $x$.

Moreover, let $(x_i^*)_{i=1}^m$ be non-increasing rearrangement of 
$(\abs{x_i})_{i=1}^m$.
Denote by 
\bens
L & = & \sqrt{s_0} B_1^m \cap \rho B_2^m \\ 
R & = & 2 \conv\left(\bigcup_{\size{J} \leq s} E_J \cap B_2^{m}\right) =
 2 \conv\big( E \cap B_2^{m}\big)
\eens
Any vector $x \in \R^{m}$ satisfies:
\ben
\label{eq::init-norm-inf}
\norm{x_{T_0^c}}_{\infty}  
\leq \norm{x_{T_0}}_{1}/s_0
& \leq & \frac{ \twonorm{x_{T_0}}}{\sqrt{s_0}}  
\een
It follows that for any $\rho >0$, $s_0 \ge 1$ and 
for all $z \in L$, we have the $i^{th}$ largest coordinate in absolute value in $z$ 
is at most $\sqrt{s_0}/i$,
\bens
\sup_{z \in L} \ip{x, z} & \le &
\max_{\twonorm{z} \le \rho} \ip{x_{T_0}, z}
+ \max_{\onenorm{z} \le \sqrt{s_0}} \ip{x_{T_0^c}, z}   \\
 & \le &  \rho \twonorm{x_{T_0}} + 
\norm{x_{T_0^c}}_{\infty} \sqrt{s_0} \\
 & \le & 
\twonorm{x_{T_0}} \left( \rho  + 1\right)
\eens
where clearly
$\max_{\twonorm{z} \le \rho} \ip{x_{T_0}, z} = \rho  \sum_{i=1}^{s_0} (x_i^{*2})^{1/2}$.
And denote by $S^J := S^{m-1} \cap E_J$,
\bens
\sup_{z \in R} \ip{x, z} & = & (1+\rho)
\max_{J: \size{J} \le s_0} \max_{z \in S^J} \ip{x, z} \\
 & = & (1+ \rho) \twonorm{x_{T_0}}
\eens
given that for a convex function $\ip{x, z}$, the maximum happens at an extreme point, and in this case, it happens for $z$ such that $z$ is supported on $T_0$, such that $z_{T_0} = \frac{x_{T_0}}{\twonorm{x_{T_0}}}$, and $z_{T_0^c} =0$.
\end{proof}

\begin{lemma}
\label{lemma::bigcone}
Let $1/5 > \delta > 0$.
Let $E=\cup_{|J| \leq s_0} E_J$ for $0 < s_0 < m/2$ and $k_0>0$.
Let $\Delta$ be a $m \times m$ matrix such that
\ben
\label{eq::conecond}
\abs{u^T \Delta v} \le \delta  \;\; \; \forall u, v \in E \cap S^{m-1}
\een
Then for all $v \in \big(\sqrt{s_0} B_1^m \cap B_2^m\big)$, we have
\ben
\label{eq::origset}
\abs{\up^T \Delta \up} & \le & 4\delta.
\een
\end{lemma}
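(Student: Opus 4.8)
The plan is to combine the hypothesis \eqref{eq::conecond} with the embedding Lemma~\ref{eq::embedding}. Taking $\rho = 1$ and a set $I$ with $\abs{I} = s_0$, that lemma yields the containment
\[
\sqrt{s_0}\, B_1^m \cap B_2^m \;\subset\; 2\,\conv\bigl(E \cap B_2^m\bigr),
\]
where $E = \cup_{\abs{J} \le s_0} E_J$. Hence every $v \in \sqrt{s_0}\,B_1^m \cap B_2^m$ can be written as $v = 2w$ with $w \in \conv(E \cap B_2^m)$, and the task reduces to bounding the quadratic form $w^T \Delta w$ uniformly over the convex hull.

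The key is that the bilinear bound $\abs{u^T \Delta v} \le \delta$, which \eqref{eq::conecond} asserts only for unit vectors $u,v \in E \cap S^{m-1}$, extends automatically to $\conv(E \cap B_2^m)$ in two steps. First, for nonzero $a, b \in E \cap B_2^m$, write $a = \twonorm{a}\,\widehat a$ and $b = \twonorm{b}\,\widehat b$ with $\widehat a, \widehat b \in E \cap S^{m-1}$; then $\abs{a^T \Delta b} = \twonorm{a}\,\twonorm{b}\,\abs{\widehat a^T \Delta \widehat b} \le \delta$, the bound being trivial if $a$ or $b$ vanishes. Second, if $w = \sum_i \lambda_i a_i$ and $w' = \sum_j \mu_j b_j$ are convex combinations of points $a_i, b_j \in E \cap B_2^m$, then by bilinearity
\[
\abs{w^T \Delta w'} = \Bigl|\, \textstyle\sum_{i,j} \lambda_i \mu_j\, a_i^T \Delta b_j \,\Bigr| \le \sum_{i,j} \lambda_i \mu_j\, \abs{a_i^T \Delta b_j} \le \delta,
\]
so $\abs{w^T \Delta w'} \le \delta$ for all $w, w' \in \conv(E \cap B_2^m)$.

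Applying this last estimate with $w' = w$ to the representation $v = 2w$ gives $\abs{v^T \Delta v} = 4\,\abs{w^T \Delta w} \le 4\delta$, which is precisely \eqref{eq::origset}. There is no genuine obstacle here beyond careful bookkeeping: the only points needing attention are invoking Lemma~\ref{eq::embedding} with the correct parameters (radius $\sqrt{s_0}$ and $\rho = 1$) so as to produce exactly the factor $2$, and verifying that the hypothesis survives both the rescaling from $S^{m-1}$ to $B_2^m$ and the passage to convex combinations, which it does since $\Delta$ need not be symmetric and only bilinearity is used. I note that the stated restriction $\delta < 1/5$ plays no role in this particular conclusion and is presumably retained for use in the companion statement Lemma~\ref{lemma::bigconeII}.
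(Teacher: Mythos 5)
Your proof is correct and follows essentially the same route as the paper's: both arguments rest on the embedding $\sqrt{s_0}\,B_1^m \cap B_2^m \subset 2\,\conv(E\cap B_2^m)$ from Lemma~\ref{eq::embedding} and then extend the bilinear bound \eqref{eq::conecond} from the extreme points to the convex hull, picking up the factor $2\times 2=4$. The only cosmetic difference is that you expand the convex combinations directly via a double sum and the triangle inequality, whereas the paper decouples the two arguments and invokes twice the principle that a convex function attains its maximum over a convex hull at an extreme point; your observation that $\delta<1/5$ is not needed here is also accurate.
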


\begin{proof}
First notice that
\ben
\label{eq::decoupledbig}
\max_{\up \in \big(\sqrt{s_0} B_1^m \cap B_2^m\big)} \abs{\up^T \Delta \up}
& \le & 
\max_{w, u \in  \big(\sqrt{s_0} B_1^m \cap B_2^m\big)}  \abs{w^T \Delta u} 
\een
Now that we have decoupled $u$ and $w$ on the RHS of
\eqref{eq::decoupledbig},
 we first fix $u$.
Then for any fixed $u \in S^{m-1}$ and matrix $\Delta \in \R^{m \times m}$,
$f(w) = \abs{w^T \Delta u}$ is a convex function of $w$, and hence 
for $w \in \big(\sqrt{s_0} B_1^m \cap  B_2^{m}\big) \subset 
2 \conv\left(\bigcup_{\size{J} \leq s_0} E_J \cap S^{m-1}\right)$,
\bens
\max_{w \in \big(\sqrt{s_0} B_1^m \cap B_2^m\big)}  \abs{w^T \Delta u} 
& \le & 
2 \max_{w \in \conv( E \cap  S^{m-1})} \abs{w^T \Delta  u} \\
& =& 
2 \max_{w \in E \cap  S^{m-1}} \abs{w^T \Delta u}
\eens
where the maximum occurs at an extreme point of the set 
$ \conv( E \cap  S^{m-1})$, because of the convexity of the function
$f(w)$,

Clearly the RHS of \eqref{eq::decoupledbig} is bounded by
\bens
\max_{u, w \in \big(\sqrt{s_0} B_1^m \cap B_2^m\big)}  \abs{w^T
  \Delta u}
& = &
\max_{u \in \big(\sqrt{s_0} B_1^m \cap B_2^m\big)} 
\max_{w \in \big(\sqrt{s_0} B_1^m \cap B_2^m\big)}  \abs{w^T \Delta u}
\\
& \le &
2 \max_{u \in \big(\sqrt{s_0} B_1^m \cap B_2^m\big)} 
\max_{w \in \big(E\cap S^{m-1}\big)}  \abs{w^T \Delta u}\\
& = &
2 \max_{u \in \big(\sqrt{s_0} B_1^m \cap B_2^m\big)} g(u) 
\eens
where the function $g$ of $u \in \big(\sqrt{s_0} B_1^m \cap
B_2^m\big)$ is defined as
\bens
g(u) =\max_{w \in \big(E \cap S^{m-1}\big)}  \abs{w^T \Delta u}
\eens
which is convex since it is the maximum of a function 
$f_w(u) := \abs{w^T \Delta u}$ 
which is convex in $u$ for each $w \in (E \cap S^{m-1})$.
Thus we have 
for $u \in (\sqrt{s_0} B_1^m \cap  B_2^{m}) \subset 2 \conv\left(\bigcup_{\size{J} \leq s_0} E_J \cap  S^{m-1}\right) =: 2\conv\left(E \cap S^{m-1}\right)$
\ben
\nonumber
\max_{u \in  \big(\sqrt{s_0} B_1^m \cap B_2^m\big)} g(u)
& \le & 
2 \max_{u\in \conv(E \cap  S^{m-1})} g(u) \\
& = & 
\label{eq::extremeg}
2 \max_{u\in E \cap  S^{m-1}} g(u) \\
& = & 
\label{eq::last}
2 \max_{u \in  E \cap  S^{m-1}} \max_{w \in  E \cap  S^{m-1}} \abs{w^T  \Delta u}
\le 4 \delta
\een
where~\eqref{eq::extremeg} holds given that the maximum occurs at an extreme point of the set 
$ \conv( E \cap  B_2^{m})$, because of the convexity of the function
$g(u)$.
\end{proof}

\begin{corollary}
\label{coro::bigcone} 
Suppose all conditions in Lemma~\ref{lemma::bigcone} hold.
Then $\forall \up \in \Cone(s_0)$,
\ben
\label{eq::cone}
\abs{\up^T \Delta \up} & \le & 4\delta  \twonorm{\up}^2.
\een
\end{corollary}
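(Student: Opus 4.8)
The plan is to deduce the corollary from Lemma~\ref{lemma::bigcone} by a straightforward homogeneity argument, exploiting that both the cone $\Cone(s_0)$ and the quadratic form $\up \mapsto \abs{\up^T \Delta \up}$ behave well under scaling. The quadratic form is $2$-homogeneous, $\abs{(t\up)^T \Delta (t\up)} = t^2 \abs{\up^T \Delta \up}$ for $t>0$, while the cone constraint $\onenorm{\up} \le \sqrt{s_0}\,\twonorm{\up}$ is invariant under positive scaling. Hence it suffices to prove the bound on a normalized representative of each ray in the cone and then rescale, reducing the claim over the full (unbounded) cone to the bounded set $\sqrt{s_0} B_1^m \cap B_2^m$ already handled by the lemma.

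First I would dispose of the trivial case $\up = 0$, where both sides vanish. For $\up \in \Cone(s_0)$ with $\up \ne 0$, I would set $w := \up/\twonorm{\up}$, so that $\twonorm{w} = 1$. The defining inequality of $\Cone(s_0)$ then gives $\onenorm{w} = \onenorm{\up}/\twonorm{\up} \le \sqrt{s_0}$, so that $w \in \sqrt{s_0} B_1^m$; combined with $\twonorm{w} = 1$, this places $w \in \sqrt{s_0} B_1^m \cap B_2^m$, which is precisely the set on which Lemma~\ref{lemma::bigcone} is stated.

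Applying Lemma~\ref{lemma::bigcone} to $w$ yields $\abs{w^T \Delta w} \le 4\delta$. Undoing the normalization via $2$-homogeneity gives $\abs{\up^T \Delta \up} = \twonorm{\up}^2\,\abs{w^T \Delta w} \le 4\delta\,\twonorm{\up}^2$, which is the asserted bound in~\eqref{eq::cone}. The hypotheses needed for the lemma, namely $0 < s_0 < m/2$, $1/5 > \delta > 0$, and the entrywise bound $\abs{u^T \Delta v} \le \delta$ for all $u, v \in E \cap S^{m-1}$, are inherited directly from the corollary's standing assumption that all conditions of Lemma~\ref{lemma::bigcone} hold.

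I do not anticipate any genuine obstacle: the corollary is essentially a rephrasing of Lemma~\ref{lemma::bigcone} after passing from the truncated convex set $\sqrt{s_0} B_1^m \cap B_2^m$ to the full scale-invariant cone $\Cone(s_0)$. The only step that warrants a line of verification is confirming that the normalized vector $w$ actually lands in $\sqrt{s_0} B_1^m \cap B_2^m$, and this is immediate from the cone's defining inequality together with $\twonorm{w}=1$.
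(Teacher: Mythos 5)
Your proposal is correct and follows essentially the same route as the paper: the paper likewise reduces to $\up \in \Cone(s_0)\cap S^{m-1}$, observes that $\Cone(s_0)\cap S^{m-1}\subset \sqrt{s_0}B_1^m\cap B_2^m$, and invokes Lemma~\ref{lemma::bigcone}, with the rescaling by $\twonorm{\up}^2$ left implicit. Your version merely spells out the homogeneity step explicitly, which is fine.
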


\begin{proof}
It is sufficient to show that $ \forall \up \in \Cone(s_0) \cap S^{m-1}$,
\bens
\abs{\up^T \Delta \up} & \le & 4\delta.
\eens
Denote by $\Cone := \cone(s_0)$.
Clearly this set of vectors satisfy:
\bens
\cone \cap S^{m-1}\subset \big(\sqrt{s_0} B_1^m \cap B_2^m\big)
\eens
Thus~\eqref{eq::cone} follows from~\eqref{eq::origset}.
\end{proof}

\begin{remark}
Suppose we relax the definition of $\Cone(s_0)$ to be:
\bens
\Cone(s_0) := \{\up: \onenorm{\up} \le 2 \sqrt{s_0} \twonorm{\up}\}
\eens
Clearly, $\Cone(s_0, 1) \subset \Cone(s_0)$.
given that $\forall u \in  \Cone(s_0, 1)$, we have 
\bens
\onenorm{u} \le 2 \onenorm{u_{T_0}} \le 2 \sqrt{s_0} \twonorm{u_{T_0}} \le 
2 \sqrt{s_0} \twonorm{u}
\eens
\end{remark}

\begin{lemma}
\label{lemma::bigconeII}
Suppose all conditions in Lemma~\ref{lemma::bigcone} hold.
Then for all $\up \in \R^m$,
\ben
\label{eq::bigcone}
\abs{\up^T \Delta \up} \le 4 \delta (\twonorm{\up}^2 + \inv{s_0} \onenorm{\up}^2) 
\een
\end{lemma}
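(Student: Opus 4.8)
The plan is to reduce the general case to the bound already established on the set $\sqrt{s_0} B_1^m \cap B_2^m$ in Lemma~\ref{lemma::bigcone}, using a homogenization (rescaling) argument that exploits the fact that the quadratic form $\up \mapsto \up^T \Delta \up$ is homogeneous of degree two. If $\up = 0$ both sides of the claimed inequality vanish, so I assume $\up \neq 0$ throughout.

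First I would rescale $\up$ so that it lands inside $\sqrt{s_0} B_1^m \cap B_2^m$, where Lemma~\ref{lemma::bigcone} is available. Concretely, set $t := \min\left(\sqrt{s_0}/\onenorm{\up},\; 1/\twonorm{\up}\right) > 0$. Then $\onenorm{t\up} = t\,\onenorm{\up} \le \sqrt{s_0}$ and $\twonorm{t\up} = t\,\twonorm{\up} \le 1$, so that $t\up \in \sqrt{s_0} B_1^m \cap B_2^m$. Applying Lemma~\ref{lemma::bigcone} to the vector $t\up$ yields $\abs{(t\up)^T \Delta (t\up)} \le 4\delta$.

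Next, by the degree-two homogeneity of the form, $\abs{\up^T \Delta \up} = t^{-2}\,\abs{(t\up)^T \Delta (t\up)} \le 4\delta\, t^{-2}$, so it remains only to bound $t^{-2}$. Since $t^{-1} = \max\left(\onenorm{\up}/\sqrt{s_0},\; \twonorm{\up}\right)$, we get $t^{-2} = \max\left(\onenorm{\up}^2/s_0,\; \twonorm{\up}^2\right) \le \twonorm{\up}^2 + \inv{s_0}\onenorm{\up}^2$, using the elementary inequality $\max(a,b) \le a+b$ valid for nonnegative reals. Substituting this back gives exactly $\abs{\up^T \Delta \up} \le 4\delta\left(\twonorm{\up}^2 + \inv{s_0}\onenorm{\up}^2\right)$, as required.

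There is no substantial obstacle here: the whole content is the scaling step together with the inequality $\max(a,b) \le a+b$. The only point needing (minor) care is checking that the chosen $t$ simultaneously places $t\up$ inside both the scaled $\ell_1$ ball and the Euclidean ball, which is immediate once $t$ is taken to be the minimum of the two natural scaling thresholds. As a consistency check, when $\up \in \Cone(s_0)$ — that is, $\onenorm{\up} \le \sqrt{s_0}\,\twonorm{\up}$ — the $\twonorm{\up}^2$ term dominates the maximum, and the bound collapses to $4\delta\,\twonorm{\up}^2$, recovering Corollary~\ref{coro::bigcone} as a special case of this more general statement.
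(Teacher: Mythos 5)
Your proof is correct and takes essentially the same route as the paper: both reduce the claim to Lemma~\ref{lemma::bigcone} by rescaling $\up$ into $\sqrt{s_0}B_1^m\cap B_2^m$ and invoking degree-two homogeneity of the quadratic form. The only difference is packaging — the paper argues by cases according to whether $\up\in\Cone(s_0)$ or not (each case corresponding to one of your two scaling thresholds being the binding one), while you unify the two cases by taking $t=\min\left(\sqrt{s_0}/\onenorm{\up},\,1/\twonorm{\up}\right)$ and using $\max(a,b)\le a+b$.
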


\begin{proof}
The lemma follows given that $\forall \up \in \R^m$, one of the following must hold:
\ben
\label{eq::cone-II}
\text{ if }  \up \in \Cone(s_0) \;\; \;
\abs{\up^T \Delta \up} & \le & 4 \delta \twonorm{\up}^2 \\
\label{eq::anticone}
\text{ otherwise }  \;\;\;
\abs{\up^T \Delta \up} & \le &  \frac{4\delta} 
{s_0}\onenorm{\up}^2,
\een
leading to the same conclusion in \eqref{eq::bigcone}.
We have shown~\eqref{eq::cone-II} in Lemma~\ref{lemma::bigcone}.
Let $\Cone(s_0)^c$ be the  complement set of $\cone(s_0)^c$ in $\R^{m}$. That is, we focus now on the set of vectors such that 
\bens
\Cone(s_0)^c := \{\up: \onenorm{\up} \ge \sqrt{s_0} \twonorm{\up}\}
\eens
and show that for $u = \sqrt{s_0} \frac{v}{\onenorm{v}}$,
\bens
\frac{\abs{v^T \Delta v} }{\onenorm{v}^2} 
& := & \inv{s_0} \abs{u^T \Delta u}  \le    \inv{s_0} \delta
\eens
where the last inequality holds by Lemma~\ref{lemma::bigcone} given that 
\bens
u \in (\sqrt{s_0} B_1^m \cap B_2^m) \subset 
2 \conv\left(\bigcup_{\size{J} \leq s_0} E_J \cap B_2^{m}\right) 
\eens 
and thus 
\bens
\frac{\abs{v^T \Delta v} }{\onenorm{v}^2} 
& \le & \inv{s_0} \sup_{u \in \sqrt{s_0} B_1^m \cap B_2^m}
 \abs{u^T \Delta u}  \le  \inv{s_0}4 \delta
\eens
\end{proof}

\section{Proof of Corollary~\ref{coro::BC}}
\label{sec::appendLURE}
\begin{proofof2}
First we show that for all $\up \in \R^m$, \eqref{eq::conebound} holds.
It is sufficient to check that the condition~\eqref{eq::conecond} in
Lemma~\ref{lemma::bigcone} holds. Then, \eqref{eq::conebound} follows
from Lemma~\ref{lemma::bigconeII}: for $\up \in \R^m$, 
\ben
\label{eq::conebound}
\abs{\up^T \Delta \up} \le 4 \delta (\twonorm{\up}^2 + \inv{\zeta} \onenorm{\up}^2) 
\le \half \lambda_{\min}(A)  (\twonorm{\up}^2 + \inv{\zeta} \onenorm{\up}^2).
\een
The Lower and Upper $\RE$ conditions thus immediately follow.
The Corollary is thus proved.
\end{proofof2}
\section{Proof of Theorem~\ref{thm::AD}}
\label{sec::proofofthmAD}
We first state the following preliminary results in 
Lemmas~\ref{lemma::normA}  and~\ref{lemma::orthogSp}; their proofs 
appear in Section~\ref{sec::proofofnormA}.
Throughout this section, the choice of $C = C_0/\sqrt{c'}$ satisfies the conditions on $C$ in Lemmas~\ref{lemma::normA}
and~\ref{lemma::orthogSp}, where recall $\min\{C_0, C_0^2\} \ge 4/c$ for $c$ as
defined in Theorem~\ref{thm::HW}.
For a set $J \subset \{1, \ldots, m\}$, denote $F_J=A^{1/2} E_J$ where recall
$E_J = \spin\{e_j: j \in J\}$.
\begin{lemma}
\label{lemma::normA}
Suppose all conditions in Theorem~\ref{thm::AD} hold.
Let $$E  = \bigcup_{\abs{J}=k} E_J \cap S^{m-1}.$$
Suppose that for some $c' > 0$ and $\ve \le \inv{C}$, where $C = C_0/\sqrt{c'}$,
\ben
 \label{eq::ALocalkronsum}
r(B) := \frac{\tr(B)}{\twonorm{B}} & \ge & c' k K^4 \frac{\log(3e m/k \ve)}{\ve^2}.
\een
Then for all vectors $u, v \in E \cap S^{m-1}$, on event $\B_1$, 
where $\prob{\B_1} \ge 1- 2 \exp\left(-c_2\ve^2 \frac{\tr(B)}{K^4\twonorm{B}}\right)$ for $c_2 \ge 2$,
\bens
\abs{u^T Z^T B Z v - \E u^T Z^T B Z v } 
& \le & 4 C \ve \tr(B).
\eens
\end{lemma}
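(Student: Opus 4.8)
\begin{proofof}{Lemma~\ref{lemma::normA}}
The plan is to recognize the left-hand side as a centered bilinear form and to control it uniformly over the sparse index set $E$ by a covering argument. Since the entries of $Z$ are independent with $\E Z_{ij}^2 = 1$, one has $\E Z^T B Z = \tr(B) I_m$, so $\E\, u^T Z^T B Z v = \tr(B)\, u^T v$ and it suffices to bound $\sup_{u, v \in E}\abs{u^T \Delta v}$ for the centered matrix $\Delta := Z^T B Z - \tr(B) I_m$. First I would record the pointwise tail bound: applying the second inequality of Lemma~\ref{lemma::oneeventA} with the two dimensions interchanged, so that the sandwiched matrix is the $f \times f$ matrix $B$ and the controlling norms become $\fnorm{B}$ and $\twonorm{B}$, for any fixed $u, v \in S^{m-1}$ and every $t > 0$,
\bens
\prob{\abs{u^T \Delta v} > t} & \le & 2\expf{-c\min\left(\frac{t^2}{K^4 \fnorm{B}^2},\; \frac{t}{K^2 \twonorm{B}}\right)}.
\eens
I would then take $t = 2C\ve\,\tr(B)$ with $C = C_0/\sqrt{c'}$ and use the elementary bound $\fnorm{B}^2 \le \twonorm{B}\tr(B)$ to lower bound the quadratic branch of the exponent by $4cC^2\ve^2\tr(B)/(K^4\twonorm{B})$; since $\ve \le 1/C$ and $K \ge 1$ this branch is the active one, while $c\min\{C_0, C_0^2\} \ge 4$ (with $C_0 \ge 1$) gives $4cC^2 \ge 16/c'$, so that $\prob{\abs{u^T\Delta v} > 2C\ve\tr(B)} \le 2\expf{-\tfrac{16}{c'}\ve^2\tr(B)/(K^4\twonorm{B})}$.

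Next I would discretize $E = \bigcup_{\abs{J}=k} E_J \cap S^{m-1}$. For each $J$ with $\abs{J}=k$ I fix an $\ve$-net $\N_J$ of the unit sphere of the $k$-dimensional subspace $E_J$, with $\abs{\N_J} \le (3/\ve)^k$, and set $\N = \bigcup_{\abs{J}=k}\N_J$, so that $\abs{\N} \le \binom{m}{k}(3/\ve)^k \le \left(3em/(k\ve)\right)^k$ and hence $2\log\abs{\N} \le 2k\log\left(3em/(k\ve)\right)$. A union bound over the at most $\abs{\N}^2$ pairs of net points controls $\max_{u',v' \in \N}\abs{(u')^T\Delta v'}$ by $2C\ve\tr(B)$ off an event of probability at most $\abs{\N}^2\cdot 2\expf{-\tfrac{16}{c'}\ve^2\tr(B)/(K^4\twonorm{B})}$. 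This is exactly where the effective-rank hypothesis enters: the assumption $r(B) \ge c'kK^4\log\left(3em/(k\ve)\right)/\ve^2$ is equivalent to $2k\log\left(3em/(k\ve)\right) \le \tfrac{2}{c'}\ve^2\tr(B)/(K^4\twonorm{B})$, so the per-pair exponent dominates the entropy $2\log\abs{\N}$ and the failure probability collapses to at most $2\expf{-\tfrac{14}{c'}\ve^2\tr(B)/(K^4\twonorm{B})}$. Calling its complement $\B_1$, this yields $\prob{\B_1} \ge 1 - 2\expf{-c_2\ve^2\tr(B)/(K^4\twonorm{B})}$ with $c_2 = 14/c' \ge 2$.

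Finally I would pass from the net to all of $E$ by the usual self-bounding device. Setting $M := \sup_{u,v \in E}\abs{u^T\Delta v}$, for arbitrary $u,v \in E$ I choose $u', v' \in \N$ lying in the same coordinate subspaces with $\twonorm{u-u'} \le \ve$ and $\twonorm{v-v'} \le \ve$; because $u-u'$ and $v-v'$ stay $k$-sparse, their normalizations again lie in $E$, so the expansion
\bens
u^T\Delta v & = & (u')^T\Delta v' + (u-u')^T\Delta v' + (u')^T\Delta(v-v') + (u-u')^T\Delta(v-v')
\eens
gives $\abs{u^T\Delta v} \le \max_{\N}\abs{(u')^T\Delta v'} + (2\ve + \ve^2)M$. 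Taking the supremum over $u, v$ and using $\ve \le 1/C$ with $C$ large enough that $2\ve + \ve^2 \le \half$ yields $M \le 2\max_{\N}\abs{(u')^T\Delta v'} \le 4C\ve\tr(B)$ on $\B_1$, which is the assertion. The main obstacle is the constant bookkeeping in the middle step: one must confirm that at the scale $t = 2C\ve\tr(B)$ it is the quadratic rather than the linear branch of the Hanson--Wright minimum that governs the tail after the reduction $\fnorm{B}^2 \le \twonorm{B}\tr(B)$, and that the resulting single-pair exponent, once expressed through $\tr(B)/\twonorm{B}$, genuinely exceeds the covering entropy $2k\log\left(3em/(k\ve)\right)$ of the sparse net under the stated effective-rank condition.
\end{proofof}
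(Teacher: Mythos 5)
Your proposal is correct and follows essentially the same route as the paper: the pointwise tail from Lemma~\ref{lemma::oneeventA} with $\fnorm{B}^2 \le \twonorm{B}\tr(B)$, an $\ve$-net of $\bigcup_{\abs{J}=k} E_J \cap S^{m-1}$ of cardinality at most $\exp(k\log(3em/(k\ve)))$, a union bound whose entropy is absorbed by the effective-rank condition~\eqref{eq::ALocalkronsum}, and a standard approximation step costing a constant factor. The only cosmetic difference is that you center at $t = 2C\ve\tr(B)$ and self-bound the supremum, while the paper uses $t = CK^2\ve\tr(B)$ and the $(1-\ve)^{-2}$ net-extension factor; your aside that the quadratic branch of the Hanson--Wright minimum is necessarily the active one can fail when $K^2 < 2C\ve$, but this is immaterial since the linear branch then also dominates the covering entropy under~\eqref{eq::ALocalkronsum}.
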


\begin{lemma}
\label{lemma::orthogSp}
Suppose that $\ve \le 1/C$, where $C$ is as defined in Lemma~\ref{lemma::normA}.
Suppose that \eqref{eq::ALocalkronsum} holds.
Let
\ben
\label{eq::spsetEF}
E = \bigcup_{\abs{J} =k} E_J \; \; \text{ and } \; \; F =
\bigcup_{\abs{J} =k} F_J.
\een
Then on event $\B_2$, 
where $\prob{\B_2} \ge 1- 2 \exp\left(-c_2\ve^2
  \frac{\tr(B)}{K^4\twonorm{B}}\right)$ for $c_2 \ge 2$,
we have for all vectors $u \in E \cap  S^{m-1}$ and $w \in F \cap  S^{m-1}$,
\bens
\abs{w^T Z_1^T B^{1/2} Z_2 u} 
& \le & 
\frac{C \ve \tr(B)}{(1-\ve)^2\twonorm{B}^{1/2}} \le
{4C \ve \tr(B)}/{\twonorm{B}^{1/2}}
\eens
where $Z_1, Z_2$ are independent copies of $Z$, as defined in
Theorem~\ref{thm::AD}.
\end{lemma}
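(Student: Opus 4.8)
The plan is to mirror the covering argument behind Lemma~\ref{lemma::normA}, replacing the quadratic form there by the decoupled bilinear form $w^{T} M u$ with $M := Z_1^{T} B^{1/2} Z_2$, and to prove the bound on the supremum
\[
R := \sup_{w \in F \cap S^{m-1},\; u \in E \cap S^{m-1}} \abs{w^{T} M u}.
\]
The pointwise ingredient is the decoupled tail bound \eqref{eq::crossB} (a consequence of Lemma~\ref{lemma::oneeventA}): for fixed unit vectors $v,w\in\R^{m}$, $\prob{\abs{v^{T} M w} > t} \le 2\expf{-c\min(t^{2}/(K^{4}\tr(B)),\, t/(K^{2}\twonorm{B}^{1/2}))}$. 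Because $M$ carries the same scale $\tr(B)/\twonorm{B}^{1/2}$ on both arguments, the target rate is $C\ve\tr(B)/\twonorm{B}^{1/2}$, up to an approximation factor coming from the passage to the full spheres.

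First I would construct nets. Since $E=\bigcup_{\size{J}=k}E_J$ and $F=\bigcup_{\size{J}=k}F_J$ with $F_J=A^{1/2}E_J$, each constituent $E_J\cap S^{m-1}$ and $F_J\cap S^{m-1}$ is a sphere inside a $k$-dimensional subspace, hence carries a Euclidean $\ve$-net of size at most $(3/\ve)^{k}$; taking the union over the $\binom{m}{k}$ index sets $J$ yields nets $\mathcal{N}_E\subseteq E\cap S^{m-1}$ and $\mathcal{N}_F\subseteq F\cap S^{m-1}$ with $\abs{\mathcal{N}_E},\abs{\mathcal{N}_F}\le \binom{m}{k}(3/\ve)^{k}\le (3em/(k\ve))^{k}$. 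Applying \eqref{eq::crossB} to each of the $\abs{\mathcal{N}_F}\cdot\abs{\mathcal{N}_E}$ unit pairs with $t=C\ve\tr(B)/\twonorm{B}^{1/2}$, the two terms in the exponent equal $C^{2}\ve^{2}\tr(B)/(K^{4}\twonorm{B})$ and $C\ve\tr(B)/(K^{2}\twonorm{B})$; since $C\ve\le 1\le K^{2}$ the first (subgaussian) term is the minimum. Using $C=C_0/\sqrt{c'}$ together with the hypothesis \eqref{eq::ALocalkronsum} and \eqref{eq::defineC0} gives $c\,C^{2}\ve^{2}\tr(B)/(K^{4}\twonorm{B})\ge 4k\log(3em/(k\ve))$, which dominates the entropy $\log(\abs{\mathcal{N}_E}\abs{\mathcal{N}_F})\le 2k\log(3em/(k\ve))$. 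A union bound then shows that on an event $\B_2$ of probability at least $1-2\expf{-c_2\ve^{2}\tr(B)/(K^{4}\twonorm{B})}$ with $c_2\ge 2$, every net pair satisfies $\abs{w_0^{T}Mu_0}\le C\ve\tr(B)/\twonorm{B}^{1/2}$.

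Finally I would transfer the estimate from the nets to all of $E\cap S^{m-1}$ and $F\cap S^{m-1}$. The key observation is that $E_J$ and $F_J$ are linear subspaces: for $u$ in a constituent sphere $E_J\cap S^{m-1}$, choose its approximant $u_0$ in the net of that same $E_J$; then $u-u_0\in E_J$, so $(u-u_0)/\twonorm{u-u_0}\in E_J\cap S^{m-1}\subseteq E\cap S^{m-1}$, and likewise on the $F$ side. Running the standard successive-approximation argument — bounding $R$ by a seminorm in the $u$ variable first and in the $w$ variable second, each reduction losing a factor $(1-\ve)^{-1}$ — yields $R\le (1-\ve)^{-2}\max_{\mathcal{N}_F\times\mathcal{N}_E}\abs{w_0^{T}Mu_0}\le C\ve\tr(B)/((1-\ve)^{2}\twonorm{B}^{1/2})$, and $(1-\ve)^{2}\ge 1/4$ for $\ve\le 1/C$ delivers the stated $4C\ve\tr(B)/\twonorm{B}^{1/2}$. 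I expect this last step to be the main obstacle: because $E\cap S^{m-1}$ and $F\cap S^{m-1}$ are unions of spheres rather than single spheres, one must verify that the net-approximation residuals remain inside the same subspace so that their normalizations stay in $E\cap S^{m-1}$ (resp.\ $F\cap S^{m-1}$); it is exactly this subspace-closure property that legitimizes the seminorm reduction and hence the clean $(1-\ve)^{-2}$ factor.
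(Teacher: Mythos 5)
Your proposal is correct and follows essentially the same route as the paper's proof: the decoupled Hanson--Wright tail bound from Lemma~\ref{lemma::oneeventA} applied at $t = C\ve\tr(B)/\twonorm{B}^{1/2}$, union bounds over $\ve$-nets of size $\binom{m}{k}(3/\ve)^k$ built separately on each $E_J\cap S^{m-1}$ and $F_J\cap S^{m-1}$, entropy dominated by the exponent via \eqref{eq::ALocalkronsum}, and a two-variable successive-approximation step costing $(1-\ve)^{-2}$. Your explicit verification that the net residuals stay inside the same $k$-dimensional subspace is exactly what legitimizes the ``standard approximation argument'' the paper invokes without comment.
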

In fact, the same conclusion holds for all $y, w \in F\cap S^{m-1}$; 
and in particular, for $B = I$, we have the following.
\begin{corollary}
\label{coro::tartan}
Suppose all conditions in 
Lemma~\ref{lemma::normA} hold.
Suppose that $F = A^{1/2} E$ for $E$ as defined in Lemma~\ref{lemma::normA}.
Let
\ben
\label{eq::BI} 
f & \ge &  c'k K^4 \frac{\log(3e m/k \ve)}{\ve^2}. 
\een
Then on event $\B_3$, 
where $\prob{\B_3} \ge 1- 2 \exp\left(-c_2\ve^2 f  \inv{K^4}\right)$,
we have for all vectors $w,y \in F \cap S^{m-1}$ and  $\ve \le 1/C$ for $C$ is as defined in Lemma~\ref{lemma::normA},
\ben
\label{eq::wyFnorm}
\abs{y^T (\onef Z^T Z - I) w } & \le & 4 C\ve.
\een
\end{corollary}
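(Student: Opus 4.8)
The plan is to recognize that Corollary~\ref{coro::tartan} is exactly Lemma~\ref{lemma::normA} specialized to the $f\times f$ identity $B = I_f$, with the index set $E$ replaced by its linear image $F = A^{1/2}E$; the proof therefore mirrors that of Lemma~\ref{lemma::normA} once three matchings are verified. Since $\E Z^T Z = f I_m$, the centered form obeys $\onef Z^T Z - I = \onef(Z^T Z - \E Z^T Z)$, and for a fixed pair $y, w \in F \cap S^{m-1}\subset\R^m$ one has $\E\, y^T Z^T Z w = f\, y^T w$. Thus the target \eqref{eq::wyFnorm} is precisely the $B = I_f$ instance of the estimate $\abs{u^T Z^T B Z v - \E\, u^T Z^T B Z v} \le 4C\ve\,\tr(B)$ of Lemma~\ref{lemma::normA}, divided by $f = \tr(I_f)$; note $\tr(I_f)/\twonorm{I_f} = f$, so hypothesis \eqref{eq::BI} is exactly the $B=I_f$ form of \eqref{eq::ALocalkronsum}, and the stated probability $1-2\exp(-c_2\ve^2 f/K^4)$ is the $B=I_f$ form of the probability in Lemma~\ref{lemma::normA}.

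Next I would establish the per-pair tail bound. Writing $y^T Z^T Z w = \mvec{Z}^T\left((y w^T)\otimes I_f\right)\mvec{Z}$ as in the proof of Lemma~\ref{lemma::oneeventA}, Lemma~\ref{lemma::Auv} gives $\fnorm{(y w^T)\otimes I_f} \le \fnorm{I_f} = \sqrt{f}$ and $\twonorm{(y w^T)\otimes I_f} \le \twonorm{I_f} = 1$ (using $\twonorm{y}=\twonorm{w}=1$), so Theorem~\ref{thm::HW} yields for $s>0$
\[
\prob{\abs{y^T Z^T Z w - f\, y^T w} > s} \le 2\exp\left(-c\min\left(\frac{s^2}{K^4 f},\,\frac{s}{K^2}\right)\right).
\]
Setting $s = ft$ with $t \asymp \ve \le K^2$ places this in the subgaussian regime $2\exp(-c f t^2/K^4)$, which is precisely the fixed-pair estimate underlying Lemma~\ref{lemma::normA} after the substitutions $\tr(B),\fnorm{B},\twonorm{B}\mapsto f,\sqrt f,1$.

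Then I would run the covering argument over $F\cap S^{m-1}$. Because $A\succ 0$, each $F_J = A^{1/2}E_J$ is a genuine $k$-dimensional subspace and $F = \bigcup_{\abs{J}=k} F_J$ is a union of $\binom{m}{k}$ such subspaces — structurally identical to $E$. Hence $F\cap S^{m-1}$ admits an $\ve$-net $\mathcal N$ with $\abs{\mathcal N} \le \binom{m}{k}(3/\ve)^k \le (3em/(k\ve))^k$, so $\log\abs{\mathcal N}\le k\log(3em/(k\ve))$. A union bound of the per-pair estimate over $\mathcal N\times\mathcal N$, together with \eqref{eq::BI}, makes the exponent $cft^2/K^4$ dominate $2\log\abs{\mathcal N}$ for $c'$ small, leaving failure probability $2\exp(-c_2\ve^2 f/K^4)$; this defines $\B_3$. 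On $\B_3$ the bound holds on the net, and I would pass to all of $F\cap S^{m-1}$ as in Lemmas~\ref{lemma::bigcone} and~\ref{lemma::bigconeII}: with $R := \sup_{y,w\in F\cap S^{m-1}}\abs{y^T(\onef Z^T Z - I)w}$ and $y=y'+(y-y')$, $w=w'+(w-w')$ for $y',w'\in\mathcal N$, the key point is that $y-y'\in F_J$ and $w-w'\in F_{J'}$ stay in $F$ (subspace closure), so each cross term is at most $\ve R$, whence $R(1-2\ve-\ve^2)\le \sup_{\mathcal N}\le C\ve$ and $R\le 4C\ve$ for $\ve\le 1/C$.

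The main obstacle is bookkeeping rather than conceptual. One must confirm that replacing $E$ by $F$ leaves the covering number genuinely unchanged — which rests on $A^{1/2}$ being a linear bijection preserving the $\binom{m}{k}$-fold subspace structure — and that this same subspace-closure of $F$ under differences is what legitimizes the net-to-sphere passage with the advertised constant $4C$. No new probabilistic input beyond Theorem~\ref{thm::HW} and Lemma~\ref{lemma::Auv} is needed, since every constant ($4C\ve$, the probability, and condition \eqref{eq::BI}) reduces to the corresponding quantity in Lemma~\ref{lemma::normA} under $B=I_f$.
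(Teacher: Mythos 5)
Your proposal is correct and follows essentially the same route as the paper, whose proof of Corollary~\ref{coro::tartan} simply sets $B=I_f$ in the argument of Lemma~\ref{lemma::normA} (so $\tr(B)=f$, $\twonorm{B}=1$, $\fnorm{B}=\sqrt f$) and replaces $E$ by $F$ everywhere, using the same Hanson--Wright tail bound, the same $\binom{m}{k}(3/\ve)^k$ net over the union of $k$-dimensional subspaces $F_J=A^{1/2}E_J$, and the same net-to-sphere approximation yielding the factor $4$. Your added bookkeeping (that $A^{1/2}$ preserves the subspace structure and hence the covering number) is exactly what the paper leaves implicit.
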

We prove Lemmas~\ref{lemma::normA} and~\ref{lemma::orthogSp}
  and Corollary~\ref{coro::tartan} in Section~\ref{sec::proofofnormA}.
We are now ready to prove Theorem~\ref{thm::AD}.

\begin{proofof}{Theorem~\ref{thm::AD}}
Recall the following for $X_0 = Z_1 A^{1/2}$,
\bens
\lefteqn{
\Delta := \hat\Gamma_{A} -A := \onef X^TX - \onef \hat\tr(B) I_{m} -A} \\
& = & (\onef X_0^T X_0 -A)+  
 \onef \big(W^T X_0 + X_0^T W\big) + \onef \big(W^T W  - \hat\tr(B) I_{m}\big).
\eens
Notice that 
\bens
\lefteqn{
\abs{u^T(\hat\Gamma_A -A) \up} =  
\abs{u^T(X^T X - \hat\tr(B) I_{m} -A) \up} }\\
& \le &  
\abs{u^T  (\onef X_0^T X_0 - A) \up} + 
\abs{u^T \onef(W^T X_0 + X_0^T W) \up} + 
\abs{u^T (\onef W^T W - \frac{\hat\tr(B)}{f} I_{m})\up}\\
& \le &  
\abs{u^T A^{1/2}\onef Z_1^T Z_1 A^{1/2} \up - u^T A \up} +
\abs{u^T \onef(W^T X_0 + X_0^T W) \up} \\
&& +
\abs{u^T (\onef Z_2^T B Z_2 - \tau_B I_{m})\up} + 
\onef \abs{\hat\tr(B) - \tr(B)} \abs{u^T \up} =: I + II + III + IV.
\eens
For $u  \in E \cap S^{m-1}$, define $h(u) := \frac{A^{1/2}
  u}{\twonorm{A^{1/2} u}}$.
The conditions in~\eqref{eq::ALocalkronsum} and~\eqref{eq::BI} hold
for $k$. We first bound the middle term as follows.
Fix $u, \up \in E \cap S^{m-1}$
Then on event $\B_2$, for $\Upsilon = Z_1^T B^{1/2} Z_2$,
\bens
\abs{u^T (W^T X_0 + X_0^T W) \up} & = & 
 \abs{u^T Z_2^T B^{1/2} Z_1 A^{1/2} \up + u^T A^{1/2} Z_1^T B^{1/2} Z_2 \up} \\ 
& \le & 
\abs{u^T \Upsilon^T h(v)}\twonorm{A^{1/2} v}
 + \abs{h(u)^T \Upsilon \up} \twonorm{A^{1/2} u}\\
& \le &  2\max_{w \in F \cap S^{m-1}, \up \in E \cap S^{m-1}}
\abs{w^T \Upsilon \up} \rho_{\max}^{1/2}(k, A) \\
& \le & 8 C \ve \tr(B)\left(\frac{\rho_{\max}(k, A)}{\twonorm{B}}\right)^{1/2}.
 \eens
We now use Lemma~\ref{lemma::normA} to bound both $I$ and $III$.
We have for $C$ as defined in Lemma~\ref{lemma::normA}, on event $\B_1 \cap \B_3$,
\bens
 \abs{u^T (Z_2^T B Z_2 - \tr(B)I_{m})\up} 
& \le  4 C \ve \tr(B).
\eens 
Moreover, by Corollary~\ref{coro::tartan}, we have on event $\B_3$, for all 
 $u, v \in E \cap S^{m-1}$,
\bens
\abs{u^T  (\onef X_0^T X_0 - A) \up} 
& = & 
\abs{u^T A^{1/2} Z^T Z A^{1/2} \up - u^T A \up} \\
& = & 
\abs{h(u)^T (\onef Z^T Z -I) h(\up)} \twonorm{A^{1/2} u} \twonorm{A^{1/2} \up} \\
& \le &
\onef \max_{w, y \in F \cap S^{m-1}} \abs{w^T (Z^T Z -I) y} \rho_{\max}(k, A) \\
& \le & 4 C \ve  \rho_{\max}(k, A).
\eens
Thus we have on event $\B_1 \cap \B_2 \cap \B_3$ and for $\tau_B := \tr(B)/f$
\bens
I + II + III 
& \le & 
 4 C \ve \left(\rho_{\max}(k, A)  +  
2 \tau_B \left(\frac{\rho_{\max}(k, A)}{\twonorm{B}}\right)^{1/2} + \tau_B \right) \\
& \le &  8 C \ve\left(\tau_B +  \rho_{\max}(k, A) \right).
\eens
On event $\B_6$, we have for $D_1$ as defined in Lemma~\ref{lemma::trBest},
\bens
IV \le \abs{\hat\tau_B - \tau_B} \le  2 C_0 D_1 K^2 \sqrt{\frac{\log m}{fm}}.
\eens
The theorem thus holds by the union bound.
\end{proofof}

\section{Proof for Theorem~\ref{thm::kronopB}}
\label{sec::proofofgramB}
We first state the following bounds in~\eqref{eq::middleHWA} 
before we prove Theorem~\ref{thm::kronopB}.
On event $\A_2$, where $\prob{\A_2} \ge 
1- 2 \exp\left(-c_3 \ve^2 \frac{\tr(A)}{K^4\twonorm{A}}\right)$
\ben
\label{eq::middleHWA} 
\forall u, w \in S^{f-1} \; \; \; 
\abs{u^T Z_1 A^{1/2} Z_2^T w} 
\le 
\frac{4C \ve \tr(A)}{\twonorm{A}^{1/2}}.
\een
To see this, first note that by Lemma~\ref{lemma::oneeventA}, we have 
for $t = C \ve \tr(A)/\twonorm{A}^{1/2}$ and $\ve \le 1/2$,
\bens
\prob{\abs{u^T Z_1 A^{1/2} Z_2^T w} >  t}& \le & 
2 \exp\left(-c\min\left(\frac{C^2\ve^2 \tr(A)}{K^4\twonorm{A}},
\frac{C\ve \tr(A)}{K^2\twonorm{A}}\right)\right) \\
& \le & 
2 \exp\left(-c\min\left(C^2, 2C\right) \frac{\ve^2 \tr(A)}{K^4\twonorm{A}}\right) 
\eens
where recall  
$$C' = c c' \min\left(2C, C^2\right) > 4.$$

Before we proceed, we state the following well-known result on {\em volumetric estimate}; see e.g.~\cite{MS86}.
\begin{lemma}
\label{eq::Pi-cover-numbers}
Given $m \geq 1$ and $\ve >0$. 
There exists an $\ve$-net
$\Pi \subset B_2^m$ of $B_2^m$ with respect to the Euclidean metric such
that $B_2^m \subset (1- \ve)^{-1} \conv \Pi$ and  $|\Pi| \leq (1+2/\ve)^m$.
Similarly, there exists an $\ve$-net of the sphere
$S^{m-1}$, $\Pi' \subset S^{m-1}$ such that $|\Pi'| \leq (1+2/\ve)^m$.
\end{lemma}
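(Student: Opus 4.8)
The plan is to construct the net greedily as a maximal $\ve$-separated subset and then bound its cardinality by a standard volumetric packing comparison, while the convex-hull containment will follow from an iterative successive-approximation argument. First I would produce $\Pi$: let $\Pi \subset B_2^m$ be a maximal subset with the property that $\twonorm{p - q} > \ve$ for all distinct $p, q \in \Pi$. Such a maximal $\ve$-separated set exists (and will turn out to be finite via the cardinality bound below), and maximality forces it to be an $\ve$-net. Indeed, if some $x \in B_2^m$ had $\twonorm{x - p} > \ve$ for every $p \in \Pi$, then $\Pi \cup \{x\}$ would remain $\ve$-separated, contradicting maximality of $\Pi$.

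Next I would bound $\size{\Pi}$ by a volume argument. Because the points of $\Pi$ are $\ve$-separated, the open balls $p + (\ve/2) B_2^m$ for $p \in \Pi$ are pairwise disjoint; moreover each is contained in $(1 + \ve/2) B_2^m$ since $\twonorm{p} \le 1$. Comparing Lebesgue volumes and using the homogeneity $\vol(r B_2^m) = r^m \vol(B_2^m)$ then gives $\size{\Pi}\,(\ve/2)^m \le (1 + \ve/2)^m$, whence $\size{\Pi} \le (1 + 2/\ve)^m$.

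For the containment $B_2^m \subset (1-\ve)^{-1} \conv \Pi$, I would iterate the net property. Given $x \in B_2^m$, choose $p_0 \in \Pi$ with $\twonorm{x - p_0} \le \ve$; then $(x - p_0)/\ve \in B_2^m$, so choose $p_1 \in \Pi$ with $\twonorm{(x - p_0)/\ve - p_1} \le \ve$, and continue inductively. This produces an expansion $x = \sum_{k \ge 0} \ve^k p_k$ with each $p_k \in \Pi$, the series converging since $\ve < 1$. As $\sum_{k \ge 0} \ve^k = (1-\ve)^{-1}$, rewriting $x = (1-\ve)^{-1} \sum_{k \ge 0} (1-\ve)\ve^k p_k$ exhibits $(1-\ve)x$ as a convex combination of points of $\Pi$, so $x \in (1-\ve)^{-1} \conv \Pi$. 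The sphere case is identical: the same greedy construction yields a maximal $\ve$-separated net $\Pi' \subset S^{m-1}$, and the disjoint balls $p + (\ve/2) B_2^m$, $p \in \Pi'$, again lie in $(1 + \ve/2) B_2^m$, giving $\size{\Pi'} \le (1 + 2/\ve)^m$ (no convex-hull statement being required there).

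The volume comparison is entirely routine, and I expect no genuine obstacle; the only step demanding mild care is checking convergence of the successive-approximation series and tracking the geometric-sum coefficients to land exactly at the factor $(1-\ve)^{-1}$. This is the classical covering/packing estimate, consistent with the citation to~\cite{MS86}.
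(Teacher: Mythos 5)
Your proof is correct. The paper itself gives no proof of this lemma---it is stated as a well-known volumetric estimate with a citation to~\cite{MS86}---and your argument (maximal $\ve$-separated set, disjoint-balls volume comparison for the cardinality bound, and the geometric-series successive-approximation for the containment $B_2^m \subset (1-\ve)^{-1}\conv\Pi$, which is legitimate since $\conv\Pi$ is closed for finite $\Pi$) is exactly the classical argument that reference supplies, so there is nothing to compare beyond noting that the implicit restriction $\ve<1$ you invoke for convergence is indeed the regime in which the convex-hull statement is meaningful.
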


Choose an $\ve$-net $\Pi \subset S^{f-1}$ such that 
$\abs{\Pi} \le (1+2/\ve)^{f} = \exp(f\log(3/\ve)).$
The existence of such $\Pi$ is guaranteed by Lemma~\ref{eq::Pi-cover-numbers}.
By the union bound and Lemma~\ref{lemma::oneeventA}, we have for
some  $C \ge 2$ and $c' \ge 1$ large enough such that
\bens
\prob{\exists u, w \in \Pi s.t. 
\abs{u^T Z_1 A^{1/2} Z_2^T w} 
\ge C \ve \frac{\tr(A)}{\twonorm{A}^{1/2}}}
& \le &  2 \exp\left(-c_3 \frac{\ve^2 \tr(A)}{K^4\twonorm{A}}\right).
\eens
Hence,~\eqref{eq::middleHWA} follows from
a standard approximation argument.
\begin{lemma}  
\label{lemma::tracebound}
 Let $\ve>0$. Let $Z$ as defined in Definition~\ref{def::subgdata}.
 Assume that 
  \[
   \frac{\tr(A)}{\norm{A}} \ge c' f \frac{\log(3/\ve)}{\ve^2}.
 \]
 Then 
 \[
   \prob{\exists x \in S^{f-1} \ \ \left| \norm{A^{1/2}Z^T x}_2 - (\tr(A))^{1/2} \right|
         > \ve (\tr(A))^{1/2}}
   \le \exp \left( -c \ve^2 \frac{\tr(A)}{K^4 \norm{A}} \right).
 \]
\end{lemma}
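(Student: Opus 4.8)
The plan is to read off that, for a fixed $x \in S^{f-1}$, the quantity $\twonorm{A^{1/2}Z^T x}^2 = x^T Z A Z^T x$, and that $\E Z A Z^T = \tr(A) I_f$ because the columns of $Z$ are independent with unit-variance entries. Thus the object to control uniformly is the quadratic form of the centered symmetric matrix $G := Z A Z^T - \tr(A) I_f$. First I would reduce the statement about $\twonorm{A^{1/2}Z^T x}$ to an operator-norm statement about $G$: since $\norm{G} = \sup_{x \in S^{f-1}} \abs{x^T G x}$ for symmetric $G$, if one shows $\norm{G} \le \half \ve \tr(A)$, then $\twonorm{A^{1/2}Z^T x}^2 \in [(1-\tfrac{\ve}{2})\tr(A), (1+\tfrac{\ve}{2})\tr(A)]$ for every $x$; using $\abs{\sqrt{1\pm\tfrac{\ve}{2}}-1}\le \ve$ for $\ve \le 1$, this yields the desired $\abs{\twonorm{A^{1/2}Z^T x} - \tr(A)^{1/2}} \le \ve\,\tr(A)^{1/2}$ simultaneously over the whole sphere.

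The core estimate is a pointwise concentration bound for the bilinear form, which is exactly the second inequality in Lemma~\ref{lemma::oneeventA}: for fixed $u,w \in S^{f-1}$, $\prob{\abs{u^T Z A Z^T w - \E u^T Z A Z^T w} > t} \le 2\exp(-c\min(t^2/(K^4\fnorm{A}^2),\, t/(K^2\norm{A})))$, and note that $u^T G w = u^T Z A Z^T w - \E u^T Z A Z^T w$. Using the elementary bound $\fnorm{A}^2 = \sum_i \lambda_i(A)^2 \le \norm{A}\tr(A)$, I would choose $t \asymp \ve\,\tr(A)$; then the first term of the minimum is at least $\asymp \ve^2\tr(A)/(K^4\norm{A})$, while for $\ve \le 1$ and $K \ge 1$ it does not exceed the second term, so the minimum equals $\asymp \ve^2\tr(A)/(K^4\norm{A})$. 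This produces the clean single-exponent tail $2\exp(-c''\ve^2\tr(A)/(K^4\norm{A}))$ at each fixed pair.

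Finally I would upgrade this pointwise bound to a uniform one by a covering argument, exactly as in the proof of Lemma~\ref{lemma::normA} but over the \emph{full} sphere rather than over sparse vectors. Take an $\ve_0$-net $\Pi \subset S^{f-1}$ (with $\ve_0$ a small absolute fraction of $\ve$) supplied by Lemma~\ref{eq::Pi-cover-numbers}, so that $\abs{\Pi} \le (1+2/\ve_0)^f \le \exp(f\log(3/\ve_0))$, and apply the union bound over the $\abs{\Pi}^2$ bilinear events. The standard approximation step — that $\norm{G} \le (1-\ve_0)^{-2}\max_{u,w\in\Pi}\abs{u^T G w}$ — then delivers $\norm{G} \le \half\ve\,\tr(A)$ on the good event. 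The hypothesis $\tr(A)/\norm{A} \ge c' f \log(3/\ve)/\ve^2$ is precisely what makes the per-pair exponent $c''\ve^2\tr(A)/(K^4\norm{A})$ dominate the net-cardinality contribution $2f\log(3/\ve_0)$, leaving a surviving tail of the stated form $\exp(-c\ve^2\tr(A)/(K^4\norm{A}))$. I expect the main obstacle to be exactly this last bookkeeping: choosing $\ve_0$ and $c'$ (equivalently $C = C_0/\sqrt{c'}$, as in Lemma~\ref{lemma::normA}) so that both the $K^4$ factor and the covering exponent $f\log(3/\ve)$ are absorbed while the final tail retains its claimed exponent.
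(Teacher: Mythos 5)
Your argument is correct, but it takes a genuinely different route from the paper. The paper fixes $x\in S^{f-1}$, observes that $Y=Z^Tx$ is a mean-zero vector with independent unit-variance subgaussian coordinates, and invokes the norm-concentration form of the Hanson--Wright inequality (Theorem 2.1 of \cite{RV13}) to get $\prob{|\twonorm{A^{1/2}Y}-\tr(A)^{1/2}|>\ve\tr(A)^{1/2}}\le \exp(-c\ve^2\tr(A)/(K^4\norm{A}))$ directly at the level of the \emph{norm} (first power), then unions over a single $\ve$-net of $S^{f-1}$ of size $(3/\ve)^f$ and finishes with the standard approximation step. You instead control the operator norm of the centered Gram matrix $G=ZAZ^T-\tr(A)I_f$ via the quadratic-form inequality of Lemma~\ref{lemma::oneeventA} together with $\fnorm{A}^2\le\norm{A}\tr(A)$, net over \emph{pairs} $(u,w)$, and then convert the two-sided bound on $\twonorm{A^{1/2}Z^Tx}^2$ into one on $\twonorm{A^{1/2}Z^Tx}$ via $|\sqrt{1\pm\delta}-1|\le\delta$. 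The paper's route is shorter because the RV13 norm-concentration theorem already delivers the deviation at the level of the norm, avoiding both the squared net cardinality and the square-root conversion (which quietly requires $\ve\lesssim 1$, though the approximation step in either proof needs that anyway); your route buys self-containedness, resting only on the bilinear estimate already proved in the paper's appendix, and yields the stronger conclusion $\norm{ZAZ^T-\tr(A)I_f}\le\ve\tr(A)$ as a byproduct. One bookkeeping point you correctly flag applies to both proofs: absorbing the net cardinality $\exp(O(f\log(3/\ve)))$ into the exponent $c\ve^2\tr(A)/(K^4\norm{A})$ requires the hypothesis on $\tr(A)/\norm{A}$ to carry the factor $K^4$, as it does in \eqref{eq::trAlow} where the lemma is applied, even though the $K^4$ is omitted from the lemma statement itself.
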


\begin{proof}
 Let $x \in S^{f-1}$. Then $Y=Z^T x \in \R^m$ is a random vector with independent coordinates satisfying $\E Y_j=0$ and $\norm{Y_j}_{\psi_2} \le CK$ for all $j \in 1 \ldots m$.
 The last estimate follows from Hoeffding inequality. By Theorem 2.1 \cite{RV13},
 \[
  \prob{\left| \norm{A^{1/2}Y}_2 - (\tr(A))^{1/2} \right| >\ve (\tr(A))^{1/2}}
  \le \exp \left( -c \ve^2 \frac{\tr(A)}{K^4 \norm{A}} \right). 
 \]
 Choose an $\ve$-net $\Pi \subset S^{f-1}$ such that $|\Pi| \le (3/\ve)^f$. 
 By the union bound and the assumption of the Lemma,
 \bens
  \prob{\exists x \in \Pi \ \ \left| \norm{A^{1/2}Z^T x}_2 - (\tr(A))^{1/2} \right| >\ve (\tr(A))^{1/2}}
  &\le & |\Pi| \cdot \exp \left( -c \ve^2 \frac{\tr(A)}{K^4 \norm{A}} \right)  \\
  &\le & \exp \left( -c' \ve^2 \frac{\tr(A)}{K^4 \norm{A}} \right).   
 \eens
 A standard approximation argument shows that if $\left| \norm{A^{1/2}Z^T x}_2 - (\tr(A))^{1/2} \right|  \le \ve (\tr(A))^{1/2}$ for all $x \in \Pi$, then $\left| \norm{A^{1/2}Z^T x}_2 - (\tr(A))^{1/2} \right| \le 3 \ve (\tr(A))^{1/2}$
 for all $x \in S^{f-1}$. This finishes the proof of the Lemma.
\end{proof}

\begin{proofof}{Theorem~\ref{thm::kronopB}}
First we write
\bens
\lefteqn{
 X X^T - \tr(A) I_{f}
 = \big( Z_1 A^{1/2} + B^{1/2} Z_2) \big(Z_1 A^{1/2} +B^{1/2}
 Z_2\big)^T - \tr(A) I_{f} } \\
& = & 
\big( Z_1 A^{1/2} + B^{1/2} Z_2) \big(Z_2^T B^{1/2} +A^{1/2} Z_1^T\big) 
- \tr(A) I_{f} \\
& = & 
 Z_1 A^{1/2} Z_2^T B^{1/2} + B^{1/2} Z_2  Z_2^T B^{1/2}
+ B^{1/2} Z_2  A^{1/2} Z_1^T+ Z_1 A Z_1^T- \tr(A) I_{f}. 
\eens
Hence,
\bens
\nonumber
\lefteqn{
\abs{\frac{u^T (X X^T)u}{m}-\frac{u^T\tr(A)I u }{m} - u^T B u }
\le 
\abs{\inv{m}u^T  Z_1 A  Z_1^T u- \frac{\tr(A)}{m} u^T u}}\\
&& 
+ \abs{\inv{m} u^T B^{1/2} Z_2  Z_2^T B^{1/2} u - u^T B u}
+\frac{2}{m}\abs{ u^T Z_1 A^{1/2} Z_2^T B^{1/2} u}.
\eens
where by~\eqref{eq::middleHWA},
we have on event $\A_2$, for $\tau_A :=\frac{\tr(A)}{m}$
and $w := \frac{B^{1/2} u}{\twonorm{B^{1/2} u}}$,
\bens
\lefteqn{
\frac{2}{m}
\abs{u^T Z_1 A^{1/2} Z_2^T B^{1/2} u}  = 
\frac{2}{m} \abs{u^T Z_1 A^{1/2} Z_2^T w}
 \twonorm{B^{1/2} u} }\\
 &\le &\frac{8 C \ve \tr(A)  \twonorm{B^{1/2} u} }{\twonorm{A}^{1/2}
   m} =: 8 C \ve \tau_A \twonorm{B^{1/2} u} /{\twonorm{A}^{1/2}}.
\eens
Moreover, by the union bound and Lemma~\ref{lemma::tracebound}, we have 
on event $\A_1$, where $\prob{\A_1} \ge  1 -\exp(c \ve^2 \frac{m}{K^4})-\exp(c \ve^2
\frac{\tr(A)}{K^4\twonorm{A}})$,
\bens
(1-\ve) \twonorm{B^{1/2} u}& \le & 
\frac{1}{\sqrt{m}}\twonorm{ Z_2 B^{1/2} u} \le (1+\ve)
\twonorm{B^{1/2} u}\\
(1-\ve) \frac{\tr(A)^{1/2}}{\sqrt{m}} 
& \le &
\inv{\sqrt{m}}\twonorm{A^{1/2} Z_1^T u}
\le (1+\ve) \frac{\tr(A)^{1/2}}{\sqrt{m}}.
\eens
Hence on event $\A_1$, we have 
\bens
\inv{m}\abs{\twonorm{A^{1/2}  Z_1^T u }^2
- \tr(A)}& \le &
\max((1+\ve)^2-1, 1-(1-\ve)^2) \frac{\tr(A)}{m}, \\
\abs{\inv{m} \twonorm{  Z_2^T B^{1/2} u}^2- u^T B u}
& \le &
\max((1+\ve)^2-1, 1-(1-\ve)^2)\twonorm{B^{1/2} u}^2.
\eens
Thus we have for all $u \in S^{f-1}$, on event $\A_1 \cap \A_2$, for
$C_2 := 4C + 3$
\bens
\lefteqn{
\abs{\inv{m}u^T (X X^T)u -u^T\frac{\tr(A)I_f}{m} u -u^T B u } \le} \\  
& \le &
\abs{\twonorm{Z_2^T B^{1/2} u}^2/m - u^T B u} +
\inv{m}\abs{\twonorm{A^{1/2}  Z_1^T u }^2 - \tr(A)} + 8C \ve \tau_A \twonorm{B^{1/2} u} /{\twonorm{A}^{1/2}} \\
& \le &
 3\ve \twonorm{B^{1/2} u}^2 + 3 \ve \tau_A + 8 C \ve \tau_A \twonorm{B^{1/2}
   u} /{\twonorm{A}^{1/2}} \le
C_2 \ve \twonorm{B^{1/2} u}^2 + C_2 \ve \tau_A
\eens
where $2\tau^{1/2}_A \twonorm{B^{1/2} u} \le  \tau_A +\twonorm{B^{1/2} u}^2$.
The theorem thus holds.
\end{proofof}

\subsection{Proof of Corollary~\ref{coro::kronopB}}
\label{sec::kronopB}
\begin{proofof2}
\noindent {\bf Lower bound:}
For all $u \in S^{f-1}$ and 
\bens
\lefteqn{
\inv{m}u^T (X X^T)u -u^T\frac{\tr(A)I_f}{m} u}\\
& \ge &  u^T B u (1-3\ve) -3\ve \tau_A -
8 C\twonorm{B^{1/2}u} \ve \tau_A /\twonorm{A}^{1/2} \\
& \ge &  u^T B u (1-3\ve - 4C \ve) -3\ve \tau_A  - 4 C \ve \tau_A\\
& \ge &  u^T B u (1-C_2\ve) - C_2\ve \tau_A \ge u^T B u (1-2\delta)
\eens
where we bound the term  using the fact that 
$1 \le \tau_A \le \lambda_{\max}(B)$ and
\bens
C_2\tau_A \ve & \le & \delta \lambda_{\min}(B) 
\; \text{ and } \;
\ve  \le \delta \lambda_{\min}(B) /(C_2\tau_A )  \\
C_2 \ve & \le & \delta \; \text{ and } \;
C_ 3 \ve  \le \delta \min \left( \frac{\lambda_{\min}(B) }{\tau_A}, 1\right).
\eens
By a similar argument, we can prove the upper bound on the isometry property 
as stated in the corollary.
\end{proofof2}

\subsection{Proof of Corollary~\ref{coro::sparseK}}
\label{sec::sparseK}
\begin{proofof2}
Recall the following
\bens
\lefteqn{
\tilde{A} := X^TX - \tr(B) I_{m}
 = \big(Z_1 A^{1/2} + B^{1/2} Z_2)^T \big(Z_1 A^{1/2} +B^{1/2}
Z_2\big) - \tr(B) I_{m}}\\ 
& = & 
\big(Z_2^T B^{1/2} +A^{1/2} Z_1^T\big)\big(Z_1 A^{1/2} + B^{1/2} Z_2) - \tr(B) I_{m} \\ 
& = &  \big(Z_2^T B^{1/2} Z_1 A^{1/2} + A^{1/2} Z_1^T B^{1/2} Z_2\big)
 +A^{1/2} Z_1^T Z_1 A^{1/2} +  \big(Z_2^T B Z_2  - \tr(B) I_{m}\big).
\eens
Hence, for all vectors $u \in \Sp^{m-1} \cap E$
\bens
\nonumber
\lefteqn{
\frac{u^T (X^T X)u}{f}-\frac{u^T\tr(B)I u }{f} - u^T A u 
\le 
\inv{f}\abs{u^T  Z_2 B  Z_2^T u- \tr(B) u^T u}}\\
&& 
+ \abs{\inv{f} u^T A^{1/2} Z_1^T  Z_1 A^{1/2} u - u^T A u}
+\frac{2}{f}\abs{u^T A^{1/2} Z_1^T B^{1/2} Z_2 u}.
\eens
By Lemma~\ref{lemma::normA}, we have   on event $\B_1$,  
\bens 
\forall u \in E \cap S^{m-1}\; \; \abs{u^T Z^T B Z u - \tr( B)} 
& \le & 4 C \ve \tr(B);
\eens
By Lemma~\ref{lemma::orthogSp}, we have on event $\B_2$,
\bens
\forall u \in E \cap  S^{m-1} \; \; \abs{u^T A^{1/2} Z_1^T B^{1/2} Z_2 u} 
& \le & 4C \ve \tr(B) \twonorm{A^{1/2} u}/\twonorm{B}^{1/2}.
\eens
For all $u \in S^{m-1} \cap E$,
\bens
\lefteqn{ 8 C\ve \tau_B\twonorm{A^{1/2}u} 
/\twonorm{B}^{1/2} \le
 2(  2 C \ve^{1/2}  \frac{\tau_B}{\twonorm{B}^{1/2} })
(2\ve^{1/2} \twonorm{A^{1/2}u} ) }\\
& \le &
   4 C^2 \ve \frac{\tau_B^2}{\twonorm{B}} + 
4\ve \twonorm{A^{1/2}u}^2
\le  4 C^2 \ve \tau_B + 4\ve \twonorm{A^{1/2}u}^2.
\eens 
And finally, we have also shown that for all $u \in E$ on event $\B_9$,
\bens
&& (1-\ve) \twonorm{A^{1/2} u}
 \le \frac{1}{\sqrt{f}}\twonorm{Z_1 A^{1/2} u} \le (1+\ve) \twonorm{A^{1/2} u}.
\eens
Thus we have for all $u \in S^{m-1} \cap E$, on event $\B_1 \cap \B_2
\cap \B_9$,
\ben
\nonumber
\lefteqn{
\abs{\frac{u^T (X^T X)u}{f}-\frac{u^T\tr(B)I u }{f} - u^T A u }
\le \inv{f}\abs{u^T  Z_2^T B  Z_2 u- \tr(B) u^T u}}\\
&& 
\nonumber
+ \abs{\inv{f}\twonorm{Z_1 A^{1/2} u}^2 - \twonorm{A^{1/2} u}^2}
+\frac{2}{f}\abs{u^T A^{1/2} Z_1^T B^{1/2} Z_2 u} \\
& \le &  
\nonumber
4 C \ve \tau_B +6\ve \twonorm{A^{1/2} u}^2 +  8C \ve \tau_{B}   \twonorm{A^{1/2}u}/\twonorm{B}^{1/2} \\
& \le &
\nonumber
  4 C \ve \tau_B +6\ve \twonorm{A^{1/2} u}^2 + 4 C^2 \ve \tau_B +
  4\ve \twonorm{A^{1/2}u}^2 \\
& \le &
\label{eq::upper}
10\ve \twonorm{A^{1/2} u}^2 + 4 (C^2 + C)\ve \tau_B.
\een
\noindent {\bf Upper bound:}
Thus we have by~\eqref{eq::upper} for the maximum sparse eigenvalue of $\tilde{A}$ at order $k$:
\bens
\rho_{\max}(k, \tilde{A} )
& := & \max_{u \in E \cap S^{m-1}} \abs{u^T \tilde{A}u} 
 \le \max_{u \in E \cap S^{m-1}}
\abs{u^T \tilde{A}u  -u^T A u } + \rho_{\max}(k, A) \\
& \le & 
\rho_{\max}(k, A)(1 + 10\ve) + C_4  \ve \tau_B  
\eens
where $C_4 = 4(C+C^2)$.
The upper bound on $\rho_{\max}(k, \tilde{A} - A)$ in the theorem
statement thus holds. \\

\noindent {\bf Lower bound:}
Suppose $C_4 = 4(C+C^2) \vee 10$
\bens
\ve \le \frac{\delta}{C_4} \min\left( \frac{\rho_{\min}(k,
    A)}{\tau_B},1\right)
= \frac{\delta}{C_5} \; \text{ and } \;
C_ 4 \ve  \le \delta \min\left( \frac{\rho_{\min}(k, A)}{\tau_B},1\right).
\eens
We have by~\eqref{eq::upper} for all $u \in S^{m-1} \cap E$, on event $\B_1 \cap \B_2 \cap \B_9$,
\bens
\lefteqn{
\inv{f}u^T (X^T X)u -u^T\frac{\tr(B)I_m}{f} u}\\
& \ge &  u^T A u - \left(6 \ve u^T A u +4 C\ve \tau_B + 8 C \ve \tau_B\twonorm{A^{1/2}u} 
/\twonorm{B}^{1/2} \right)\\
& \ge &  u^T A u -6\ve u^T A u -4 C\ve \tau_B - 8 C \ve \tau_B^{1/2} \twonorm{A^{1/2}u} \\ 
& \ge &  u^T A u -10\ve u^T A u -4(C + C^2) \ve \tau_B \ge   u^T A u (1-10 \ve -\delta) \\
& \ge &  u^T A u (1 -2\delta)
\eens
where  $4(C +C^2) \ve \tau_B \le \delta \rho_{\min}(k, A)$
and $10 \ve \le \delta$.
\end{proofof2}

\section{Proofs of  Lemmas~\ref{lemma::normA} and~\ref{lemma::orthogSp}
  and Corollary~\ref{coro::tartan}}
\label{sec::proofofnormA}
Throughout the following proofs, we denote by $r(B) =
\frac{\tr(B)}{\twonorm{B}}$. Let $\ve \le \inv{C}$ where $C$ is large enough so that 
$ c c' C^2 \ge 4$, and hence the choice of $C = C_0/\sqrt{c'}$
satisfies our need.

\begin{proofof}{Lemma~\ref{lemma::normA}}
First we prove concentration bounds for all pairs of $u, v \in \Pi'$, where 
$\Pi' \subset \Sp^{m-1}$ is an $\ve$-net of $E$. 
Let $t  = C K^2 \ve \tr(B)$.
We have by Lemma~\ref{lemma::oneeventA}, and the union bound,
\bens
\lefteqn{
\prob{\exists u, v \in \Pi', \; 
\abs{u^T Z^T B Z v - \E u^T Z^T B Z v} >  t}} \\
& \leq &
2 \abs{\Pi'}^2 \exp \left[- c\min\left(\frac{t^2}{K^4  \fnorm{B}^2}, 
\frac{t}{K^2 \twonorm{B}} \right)\right] \\
& \leq &
2 \abs{\Pi'}^2
\exp \left[- c\min\left(C^2, \frac{C K^2}{\ve} \right)\frac{\ve^2 r(B)}{K^4}\right]
\le  2 \exp\left(-c_2\ve^2 r(B) /K^4\right)
\eens
where we use the fact that 
$\fnorm{B}^2 \le  \twonorm{B}\tr(B)$, and 
\bens
\abs{\Pi'} \le {m \choose k}(3/\ve)^k \le \exp(k\log(3 e m/k\ve))
\eens
while 
\bens
c \min\left(C^2, \frac{C K^2}{\ve} \right) \ve^2
\frac{r(B)}{K^4}
= c C^2 \ve^2 \frac{\tr(B)}{\twonorm{B} K^4} \ge c C_0^2 k \log\big(\frac{3e m}{k \ve}\big)
\ge 4k\log\big(\frac{3e m}{k \ve}\big)
\eens
Denote by $\B_2$ the event such that for $\Lambda := \inv{\tr(B)} (Z^T B Z - I)$, 
\bens
\sup_{u, v \in \Pi'} \abs{v^T \Lambda u} 
& \le & C  \ve =:r'_{f,k}
\eens
holds. A standard approximation argument shows that under $\B_2$ and
for $\ve \le 1/2$, 
\ben
\label{eq::sphereL}
\sup_{x, y\in \Sp^{m-1} \cap E} \abs{y^T \Lambda  x} 
\le \frac{r'_{k,f}}{(1-\ve)^2} \le 4 C  \ve.
\een
The lemma is thus proved.
\end{proofof}

\begin{proofof}{Lemma~\ref{lemma::orthogSp}}
By Lemma~\ref{lemma::oneeventA}, we have 
for $t = C \ve \tr(B)/\twonorm{B}^{1/2}$ for $C = C_0/\sqrt{c'}$
\bens
\prob{\abs{w^T Z_1^T B^{1/2} Z_2 u} >  t}
& \le &
\exp\left(-c\min\left(\frac{C^2 \frac{\tr(B)^2}{\twonorm{B}} \ve^2}
{K^4\tr(B)}, \frac{C\ve \tr(B)}{K^2\twonorm{B}}
\right)\right) \\
& \le & 
2 \exp\left(-c\min\left(\frac{C^2\ve^2 r_B}{K^4}, \frac{C\ve r_B}{K^2}\right)\right) \\
& \le & 
2 \exp\left(-c \min\left(C^2, \frac{CK^2}{\ve} \right) \ve^2 r_B/K^4\right)
\eens
Choose an $\ve$-net $\Pi' \subset S^{m-1}$ such that
\ben
\label{eq::Enet}
\Pi' = \bigcup_{\abs{J} = k} \Pi'_{J} \; \; \text{ where } \;\; 
\Pi'_{J} \subset E_J \cap S^{m-1} 
\een
is an $\ve$-net for $E_J \cap S^{m-1}$ and
\bens
\abs{\Pi'} \le {m \choose k}(3/\ve)^k \le \exp(k\log(3 e m/k\ve)).
\eens
Similarly, choose $\ve$-net $\Pi$ of $F \cap S^{m-1}$ of size at most 
$\exp(k\log(3 e m/k\ve))$.
By the union bound and Lemma~\ref{lemma::oneeventA}, and for $K^2 \ge 1$,
\bens
\lefteqn{
\prob{\exists w \in \Pi, u \in \Pi'\; s.t. \; 
\abs{w^T Z_1^T B^{1/2} Z_2 u} 
\ge C \ve{\tr(B)}/{\twonorm{B}^{1/2}}}}\\
& \le &
\abs{\Pi'} \abs{\Pi}  2 \exp\left(-c \min\left(C K^2/\ve, C^2\right) \ve^2 r_B/K^4\right)\\
& \le & 
\exp\left(2 k \log(3em/k\ve)\right) 2 \exp\left(-cC^2 \ve^2 r_B/K^4 \right) \\
& \le & 
2 \exp\left(-c_2\ve^2 r_B/K^4\right) 
\eens
where $C$ is large enough such that 
$c c' C^2 := C' > 4$ and  for $\ve \le \inv{C}$,
$$c \min\left(C K^2/\ve, C^2\right) \ve^2\frac{\tr(B)}{\twonorm{B}K^4} 
\ge C' k \log(3 e  m/k\ve)
\ge 4 k \log(3 e  m/k\ve).$$ 
Denote by $\U := Z_1^T B^{1/2} Z_2$.
A standard approximation argument shows that if 
\bens
\sup_{w \in \Pi, u \in \Pi'} \; \; \abs{w^T \U u} 
\le C \ve \frac{\tr(B)}{\twonorm{B}^{1/2}} =: r_{k,f}
\eens
an event which we denote by $\B_2$, then for all $u\in E$ and $w \in F$,
\ben
\label{eq::complete}
\abs{w^T Z_1^T B^{1/2} Z_2 u} \le \frac{r_{k,f}}{(1-\ve)^2}.
\een
The lemma thus holds for $c_2 \ge C'/2 \ge 2$.
\end{proofof}

\begin{proofof}{Corollary~\ref{coro::tartan}}
Clearly~\eqref{eq::wyFnorm} implies that \eqref{eq::ALocalkronsum}
holds for $B=I$.
Clearly~\eqref{eq::BI}  holds following the analysis of 
Lemma~\ref{lemma::normA} by setting $B = I$, while replacing event $\B_1$ with
$\B_3$, which denotes an event such that 
\bens
\sup_{u, v \in \Pi} \onef \abs{v^T (Z^T Z - I) u} 
& \le &  C \ve
\eens
The rest of the proof follows by replacing $E$ with $F$ everywhere.
The corollary thus holds.
\end{proofof}

\bibliography{subgaussian}

\end{document}